\def\ub{\underline{u}}
\def\Lb{\underline{L}}
\def\Cb{\underline{C}} \def\Eb{\underline{E}}
\def\Fb{\underline{F}}
\def\Db{\bar{D}}
\def\chib{\underline{\chi}}
\def\gslash{\mbox{$g \mkern -8mu /$ \!}}
\def\doubleint{\int\!\!\!\!\!\int}
\def\nablaslash{\mbox{$\nabla \mkern -13mu /$ \!}}
\def\laplacianslash{\mbox{$\triangle \mkern -13mu /$ \!}}
\def\p{\partial}
\newcommand{\di}{\mathrm{d}} 
\newcommand{\D}{\mathcal{D}}
\newcommand{\R}{\mathcal{R}}
\newcommand{\Dp}{\mathcal{D}_{0, \ub}^{u_0, u}}
\newcommand{\Do}{\mathcal{D}_{\ub_1, \ub_2}^{u_1, u_2}}
\newcommand{\M}{\mathcal{M}}
\newcommand{\F}{\mathcal{F}}
\newcommand{\hori}{\mathcal{H}^+}
\newtheorem{theorem}{Theorem}[section]
\newtheorem{lemma}[theorem]{Lemma}
\newtheorem{proposition}[theorem]{Proposition}
\newtheorem{definition}[theorem]{Definition}
\newtheorem{remark}[theorem]{Remark}
\numberwithin{equation}{section}
\begin{document}
\title[Large data for nonlinear wave in Schwarzschild]{A large data theory for nonlinear wave on the Schwarzschild background}

\author[S. Huo]{Saisai Huo} \email{11435024@zju.edu.cn}
\address{School of Mathematical Sciences, Zhejiang University, Hangzhou 310027, China}

\author[J. Wang]{Jinhua Wang} \email{wangjinhua@xmu.edu.cn}
\address{School of Mathematical Sciences, Xiamen University, Xiamen 31005, China}

\begin{abstract}
We study both of the scattering and Cauchy problems for the semilinear wave equation with null quadratic form on the Schwarzschild background. 
Prescribing the scattering data that are given by the short pulse data on the future null infinity and are trivial on the future event horizon, we construct a class of globally smooth solutions backwards up to any finite time and show that the wave travels in such a way that almost all of the (large) energy is focusing in an outgoing null strip, while little radiates out of this strip.
In reverse, considering a class of Cauchy data with large energy norms, there exists a unique and global solution  in the future development. And most of the wave packet is confined in an incoming null strip and reflected to the future event horizon, whereas little is transmitted to the future null infinity. 
\end{abstract}
\maketitle
\tableofcontents

\section{Introduction}\label{sec-intro}
We are concerned with the semilinear wave equation in the exterior region of Schwarzschild spacetime, of the form
\begin{equation}\label{Main Equation}
\Box_g \varphi = Q(\p \varphi, \p \varphi),
\end{equation}
where $\Box_g$ is the Laplace--Beltrami operator for the Schwarzschild metric, and $Q$ denotes non-linear term that is quadratic in the first order derivatives of the field $\varphi$ and satisfies the null condition (see Definition \ref{def-null-condition}). The data that we will consider for \eqref{Main Equation} will be some specific large data.

The small data theory for \eqref{Main Equation} has been well studied in the Minkowski spacetime $\mathbb{R}^{1+n}$. In dimension $n \geq 4$, the sufficiently fast decay rate of linear wave allows one to prove the global existence for the nonlinear wave equations with any quadratic nonlinearity for sufficiently small data \cite{Klainerman-80}. However, in $1+3$ dimension, F. John \cite{John-blowup} constructed a blowup example of nonlinear wave equations with certain quadratic nonlinearity. Nevertheless, if the quadratic nonlinearity satisfies the null condition, it has been proved independently by Christodoulou \cite{Christodoulou-86-wave} and Klainerman \cite{Klainerman-85-decay} that small data lead to solutions that are global in time. There has been an extensive literature on its applications \cite{Metcalfe-Nakamura-Sogge-05, Metcalfe-Sogge-07, Sideris-96, Sideris-Tu-01}. A far-reaching application of the idea of null condition in general relativity is the proof of nonlinear stability of the Minkowski spacetime \cite{C-K-90}, see also \cite{Lind-Rod-05, Lind-Rod-10}.

Based on the structure of null condition, Christodoulou  \cite{Christodoulou-09} initiated a large data theory for the Einstein vacuum equation. He introduced the \emph{short pulse} data, which is large in one certain null direction, and proved the formation of black holes  due to the focusing of gravitational waves. This work has been generalized and significantly simplified by Klainerman and Rodnianski \cite{K-R-12}. In addition, the ideas used in \cite{Christodoulou-09} and \cite{K-R-12} have been adapted to the wave equation \eqref{Main Equation} and the membrane equation in the Minkowski spacetime, see \cite{Miao-Pei-Yu, Wang-Wei-17, Wang-Yu-13, Wang-Yu-16}. 

We briefly recall some works on the linear and nonlinear wave equations in the asymptotically flat black hole spacetimes. The decay rate of linear wave has received intensive attention, see \cite{A-Blue-wavekerr, Blue-Soffer-wavesch, D-R-09, Dafermos-Rodnianski-smalla, D-R-13, D-R-S-R, D-S-A-12, Kerr-Smoller-decay, Kerr-Smoller-decay-Erratum, L-10, Luk-12-linear, Marzuola-Metcalfe-Tataru-Tohaneanu, MMDT-strich-sch-10, T-13, Tataru-Tohaneanu}. Closely related to this, there are quite a lot of results on the linearized gravity (related to the Regge-Wheeler equation, Teukolsky equation, etc.) \cite{Linear-Stability-Kerr, Andersson-Blue-Wang-17, D-H-R-16, Hartle-Wilkins, H-K-W-17, Ma17spin2Kerr, Press-Teukolsky-73, R-W-75}.
For the nonlinear wave, the global existence with power nonlinearity has been studied in \cite{Blue-Sterbenz, D-R-nonlinear-05, L-M-S-T-W-14, Nicolas-Sch-95, Nicolas-Kerr-02, Tohaneanu-12}; the small data global existence with null quadratic form in the slowly rotating Kerr spacetime has been demonstrated by Luk \cite{Luk-15-nonlinear} and the same theory for quasilinear wave equation in the spacetimes close to the Schwarzschild has been addressed by Lindblad and Tohaneanu \cite{L-T-quasilinear-sch-18},  We also mention some works on the scattering of  waves  (or gravity) in the black hole spacetimes \cite{D-H-D-scattering, D-R-S-Scattering, Dimock-84, Firedlander-80, Friedlander-01, Nicolas-16}, etc.

In the current work, we study the global-in-time behaviour of solutions to the semilinear wave equation \eqref{Main Equation} with the short pulse data in the Schwarzschild spacetime. 
\subsection{Main results}\label{Section main result}
To state our main theorem, we introduce some necessary concepts and notations on the Schwarzschild geometry.
The Schwarzschild spacetime is an $1+3-$dimensional Lorentzian manifold with
 the Lorentz metric taking the following form in the Boyer-Lindquist coordinates $(x^\alpha) = (t,r,\theta, \phi)$, 
\begin{align}
g_{\mu\nu} \di x^\mu \di x^\nu  ={}&{}
-\left(1-\frac{2m}{r}\right)\di t^2 +\left(1-\frac{2m}{r}\right)^{-1}\di r^2+r^2 \di \sigma_{S^2},
\label{eq:SchMetric}
\end{align}
where $\di \sigma_{S^2}$ is always the standard metric on the unit $2$-sphere $S^2$.
We consider the exterior region, which is given by $\mathcal{M}=\mathbb{R} \times [2m, \infty) \times S^2$.
For notational convenience, we set
\begin{align}\label{mu-eta}
\mu={}&\frac{2m}{r}, & \eta={}&1-\mu.
\end{align}
Let $r^\ast$ be the Regge-Wheeler tortoise coordinate 
\begin{equation}\label{Regge-Wheeler tortoise coordinate}
r^\ast = r + 2m \log(r-2m)-3m-2m\log m,
\end{equation}
and define the null coordinates $u = \frac{1}{2}(t-r^\ast), \,\, \ub = \frac{1}{2}(t+r^\ast)$.
The future null infinity $\mathcal{I}^{+}$ of $\M$ can be parametrized by $\{\ub = +\infty\}$. For any $c \in \mathbb{R}$, $C_c$ is used to denote the level surface $\{u=c\}$; Similarly, $\Cb_{\ub}$ denotes a level set of $\ub$. The intersection $C_u \cap \Cb_{\ub}$ is a $2$-sphere denoted by $S_{\ub,u}$, and $\Sigma_t$ is the constant $t$ hypersurface.

Define $L$, $\Lb$ and $Y$ by
\begin{equation*}
 L =\p_{\ub}= \partial_t + \partial_{r^\ast}, \quad \Lb=\p_u = \partial_t - \partial_{r^\ast}, \quad Y=\eta^{-1} \Lb.
\end{equation*}
Then $\{ L, Y \}$ is a normalized null frame. Let $\nablaslash$ be the induced covariant derivative on $S_{\ub,u}$. 
We can now define the ``good'' ($\Db$) and ``bad'' ($L$) derivatives,
\begin{equation*}
 \Db \in \{Y, \nablaslash \},  \quad  D \in \{ Y, L, \nablaslash\}.
\end{equation*}
Besides, let $\{ \Omega_i \}_{i=1}^3$ be a basis of the killing vectors spanning the Lie algebra $so(3)$. These are angular derivatives on $S_{\ub,u}$. We shall use the short cut: for any given function $\psi$, $\Omega \psi = \Omega_i \psi$, $\Omega^2 \psi = \Omega_i \Omega_j \psi, \, i, j \in \{1, 2, 3\}$, etc.

Near the horizon, we also use the Eddington-Finkelstein coordinates $(r, \ub, \theta, \phi)$, in which the metric reads $$g_{\mu \nu} \di x^\mu \di x^\nu = -\eta \di \ub^2 + 2 \di r \di \ub +r^2 \di \sigma_{S^2},$$ and extends across the event horizon.

We now define the null condition for a quadratic form \cite{Luk-15-nonlinear}.
\begin{definition}\label{def-null-condition}
Consider the quadratic form $Q(D\psi_1, D \psi_2)$. We say that $Q$ satisfies the null condition if
\begin{align*}
Q= \Lambda_1 (u, \ub,\theta, \phi) D \psi_1 \Db \psi_2 +  \Lambda_2 (u, \ub,\theta, \phi) D \psi_2 \Db \psi_1,
\end{align*}
and
\begin{align*}
|\p_t^{i_1} Y^{i_2} \Omega^{i_3} \Lambda_j |\lesssim t^{-i_1} r^{-i_2}, \quad  j =1,2.
\end{align*}
\end{definition}
The notation $x \lesssim y$ means $x \leq cy$ for a universal constant $c$, and $x \sim y$ means $x \lesssim y$ and $y \lesssim x$.  

Now we are ready to present our first theorem concerning the scattering problem.
The asymptotic characteristic data will be imposed on the future null infinity $\mathcal{I}^+$ and the future event horizon $\hori$. 
Let $\delta >0$ and let $\varphi_{+\infty}: \mathcal{I}^+ \rightarrow \mathbb{R}$ be such that
\begin{equation}\label{def-psi-0}
\varphi_{+\infty} (u, \theta, \phi)=
\begin{cases}
0, &\text{if} \quad u > 0 \,\, \text{or} \,\, u < - \delta, \\
\delta^{\frac{1}{2}} \psi_0 \left(\frac{u}{\delta}, \theta, \phi \right), &\text{if} \quad -\delta \leq u \leq 0,
\end{cases}
\end{equation} 
where $\psi_0: [-1,0] \times S^2 \rightarrow \mathbb{R}$ is a smooth, compactly supported function defined on $\mathcal{I}^+$. We recall that $\D^+(\Sigma) (\D^-(\Sigma))$ is the future (past) Cauchy development of $\Sigma$.

\begin{theorem}[Scattering Theorem]\label{Thm-scattering-region}
Consider on the Schwarzschild background the scattering problem (without contribution from $\hori$)  for the semilinear wave equation \eqref{Main Equation} with $Q$ satisfying the null condition. 
The asymptotic characteristic data are given by
\begin{align*}
\ub \varphi(u, \ub, \theta, \phi)\big|_{\mathcal{I}^+} & = \varphi_{+\infty} (u, \theta, \phi),\quad 
\varphi(u, \ub, \theta, \phi)\big|_{\hori} \equiv  0,
\end{align*}
where $\varphi_{+\infty} \in C^\infty ( \mathcal{I}^+)$ is defined in \eqref{def-psi-0}.
If $\delta$ is small enough, \eqref{Main Equation} has a globally smooth solution  in $ \D^{-}(\mathcal{I}^{+}) \cap \D^-(\hori) \cap \D^{+}(\Sigma_{0})$, whose radiation field on $\mathcal{I}^+$ is exactly $\varphi_{+\infty}$. 
And most of the wave energy is concentrated in the null strip $\mathcal{N}_1 := \D^{-}(\mathcal{I}^{+}) \cap \D^-(\hori) \cap \D^{+}(\Sigma_{0}) \cap \{-\delta \leq u \leq 0\}$, while little is dispersing out of $\mathcal{N}_1$ (see Figure \ref{fig:Scattering}).
\end{theorem}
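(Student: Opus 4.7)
The plan is to adapt Christodoulou's short pulse construction to the Schwarzschild exterior, solving \eqref{Main Equation} backwards from $\scri\cup\hori$. As an approximation scheme, I would truncate $\scri$ by a sequence of outgoing characteristic hypersurfaces $\Cb_{\ub_n}$ with $\ub_n\to+\infty$, prescribing on $\Cb_{\ub_n}$ the incoming characteristic data produced by rescaling $\varphi_{+\infty}$ from the radiation field (so that $\ub\varphi|_{\Cb_{\ub_n}}$ approximates $\varphi_{+\infty}$ as $\ub_n\to\infty$) and imposing $\varphi\equiv 0$ on the portion of $\hori$ bounded by $C_{u_n}$. Solving each such backward characteristic initial value problem on a bounded region $\D^-(\Cb_{\ub_n})\cap\D^-(\hori)\cap\D^+(\Sigma_0)$ and taking the limit $\ub_n\to\infty$ yields the desired solution, provided one has uniform-in-$\ub_n$ estimates in a $\delta$-scaled norm.

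The analytic heart is a hierarchy of $\delta$-weighted energy estimates reflecting the short pulse scaling: on $\scri$ the ``bad'' derivative $\Lb\varphi$ is of order $\delta^{-1/2}$ while good derivatives $\Db\varphi$ are of order $\delta^{1/2}$, and this anisotropy is to be propagated into the interior as bounds on the $L^2$ fluxes of $L^{k_1}Y^{k_2}\Omega^{k_3}\varphi$ through $C_u$ and $\Cb_{\ub}$, with an extra power of $\delta$ for each absent bad derivative. The null condition in Definition~\ref{def-null-condition} is what makes the hierarchy self-consistent: every term in $Q(\p\varphi,\p\varphi)$ contains at least one $\Db$-derivative, so that quadratic products gain an additional factor of $\delta^{1/2}$, which is what allows the source term in the energy identity to be absorbed when $\delta$ is small.

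Concretely I would (i) commute \eqref{Main Equation} with $\{L, Y, \Omega_i, \p_t\}$, exploiting the tame commutator coefficients supplied by the decay assumption in Definition~\ref{def-null-condition}; (ii) run a multiplier estimate with $L$ in the bulk away from $\hori$, which supplies positive fluxes on $C_u$ and $\Cb_{\ub}$; and (iii) use a Dafermos--Rodnianski red-shift vector field near $\hori$, expressed in the Eddington--Finkelstein coordinates $(r,\ub,\theta,\phi)$, to recover positivity of the flux through $\hori$ when integrating backwards with zero horizon datum (without this, the degeneracy of $L$ at $\hori$ would prevent closing the scheme). $L^\infty$ bounds then follow from $L^2$ bounds via Sobolev embedding on $S_{\ub,u}$ after enough angular commutations, closing a bootstrap on the hierarchy for $\delta$ small.

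Finally, for the concentration statement, a separate energy estimate applied to the region $\{u>0\}$ inside $\D^-(\scri)\cap\D^-(\hori)\cap\D^+(\Sigma_0)$ gives the result: in that region both the incoming datum on $\scri$ and the datum on $\hori$ vanish, so the only input is the flux through $C_0$ coming from inside $\mathcal{N}_1$, which by the hierarchy carries an extra power of $\delta^{1/2}$ compared with the total energy, so only a $\delta$-small proportion of the wave escapes the strip. I expect the main obstacle to be reconciling the red-shift estimate with the anisotropic short pulse hierarchy near $\hori$: the red-shift naturally controls a single aggregate energy and is blind to the $\delta$-splitting, so a careful decomposition of a horizon neighborhood, combined with transitional bounds in the overlap region and propagated uniformly through all orders of commutation, will be where the bookkeeping is most delicate.
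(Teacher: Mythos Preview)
Your overall scaffolding---approximate $\scri$ by finite cones, propagate the short-pulse hierarchy via $\delta$-weighted energy estimates exploiting the null condition, pass to the limit---matches the paper's. But your expectation of where the difficulty lies is off, and there is a genuine region you have not addressed.

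First, the red-shift proposal is a red herring for \emph{this} theorem. In $\D^-(\scri)\cap\D^-(\hori)\cap\D^+(\Sigma_0)$ the strip $\mathcal{N}_1=\{-\delta\le u\le 0\}$ satisfies $r^\ast=t-2u\ge 0$, hence $r\ge 3m$; likewise $\{u<-\delta\}$ has $r^\ast>2\delta$. So the entire large-data analysis and the small-data exterior never approach $\hori$. The only part of the domain touching $\hori$ is $\{u>0\}$, where both the $\scri$ datum and the $\hori$ datum vanish; in each finite approximant the (well-posed) backward characteristic problem then forces $\varphi\equiv 0$ there, and no multiplier is needed. Moreover, the red-shift gives the \emph{wrong} sign when integrating backward from $\hori$ (blue-shift), so it would not have helped anyway. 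The delicate coupling of red-shift with the $\delta$-hierarchy that you anticipate is real, but it belongs to Theorem~\ref{Thm-froward-whole-region} (the forward Cauchy problem into $\hori$), not to the scattering theorem.

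Second, you have not treated the region $\{u<-\delta\}$. This is \emph{not} handled by the short-pulse hierarchy: one first shows that on the last cone $C_{-\delta}$ the solution has become genuinely small (order $\delta^{1/2}$ without anisotropy), and then one needs a separate small-data global result in $\{u<-\delta\}\cap\D^+(\Sigma_0)$. The paper makes a point of this: the conformal multiplier $K=u^2\Lb+\ub^2 L$ has the wrong sign near spatial infinity for the backward direction, so Luk's small-data theorem does not apply, and one instead uses the local-energy/Morawetz framework of Metcalfe--Tataru and Lindblad--Tohaneanu (which, conveniently, does not even require the null structure). Without this step your construction does not close in $\{u<-\delta\}$.

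In short: drop the red-shift from your plan for Theorem~\ref{Thm-scattering-region}, observe that the near-horizon region carries the zero solution, and add a separate small-data argument for $\{u<-\delta\}$ using local energy decay rather than $K$.
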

\begin{remark}\label{rk-uniqueness}
The statement above does not assert the uniqueness of the global solution in $ \D^{-}(\mathcal{I}^{+}) \cap \D^-(\hori) \cap \D^{+}(\Sigma_{0})$ with prescribed scattering data at $\hori \cup \mathcal{I}^+$. This had been explained earlier in \cite[Section 1.3.4]{D-H-D-scattering}.

As a remark, the uniqueness for a solution of the scattering problem in the Minkowski spacetime is understood in a class of solutions whose asymptotic behaviour resembles the linear wave \cite[Main Theorem 2]{Wang-Yu-16}, namely,
$$|\varphi| = O(1/t ), \quad |L \varphi| = O(1/t), \quad |\Db \varphi| = O(1/t^{ \frac{3}{2}}).$$
However, this does not hold true in the Schwarzschild spacetime, for the decay of linear wave is generically not strong enough in these asymptotically flat black hole spacetimes \cite{D-R-09, L-10, Luk-12-linear}. Practically, we only prove $|D\varphi| = O(1/t)$ in the ``small data'' region $\D^-(C_{-\delta}) \cap \D^+(\Sigma_0)$, see Section \ref{sec-global-region-III-1}.
\end{remark}
\begin{figure}
\centering
\includegraphics[width=3.5in]{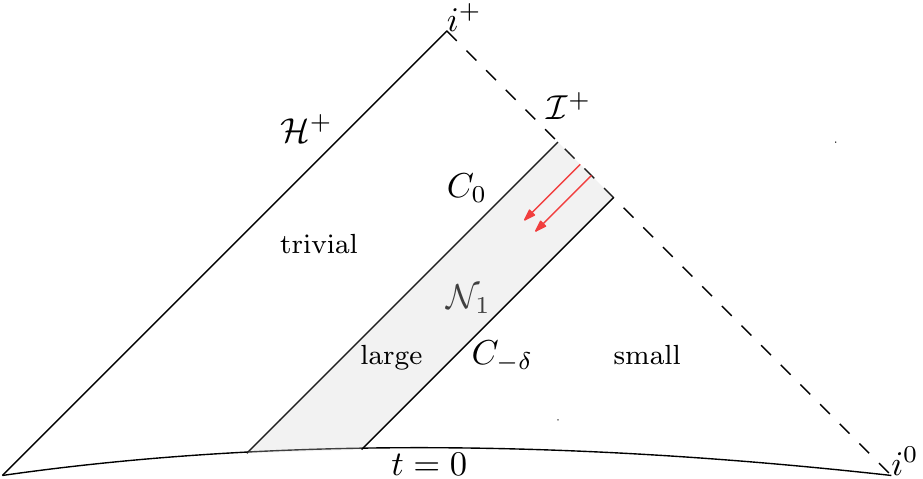}
\caption{Scattering Theorem}
\label{fig:Scattering}
\end{figure}

The global Cauchy development for the semilinear wave equation with large data is stated as follows. 
\begin{theorem}[Cauchy development]\label{Thm-froward-whole-region}
Consider the Cauchy problem for the semilinear equation \eqref{Main Equation} with  $Q$ satisfying the null condition, where the Cauchy data are given by $\left( \varphi|_{\Sigma_{1}}, \partial_t\varphi|_{\Sigma_{1}} \right) = (\psi_0, \psi_1)$. Fix an integer $N \in \mathbb{N}$, $N \geq 30$.
If $\delta$ is small enough, there is an initial data set $(\psi_0, \psi_1)$ verifying
\begin{align*}
E_k(\psi_0, \psi_1) \sim \delta^{-k+1}, \quad 1 \leq k\leq N,
\end{align*}
where $ E^2_k(\psi_0, \psi_1)= \int_{\Sigma_{1}} ( |D^k \psi_0|^2 + |D^{k-1} \psi_1|^2) \di x^3$,
so that a unique and global solution $\varphi$ exists in $\D^{+}(\Sigma_{1}) \cap \D^-(\hori) \cap \D^-(\mathcal{I}^+)$ (see Figure \ref{fig:Cauchy-development}). Moreover, the wave profile is mostly transmitted along the null strip $\mathcal{N}_2 := \D^{-}(\mathcal{I}^{+}) \cap \D^-(\hori) \cap \D^{+}(\Sigma_{1}) \cap \{0 \leq \ub \leq \delta\}$ to the future event horizon $\hori$, whereas little is propagated to the future null infinity $\mathcal{I}^+$. 
\end{theorem}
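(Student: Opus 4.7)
The plan is to construct the Cauchy data in Theorem \ref{Thm-froward-whole-region} by running the short-pulse machinery of Theorem \ref{Thm-scattering-region} ``in reverse'' and then extending the resulting solution globally to the future. Concretely, I would first pose a characteristic initial value problem on a bifurcate null pair $C_{u_\ast}\cup \Cb_0$ lying to the past of $\Sigma_1$, with $u_\ast$ chosen so that $\Sigma_1$ lies in the future Cauchy development of the pair. On $C_{u_\ast}$ I prescribe
\begin{equation*}
\ub\,\varphi\big|_{C_{u_\ast}} = \delta^{\frac{1}{2}}\,\psi_0\!\left(\tfrac{\ub}{\delta},\theta,\phi\right)\mathbf{1}_{[0,\delta]}(\ub), \qquad \varphi\big|_{\Cb_0}\equiv 0,
\end{equation*}
and solve forward in $\ub$ by the short-pulse hierarchy. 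This is precisely the setup of Theorem \ref{Thm-scattering-region} after the formal interchange $u\leftrightarrow\ub$, so the same bootstrap argument produces a smooth solution in the slab $\{u\ge u_\ast\}\cap\{0\le\ub\le\delta\}$, together with the expected short-pulse bounds $|L^a Y^b \Omega^c \varphi|\lesssim \delta^{\frac{1}{2}-a} r^{-1}$.

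Next, I would restrict this solution to $\Sigma_1$ (using the ``small data'' analysis of Section~\ref{sec-global-region-III-1} for the portion of $\Sigma_1$ with $\ub<0$ or $\ub>\delta$) to read off the Cauchy data $(\psi_0,\psi_1)$. The short-pulse ansatz gives $|D^k\varphi|\sim \delta^{\frac{1}{2}-k}$ on $C_{u_\ast}$ inside the pulse, and this profile is carried across the thin slab $\mathcal{N}_2$ with only controlled loss by the transport structure of \eqref{Main Equation}; integrating $|D^k\varphi|^2$ over $\Sigma_1\cap\{0\le\ub\le\delta\}$ then yields $E_k\sim \delta^{\frac{1}{2}-k}\cdot \delta^{\frac{1}{2}}=\delta^{1-k}$, as claimed.

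With the initial data in hand, the forward Cauchy development in $\D^+(\Sigma_1)\cap \D^-(\hori)\cap \D^-(\scri)$ is obtained by a bootstrap/continuity argument that splits the region into two pieces: the ingoing short-pulse strip $\mathcal{N}_2=\{0\le\ub\le\delta\}$ and its complement. Inside $\mathcal{N}_2$ the same short-pulse hierarchy applies, now with the pulse travelling inward along $\Lb$ towards $\hori$; the red-shift effect at the horizon, together with the ``good'' derivative $\Db$ furnished by the null condition in $Q$, ensures that the top-order energies do not degenerate as $\ub$ grows, so that a large flux (of size $\sim 1$) is deposited on $\hori$. In the complement $\{\ub>\delta\}\cup\{\ub<0\}$ the solution is small in suitably weighted norms, and one closes the estimate by combining an $r^p$-weighted energy inequality in the spirit of Dafermos--Rodnianski with the redshift multiplier near $\hori$, which simultaneously propagates smallness up to $\scri$ and yields uniqueness via the usual energy identity.

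The main obstacle, in my view, will be closing the global energy estimate in the ``small-data'' wedge between $\Cb_\delta$ and $\scri$: linear decay on Schwarzschild is only $O(1/t)$ rather than the Minkowskian $O(1/t^{3/2})$, so the standard Klainerman--Sobolev treatment of a quadratic nonlinearity fails. The remedy is to exploit the null form of $Q$, which always spares one ``good'' derivative $\Db$ on one of the factors, together with the $r^p$-weighted flux to absorb the $Y$ and angular derivatives, plus the smallness in $\delta$ of the trace of the pulse solution on $\Cb_\delta$. Once this ingredient is in place, global existence, uniqueness, and the asymptotic assertion (most energy to $\hori$, negligible energy to $\scri$) all follow from the resulting a priori estimates in a standard way.
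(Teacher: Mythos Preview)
Your proposal has genuine gaps that would prevent it from closing.

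First, the claim that the analysis in the ingoing strip $\mathcal{N}_2$ (the paper's $\R_2$) is ``the same short-pulse hierarchy'' as in the outgoing region is incorrect, and this is where the bulk of the paper's work lies. In $\R_1$ (where $r\sim|u|\to\infty$) the $\R_1$-multiplier $\xi_1=\eta L+\delta^{-1}\Lb$ suffices because the dispersive $r$-weights close the estimates. In $\R_2$, however, $r$ is bounded and $u\to+\infty$ approaches the horizon; the $\R_1$ scheme gives no decay in $u$ and no mechanism to prevent degeneration at $\hori$. The paper introduces a \emph{new} degenerate multiplier $\xi_2=\eta L+\delta^{-1}(1+\mu)\Lb$ whose key feature is a coercive bulk term $\iint(\delta^{-1}|\Lb\psi|^2+\delta^{-1}|\nablaslash\psi|^2+|L\psi|^2)\eta\,d\mu_{\D}$; this bulk, combined with a pigeon-hole argument, yields the ``fake'' degenerate decay $E^{deg}(u)\lesssim|u|^{-2\beta}$. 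Only after this is in place does the red-shift multiplier $\xi_3$ upgrade the degenerate bound to a non-degenerate one near $\hori$. Your one-line invocation of ``the red-shift effect \ldots ensures that the top-order energies do not degenerate'' skips the degenerate step entirely, without which there is nothing for the red-shift argument to feed on.

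Second, your plan to read off $(\psi_0,\psi_1)$ on $\Sigma_1\cap\{\ub>\delta\}$ from the Region~III solution will not produce data compatible with a small-data theorem in Region~IV. The paper notes explicitly that the Region~III solution decays only like $|D\varphi|=O(1/t)$, which is too slow to meet the weighted hypotheses needed for the forward problem (in the paper, Luk's theorem). The paper instead Whitney-extends the restriction $(\psi_0^{\text{int}},\psi_1^{\text{int}})$ from $\Sigma_1\cap\{\ub\le\delta\}$ to compactly supported data on all of $\Sigma_1$; this is what allows the small-data machinery (Luk's theorem, or an $r^p$ scheme if you prefer) to apply in Region~IV. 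Relatedly, your data prescription $\ub\,\varphi|_{C_{u_\ast}}=\delta^{1/2}\psi_0(\ub/\delta,\cdot)$ is malformed: on a finite cone $C_{u_\ast}$ one has $\ub\in[0,\delta]$, so dividing by $\ub$ makes the data singular; the correct normalization is $\varphi|_{C_{u_\ast}}=\delta^{1/2}|u_\ast|^{-1}\psi_0(\ub/\delta,\cdot)$. Finally, the appeal to a ``formal interchange $u\leftrightarrow\ub$'' is not a symmetry of the Schwarzschild exterior; the relevant symmetry is time reversal $t\mapsto -t$, and it connects Theorem~\ref{Thm-scattering-region} to the semi-global Theorem~\ref{theorem-whole-R123}, not directly to the Cauchy statement.
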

\begin{remark}\label{rk-Cauchy-theorem}
Theorem \ref{Thm-froward-whole-region} should be fundamentally distinguished from the cases in \cite{Miao-Pei-Yu, Wang-Yu-16}, where most of the wave profile 
disperses to the null infinity $\mathcal{I}^+$. Essentially, the wave  in Theorem \ref{Thm-froward-whole-region} is travelling without decay in $\mathcal{N}_2$ and the energies transmitted to the event horizon $\hori$ are large. This can be read off from the $L^\infty$ estimate in Region $\R_2$ in Theorem \ref{theorem-whole-R123}.
\end{remark}
\begin{figure}
\centering
\includegraphics[width=3.5in]{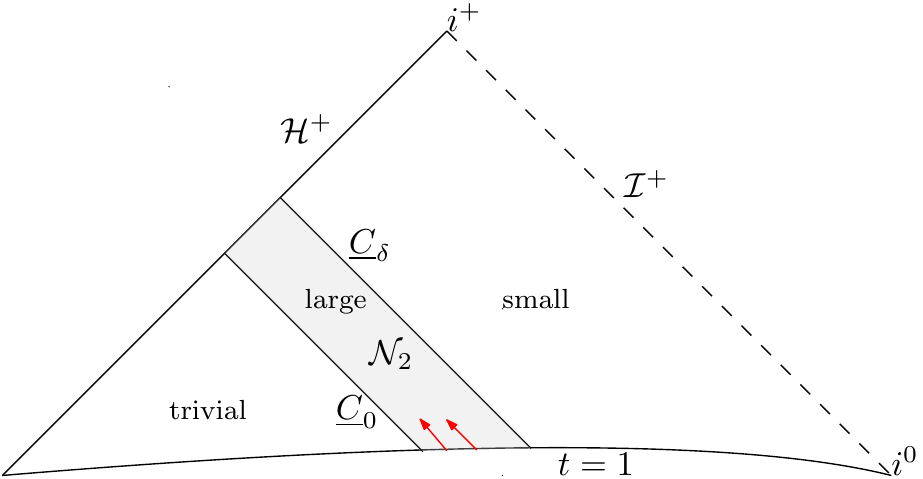}
\caption{Cauchy development}
\label{fig:Cauchy-development}
\end{figure}

\subsection{Outline of the proof}\label{sec-outline-proof}
The main body of this paper is devoted to proving the following semi-global statement.
Define
\begin{equation}\label{def-psi-0-past}
\varphi_{-\infty} (\ub, \theta, \phi)=
\begin{cases}
0, &\text{if} \quad \ub < 0 \,\, \text{or} \,\, \ub > \delta, \\
\delta^{\frac{1}{2}} \psi_0 \left(\ub / \delta,  \theta, \phi \right), &\text{if} \quad 0 \leq \ub \leq \delta,
\end{cases}
\end{equation} 
where $\psi_0: [0, 1] \times S^2 \rightarrow \mathbb{R}$ is a smooth, compactly supported function defined on $\mathcal{I}^-$.   Let $r_{NH}$ be close to $2m$, satisfying $2m<r_{NH} <1.2 r_{NH} <3m$.
\begin{theorem}\label{theorem-whole-R123}
Consider on the Schwarzschild background the semilinear wave equation \eqref{Main Equation} with $Q$ satisfying the null condition and with the asymptotic characteristic data 
\begin{align*}
|u|  \varphi(u, \ub, \theta, \phi)\big|_{\mathcal{I}^-} & = \varphi_{-\infty} (\ub, \theta, \phi), \quad  \varphi(u, \ub, \theta, \phi)\big|_{\mathcal{H}^-} \equiv 0,
\end{align*}
where $\varphi_{-\infty} \in C^\infty ( \mathcal{I}^-)$ is defined in  \eqref{def-psi-0-past}. There exists a constant $\delta_0$ such that if $\delta < \delta_0$, \eqref{Main Equation} has a global solution $\varphi$ in the null strip $\R_1 \cup \R_2 := \D^{+}(\mathcal{I}^{-}) \cap \D^-(\hori) \cap \{0 \leq \ub \leq \delta \}$, with $\R_1:=\D^-(\Sigma_1) \cap \D^+(\mathcal{I^-}) \cap  \{0\leq \ub \leq \delta\}$ and $\R_2:=\D^+(\Sigma_1) \cap \D^-(\hori)  \cap \{0\leq \ub \leq \delta\}$ (see Figure \ref{fig:global}). 

In particular, fix any integer $N \in \mathbb{N}$, $N \geq 30$,
 the solution $\varphi$ obeys the following estimates: for $k+l+j \leq N-2$,
 \begin{align*}
 |L^{1+k} \Lb^l \Omega^j \varphi| \lesssim{}& \delta^{-\frac{1}{2}-k} |t|^{-1}, \quad  |\bar D L^{k} \Lb^l \Omega^j  \varphi| \lesssim{}\delta^{\frac{1}{4}-k} |t|^{-\frac{3}{2}}, & \text{in} \, \, \R_1,\\
   |L^{1+k} Y^l \Omega^j\varphi| \lesssim{}& \delta^{-\frac{1}{2}-k}, \quad\quad\quad |\bar D L^k Y^l \Omega^j \varphi| \lesssim{} \delta^{\frac{1}{4}-k}, & \text{in} \,\,  \R_2.
\end{align*} 
And the energies of the solution $\varphi$ are of the size $\delta^{\frac{1}{2}}$ on the last cone $\Cb_\delta$.
\end{theorem}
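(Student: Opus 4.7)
My strategy is to split $\R_1\cup\R_2$ at $\Sigma_1$ and run two successive, compatible bootstrap arguments, since the geometries at infinity and at the horizon demand genuinely different ingredients. Both regions share a common backbone: an anisotropic \emph{short-pulse hierarchy}. Since $\varphi_{-\infty}(\ub/\delta)$ has size $\delta^{\frac{1}{2}}$ and is supported in a $\ub$-interval of length $\delta$, each $L=\p_{\ub}$ derivative is expected to cost a factor $\delta^{-1}$, while each $\Db$-derivative is ``cheap'' and gains a factor $\delta^{\frac{3}{4}}$ relative to a bad-direction one. I bootstrap precisely the pointwise bounds stated in the theorem, together with matching $L^2$-energies on $C_u$, $\Cb_\ub$ and $\Sigma_t$. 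The null condition of Definition \ref{def-null-condition} plays its standard role: when $L^{1+k}\Lb^l\Omega^j Q$ is Leibniz-expanded, every quadratic term pairs a $D$-factor with a $\Db$-factor, so two top-order $L$-derivatives never multiply each other and each error carries enough good-direction weight to be absorbed by the $\delta$-smallness.

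\textbf{Region $\R_1$.} Here the Schwarzschild geometry is Minkowskian to leading order, so I run $r^p$-weighted multipliers in the spirit of Dafermos--Rodnianski, combined with commutators $\{L,\Lb,\Omega\}$. Writing $\Phi$ for a commuted version of $\varphi$, the basic energy inequality is schematically
\begin{equation*}
\int_{\Cb_\ub} r^p |L\Phi|^2 + \int_{C_u}\bigl(|\Lb\Phi|^2 + |\nablaslash\Phi|^2\bigr) \lesssim I_{\mathcal{I}^-} + \iint \bigl|[\Box_g,\cdot]\varphi + L^{1+k}\Lb^l\Omega^j Q\bigr|\, r^p |L\Phi|,
\end{equation*}
where $I_{\mathcal{I}^-}$ is the datum contribution. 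The Schwarzschild corrections to the flat d'Alembertian enter only as lower-order terms compatible with the $r^p$-scheme. Sobolev on $S_{\ub,u}$ and Gronwall in $u$ promote the resulting $L^2$-bounds to the announced pointwise estimates in $\R_1$; the outgoing $|t|^{-1}$ decay reflects the $p=2$ choice.

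\textbf{Region $\R_2$.} Taking the $\Sigma_1$ trace as initial data, I propagate into $\R_2$ in Eddington--Finkelstein coordinates, using the horizon-adapted algebra $\{L,Y,\Omega\}$ with $Y=\eta^{-1}\Lb$. The central multiplier is the red-shift vector field of Dafermos--Rodnianski, a combination $N=L+cY$ with $c>0$ whose deformation tensor is uniformly positive-definite in $\{r\leq r_{NH}\}$; together with a standard $T=\p_t$-energy on the complement it yields a non-degenerate identity
\begin{equation*}
\int_{\Cb_\ub}|L\Phi|^2 + \int_{C_u\cap\{r\leq r_{NH}\}} \bigl(|Y\Phi|^2 + |\nablaslash\Phi|^2 + |\Phi|^2\bigr) \lesssim (\text{data on }\Sigma_1) + \text{error}.
\end{equation*}
Because $\ub$ runs through an interval of length $\delta$, Gronwall in $\ub$ supplies the smallness that used to come from $|t|$-decay in $\R_1$, which is why the bounds in $\R_2$ remain bounded in $t$ rather than growing.

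\textbf{Principal difficulty.} The hardest point is the compatibility of the short-pulse hierarchy with red-shift commutation: commuting $N$ through $L^{1+k}Y^l$ produces coefficients $\p_r^j\eta=O(1)$ at the horizon that interact with the $\delta^{-k}$ weights of the bad-direction commutators, and one must verify that the null condition, read in the basis $\{L,Y,\nablaslash\}$, still kills the worst cross-terms so that no positive power of $\delta^{-1}$ leaks onto a good-direction quantity. A closely related subtlety is the matching on $\Sigma_1$: transferring the $\R_1$-bound $|\Db\varphi|\lesssim\delta^{\frac{1}{4}}|t|^{-\frac{3}{2}}$ into the $\R_2$-hierarchy $|\Db\varphi|\lesssim\delta^{\frac{1}{4}}$ is immediate for $\Lb$ on the finite slice but, for $Y=\eta^{-1}\Lb$, requires the uniform boundedness of $\eta^{-1}$ on $\Sigma_1\cap\R_2$, ensured by $2m<r_{NH}<3m$.
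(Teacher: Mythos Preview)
Your $\R_2$ plan has a genuine gap. You propose red-shift near the horizon together with a $T=\p_t$ energy on the complement, closed by ``Gronwall in $\ub$''. But $T$ is Killing, so it contributes no positive bulk; and neither $T$ nor your $N=L+cY$ carries the $\delta^{-1}$ weight on the $Y$-component that the short-pulse hierarchy forces. More importantly, the nonlinear errors in $\R_2$ are spacetime integrals over $u\in[1,+\infty)$, not just over $\ub\in[0,\delta]$: Gronwall in $\ub$ does nothing for a term such as $\iint_{\D^h}|L\varphi_p|^2|\nablaslash\varphi_q|^2\eta\,\di\mu_{\D}$, whose $u$-integrability is the real issue. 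The red-shift bulk is coercive only in $\{r\leq r_{NH}\}$, and the exterior currents that arise when you cut it off (the paper's ${}^e\mathcal{C},{}^e\mathcal{F}$ in \eqref{def-current-hori-e}--\eqref{def-error-hori-e}) live in a region where your scheme provides no spacetime control at all.

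The paper's route through $\R_2$ is a mandatory two-step procedure. First, on all of $\R_2$, it uses the anisotropic, horizon-degenerate multiplier $\xi_2=\eta L+\delta^{-1}(1+\mu)\Lb$; the choice $f_2=\delta^{-1}(1+\mu)$ is precisely what makes all three bulk contributions in \eqref{current of fL} positive (Section~\ref{sec-deg-multiplier-II}), yielding $E^{deg}(u_2)+\int_{u_1}^{u_2}E^{deg}(u)\,\di u\lesssim E^{deg}(u_1)+\delta^{1/2}|u_1|^{-2\beta}$. A pigeon-hole argument (Lemma~\ref{lema-pigeonhole-1}) then converts this into the ``fake'' decay $E^{deg}(u)\lesssim|u|^{-2\beta}$ of \eqref{intr-decay-deg}. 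Only after this does the red-shift vector \eqref{def-VF-N}---itself carrying the $\delta^{-1}$ weight---enter, with every exterior error term fed by the already established degenerate spacetime bound (see Remark~\ref{rk-thm-deg-low}). Your outline skips the first step and therefore cannot close the second. As a secondary matter, your displayed fluxes are on the wrong cones in both regions: in $\R_1$ it is $C_u$ that carries $|L\Phi|^2+\delta^{-1}|\nablaslash\Phi|^2$ and $\Cb_\ub$ that carries $|\nablaslash\Phi|^2+\delta^{-1}|\Lb\Phi|^2$ (compare \eqref{def-energy-E-R1}--\eqref{def-energy-Eb-R1}), and likewise in $\R_2$ the $C_u$-flux controls $L\Phi,\nablaslash\Phi$ while $\Cb_\ub$ controls $Y\Phi,\nablaslash\Phi$ (see \eqref{def-energy-Cu}--\eqref{def-energy-Cub}); your $r^p$ scheme in $\R_1$, besides being a different mechanism from the paper's direct multiplier $\xi_1=\eta L+\delta^{-1}\Lb$, inherits this confusion.
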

\begin{remark}\label{rk-R1-R2}
Compared to \cite{Christodoulou-09} and \cite{Wang-Yu-16}, where the main estimates are merely obtained in the past region $\R_1$, our results are able to cover both of the past region $\R_1$ and the future region $\R_2$. Basically, the decaying mechanics in $\R_1$ and $\R_2$ are completely different. 
\end{remark}
\begin{figure}
\centering
\includegraphics[width=3.0in]{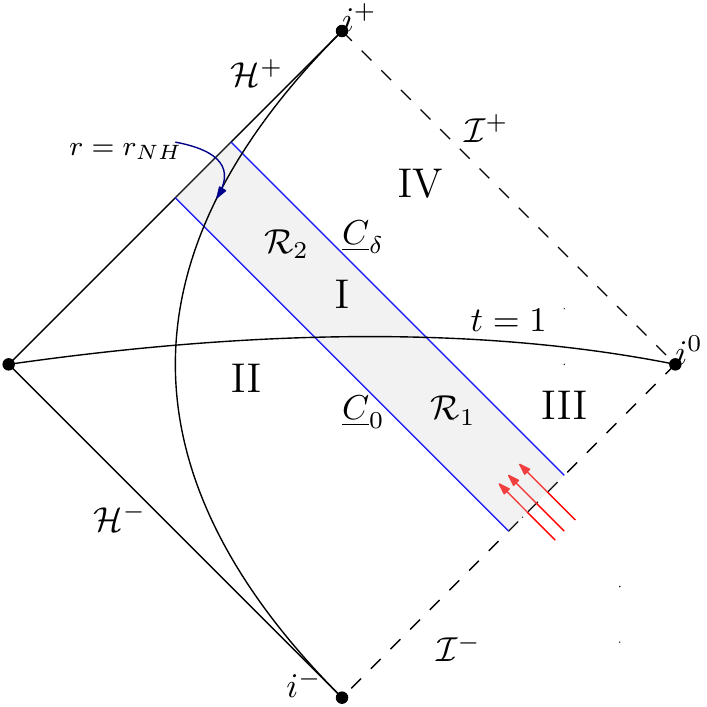}
\caption{Semi-global existence with large data}
\label{fig:global}
\end{figure}

The proof of Theorem \ref{theorem-whole-R123} is indicated in the sections \ref{sec-past} and \ref{sec-future}.
Additionally, the result of Theorem \ref{theorem-whole-R123} entails the scattering theorem \ref{Thm-scattering-region} and the Cauchy development theorem \ref{Thm-froward-whole-region}. We will give an overview for the proof in what follows.

The exterior region is divided into four parts: I $\,:=\R_1 \cup \R_2$, II $\,:=\D^+(\mathcal{I^-}) \cap \D^+(\mathcal{H}^-) \cap \D^-(\Cb_0)$ and III $\,:=\D^+(\mathcal{I^-}) \cap \D^+(\mathcal{H}^-) \cap \D^+(\Cb_\delta) \cap \D^-(\Sigma_1)$, IV $\,:=\D^-(\mathcal{I^+}) \cap \D^-(\mathcal{H}^+) \cap \D^+(\Cb_\delta) \cap \D^+(\Sigma_1)$ (see Figure \ref{fig:global}). Our main estimates are conducted in Region I (Theorem \ref{theorem-whole-R123}), where the energy norms of the solution are large, while in Region II, the solution is extended by zero.

To establish the energy argument in Region I, we employ the following multipliers:
\begin{align*}
\xi_1={}& \eta L +  \delta^{-1} \Lb, & \text{in} & \quad \R_1,\\
\xi_2={}&\eta L +  \delta^{-1} (1+\mu) \Lb,  &\text{in} &\quad \R_2,\\
\xi_3={}&\left( 1+y_2(r^\ast) \right) L + \delta^{-1}  y_1 (r^\ast)  Y,  &\text{in} &\quad \R_2\cap \{r < r_{NH}\}.
\end{align*}
The energy estimate in $\R_1$ is the Schwarzschild analogue of that in \cite{Christodoulou-09, Wang-Yu-16}, and it also provides an simplification for the proof of \cite{Wang-Yu-16}.
We remark that the decay rate in $\R_1$ is already determined by the asymptotic characteristic data (or radiation field on the past null infinity). However, in $\R_2$, one of the main difficulties is to figure out the quantitative decay rates for both of the degenerate and non-degenerate energies.
To begin with, we prove the following degenerate decay estimate in $\R_2$ by means of the multiplier $\xi_2$ which degenerates at the horizon $\hori$: for any $\beta \geq \frac{1}{2}$, $k+l+j \leq N-2$,
\begin{equation}\label{intr-decay-deg}
|\eta^{\frac{1}{2}} L^{1+k} \Lb^l \Omega^j\varphi| \lesssim \delta^{-\frac{1}{2}-k} |u|^{-\beta} ,  \,\, |\eta^{\frac{1}{2}}\bar D L^k \Lb^l \Omega^j\varphi| \lesssim{} \delta^{\frac{1}{4}-k} |u|^{-\beta}, \quad \text{in} \,\, \R_2.
\end{equation}
The idea beyond the choice of $\xi_2$ in $\R_2$ lies in the facts that, upon using $\xi_2$, there is a positive (i.e., with a favourable sign) contribution from the spacetime integral $$\doubleint  \left( \delta^{-1} | \Lb \varphi|^2 +\delta^{-1} |\nablaslash \varphi|^2 + |L \varphi|^2 \right) \eta r^2 \di u \di \ub \di \sigma_{S^2}$$ in the energy estimate, noting that $r$ is always finite in $\R_2$, see Section \ref{sec-deg-multiplier-II}. With this positive spacetime integral, we get an energy inequality taking the form of $$E^{deg}(u) + \int_{u_1}^{u} E^{deg}(u^\prime) \di u^\prime \lesssim E^{deg}(u_1) + \delta^{\frac{1}{2}} |u_1|^{-2\beta}, \quad \forall \, u_1 <u, \, \beta \geq \frac{1}{2},$$ where $E^{deg}(u) = \int_{C_u \cap \R_2} \left(\delta^{-1} |\nablaslash \varphi|^2 + |L \varphi|^2 \right) \eta r^2 \di \ub \di \sigma_{S^2}$ denotes the degenerate energy. Then a pigeon-hole argument can be applied to achieve the energy decay $E^{deg}(u) \lesssim |u|^{2\beta}$. On the other, the energy involving the transversal derivative $Y$, ${}^tF^{deg}(u) = \int_{C_u \cap \R_2}  |Y \varphi|^2 \eta r^2 \di \ub \di \sigma_{S^2}$, can be retrieved by integrating along $L$ and making use of the wave equation. As it stands, \eqref{intr-decay-deg} can be regarded as ``fake'' decay estimate  (recalling that $\eta = 1-\mu$ and $\eta \sim \exp(-u)$ in $\R_2$), and it degenerates at $\hori$. Nevertheless, based on \eqref{intr-decay-deg} (and the associated degenerate energy estimate), the non-degenerate energy estimate will be inferred with the help of $\xi_3$, which is actually the red-shift vector field \cite{D-R-09}, and is now well adapted to the sizes of the profiles $L\varphi$ and $Y\varphi$, see Section  \ref{sec-multiplier-horizon}. 

With more efforts, we can show that the solution in Region I is small on the last incoming cone $\Cb_{\delta}$, as proved in the sections \ref{sec-small-Cb-1} and \ref{sec-smallness-Cb-2}.

To accomplish the scattering theorem, it remains to prove the global existence in Regions III, which is reduced to be a small data problem. This is carried out by introducing some ideas in \cite{MMDT-strich-sch-10, L-T-quasilinear-sch-18} (without using the conformal multiplier  $K=u^2 \Lb + \ub^2 L$, for $K$ does not have a favourable sign near the spatial infinity in Region III and hence is not allowed when considering the scattering problem), see Section \ref{sec-global-region-III-1}. In the end, we reverse the time $t$ to conclude the scattering statement. 
When it comes to the Cauchy development, we are left with the global existence in Region IV, with data imposed on $(\Cb_\delta \cap \{t \geq 1\}) \cup (\Sigma_1\cap \{\ub > \delta\})$, for which we shall apply the small data theorem for the Cauchy problem in \cite{Luk-15-nonlinear}, see Section \ref{sec-small-data-Cauchy}. 

 The paper is organized as follows. In Section \ref{Section Preliminaries}, we introduce several notations and the energy estimates scheme in the Schwarzschild spacetime. In Section \ref{sec-past}, we show the global existence with scattering data at the past event horizon and the past null infinity. In Section \ref{sec-future}, the global existence for the Cauchy problem is stated. More background knowledge is collected in the appendix.

{\bf Acknowledgement}: The authors would like to thank Prof. Mihalis Dafermos for helpful comments on the issue of uniqueness for the scattering problem, and professors Boling Guo and Xiao Zhang, and the Academy of Mathematics and Systems Science, Chinese Academy of Science (AMSS, CAS), the Institute of Applied Physics and Computational Mathematics (IAPCM), for great support and hospitality during the preparation of this work. S.H. is also deeply indebted to her advisor---Prof. Dexing Kong for guidance. 
J.W. is grateful to Prof. Jianwen Zhang for great help and Siyuan Ma for enlightening discussions as well. And J.W. is supported by NSFC (Grant No. 11701482), 
and NSF of Fujian Province (Grant No. 2018J05010).

\section{Preliminaries}\label{Section Preliminaries}

\subsection{Notations}\label{sec-notation}
We clarify the measures: $\di \mu_{\D} =  r^2 \di u \di \ub \di \sigma_{S^2}$, 
$\di \mu_{C_u} = r^2 \di \ub \di \sigma_{S^2}$, $\di \mu_{\Cb_{\ub}} = r^2 \di u \di \sigma_{S^2},$ $\di \mu_{\Cb_{\ub}^{ND}} = r^2 \eta \di u  \di \sigma_{S^2}$. Here $\di \mu_{\Cb_{\ub}^{ND}}$ means the non-degenerate volume form on $\Cb_{\ub}$. In $(r, \ub)$ coordinate, $\di \mu_{\Cb_{\ub}^{ND}}= -  r^2  \di r \di \sigma_{S^2}$. And the spacetime volume form takes the form of $\eta \di \mu_{\D}$.
We denote $\|\cdot \|_{L^2(C_u)}, \| \cdot \|_{L^2(\Cb_{\ub})}$ and $\|\cdot \|_{L^2(\Cb_{\ub}^{ND})}$ the $L^2$ norm with the corresponding volume form respectively.

Define the following truncated cone: $C_{u_1}^{[\ub_1, \ub_2]}:= \{u_1\} \times {[\ub_1, \ub_2]} \times S^2$, $\Cb_{\ub_1}^{[u_1, u_2]}:= {[u_1, u_2]} \times \{\ub_1\} \times S^2$. The spacetime domain bounded by $C_{u_1}^{[\ub_1, \ub_2]}, C_{u_2}^{[\ub_1, \ub_2]}$ and  $\Cb_{\ub_1}^{[u_1, u_2]}, \Cb_{\ub_2}^{[u_1, u_2]}$ is denoted by $\D_{u_1, u_2}^{\ub_1, \ub_2}$.

Define the degenerate and non-degenerate null vector fields: $W \in \{L, \Lb\}$, $Z \in \{L, Y\}$. We shall introduce the following simplifications: $W^n_{p,q} := L^p \Lb^q,\, p+q =n$,  $\bar W^i_{p,q} \in \{L^k \Lb^l | k+l=i, k \leq p, l \leq q\}$ and $Z^n_{p,q} := L^p Y^q,\, p+q =n$, $\bar Z^i_{p,q} \in \{L^k Y^l | k+l=i, k \leq p, l \leq q\}$.

We use the notation $C, \, c$ to denote positive numerical constants that are free to vary from line to line. We allow $C, c$ to depend on 
the amount of Sobolev regularity that we assume on the initial data, but we always choose these constants so that they are independent of the solution.

We always use the notation $\langle x \rangle = \sqrt{1+x^2}$.

Throughout this paper, we set
\begin{equation}\label{def-varphi-k}
\varphi_i = \Omega^i \varphi, \quad |\varphi_k|^2=\sum_{i\leq k} |\Omega^i \varphi|^2.
\end{equation}

\subsection{Energy estimates scheme}\label{Energy estimates scheme}
We would like to briefly review the vector field method. In the case of wave equation on the Schwarzschild background, the energy momentum tensor associated to the wave equation for $\psi$ is defined to be
\begin{equation}\label{energy-tensor-1}
\mathcal{T}_{\alpha\beta}(\psi)=D_\alpha \psi D_\beta \psi-\frac{1}{2} g_{\alpha\beta} D^\gamma \psi D_\gamma \psi,
\end{equation}
where $D$ denotes the covariant derivative corresponding to the spacetime metric $g$.
We note that $\mathcal{T}_{\alpha\beta}$ is symmetric and there is the divergence identity for the energy-momentum tensor,
\begin{equation}\label{divergence of T}
 D^\alpha  \mathcal{T}_{\alpha \beta}(\psi) =\Box_g \psi \cdot D_\beta \psi.
\end{equation}
Given a vector field $\xi$, which is usually called a \emph{multiplier vector field}, the associated energy currents are defined as follows
\begin{align*}
P_{\alpha}^\xi(\psi) = \mathcal{T}_{\alpha \beta}(\psi) \cdot \xi^\beta, \quad K^\xi(\psi)=\mathcal{T}^{\mu\nu}(\psi)\, ^{\xi}\pi_{\mu \nu},
\end{align*}
where $^{\xi}\pi_{\mu \nu}$ is the deformation tensor defined by
\begin{equation*}
  ^{\xi}\pi_{\mu \nu} = \frac{1}{2} \mathcal{L}_\xi g_{\mu \nu} = \frac{1}{2}(D_\mu \xi_{\nu} + D_\nu \xi_{\mu}).
\end{equation*}
Due to \eqref{divergence of T}, we have
\begin{equation}\label{divergence of P}
D^\alpha P^{\xi}_{\alpha}(\psi)=  K^\xi(\psi) + \Box_g \psi \cdot \xi \psi.
\end{equation}
Integrating \eqref{divergence of P} on the spacetime domain, we then derive the energy identity,
\begin{equation}\label{energy-identity}
\begin{split}
& \int_{C_u^{[\ub_0, \ub]}} T_{\p_{\ub} \xi} (\psi)  r^2 \di \ub \di \sigma_{S^2} + \int_{\Cb_{\ub}^{[u_0, u]}}T_{\p_u \xi}(\psi)  r^2 \di u \di \sigma_{S^2}  \\
=& \int_{C_{u_0}^{[\ub_0, \ub]}} T_{\p_{\ub} \xi} (\psi) r^2 \di \ub \di \sigma_{S^2} + \int_{\Cb_{\ub_0}^{[u_0, u]}} T_{\p_u \xi} (\psi) r^2 \di u \di \sigma_{S^2} \\
& \quad \quad + \doubleint_{\D_{u_0, u}^{\ub_0, \ub}} \left( -2 K^\xi (\psi) -2 \Box_g \psi \cdot \xi \psi \right)  \eta r^2 \di u \di \ub \di \sigma_{S^2}.
\end{split}
\end{equation}
We also define the modified currents associated to a function $q$,
\begin{align*}
P_\alpha^\xi (\psi, q)&=P_{\alpha}^\xi(\psi)+q D_\alpha \psi \cdot \psi-\frac{1}{2} D_\alpha q \cdot \psi^2,\\
K^\xi(\psi, q)&=K^\xi(\psi) + q D^\gamma \psi D_\gamma \psi - \frac{1}{2} \Box_g q \cdot \psi^2.
\end{align*}
Then
\begin{equation*}
D^\alpha P^{\xi}_{\alpha}(\psi, q)= K^\xi(\psi, q) + \Box_g \psi \cdot (\xi \psi + q \psi).
\end{equation*}
An energy identity analogous to \eqref{energy-identity} holds true after integration by parts.

\subsection{Vector fields}\label{Vector fields}
In terms of the null frame $\{\Lb, L, e_A, A=1,2 \}$, where $\{e_A,A=1,2\}$ is an orthonormal basis on $S_{\ub ,u}$, the energy-momentum tensor \eqref{energy-tensor-1} reads $\mathcal{T}_{\ub \ub}(\psi) = |L \psi|^2, \,
 \mathcal{T}_{u\ub}(\psi) = \eta |\nablaslash \psi|^2, \, \mathcal{T}_{uu}(\psi) = |\Lb \psi|^2$.
The deformation tensor for $L$ is computed as
\begin{align*}
&  ^{L}\pi_{\ub \ub} = 0, &  ^{L}\pi_{uu} =0,\\
 &  ^L\pi_{u \ub} = - \frac{\mu \eta}{r}, & ^L\pi_{AB} = \frac{\eta}{r} \gslash_{AB},
\end{align*}
and at the same time, there is $^{\Lb}\pi_{\alpha \beta} = - ^L\pi_{\alpha \beta}.$

\subsubsection{The multiplier  vector fields}\label{sec-Multiplier-Vector fields}
We employ a class of multipliers of the following type
\begin{equation*}
X  = f_1(u,\ub)) L + f_2(u,\ub)) \Lb,
\end{equation*}
with $f_i(u,\ub), \, i=1,2$ being some functions to be determined.
The current is now calculated as
\begin{equation}\label{current of fL}
\begin{split}
 K^{X}(\psi) 
 &=\p_u f_1 g^{u\ub} |L\psi|^2 - \frac{1}{2}(\p_{\ub} f_1+\frac{\mu f_1}{r} ) |\nablaslash \psi|^2 - \frac{2\eta}{r} f_1 g^{u \ub} L\psi \Lb \psi \\
 &+ \p_{\ub} f_2 g^{u\ub} |\Lb \psi|^2 - \frac{1}{2}(\p_u f_2- \frac{\mu f_2}{r} ) |\nablaslash \psi|^2 + \frac{2\eta}{r} f_2 g^{u \ub} L\psi \Lb \psi.
\end{split}
\end{equation}

Let $K = u^2 \Lb + v^2 L$ be the conformal multiplier, and $q = \frac{(1-\mu)}{r}( v^2 -u^2) =  \frac{\eta t r^\ast}{r}$, then 
\begin{equation}\label{current-K}
K^{K}(\psi, q) = t \left(\frac{r^\ast}{r} \left(1- \frac{3m}{r} \right) - 1\right) |\nablaslash \psi|^2 - \frac{t}{2} \frac{\mu}{r^2} \left( 2 + \frac{r^\ast}{r} (4\mu -3) \right) \psi^2.
\end{equation}
If $r>2m$ is small enough, then 
$K^{K}(\psi, q) \sim - \frac{m r^\ast t}{r^2} |\nablaslash \psi|^2 -  \frac{t}{2r^2} \frac{r^\ast}{r} \psi^2$ would be non-negative ($t \geq 0$); If $r>R$ large enough, then $K^{K}(\psi, q) \sim \frac{r^\ast t}{r} |\nablaslash \psi|^2 + \frac{3t}{2} \frac{\mu}{r^2} \frac{r^\ast}{r} \psi^2$ would be non-negative ($t \geq 0$) as well.

\subsubsection{The commutators}\label{sec-commutator}
For most of the computations throughout this paper, we will need several commuting formulae. Here, we collect all of them as follows,
\begin{equation}\label{commutates}
\begin{split}
[\Omega, \nablaslash]&=0, \quad\,\, [\Lb, \nablaslash]= \frac{\eta}{r} \nablaslash, \quad\,\, [L, \nablaslash]=-\frac{\eta}{r} \nablaslash,\\
[\Box_g, \Omega]&=0, \quad\,\, [\Lb, \Omega]=0, \quad\quad\,\,\, [L, \Omega]=0, \\
 [\Lb, Y] &= \frac{\mu}{r} Y, \quad\quad\quad\quad\quad\quad\quad\,\,\,  [L, Y] =- \frac{\mu}{r} Y,
\end{split}
\end{equation}
and the commutator with the wave operator:
\begin{equation}\label{commutates-Y-L-Box}
\begin{split}
[\Box_g, Y]   &=\frac{2m}{r^2} Y^2 - \frac{2}{r} \laplacianslash + \frac{1}{r^2} Y - \frac{1}{r^2} L, \\
[\Box_g, L]  &= \frac{\eta - \mu}{r^2}(L-\Lb)+\frac{2\eta-\mu}{r}\laplacianslash +  \frac{\mu}{r} \Box_g, \\
[\Box_g, \Lb] &= \frac{\eta - \mu}{r^2}(\Lb-L)-\frac{2\eta-\mu}{r}\laplacianslash -  \frac{\mu}{r} \Box_g.
\end{split}
\end{equation}
We also provide the following derived commutator,
\begin{align*}
[\Box_g, u \Lb] =& \left(1- \frac{\mu u}{r}\right) \Box_g +\left(\frac{(\mu-2\eta) u}{r} - 1\right)  \laplacianslash  - \frac{1}{r} L  + \frac{(\eta - \mu) u}{r^2} (\Lb  - L ),
\end{align*}
which will be used in Region $\R_1$. We note that, $r \sim| u|$ in $\R_1$, and hence,
\begin{equation}\label{commutator-S-Box-R1}
[\Box_g, u \Lb] 
\sim  \pm \Box_g  \pm  \laplacianslash   \pm \frac{1}{r} (\Lb - L ), \quad \text{in} \,\, \R_1.
\end{equation}
In general, we conclude the following lemma.
\begin{lemma}\label{lemma-commuting}
Let $W \in \{L, \Lb\}$, $W^n_{p,q} := L^p \Lb^q$, $p+q =n$. Then
\begin{align*}
\Box_g W^n_{p,q} \varphi_k &= W^n_{p,q} \Omega^k \Box_g \varphi \pm W_{\leq n-1} (\varphi_k),
\end{align*}
where 
\begin{align*}
 W_{\leq n-1} (\varphi_k) &= \sum_{i\leq n-1} \frac{1}{r^{n+1-i}}(\Lb \bar W^i_{p,q} \varphi_k \pm L \bar W^i_{p,q}  \varphi_k \pm   r\laplacianslash \bar W^i_{p,q} \varphi_k) \\
 &\pm \frac{\mu}{r}\bar W^{n-1}_{p,q} \Omega^k \Box_g \varphi  + \text{l.o.t.},
\end{align*}
and $\bar W^l_{p,q} \in \{L^i \Lb^j | i+j=l, \, i \leq p, \, j \leq q\}$.

Denote $Z^{n}_{p,q} := L^{p} Y^{q}$, $p+q =n$ and $Z \in \{L, Y\}$. We have
\begin{align*}
\Box_g Z^n_{p,q} \varphi_k &= Z^n_{p,q} \Omega^k \Box_g \varphi + Z_n (\varphi_k) \pm Z_{\leq n-1} (\varphi_k),
\end{align*}
where 
\begin{align*}
Z_n (\varphi_k)& = (q-p) \frac{2m}{r^2} Y  Z^{n}_{p, q} \varphi_k, \\ 
 Z_{\leq n-1} (\varphi_k) &=   \sum_{i\leq n-1} \frac{1}{r^{n+1-i}}(\bar Z^i_{p,q} Y \varphi_k \pm \bar Z^i_{p,q}  L\varphi_k  \pm  r\laplacianslash \bar Z^i_{p,q} \varphi_k) + \text{l.o.t.},
\end{align*}
and $\bar Z^l_{p,q} \in \{L^i Y^j | i+j=j, \, i \leq p, \, j \leq q\}$. 

Here l.o.t. denotes lower order terms in terms of  derivatives and $r$ weight.
\end{lemma}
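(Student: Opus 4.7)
The lemma is a bookkeeping statement recording the principal and subprincipal structure of $\Box_g$ commuted with mixed null derivatives $W^n_{p,q}\Omega^k$ and $Z^n_{p,q}\Omega^k$. My plan is a straightforward induction on $n$, using \eqref{commutates}--\eqref{commutates-Y-L-Box} as the base step, with the one delicate point being the extraction of the coefficient $(q-p)$ in $Z_n(\varphi_k)$.

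First, because $[\Box_g,\Omega]=0$ and $[L,\Omega]=[\Lb,\Omega]=0$, the angular factors $\Omega^k$ commute freely through $\Box_g$, $L$, $\Lb$; and $[Y,\Omega]$ only produces an $\frac{\eta}{r}$ weight (from $[\Lb,\nablaslash]$) that fits into the stated lower-order terms. So I reduce to proving both identities with $\varphi$ in place of $\varphi_k$, then reinstate the $\Omega^k$. For the $W$-case I induct on $n$: the case $n=1$ is exactly \eqref{commutates-Y-L-Box}, where $\frac{\eta-\mu}{r^2}(L-\Lb)\pm\frac{2\eta-\mu}{r}\laplacianslash$ has the advertised form $\frac{1}{r^2}(\Lb\pm L\pm r\laplacianslash)\varphi$ and $\frac{\mu}{r}\Box_g\varphi$ fills the role of the $\frac{\mu}{r}\bar W^{n-1}_{p,q}\Omega^k\Box_g\varphi$ piece. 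For the step, write $W^n_{p,q}=W'\, W^{n-1}_{p',q'}$ with $W'\in\{L,\Lb\}$ and split
\begin{equation*}
\Box_g W^n_{p,q}\varphi = W'\,\Box_g W^{n-1}_{p',q'}\varphi + [\Box_g,W']\,W^{n-1}_{p',q'}\varphi.
\end{equation*}
The first piece yields the main term $W^n_{p,q}\Box_g\varphi$ plus $W'$ applied to $W_{\leq n-2}(\varphi)$; $W'$ acting on the $r^{-(n-i)}$ weights only raises the weight by one and hence stays in the $W_{\leq n-1}$ pattern, while $W'$ hitting the derivative factor is absorbed after one more use of $[L,\Lb]=0$, $[L,\nablaslash]=-\frac{\eta}{r}\nablaslash$, $[\Lb,\nablaslash]=\frac{\eta}{r}\nablaslash$. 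The second piece, by the base step, is already of the required form at level $n-1$.

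The $Z$-case runs by the same induction but carries the extra top-order term. The coefficient $(q-p)$ in $Z_n(\varphi)=(q-p)\frac{2m}{r^2}YZ^n_{p,q}\varphi$ arises as the sum of two contributions. Iterating $[\Box_g,Y^q]=\sum_{j=0}^{q-1}Y^{q-1-j}[\Box_g,Y]Y^j$ and keeping only the $\frac{2m}{r^2}Y^2$ piece from \eqref{commutates-Y-L-Box} gives $[\Box_g,Y^q]=q\,\frac{2m}{r^2}Y^{q+1}+(\text{l.o.t.})$; prepending $L^p$ and swapping the leading $Y$ to the outside via
\begin{equation*}
[L^p,Y] = -p\tfrac{\mu}{r}\,Y L^{p-1} + (\text{l.o.t.})
\end{equation*}
(which follows by induction from $[L,Y]=-\frac{\mu}{r}Y$ and $L(\frac{\mu}{r})=O(r^{-2})$) produces $+q\frac{2m}{r^2}YZ^n_{p,q}\varphi$ modulo subprincipal terms. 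The second contribution comes from $[\Box_g,L^p]Y^q\varphi$: iterating $[\Box_g,L]=\frac{\mu}{r}\Box_g+\frac{2\eta-\mu}{r}\laplacianslash+\frac{\eta-\mu}{r^2}(L-\Lb)$, writing $\Lb=\eta Y$, and using the identity $\frac{\eta(\eta-\mu)}{r^2}=\frac{2m}{r^2}+\text{(lower in }r^{-1}\text{)}$ after tracking all $p$ applications, produces the cross-term $-p\frac{2m}{r^2}YZ^n_{p,q}\varphi$. Summing gives $(q-p)\frac{2m}{r^2}YZ^n_{p,q}\varphi$, exactly $Z_n(\varphi)$, while the $\frac{\mu}{r}\Box_g$ contributions are absorbed into the main $Z^n_{p,q}\Omega^k\Box_g\varphi$ term by the induction hypothesis.

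All remaining terms — $Y$ or $L$ derivatives falling on the $r$-weights $r^{-2}$, $\frac{\mu}{r}$, $\frac{\eta-\mu}{r^2}$, the subleading $\frac{1}{r^2}Y$, $\frac{1}{r^2}L$, $-\frac{2}{r}\laplacianslash$ from $[\Box_g,Y]$, and the one-lost-derivative errors from $[L^p,Y]$ — each contribute either an extra negative power of $r$ or one fewer derivative, hence fit into the $Z_{\leq n-1}(\varphi)$ pattern $\frac{1}{r^{n+1-i}}(\bar Z^i_{p,q}Y\varphi\pm\bar Z^i_{p,q}L\varphi\pm r\laplacianslash\bar Z^i_{p,q}\varphi)+\text{l.o.t.}$; this is essentially automatic by the induction hypothesis on the remainder. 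The main obstacle is the coefficient bookkeeping that produces $(q-p)$: one must carefully collect the $+q$ from commuting $Y^q$ past $\Box_g$ and the $-p$ arising from the interplay between $[\Box_g,L^p]$ (through $-\Lb=-\eta Y$) and $[L^p,Y]$, and verify that every other contribution is subprincipal. Once these coefficients are nailed down, assembling the full statement is a routine triangular induction over $n$.
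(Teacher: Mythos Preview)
The paper itself does not supply a proof of this lemma; it is stated as a direct consequence of the commutator identities \eqref{commutates}--\eqref{commutates-Y-L-Box}, so the natural approach---induction on $n$---is what one must carry out, and your $W$--case and the overall structure are fine.

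There is, however, a genuine error in your extraction of the $-p$ contribution to $Z_n(\varphi)$. You attribute it to the $-\frac{\eta-\mu}{r^2}\Lb$ piece of $[\Box_g,L]$ together with an ``identity'' $\frac{\eta(\eta-\mu)}{r^2}=\frac{2m}{r^2}+\text{(lower)}$. That identity is false: $\eta(\eta-\mu)=(1-\tfrac{2m}{r})(1-\tfrac{4m}{r})=1-\tfrac{6m}{r}+\tfrac{8m^2}{r^2}$, so the leading coefficient is $1$, not $2m$. More importantly, $-\frac{\eta-\mu}{r^2}\Lb\,Z^{n-1}_{p-1,q}\varphi$ has only $n$ derivatives total and hence lives in $Z_{\leq n-1}$; it cannot contribute to the $(n+1)$--derivative term $Z_n$. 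Meanwhile you explicitly dismiss the $\frac{\mu}{r}\Box_g$ piece as ``absorbed into the main term,'' but this is precisely where the $-p$ actually arises. Writing $\Box_g=-LY+\laplacianslash-\frac{2m}{r^2}Y+\frac{1}{r}(L-\eta Y)$, the summand $L^{j}\,\frac{\mu}{r}\Box_g\,L^{p-1-j}Y^q\varphi$ in $[\Box_g,L^p]Y^q\varphi$ has top piece
\[
L^{j}\,\tfrac{\mu}{r}\bigl(-LY\,L^{p-1-j}Y^q\varphi\bigr)
= -\tfrac{2m}{r^2}\,L^{p}Y^{q+1}\varphi + \text{l.o.t.}
= -\tfrac{2m}{r^2}\,YZ^n_{p,q}\varphi + \text{l.o.t.},
\]
and summing over $j=0,\dots,p-1$ gives exactly $-p\,\frac{2m}{r^2}YZ^n_{p,q}\varphi$. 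Combined with your (correct) $+q$ from $L^p[\Box_g,Y^q]$, this yields $(q-p)$. So the conclusion is right, but the mechanism you describe does not work; you need to reroute the $-p$ through the $\frac{\mu}{r}\Box_g$ contribution rather than discard it.
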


\subsection{Null condition}
We refer to Definition \ref{def-null-condition}  for the null condition.
There are several obvious examples of quadratic null forms: $Q_0= g^{\mu \nu} D_\mu \psi D_\nu \psi;$ $Q_{\mu \nu} = D_\mu \psi D_\nu \psi - D_\nu \psi D_\mu \psi.$ Without confusion, we denote these null forms by $Q$ as well. Given any vector field $X$, let $Q \circ X(D\psi_1, D \psi_2) = Q(DX\psi_1, D \psi_2) +  Q(D\psi_1, D X\psi_2)$ and $[Q, X] = XQ - Q \circ X$.  One has then
\begin{subequations}
\begin{align}
|[\Omega, Q] (D\psi_1, D \psi_2)| & \lesssim   | D \psi_1 \Db \psi_2| + | D \psi_2 \Db \psi_1|, \label{eq-null-condition-comm-omega}\\
| [D, Q](D\psi_1, D \psi_2)| & \lesssim  r^{-1} \left( | D \psi_1 \Db \psi_2| + | D \psi_2 \Db \psi_1|\right). \label{eq-null-condition-comm-D}
\end{align}
\end{subequations}
Implied by \eqref{eq-null-condition-comm-D}, there is
\begin{equation*}
|[u \Lb, Q] (D\psi_1, D \psi_2)| \lesssim |u| r^{-1} \left( | D \psi_1 \Db \psi_2| + | D \psi_2 \Db \psi_1|\right),
\end{equation*}
and hence
\begin{equation}\label{eq-null-uLb-comm}
|[u \Lb, Q] (D\psi_1, D \psi_2)| \lesssim  | D \psi_1 \Db \psi_2| + | D \psi_2 \Db \psi_1|, \quad \text{in} \,\,\, \R_1,
\end{equation}
since $r \sim| u|$ in $\R_1$.

By the formula \eqref{commutates}, we can calculate that for a general $Q$ satisfying the Definition \ref{def-null-condition}, \eqref{eq-null-condition-comm-omega}-\eqref{eq-null-condition-comm-D} and \eqref{eq-null-uLb-comm} are always valid. Theses are the inequalities needed in practice.

\section{Global existence for the scattering problem}\label{sec-past}
In this section, we will prove that the solution exists from the past event horizon and the past null infinity up to any finite $u=u_1 \sim 1$. Without lost of generality, we assume that $u_1=1$ in the following discussion.  
Hence, we shall allow us to abuse the notation $\R_1$ a little bit and let $\R_1$ be the null strip $\D^+(\mathcal{I}^-) \cap \{u \leq 1\} \cap \{0 \leq \ub \leq \delta \}$, if there is no confusion. Then in $\R_1$, $u \leq 1$, $r^\ast = \ub - u\geq -1$, and hence $\langle r^\ast \rangle \sim r$, $t \sim \langle u \rangle \sim r $. Moreover, $\R_1$ is away from the horizon and the  photon sphere $r=3m$.  We remind ourselves that $\langle u\rangle = \sqrt{|u|^2 +1}$. And we simplify the notation $C_u^{[0, \ub]}$ by $C_u$, $\Cb_{\ub}^{[u_0, u]}$ by $\Cb_{\ub}$, where $-\infty \leq u_0 \leq u \leq 1$.

\subsection{Initial data in $\R_1$}\label{sec-data-1}
We refer to \cite{Christodoulou-09} for the short pulse data, and also \cite[Section 3]{Wang-Yu-16} for such data in the setting of wave equation. 

Let $-\infty \leq u_0 \leq 0$ and $C_{u_0}= \{u = u_0\}$ be the initial outgoing light-cone. And $\mathcal{H}^- = \{\ub = -\infty\}$ denotes the past event horizon. The data will be imposed on $\mathcal{H}^- \cup C_{u_0}$.
Initially, we require that the data of \eqref{Main Equation} verify:
\begin{equation}\label{eq-data-trivial}
\varphi \equiv 0, \quad \text{on} \quad \mathcal{H}^- \cup C_{u_0}^{[-\infty, 0]}.
\end{equation}
Consequently, 
we can extend the solution of \eqref{Main Equation} to be trivial in the region $\D^+(\mathcal{H}^-) \cap \D^+(C_{u_0}) \cap \D^-(\Cb_0)$ , i.e., $\varphi \equiv 0$ in $\{ \ub \leq 0, u \geq u_0\}$. Secondly, we set
\begin{equation}\label{data-short-plus-1}
\varphi|_{C_{u_0}^{[0, \delta]}} = \frac{\delta^{\frac{1}{2}}}{|u_0|} \psi_0 \left(\ub/\delta, \theta, \phi \right),
\end{equation}
where $\psi_0: [0,1] \times S^2 \rightarrow \mathbb{R}$ is a smooth, compactly supported function. We remark that the factor $\frac{1}{|u_0|}$ manifests the decay of linear wave.

The data \eqref{data-short-plus-1} immediately entail that for all $l, k \in \mathbb{N}$,
\begin{equation}\label{initial-data-R1}
\begin{split}
|u_0| \delta^{\frac{1}{2}} \|L^{l+1} \Omega^k \varphi \|_{L^\infty (C_{u_0})} + \|L^{l+1} \Omega^k \varphi \|_{L^2 (C_{u_0})} & \lesssim \delta^{-l} ,\\
 |u_0|^{2}\|\nablaslash L^l \Omega^k \varphi \|_{L^\infty (C_{u_0})} +  |u_0| \delta^{-\frac{1}{2}} \|\nablaslash L^l \Omega^k \varphi \|_{L^2 (C_{u_0})} & \lesssim  \delta^{\frac{1}{2} -l}.
\end{split}
\end{equation}
Following \cite{Wang-Yu-16}, we commute \eqref{Main Equation} with $\Omega^k$, rewrite it as an ODE for $\Lb \Omega^k \varphi$ and integrate along $L$ to derive 
\begin{equation*}
\begin{split}
\|\Lb \Omega^k \varphi \|_{L^\infty (C_{u_0})} \lesssim \delta^{\frac{1}{2}} |u_0|^{-2}.
\end{split}
\end{equation*}
We expect that these initial informations will be preserved during the evolution of wave equation. For this purpose, we should relax $\nablaslash$ a little bit, namely, we only expect that the estimate $\|\nablaslash L^l \Omega^k \varphi \|_{L^2 (C_{u_0})} \lesssim \delta^{\frac{1}{2} -l}$, rather than the original $\|\nablaslash L^l \Omega^k \varphi \|_{L^2 (C_{u_0})} \lesssim \delta^{1 -l} |u_0|^{-1}$ in \eqref{initial-data-R1}, propagates along the flow of \eqref{Main Equation}. This will be reflected in the definitions of  energies $E_k(u, \ub)$ \eqref{def-energy-E-R1} and ${}^LF_{1+k}(u, \ub)$ \eqref{def-F-L-R1} below.

\subsection{Bootstrap argument in $\mathcal{R}_1$} 
To conduct the energy estimates in $\R_1$, we need the commutators: $L, \Lb, \, \Omega$ and $\tilde S$, where 
\begin{equation}\label{def-tilde-S}
\tilde S : = \langle u \rangle \Lb.
\end{equation}
Then a family of energy norms are defined as follows.
Given any fixed number $N\in \mathbb{N}$, $N\geq6$, 
we define  for $0\leq l\leq N$,
\begin{subequations}
\begin{align}
E_l(u,\ub) &= \|L \varphi_l\|^2_{L^2(C_u)} + \delta^{-1}  \| \nablaslash \varphi_l\|^2_{L^2(C_u)}, \label{def-energy-E-R1}\\
\Eb_l(u,\ub) &=\| \nablaslash \varphi_l\|^2_{L^2(\Cb_{\ub})} + \delta^{-1} \| \Lb \varphi_l\|^2_{L^2(\Cb_{\ub})}. \label{def-energy-Eb-R1}
\end{align}
\end{subequations}
And for $0\leq k\leq N-1$, 
 \begin{subequations}
\begin{align}
{}^{L}F_{1+k}(u,\ub) &=\delta^2 \|L^2 \varphi_k\|^2_{L^2(C_u)}+\delta \| \nablaslash L\varphi_k\|^2_{L^2(C_u)}, \label{def-F-L-R1} \\
{}^{L }\Fb_{1+k}(u,\ub) &=\delta^2 \| \nablaslash L \varphi_k\|^2_{L^2(\Cb_{\ub})} + \delta  \| \Lb L \varphi_k\|^2_{L^2(\Cb_{\ub})}, \label{def-Fb-L-R1}\\
{}^{\tilde S }F_{1+k}(u,\ub) &= \delta^{-1} \| \nablaslash  \tilde S \varphi_k\|^2_{L^2(C_u)}, \label{def-F-S-R1}\\
  {}^{\tilde S }\Fb_{1+k}(u,\ub) &= \delta^{-1}\| \Lb  \tilde S \varphi_k\|^2_{L^2(\Cb_{\ub})}, \label{def-Fb-S-R1}
 \end{align}
\end{subequations}
and
\begin{equation}\label{def-energy-E-T-R1}
{}^{t}{F}_{1+k}(u,\ub) = \delta^{-2}\langle u \rangle^2 \|\Lb \varphi_k\|^2_{L^2(C_{u})} + \langle u \rangle^2 \|\Lb L \varphi_k\|^2_{L^2(C_{u})}.
\end{equation}
Equivalently, 
\begin{equation*}
{}^{t}{F}_{1+k}(u,\ub) = \delta^{-2} \|\tilde S \varphi_k\|^2_{L^2(C_{u})} +  \|L \tilde S \varphi_k\|^2_{L^2(C_{u})}.
\end{equation*}
We also make the simplification for the flux \eqref{def-F-L-R1}-\eqref{def-Fb-S-R1}
\begin{align*}
F_{1+k}(u,\ub) &={}^{L}F_{1+k}(u,\ub) +{}^{\tilde S }F_{1+k}(u,\ub),\\
\Fb_{1+k}(u,\ub) &={}^{L }\Fb_{1+k}(u,\ub) + {}^{\tilde S }\Fb_{1+k}(u,\ub).
\end{align*}

\begin{remark}\label{rk-def-energy-R1}
Intuitively, one would expect ${}^{\tilde S }F_{1+k}(u,\ub)$ and  ${}^{\tilde S }\Fb_{1+k}(u,\ub)$ resemble ${}^{L}F_{1+k}(u,\ub)$ and  ${}^{L}\Fb_{1+k}(u,\ub)$, i.e., replacing \eqref{def-F-S-R1}-\eqref{def-Fb-S-R1} by
 \begin{subequations}
\begin{align}
& \|L \tilde S \varphi_k\|^2_{L^2(C_u)} +
 \delta^{-1} \| \nablaslash  \tilde S \varphi_k\|^2_{L^2(C_u)}, \label{def-F-S-R1-fake}\\
& \| \nablaslash \tilde S \varphi_k\|^2_{L^2(\Cb_{\ub})} +
 \delta^{-1}\| \Lb  \tilde S \varphi_k\|^2_{L^2(\Cb_{\ub})}. \label{def-Fb-S-R1-fake}
 \end{align}
\end{subequations}
However, we note that for $k \leq N-1$, $\|L \tilde S \varphi_k\|^2_{L^2(C_u)}$ is already covered by  ${}^{t}{F}_{1+k}(u,\ub)$ \eqref{def-energy-E-T-R1}, and $\| \nablaslash \tilde S \varphi_k\|^2_{L^2(\Cb_{\ub})}$ is also covered by $\delta^{-1} \| \Lb \varphi_l\|^2_{L^2(\Cb_{\ub})}, \, l \leq N$ in $\Eb_{l}(u,\ub)$ \eqref{def-energy-Eb-R1}. Hence, $\|L \tilde S \varphi_k\|^2_{L^2(C_u)}$ and  $\| \nablaslash \tilde S \varphi_k\|^2_{L^2(\Cb_{\ub})}$ are redundant in \eqref{def-F-S-R1-fake}-\eqref{def-Fb-S-R1-fake} and we use \eqref{def-F-S-R1}-\eqref{def-Fb-S-R1} for the sake of simplicity.
\end{remark}

With these definitions of energy norms, the data \eqref{eq-data-trivial}-\eqref{data-short-plus-1} satisfy
\begin{subequations}
\begin{align}
E_{l}(u_0,\delta)+  F_{1+k}(u_0,\delta)+ {}^{t}{F}_{1+k}(u_0,\delta)& \leq I^2_{N+1}, \label{initial bound-1}\\
\Eb_l(u,\ub) +  \Fb_{1+k}(u,\ub) & =0, \label{initial bound-2}
\end{align}
\end{subequations}
where \eqref{initial bound-1} has relaxed the initial bound \eqref{initial-data-R1} and $0\leq l \leq N$, $0\leq k\leq N-1$. Here $I_{N+1} \in \mathbb{R}^{+}$ is a universal constant manifesting the initial norm. The subindex $N+1$ in $I_{N+1}$ denotes the number of derivatives used in the energy norms.

The energy estimates in $\R_1$ will be based on a standard bootstrap argument. Fix $u_0 \leq u^*\leq 1$ and $0\leq \ub^* \leq \delta$. We assume that there is a large constant $M$ to be determined, such that the solution of \eqref{Main Equation} defined on the domain $\mathcal{D}^{u_0, u^\ast}_{0,\ub^\ast} \subset \mathcal{R}_1 $ enjoys the estimate
\begin{equation}\label{bootstrap assumption in R1}
 E_l(u^{\prime},\ub^\prime) + \Eb_l(u^{\prime},\ub^{\prime})+ F_{1+k}(u^{\prime},\ub^{\prime}) + \Fb_{1+k}(u^{\prime},\ub^{\prime})+ {}^{t}{F}_{1+k}(u^\prime,\ub^\prime) \leq M^2,
\end{equation}
for all $\ub^{\prime} \in [u_0, u]$ and $\ub^{\prime} \in [0, \ub]$, where $l \leq N,$ $k\leq N-1$,  and $u\leq u^*$ and all $0\leq \ub \leq \ub^*$. At the end of the current section, we aim to show that  the $M^2$ in \eqref{bootstrap assumption in R1} can be actually replaced by $\frac{M^2}{2}$, and the choice of $M$ depends only on the norm of the initial data but not the wave profile $\varphi$. Then the bootstrap argument will be closed and it yields the following estimates: There is a constant $C(I_{N+1})$ depending only on $I_{N+1}$ (in particular, not on $\delta$ and $u_0$), so that for all $u\leq 1$ and all $0\leq \ub \leq \delta$, we have
\begin{equation}\label{main estimates in R1}
E_l(u,\ub) + \Eb_l(u,\ub)+  F_{1+k}(u,\ub) +  \Fb_{1+k}(u,\ub)+{}^{t}{F}_{1+k}(u,\ub) \leq C(I_{N+1}).
\end{equation}

We first collect some preliminary estimates which follow from the bootstrap assumption \eqref{bootstrap assumption in R1} and the Sobolev inequalities.
\begin{proposition}\label{proposition L infinity and L4 estimates in R1} 
The bootstrap assumption \eqref{bootstrap assumption in R1} leads to the following estimates in $\R_1$,
\begin{align*}
\delta^{\frac{1}{2}}\langle u \rangle \|L\varphi_j\|_{L^\infty(\mathcal{R}_1)} + \delta^{-\frac{1}{4}}\langle u \rangle^{\frac{3}{2}}  \|\Db\varphi_j\|_{L^\infty(\mathcal{R}_1)} & \lesssim M, \quad 0\leq j\leq  N-2, \\
\delta^{\frac{1}{2}}\langle u \rangle^{\frac{1}{2}}\| L\varphi_k \|_{L^{4}(S_{\ub,u})} +\delta^{-\frac{1}{4}}\langle u \rangle \|\Db\varphi_k\|_{L^4(S_{\ub,u})} & \lesssim M, \quad 0\leq k\leq  N-1.
\end{align*}
\end{proposition}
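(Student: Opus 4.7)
The plan is a standard transport-plus-Sobolev argument on the spheres $S_{\ub,u}$. The key geometric feature of $\R_1$ is that $r \sim \langle u\rangle \geq 1$ and $\R_1$ stays bounded away from both the horizon and the photon sphere, so $\eta$ is pinched between two positive constants there and in particular $Y = \eta^{-1}\Lb$ is comparable to $\Lb$. Moreover $\ub$ ranges over the short interval $[0,\delta]$, and the extended data prescription \eqref{eq-data-trivial}--\eqref{data-short-plus-1} together with the compact support of $\psi_0$ away from $\ub/\delta = 0$ forces $\varphi$ and all its $L^j \Omega^k$--derivatives to vanish identically on $\{\ub = 0\}$, so the fundamental theorem of calculus in $\ub$ carries no boundary contribution.

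The first step is to convert the cone-type bootstrap bounds \eqref{bootstrap assumption in R1} into pointwise $L^2(S_{\ub,u})$ estimates. Differentiating $r^2\int_{S^2}|f|^2\di\sigma$ in $\ub$, applying Cauchy--Schwarz, and absorbing the harmless term $r^{-1}\eta\,\|f\|_{L^2(S)}^2$ by Gr\"onwall (using $r \sim \langle u\rangle$) yields, for any scalar $f$ that vanishes at $\ub = 0$,
\begin{equation*}
\|f\|_{L^2(S_{\ub,u})}^2 \;\lesssim\; \|f\|_{L^2(C_u)}\,\|L f\|_{L^2(C_u)} + \langle u\rangle^{-1}\|f\|_{L^2(C_u)}^2.
\end{equation*}
Applied with $f = L\varphi_j$, and using $E_j$ together with ${}^LF_{1+j}$, this gives $\|L\varphi_j\|_{L^2(S)}\lesssim \delta^{-1/2}M$. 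With $f=\Lb\varphi_j$ and the bounds $\|\Lb\varphi_j\|_{L^2(C_u)}\lesssim \delta\langle u\rangle^{-1}M$, $\|L\Lb\varphi_j\|_{L^2(C_u)}\lesssim\langle u\rangle^{-1}M$ extracted from ${}^tF_{1+j}$, it gives $\|\Lb\varphi_j\|_{L^2(S)}\lesssim \delta^{1/2}\langle u\rangle^{-1}M$. For the angular direction, the trivial inequality $\|\varphi_j\|_{L^2(S)} \leq \delta^{1/2}\|L\varphi_j\|_{L^2(C_u)} \lesssim \delta^{1/2}M$ (obtained from $\varphi_j(\ub, \cdot) = \int_0^{\ub}L\varphi_j\,\di\ub'$ using $\ub \leq \delta$) together with the identity $r|\nablaslash \varphi_j| \lesssim |\Omega \varphi_j|$ yields $\|\nablaslash \varphi_j\|_{L^2(S)} \lesssim r^{-1}\|\varphi_{j+1}\|_{L^2(S)} \lesssim \delta^{1/2}\langle u\rangle^{-1}M$. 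Since $Y \sim \Lb$ on $\R_1$, the $Y$-piece of $\Db$ obeys the same bound.

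The second step is to invoke the standard 2-sphere embeddings
\begin{align*}
\|f\|_{L^\infty(S)} &\lesssim r^{-1}\sum_{i=0}^{2}\|\Omega^i f\|_{L^2(S)},\\
\|f\|_{L^4(S)} &\lesssim r^{-1/2}\|f\|_{L^2(S)}^{1/2}\bigl(\|f\|_{L^2(S)}+\|\Omega f\|_{L^2(S)}\bigr)^{1/2},
\end{align*}
and use the commutators $[\Omega,L]=[\Omega,\Lb]=[\Omega,\nablaslash]=0$ from \eqref{commutates} to rewrite $\Omega^i f$ as $f$ with the $\Omega$-count raised by $i$. Combined with Step~1 and $r \sim \langle u\rangle$, this produces $\|L\varphi_j\|_{L^\infty(S)}\lesssim \delta^{-1/2}\langle u\rangle^{-1}M$ and $\|\Db\varphi_j\|_{L^\infty(S)}\lesssim \delta^{1/2}\langle u\rangle^{-2}M$ for $j+2\leq N$, together with $\|L\varphi_k\|_{L^4(S)}\lesssim \delta^{-1/2}\langle u\rangle^{-1/2}M$ and $\|\Db\varphi_k\|_{L^4(S)}\lesssim \delta^{1/2}\langle u\rangle^{-3/2}M$ for $k+1\leq N$. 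Taking $L^\infty$ over $(\ub,u)\in\R_1$ and weakening via $\delta < 1 \leq \langle u\rangle$ recovers (in fact improves) the stated $\Db$ bounds $\delta^{1/4}\langle u\rangle^{-3/2}M$ and $\delta^{1/4}\langle u\rangle^{-1}M$.

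The argument has no genuine obstacle; the only technicalities are the commutators $[L,\Lb]=0$ (trivial in null coordinates) and $[L,\nablaslash]=-(\eta/r)\nablaslash$, whose contribution enters as the lower-order Gr\"onwall term in Step~1, and the bookkeeping needed to verify that the highest $\Omega$-count $j+2$ (resp.\ $k+1$) stays within the $\leq N$ budget of the bootstrap energies \eqref{def-energy-E-R1}--\eqref{def-energy-E-T-R1}.
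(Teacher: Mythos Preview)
Your two–step route $L^2(C_u)\to L^2(S_{\ub,u})\to L^4(S_{\ub,u})$ is one derivative too expensive at the top order, and your claimed ranges ``$k+1\leq N$'' and ``$j+2\leq N$'' are miscounted. Concretely: to bound $\|L\varphi_k\|_{L^4(S)}$ via the sphere embedding you need $\|L\varphi_{k+1}\|_{L^2(S)}$, and your transport inequality for the latter requires $\|L^2\varphi_{k+1}\|_{L^2(C_u)}$, which is controlled only by ${}^LF_{1+(k+1)}$, hence only for $k+1\leq N-1$, i.e.\ $k\leq N-2$. The same obstruction hits $\Lb\varphi_k$: to get $\|\Lb\varphi_{k+1}\|_{L^2(S)}$ you would need $\|L\Lb\varphi_{k+1}\|_{L^2(C_u)}$ from ${}^tF_{1+(k+1)}$, again forcing $k\leq N-2$. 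So your argument yields the $L^4$ estimate only for $k\leq N-2$ and the $L^\infty$ estimate only for $j\leq N-3$, missing exactly the top cases the proposition asserts. The stronger $\Db$ bounds you obtain ($\delta^{1/2}\langle u\rangle^{-3/2}$ in $L^4$, $\delta^{1/2}\langle u\rangle^{-2}$ in $L^\infty$) are indeed valid in that sub-top range---this is precisely what Remark~\ref{bt-L4-Linfty-Lb-R1} records---but the proposition is stated with the weaker powers $\delta^{1/4}\langle u\rangle^{-1}$ and $\delta^{1/4}\langle u\rangle^{-3/2}$ because those are all that survive at $k=N-1$, $j=N-2$.

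The fix is to use the cone Sobolev inequality \eqref{Sobolev Inequlities-Cu} directly, as the paper does: it converts $L^2(C_u)$ data to $L^4(S_{\ub,u})$ using $\|Lf\|_{L^2(C_u)}$ and $\|r\nablaslash f\|_{L^2(C_u)}$ \emph{separately}, rather than your $\|L\Omega f\|_{L^2(C_u)}$. For $f=\Lb\varphi_k$ at $k=N-1$, the angular factor $\|r\nablaslash\Lb\varphi_{N-1}\|_{L^2(C_u)}\sim\|\nablaslash\tilde S\varphi_{N-1}\|_{L^2(C_u)}$ is bounded by $\delta^{1/2}M$ via ${}^{\tilde S}F_{1+(N-1)}$---this is the whole point of carrying the $\tilde S$--energy in the bootstrap---while $\|L\Lb\varphi_{N-1}\|_{L^2(C_u)}\lesssim\langle u\rangle^{-1}M$ comes from ${}^tF_{1+(N-1)}$. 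The product of the two half-powers gives the $\delta^{1/4}\langle u\rangle^{-1}$ you see in the statement. The $L^\infty$ bound then follows by one more sphere embedding $L^4\to L^\infty$, costing a single $\Omega$ and landing at $j\leq N-2$.
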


\begin{proof}
The proof is based on the Sobolev inequalities on $C_u$ and $S_{\ub, u}$, \eqref{Sobolev Inequlities-S2}-\eqref{Sobolev Inequlities-Cu}. 
For simplicity, we will only address the case for $\Lb\varphi_k$. By \eqref{Sobolev Inequlities-Cu}, there is, for $k \leq N-1$,
\begin{align*}
 \langle u \rangle^{\frac{1}{2}} \| \Lb \varphi_k \|_{L^{4}(S_{\ub,u})} &\lesssim \| L \Lb \varphi_k \|^{\frac{1}{2}}_{L^{2}(C_{u})} (\| \Lb \varphi_k \|^{\frac{1}{2}}_{L^{2}(C_{u})} + \|\Omega \Lb \varphi_k \|^{\frac{1}{2}}_{L^{2}(C_{u})}) \\
 &\lesssim \langle u \rangle^{-\frac{1}{2}} M^{\frac{1}{2}} \cdot  \delta^{\frac{1}{4}} M^{\frac{1}{2}} \lesssim \delta^{\frac{1}{4}}  \langle u \rangle^{-\frac{1}{2}} M,
 \end{align*}
where we note that for $k \leq N-1$, $  \langle u \rangle \| L \Lb \varphi_k \|_{L^{2}(C_{u})}$ is controlled by ${}^tF_{1+k} (u, \ub)$, while the bound of $\|\Omega \Lb \varphi_k \|_{L^{2}(C_{u})} \sim \|\nablaslash \tilde S \varphi_k \|_{L^{2}(C_{u})}$ should be related to  the bootstrap assumption for ${}^{\tilde S}F_{1+k}$ \eqref{def-F-S-R1}.
For the $L^\infty$ estimate, we turn to \eqref{Sobolev Inequlities-S2}. Then for $j \leq N-2$,
\begin{align*}
\|\Lb \varphi_j \|_{L^\infty(S_{\ub, u})} &\lesssim r^{-\frac{1}{2}} (\| \Lb \varphi_j \|_{L^{4}(S_{\ub,u})} +\| \Omega \Lb \varphi_j \|_{L^{4}(S_{\ub,u})}) \lesssim \delta^{\frac{1}{4}}  \langle u \rangle^{-\frac{3}{2}} M.
\end{align*}
\end{proof}

\begin{remark}\label{bt-L4-Linfty-Lb-R1}
In contrast to \cite{Wang-Yu-16}, we here use the Sobolev inequality on $C_u$ instead of that on $\Cb_{\ub}$ for the $L^\infty$ and $L^4$ estimates of $\Lb \varphi_k$. In doing this, we have to employ the weighted commutator vector field $\tilde S = \langle u \rangle \Lb$ (rather than $\Lb$) to ensure good decay rates for $\Lb \varphi_k$. In other words, we introduce the energy norms ${}^{\tilde S} F_{1+k}, {}^{\tilde S} \Fb_{1+k}$ and ${}^tF_{1+k}$ rather than ${}^{\Lb} F_{1+k}, {}^{\Lb} \Fb_{1+k}$ as in \cite{Wang-Yu-16}.

There are the following stronger estimates for lower order derivatives of $\Lb\varphi_k$: when $0\leq k_1\leq  N-3$ and $0\leq k_2\leq  N-2$,
\begin{equation*}
 \delta^{-\frac{1}{2}}\langle u \rangle^{2}\|\Lb\varphi_{k_1}\|_{L^\infty(\mathcal{R}_1)}+ \delta^{-\frac{1}{2}}\langle u \rangle^{\frac{3}{2}}\|\Lb\varphi_{k_2} \|_{L^{4}(S_{\ub,u})}\lesssim M.
\end{equation*}
Compared to the top order case, the difference lies in that for the lower order case $0\leq k_2\leq  N-2$, $$\| \Lb \varphi_{k_2} \|^{\frac{1}{2}}_{L^{2}(C_{u})} + \|\Omega \Lb \varphi_{k_2} \|^{\frac{1}{2}}_{L^{2}(C_{u})} \lesssim \delta^{\frac{1}{2}} M^{\frac{1}{2}}   \langle u \rangle^{-\frac{1}{2}},$$ where we use the bootstrap assumption for ${}^tF_{1+k}(u, \ub), \, k \leq N-1$, instead of that for ${}^{\tilde S}F_{1+k}, \, k \leq N-1$ (in the top order case). It will be more obvious if we list these two bootstrap assumptions: ${}^tF_{1+k}(u, \ub) \leq M^2$ gives $ \|\Lb \varphi_k\|_{L^2(C_{u})} \leq \delta M \langle u \rangle^{-1},$ $k \leq N-1$; while ${}^{\tilde S}F_{1+k} (u, \ub) \leq M^2$, i.e., $ \| \nablaslash \tilde S \varphi_k\|_{L^2(C_u)} \leq  \delta^{\frac{1}{2}} M$, $k \leq N-1$, leads to $ \| \Lb \varphi_l\|_{L^2(C_u)} \lesssim  \delta^{\frac{1}{2}} M$, $l \leq N$. 
We can see that, there is an extra $\delta^{\frac{1}{2}}  \langle u \rangle^{-1}$ in the estimates for the lower order cases. However, the weak estimates presented in Proposition \ref{proposition L infinity and L4 estimates in R1} is good enough for our proof.
\end{remark}

\subsection{Energy estimates in $\mathcal{R}_1$}\label{sec-energy-estimate-R1}
\subsubsection{The multiplier in $\mathcal{R}_1$}\label{sex-multiplier-R1}
Consider the multiplier $\xi_1:= \eta L +  \delta^{-1}  \Lb$. That is, choose $f_1= \eta$, $f_2 = \delta^{-1}$. In view of \eqref{current of fL},  we have,
\begin{align}
\p_u f_1g^{u\ub}  |L \psi|^2 &=  \frac{m}{r^2} |L \psi|^2 >0, \label{eq-positive-L-R1} \\
-\frac{1}{2} \left( \p_u f_2 - \frac{\mu f_2}{r} \right)   |\nablaslash \psi|^2 &= \delta^{-1} \frac{m}{r^2}    |\nablaslash \psi|^2 >0. \label{eq-positive-angular-R1}
\end{align}
We apply the scheme in Section \ref{Energy estimates scheme} to the wave equation for $\psi$,  the energy identity \eqref{energy-identity}-\eqref{current of fL}  yields that, for  $u_0 \leq u \leq 1$,
\begin{align*}
&\int_{C_u}  ( |L \psi|^2 + \delta^{-1} |\nablaslash \psi|^2)   \di \mu_{C_u} + \int_{\Cb_{\ub}} (  |\nablaslash \psi|^2+\delta^{-1} |\Lb \psi|^2) \di \mu_{\Cb_{\ub}} \\
\quad &+\doubleint_{\Dp} \langle u \rangle^{-2} ( |L \psi|^2+ \delta^{-1} |\nablaslash \psi|^2 )   \di \mu_{\D}\\
\lesssim {}& I_{1}^2 (\psi) + \mathcal{C}^k_1(\psi)+ \mathcal{C}^k_2(\psi)+ \mathcal{F}_L^k(\psi) +  \mathcal{F}_{\Lb}^k(\psi),
\end{align*}
where we note that $r \sim \langle u \rangle, \, \eta \sim 1$ in $\R_1$ and $I^2_1(\psi)$ denotes the first order initial energy of $\psi$, and
\begin{align*}
 \mathcal{C}^k_1(\psi)& = \doubleint_{\Dp}\langle u \rangle^{-1} |\nablaslash \psi|^2  \di \mu_{\D}, \quad \quad \mathcal{C}^k_2(\psi)  = \doubleint_{\Dp} \delta^{-1}\langle u \rangle^{-1}|L\psi \Lb \psi|   \di \mu_{\D}, \\
 \mathcal{F}_L^k(\psi) &= \doubleint_{\Dp} |L \psi| |\Box_g\psi|  \di \mu_{\D}, \quad
 \quad  \mathcal{F}_{\Lb}^k(\psi) = \doubleint_{\Dp}  \delta^{-1} |\Lb  \psi| |\Box_g\psi|  \di \mu_{\D}.
\end{align*}
For $\mathcal{C}^k_1(\psi), \,  \mathcal{C}^k_2(\psi)$,
\begin{align*}
 \mathcal{C}^k_1(\psi) \lesssim & \int^{\ub}_{0}  \di \ub^{\prime}\int_{\Cb_{\ub^{\prime}}}  |\nablaslash \psi|^2  \di \mu_{\Cb_{\ub}}, \\
\mathcal{C}^k_2(\psi) \lesssim &  \int_{u_0}^u  \langle u^\prime \rangle^{-2}  \di u^\prime \int_{C_{u^\prime}} | L \psi|^2  \di \mu_{C_u} +\int^{\ub}_{0} \delta^{-1}\di \ub^{\prime}\int_{\Cb_{\ub^{\prime}}}\delta^{-1}  |\Lb \psi|^2\di \mu_{\Cb_{\ub}},
\end{align*}
where both of them can be handled by the Gr\"{o}nwall's inequality, see Lemma \ref{lemma-Gronwall}.
For $ \mathcal{F}_L^k(\psi), \, \mathcal{F}_{\Lb}^k(\psi)$, 
\begin{align*}
 \mathcal{F}_L^k(\psi) \lesssim & \doubleint_{\Dp}\langle u \rangle^{\frac{3}{2}}  |\Box_g\psi|^2  \di \mu_{\D} + \int_{u_0}^u  \langle u^\prime \rangle^{-\frac{3}{2}}    \di u^\prime \int_{C_{u^\prime}} | L \psi|^2  \di \mu_{C_u}, \\ 
\mathcal{F}_{\Lb}^k(\psi) \lesssim & \doubleint_{\Dp}   |\Box_g\psi|^2  \di \mu_{\D} +\int^{\ub}_{0} \delta^{-1}\di \ub^{\prime}\int_{\Cb_{\ub^{\prime}}}\delta^{-1}   |\Lb \psi|^2 \di \mu_{\Cb_{\ub}}.  
\end{align*}
Hence, after the Gr\"{o}nwall's inequality, we derive the energy inequality: for $u_0 \leq u \leq 1$, $0 \leq \ub \leq \delta$,
\begin{equation}\label{energy-ineq-phi-R1-summary}
\begin{split}
&\quad \int_{C_u}  ( |L \psi|^2 + \delta^{-1} |\nablaslash \psi|^2) \di \mu_{C_u} + \int_{\Cb_{\ub}} ( |\nablaslash \psi|^2+\delta^{-1} |\Lb \psi|^2) \di \mu_{\Cb_{\ub}} \\
& \lesssim  I_{1}^2 (\psi)
+\doubleint_{\Dp}\langle u \rangle^{\frac{3}{2}}  |\Box_g\psi|^2 \di \mu_{\D}.
\end{split}
\end{equation}
We should always remind ourselves that $\eta \sim 1$ in $\R_1$. Note also that, if we take $\xi=\delta^{-1} \Lb$ as the multiplier, then another energy inequality will be derived 
\begin{equation}\label{energy-ineq-Lb-phi-R1-summary}
\quad \int_{C_u}  \delta^{-1} |\nablaslash \psi|^2 \di \mu_{C_u} + \int_{\Cb_{\ub}} \delta^{-1} |\Lb \psi|^2 \di \mu_{\Cb_{\ub}} \lesssim  I_{1}^2 (\psi)
+\doubleint_{\Dp} |\Box_g\psi|^2 \di \mu_{\D}.
\end{equation}

\subsubsection{Energy estimates for $E_{k}(u,\ub), \Eb_{k}(u,\ub),$ $k \leq N-1$}\label{sec-energy-esti-Ek-Ebk}
We take $\psi = \varphi_k$, $k \leq N-1$ in \eqref{energy-ineq-phi-R1-summary}, then
\begin{equation}\label{energy-ineq-varphi_k-R1}
 E_{k}(u,\ub)+ \Eb_{k}(u,\ub) 
\lesssim I_{N}^2  + \doubleint_{\Dp}\langle u \rangle^{ \frac{3}{2}}  |\Box_g\varphi_k|^2 \di \mu_{\D},
\end{equation}
where  by the null condition, the spacetime integral can be decomposed as: \\ $\iint_{\Dp}\langle u \rangle^{ \frac{3}{2}}  |\Box_g\varphi_k|^2 \di \mu_{\D} = H_1^k + \cdots + H^k_4$, with  $k_1+k_2\leq k \leq N-1, \, k_1\leq k_2$ and
\begin{subequations}
\begin{align}
H^k_{1} &= \doubleint_{\Dp}  \langle u \rangle^{\frac{3}{2}}  | D \varphi_{k_1}|^2  | \Lb \varphi_{k_2}|^2 \di \mu_{\D}; \label{eq-H-k-1-R1}\\
H^k_{2} &=  \doubleint_{\Dp}  \langle u \rangle^{\frac{3}{2}}   |  \Db\varphi_{k_1}|^2 |L \varphi_{k_2}|^2 \di \mu_{\D};\label{eq-H-k-2-R1} \\
H^k_{3} &=  \doubleint_{\Dp}  \langle u \rangle^{\frac{3}{2}}   |  \Db\varphi_{k_1}|^2  |\nablaslash  \varphi_{k_2}|^2\di \mu_{\D}; \label{eq-H-k-3-R1} \\
H^k_{4} &= \doubleint_{\Dp}  \langle u \rangle^{\frac{3}{2}}  | L\varphi_{k_1}|^2   |\nablaslash  \varphi_{k_2}|^2 \di \mu_{\D}. \label{eq-H-k-4-R1}
  \end{align}
\end{subequations}
Noting that, $ N\geq 6$, $k_1\leq [\frac{N}{2}] \leq N-3$, we can apply $L^\infty$ to $D \varphi_{k_1}$, see Proposition \ref{proposition L infinity and L4 estimates in R1}. Then by the bootstrap assumptions,
\begin{align*}
H^k_{1}& \lesssim \int^{\ub}_{0}  \di \ub^\prime \int_{\Cb_{\ub^{\prime}}}  \langle u \rangle^{\frac{3}{2}} \delta^{-1} \langle u \rangle^{-2}M^2 | \Lb\varphi_{k_2}| ^2\di \mu_{\Cb_{\ub}}  \lesssim \delta  M^4.
\end{align*}
In an analogous fashion, there is
\begin{align*}
H^k_{2} + H^k_{3} & \lesssim \int^{u}_{u_0}  \langle u^\prime \rangle^{\frac{3}{2}} \delta^{\frac{1}{2}} \langle u^\prime \rangle^{-3} M^2 \left(\|L\varphi_{k_2}\|^2_{L^2(C_{u^\prime})} + \|\nablaslash \varphi_{k_2}\|^2_{L^2(C_{u^\prime})} \right) \di u^\prime \\
&\lesssim \delta^{\frac{1}{2}} \langle u \rangle^{-\frac{1}{2}} M^4.
\end{align*}
For the last term $H^k_{4} $, we should notice that, $k \leq N-1$. Thus, we are allowed to manipulate $L^4, L^4, L^4, L^4$ (instead of $L^\infty, L^\infty, L^2, L^2$) on the four factors and gain some positive power of $\delta$,
\begin{equation}\label{esti-Qk-4N-1}
\begin{split}
H^k_{4} &\lesssim\int_{u_0}^{u}\int_{0}^{\ub}  \langle u^\prime \rangle^{\frac{3}{2}} \|L\varphi_{k_1}\|^2_{L^4(S_{\ub^{\prime},u^{\prime}})}\| \nablaslash\varphi_{k_2}\|^2_{L^4(S_{\ub^{\prime},u^{\prime}})}\di  \ub^{\prime}\di u^{\prime}\\
&\lesssim  \delta^{\frac{1}{2}}\langle u \rangle^{-\frac{1}{2}}M^4.
\end{split}
\end{equation}
We remark that the estimate \eqref{esti-Qk-4N-1} is not valid in the top order case: $k=N$.

These results are summarized as,
\begin{equation}\label{energy-esti-Qk}
\doubleint_{\Dp}\langle u \rangle^{ \frac{3}{2}}|\Box_g \varphi_k|^2\di \mu_{\D}\lesssim \delta^{\frac{1}{2}} M^4, \quad k \leq N-1.
\end{equation}
Therefore, we infer that 
 \begin{equation}\label{energy-esti-Ek}
 E_k(u,\ub)+\Eb_k(u,\ub) \lesssim I^2_{N}+\delta^{\frac{1}{2}} M^4, \quad k \leq N-1.
 \end{equation}

With of the improved $E_k(u, \ub), \, k \leq N-1$ \eqref{energy-esti-Ek}, we can proceed to ${}^LF_{1+k}(u,\ub)$, ${}^{L}\Fb_{1+k}(u,\ub), \, k \leq N-1$. 
\subsubsection{Energy estimates for ${}^LF_{1+k}(u,\ub)$, ${}^{L}\Fb_{1+k}(u,\ub), \, k \leq N-1$}
In this section, we take $\psi=\delta L\varphi_k, \, k \leq N-1$ in \eqref{energy-ineq-phi-R1-summary} to obtain the following energy inequality,
\begin{equation}\label{energy-ineq-Lvarphi-R1}
{}^LF_{1+k}(u,\ub) + {}^{L}\Fb_{1+k}(u,\ub) 
 \lesssim I_{N+1}^2 + \doubleint_{\Dp}\delta^2 \langle u \rangle^{ \frac{3}{2}} |\Box_gL\varphi_k|^2\di \mu_{\D},
\end{equation}
where the source term is split as: $ \iint_{\Dp}\delta^2 \langle u \rangle^{ \frac{3}{2}} |\Box_gL\varphi_k|^2\di \mu_{\D} =  {}^L\mathcal{G}^k + {}^L\mathcal{H}^k + {}^L\mathcal{J}^k +{}^LW^k$, with $k_1+k_2\leq k \leq N-1, \, k_1\leq k_2$ and
\begin{align*}
 {}^L\mathcal{G}^k&= \doubleint_{\Dp}\delta^2\langle u\rangle^{ \frac{3}{2}}|Q(\p L\varphi_{k_1},\p \varphi_{k_2})|^2\di \mu_{\D}, \\
  {}^L\mathcal{H}^k &=  \doubleint_{\Dp}\delta^2 \langle u \rangle^{ \frac{3}{2}}|Q(\p\varphi_{k_1},\p L \varphi_{k_2})|^2\di \mu_{\D}, \\
   {}^L\mathcal{J}^k &= \doubleint_{\Dp}\delta^2 \langle u \rangle^{ \frac{3}{2}}|Q(\p\varphi_{k_1},\p \varphi_{k_2})|^2 \di \mu_{\D}, \\
  {}^LW^k&
 =\doubleint_{\Dp}\delta^2 \langle u\rangle^{ \frac{3}{2}}|[\Box_g,L]\varphi_k|^2\di \mu_{\D}.
\end{align*}
In what follows, we will focus on estimating these four terms.

At first,  \eqref{energy-esti-Qk} tells that ${}^L\mathcal{J}^k \lesssim \delta^{2+\frac{1}{2}} M^4$. 

Next, for ${}^L\mathcal{G}^k$,  we make the splitting: $ {}^L\mathcal{G}^k= {}^LG^k_{1} +{}^LG^k_{2} +{}^LG^k_{3}$, where for $k_1+k_2\leq k \leq N-1, \, k_1\leq k_2$,
\begin{align*}
{}^LG^k_{1}  &= \doubleint_{\Dp}\delta^2 \langle u \rangle^{\frac{3}{2}}  | D\varphi_{k_2}|^2  |\Lb L\varphi_{k_1}|^2 \di \mu_{\D};\\
{}^LG^k_{2}  &= \doubleint_{\Dp} \delta^2  \langle u \rangle^{\frac{3}{2}} | D\varphi_{k_2}|^2 |\nablaslash L\varphi_{k_1}|^2 \di\mu_{\D};\\
{}^LG^k_{3}  &=  \doubleint_{\Dp}\delta^2  \langle u \rangle^{\frac{3}{2}}  | \Db\varphi_{k_2}|^2  |L^2\varphi_{k_1}|^2 \di\mu_{\D}.
\end{align*}
Since $k_1\leq [\frac{N}{2}] \leq N-3, \, k_2 \leq N-1$, we can apply $L^4$ to all of the four factors in ${}^LG^k_i$. By Proposition \ref{proposition L infinity and L4 estimates in R1}, $ \| D\varphi_{k_2}\|_{L^4(S_{\ub,u})} \lesssim \delta^{- \frac{1}{2}}M  \langle u \rangle^{-\frac{1}{2}},$
then 
\begin{align*}
{}^LG^k_{1} 
&\lesssim \int_{u_0}^{u}\int_{0}^{\ub} \delta M^2  \langle u^\prime \rangle^{-1+\frac{3}{2}} \cdot \|\Lb L \varphi_{k_1}\|^2_{L^4(S_{\ub^{\prime},u^{\prime}})}\di  \ub^{\prime} \di u^{\prime}\\
&\lesssim \int_{u_0}^{u}\delta M^2   \langle u^\prime \rangle^{\frac{1}{2}} \sum_{k_1 \leq i \leq k_1+1} \langle u^\prime \rangle^{-1}\|\Lb L \varphi_{i}\|^2_{L^2(C_{u^{\prime}})}\di u^{\prime}\\
&\lesssim\delta \langle u \rangle^{-\frac{3}{2}} M^4, \quad k_1 \leq N-3.
\end{align*}
Here we have used the bootstrap assumption for ${}^tF_{1+k}(u, \ub), \, k\leq N-1$ and the Sobolev inequality on the sphere $S_{\ub,u}$:
\begin{equation}\label{Sobolev-L2-L4-S2}
\|\phi\|_{L^4(S_{\ub,u})}\lesssim r^{-\frac{1}{2}}\|\phi\|_{L^2(S_{\ub,u})}+ r^{\frac{1}{2}}\|\nablaslash\phi\|_{L^2(S_{\ub,u})}.
\end{equation}
Similarly, there is
\begin{align*}
{}^LG^k_{2} 
&\lesssim \int_{u_0}^{u}\delta M^2   \langle u^\prime \rangle^{\frac{1}{2}} \sum_{k_1 \leq i \leq k_1+1} \langle u^\prime \rangle^{-1}\|\nablaslash L \varphi_{i}\|^2_{L^2(C_{u^{\prime}})}\di u^{\prime}\\
&\lesssim\delta \langle u \rangle^{-\frac{3}{2}} M^4, \quad k_1 \leq N-3,
\end{align*}
where we have used the bootstrap assumption for $E_l(u, \ub), \, l\leq N$. Finally, we come to ${}^LG^k_{3}$, noting that, by Proposition \ref{proposition L infinity and L4 estimates in R1}, $ \| \Db \varphi_{k_2}\|_{L^4(S_{\ub,u})} \lesssim \delta^{\frac{1}{4}}M  \langle u \rangle^{-1},$
\begin{align*}
{}^LG^k_{3} 
&\lesssim \int_{u_0}^{u}\int_{0}^{\ub} \delta^{\frac{5}{2}} M^2  \langle u^\prime \rangle^{-2+\frac{3}{2}} \cdot \|L^2 \varphi_{k_1}\|^2_{L^4(S_{\ub^{\prime},u^{\prime}})}\di  \ub^{\prime} \di u^{\prime}\\
&\lesssim \int_{u_0}^{u}  \delta^{\frac{5}{2}} M^2 \langle u^\prime \rangle^{-\frac{1}{2}} \sum_{k_1 \leq i \leq k_1+1} \langle u^\prime \rangle^{-1}\|L^2 \varphi_{i}\|^2_{L^2(C_{u^{\prime}})}\di u^{\prime}\\
&\lesssim \delta^{\frac{1}{2}} \langle u \rangle^{-\frac{1}{2}} M^4, \quad k_1 \leq N-3.
\end{align*}
Here the bootstrap assumption for ${}^LF_{1+k}(u, \ub), \, k\leq N-1$ is used.

For $  {}^L\mathcal{H}^k$, it can be decomposed into the following terms: for $k_1+k_2\leq k \leq N-1, \, k_1\leq k_2$,
\begin{align*}
{}^LH^k_{1} &= \doubleint_{\Dp}\delta^2  \langle u \rangle^{\frac{3}{2}}  | D \varphi_{k_1}|^2  | \Lb L\varphi_{k_2}|^2  \di \mu_{\D};\\
 {}^LH^k_{2} &=  \doubleint_{\Dp} \delta^2 \langle u \rangle^{\frac{3}{2}}   |  D \varphi_{k_1}|^2  |\nablaslash L\varphi_{k_2}|^2  \di \mu_{\D};\\
 {}^LH^k_{3} &= \doubleint_{\Dp}\delta^2  \langle u \rangle^{\frac{3}{2}}   |  \Db\varphi_{k_1}|^2 |L^2\varphi_{k_2}|^2  \di \mu_{\D}.
\end{align*}
Knowing that $k_1\leq [\frac{N}{2}] \leq N-3,\, k_2 \leq N-1$, we can apply $L^\infty, L^\infty, L^2, L^2$ to the four factors in ${}^LH^k_i$. By Proposition \ref{proposition L infinity and L4 estimates in R1}, $\|D \varphi_{k_1}\|_{L^\infty(S_{\ub,u})} \lesssim \delta^{-\frac{1}{2}}M  \langle u \rangle^{-1}$, and the bootstrap assumption on ${}^{t}F_{1+k} (u, \ub), \, k \leq N-1$, 
\begin{align*}
{}^LH^k_{1} &\lesssim \doubleint_{\Dp}\delta^2 \langle u^\prime \rangle^{\frac{3}{2}}  \| D \varphi_{k_1}\|^2_{L^\infty} |\Lb L \varphi_{k_2}|^2\di \mu_{\D}\\
&\lesssim \int_{u_0}^u \delta M^2  \langle u^\prime \rangle^{-\frac{1}{2}} \|\Lb L \varphi_{k_2}\|^2_{L^2(C_{u^\prime})}\di u^\prime \lesssim \delta \langle u \rangle^{-\frac{3}{2}} M^4, \quad k_2 \leq N-1.
\end{align*}
In the same way, taking advantage of the bootstrap assumption on $E_{l} (u, \ub), \, k \leq N$, there is
\begin{equation*}
{}^LH^k_{2} 
\lesssim \int_{u_0}^u \delta M^2  \langle u^\prime \rangle^{-\frac{1}{2}} \|\nablaslash L \varphi_{k_2}\|^2_{L^2(C_{u^\prime})}\di u^\prime \lesssim \delta \langle u \rangle^{-\frac{3}{2}} M^4, \quad k_2 \leq N-1.
\end{equation*}
As for ${}^LH^k_{3}$, noting that $\| \Db \varphi_{k_1}\|_{L^\infty(S_{\ub,u})} \lesssim \delta^{\frac{1}{4}}M  \langle u \rangle^{-\frac{3}{2}}$ (see Proposition \ref{proposition L infinity and L4 estimates in R1}), we have
\begin{align*}
{}^LH^k_{3} 
&\lesssim \int_{u_0}^u \delta^{\frac{5}{2}} M^2  \langle u^\prime \rangle^{-\frac{3}{2}} \|L^2 \varphi_{k_2}\|^2_{L^2(C_{u^\prime})}\di u^\prime \lesssim \delta^{\frac{1}{2}} \langle u \rangle^{-\frac{1}{2}} M^4, \quad k_2 \leq N-1,
\end{align*}
where the bootstrap assumption on ${}^LF_{1+k} (u, \ub), \, k \leq N-1$ is used.

In summary, we have obtained
\begin{equation*}
{}^L\mathcal{G}^k + {}^L\mathcal{H}^k  \lesssim\delta^{\frac{1}{2}} M^4, \quad k \leq N-1.
\end{equation*}

Next, we turn to ${}^LW^k $. In view of \eqref{commutates-Y-L-Box},
$[\Box_g,L]\varphi_k\sim \frac{1}{r^2}(L\varphi_k-\Lb\varphi_k)+\frac{1}{r} \laplacianslash\varphi_k + \frac{1}{r}\Box_g \varphi_k,$
 then 
 \begin{equation}\label{esti-W-R1}
\begin{split}
  {}^LW^k \lesssim & \doubleint_{\Dp} \delta^2 \langle u \rangle^{-4+\frac{3}{2}} \left( |L \varphi_{k}|^2+|\Lb \varphi_{k}|^2+|\nablaslash \varphi_{k+1}|^2 \right) \di \mu_{\D} \\
  &+\doubleint_{\Dp} \delta^2 \langle u \rangle^{-2+\frac{3}{2}}  |\Box_g \varphi_k|^2 \di \mu_{\D}, \quad k \leq N-1.
\end{split}
\end{equation}
By the improved result on $E_k(u, \ub), \, k \leq N-1$ \eqref{energy-esti-Ek}, there is 
$$\doubleint_{\Dp} \delta^2|u|^{-4+\frac{3}{2}}\left(|L \varphi_{k}|^2+|\Lb \varphi_{k}|^2\right) \di \mu_{\D} \lesssim \delta^2 I^2_{N}+\delta^{\frac{5}{2}} M^4, \quad k \leq N-1.$$
For the third term associated to $\nablaslash\varphi_{k+1}$, we should note that $k+1\leq N$, and hence it hits the top order derivative. Thus, we should make use of the bootstrap assumption on $E_N(u, \ub)$, then
$$\doubleint_{\Dp} \delta^2 \langle u \rangle^{-4+\frac{3}{2}}|\nablaslash \varphi_{k+1}|^2 \di \mu_{\D} \lesssim  \delta^3 \langle u \rangle^{-\frac{3}{2}}M^2, \quad k \leq N-1.$$
For the last term on the right hand of \eqref{esti-W-R1}, we appeal to \eqref{energy-esti-Qk}, then
\begin{equation*}
\doubleint_{\Dp}\langle u \rangle^{-2 + \frac{3}{2}}|\Box_g \varphi_k|^2\di \mu_{\D}\lesssim \delta^{\frac{1}{2}} M^4, \quad k \leq N-1.
\end{equation*}

Now, we conclude that
\begin{equation}\label{energy-esti-F-L-R1}
{}^LF_{1+k}(u,\ub) + {}^{L}\Fb_{1+k}(u,\ub) 
\lesssim I^2_{N+1}+\delta^{\frac{1}{2}} M^4, \quad k \leq N-1.
\end{equation}

Combining \eqref{energy-esti-F-L-R1} and the previously enhanced results \eqref{energy-esti-Ek}, we can improve the $L^\infty$ and $L^4$ estimates for $L\varphi_k$.
Define 
\begin{equation}\label{def-I-mathbb-R1}
\mathbb{I}^2_{k} := I^2_{k}+\delta^{\frac{1}{2}} M^4.
\end{equation}
\begin{proposition}\label{improved L infinity and L4 estimates in R1} 
 In Region $\mathcal{R}_1$, we have
\begin{align*}
 \delta^{\frac{1}{2}}\langle u \rangle \|L\varphi_p\|_{L^\infty(\mathcal{R}_1)} + \delta^{-\frac{1}{4}} \langle u \rangle^{\frac{3}{2}}\|\nablaslash\varphi_p\|_{L^\infty(\mathcal{R}_1)} & \lesssim \mathbb{I}_{N}, \quad 0\leq p\leq  N-3, \\
\delta^{\frac{1}{2}} \langle u \rangle^{\frac{1}{2}}\| L\varphi_q \|_{L^{4}(S_{\ub,u})} +\delta^{-\frac{1}{4}}\langle u \rangle  \|\nablaslash \varphi_q\|_{L^{4}(S_{\ub,u})} & \lesssim \mathbb{I}_{N}, \quad 0\leq q\leq  N-2.
\end{align*}
\end{proposition}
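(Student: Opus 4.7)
The plan is to mimic the proof of Proposition \ref{proposition L infinity and L4 estimates in R1} verbatim, but to feed in the improved energy bounds \eqref{energy-esti-Ek} and \eqref{energy-esti-F-L-R1} (so that $E_k,\Eb_k\lesssim \mathbb{I}_N^2$ and ${}^LF_{1+k},{}^L\Fb_{1+k}\lesssim \mathbb{I}_{N+1}^2$ for all $k\leq N-1$) in place of the bootstrap constant $M^2$. The price paid is that these enhanced bounds are now valid only up to order $k\leq N-1$ rather than to the top order $N$; since each application of a Sobolev step consumes one extra $\Omega$-derivative, this forces the admissible range for the $L^4$ estimate down to $q\leq N-2$ (one derivative lost to the cone Sobolev) and for the $L^\infty$ estimate down to $p\leq N-3$ (a second derivative lost to the sphere Sobolev \eqref{Sobolev Inequlities-S2}).

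Concretely, for the $L^4$ bound on $L\varphi_q$, $q\leq N-2$, I apply the $C_u$-cone Sobolev \eqref{Sobolev Inequlities-Cu} used in Proposition \ref{proposition L infinity and L4 estimates in R1},
\begin{align*}
\langle u\rangle^{\frac{1}{2}}\|L\varphi_q\|_{L^4(S_{\ub,u})}\lesssim \|L^2\varphi_q\|_{L^2(C_u)}^{\frac{1}{2}}\bigl(\|L\varphi_q\|_{L^2(C_u)}^{\frac{1}{2}}+\|L\varphi_{q+1}\|_{L^2(C_u)}^{\frac{1}{2}}\bigr),
\end{align*}
and insert $\|L^2\varphi_q\|_{L^2(C_u)}\lesssim \delta^{-1}\mathbb{I}_{N+1}$ from ${}^LF_{1+q}\lesssim \mathbb{I}_{N+1}^2$ together with $\|L\varphi_q\|_{L^2(C_u)},\|L\varphi_{q+1}\|_{L^2(C_u)}\lesssim \mathbb{I}_N$ from $E_{q+1}\lesssim \mathbb{I}_N^2$ (valid because $q+1\leq N-1$), which yields the first asserted bound. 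For $\nablaslash\varphi_q$ with $q\leq N-2$ I instead use the $\Cb_\ub$-cone Sobolev, which replaces $L$ by $\Lb$ on the right and carries a weight $r\sim\langle u\rangle$ on the left,
\begin{align*}
\langle u\rangle\|\nablaslash\varphi_q\|_{L^4(S_{\ub,u})}\lesssim \|\Lb\nablaslash\varphi_q\|_{L^2(\Cb_\ub)}^{\frac{1}{2}}\bigl(\|\nablaslash\varphi_q\|_{L^2(\Cb_\ub)}^{\frac{1}{2}}+\|\nablaslash\varphi_{q+1}\|_{L^2(\Cb_\ub)}^{\frac{1}{2}}\bigr).
\end{align*}
The commutator $[\Lb,\nablaslash]=\tfrac{\eta}{r}\nablaslash$ from \eqref{commutates} bounds the top factor by $\|\Lb\varphi_{q+1}\|_{L^2(\Cb_\ub)}+\text{l.o.t.}\lesssim \delta^{\frac{1}{2}}\mathbb{I}_N$ from $\Eb_{q+1}\lesssim \mathbb{I}_N^2$, and the remaining factors are $\lesssim \mathbb{I}_N$ from $\Eb_q,\Eb_{q+1}$; combined, this gives $\|\nablaslash\varphi_q\|_{L^4}\lesssim \delta^{\frac{1}{4}}\langle u\rangle^{-1}\mathbb{I}_N$. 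The $L^\infty$ bounds for $p\leq N-3$ are then obtained by applying the sphere Sobolev $\|\phi\|_{L^\infty(S_{\ub,u})}\lesssim r^{-\frac{1}{2}}(\|\phi\|_{L^4(S_{\ub,u})}+\|\Omega\phi\|_{L^4(S_{\ub,u})})$ from \eqref{Sobolev Inequlities-S2} to $\phi=L\varphi_p$ and $\phi=\nablaslash\varphi_p$, using $[\Omega,L]=0$ and $[\Omega,\nablaslash]=\text{l.o.t.}$ from \eqref{commutates} to pass $\Omega$ through, spending one more order, and substituting $r\sim\langle u\rangle$ valid in $\R_1$.

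I do not foresee a real obstacle: the analytic work has already been done in Sections \ref{sec-energy-esti-Ek-Ebk}--\ref{sec-energy-estimate-R1}, and what remains is to pair each derivative with the correct cone so that the $\delta$-weights interlock properly---namely $\delta^{-1/2}$ for the $L$-derivative (naturally produced by ${}^LF_{1+k}$ on $C_u$) and $\delta^{1/4}$ for the angular derivative (naturally produced by the short-pulse scaling of $\|\Lb\varphi_k\|_{L^2(\Cb_\ub)}$). The only bookkeeping subtlety is verifying that the commutator $[\Lb,\nablaslash]=\tfrac{\eta}{r}\nablaslash$ indeed contributes merely a lower-order term, which is immediate in $\R_1$ since there $\eta\sim 1$ and $r\sim\langle u\rangle\gtrsim 1$.
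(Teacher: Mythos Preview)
Your overall strategy---rerun the proof of Proposition~\ref{proposition L infinity and L4 estimates in R1} with the improved energies \eqref{energy-esti-Ek}, \eqref{energy-esti-F-L-R1} in place of $M^2$, at the cost of one derivative in the admissible range---is exactly what the paper intends, and your treatment of $L\varphi_q$ via the $C_u$-Sobolev \eqref{Sobolev Inequlities-Cu} is correct.

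The argument for $\nablaslash\varphi_q$, however, has a genuine slip. The $\Cb_{\ub}$-Sobolev inequality you write carries weight $\langle u\rangle$ on the left; the standard incoming-cone Sobolev (and the only form compatible with the $C_u$ inequality \eqref{Sobolev Inequlities-Cu} stated in the paper) has weight $r^{1/2}\sim\langle u\rangle^{1/2}$. Your version is false in general: a spherically symmetric bump in $u$ of height $1$ near $u_1$ gives left side $\sim r_1^{3/2}$ and right side $\sim r_1$. Your computation still reaches the correct bound only because a second error cancels the first: you estimate $\|\Lb\nablaslash\varphi_q\|_{L^2(\Cb_{\ub})}\lesssim\|\Lb\varphi_{q+1}\|_{L^2(\Cb_{\ub})}$, but in fact $\nablaslash\Lb\varphi_q\sim r^{-1}\Lb\varphi_{q+1}$ carries an $r^{-1}$ which, on $\Cb_{\ub}^{[u_0,u]}$ where $r(u')\geq r(u)$, yields the sharper $\|\Lb\nablaslash\varphi_q\|_{L^2(\Cb_{\ub})}\lesssim\langle u\rangle^{-1}\delta^{1/2}\mathbb{I}_N$.

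Two clean fixes are available. Either keep the $\Cb_{\ub}$-route but use the correct $r^{1/2}$ weight and the sharper bound on $\|\Lb\nablaslash\varphi_q\|$ just described; or---more in line with the paper, which only states \eqref{Sobolev Inequlities-Cu} and proves Proposition~\ref{proposition L infinity and L4 estimates in R1} on $C_u$---apply the $C_u$-Sobolev to $\psi=\nablaslash\varphi_q$ as well. On $C_u$ one has $r\sim\langle u\rangle$ constant, so $\|L\nablaslash\varphi_q\|_{L^2(C_u)}\sim\langle u\rangle^{-1}\|L\varphi_{q+1}\|_{L^2(C_u)}\lesssim\langle u\rangle^{-1}\mathbb{I}_N$ from $E_{q+1}$, and combining with $\|\nablaslash\varphi_{q+1}\|_{L^2(C_u)}\lesssim\delta^{1/2}\mathbb{I}_N$ gives $\langle u\rangle^{1/2}\|\nablaslash\varphi_q\|_{L^4}\lesssim\langle u\rangle^{-1/2}\delta^{1/4}\mathbb{I}_N$, i.e.\ the claimed estimate.
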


With the help of Proposition \ref{improved L infinity and L4 estimates in R1}, we can continue with the case of $E_{N}(u,\ub), \, \Eb_{N}(u,\ub)$ immediately.
\subsubsection{Energy estimates for $E_{N}(u,\ub), \, \Eb_{N}(u,\ub)$}\label{sec-energy-esti-EN-EbN}

In this top order case: $k=N$, we can proceed along the lines of Section \ref{sec-energy-esti-Ek-Ebk}, except that now \eqref{esti-Qk-4N-1} is not valid for $H^N_4$, due to the restriction of regularity. Alternatively, taking advantage of the improved result of Proposition \ref{improved L infinity and L4 estimates in R1} (noting that $k_1 \leq [\frac{N}{2}] \leq N-3$),
\begin{align*} 
H^N_4& \lesssim \sum_{k_1+k_2\leq N, k_1 \leq k_2} \doubleint_{\Dp}  \langle u \rangle^{\frac{3}{2}} |L\varphi_{k_1}|_{L^\infty}^2  |  \nablaslash \varphi_{k_2} |^2\di\mu_{\D}\\
&\lesssim \int_{0}^{\ub} \delta^{-1}\langle u \rangle^{ -\frac{1}{2}}\mathbb{I}^2_{N}  \di {\ub^\prime} \int_{\Cb_{\ub^\prime}} | \nablaslash\varphi_{k_2}|^2\di\mu_{\Cb_{\ub}},
\end{align*}
where the Gr\"{o}nwall's inequality works. In conclusion, there is
 \begin{equation}\label{energy-esti-EN}
 E_N(u,\ub)+\Eb_N(u,\ub) \lesssim \mathbb{I}^2_{N+1}. 
 \end{equation}

\subsubsection{Energy estimates for ${}^{\tilde S}F_{1+k}(u,\ub),{}^{\tilde S}\Fb_{1+k}(u,\ub), \, k \leq N-1$}\label{sec-tilde-S-energy-estimates-R1}
In this section, we shall make use of Proposition \ref{improved L infinity and L4 estimates in R1} and also the improved $E_k(u, \ub), \, k \leq N-1$ \eqref{energy-esti-Ek} to estimate ${}^{\tilde S}F_{1+k}(u,\ub),{}^{\tilde S}\Fb_{1+k}(u,\ub)$, $k \leq N-1$.
Taking the multiplier $\xi = \delta^{-1} \Lb$ and $\psi =\tilde S \varphi_k$ yields the energy inequality \eqref{energy-ineq-Lb-phi-R1-summary} with $\psi $ being replaced by $\tilde S \varphi_k$. That is,
\begin{equation}\label{energy-ineq-Lb-S-varphi-R1}
{}^{\tilde S}F_{1+k}(u,\ub) + {}^{\tilde S}\Fb_{1+k}(u,\ub)  
\lesssim  I_{N+1}^2 + \doubleint_{\Dp} |\Box_g \tilde S \varphi_k|^2\di \mu_{\D},
\end{equation}
where the double integrated term is split as $\iint_{\Dp} |\Box_g \tilde S \varphi_k|^2\di \mu_{\D} = {}^{\tilde S}\mathcal{G}^k +{}^{\tilde S}\mathcal{H}^k + {}^{\tilde S}W^k$, with $k_1+k_2\leq k \leq N-1, \, k_1\leq k_2$,
\begin{align*}
{}^{\tilde S}\mathcal{G}^k&=  \doubleint_{\Dp} |Q(\p \tilde S \varphi_{k_1},\p \varphi_{k_2})|^2\di\mu_{\D}, \\
{}^{\tilde S}\mathcal{H}^k&= \doubleint_{\Dp} \sum_{i \leq 1}  |Q(\p \varphi_{k_1},\p \tilde S^i \varphi_{k_2})|^2\di{\mu_{\D}}, \\
{}^{\tilde S}W^k&=
\doubleint_{\Dp} |[\Box_g,\tilde S]\varphi_k|^2\di{\mu_{\D}}.
\end{align*}

By analogy with ${}^{L}\mathcal{G}^k$ and ${}^{L}\mathcal{H}^k$, ${}^{\tilde S}\mathcal{G}^k, {}^{\tilde S}\mathcal{H}^k$ can be divided into ${}^{\tilde S}\mathcal{G}^k = {}^{\tilde S}G^k_{1} + \cdots + {}^{\tilde S}G^k_{3}$ and $ {}^{\tilde S}\mathcal{H}^k = {}^{\tilde S}H^k_{1} + \cdots + {}^{\tilde S}H^k_{3}$ respectively. Each of them can be bounded in a similar way as ${}^{L}G^k_{i}$, ${}^{L}H^k_{i}$.

${}^{\tilde S}\mathcal{G}^k = {}^{\tilde S}G^k_{1} + \cdots + {}^{\tilde S}G^k_{3}$, where for $k_1+k_2\leq k \leq N-1, \, k_1\leq k_2$,
\begin{align*}
{}^{\tilde S}G^k_{1}  &= \doubleint_{\Dp} | D\varphi_{k_2}|^2  |\Lb \tilde S \varphi_{k_1}|^2 \di \mu_{\D};\\
{}^{\tilde S}G^k_{2}  &= \doubleint_{\Dp} | D\varphi_{k_2}|^2 |\nablaslash \tilde S \varphi_{k_1}|^2 \di\mu_{\D};\\
{}^{\tilde S}G^k_{3}  &=  \doubleint_{\Dp} | \Db\varphi_{k_2}|^2  |L \tilde S \varphi_{k_1}|^2 \di\mu_{\D}.
\end{align*}
In the same way as ${}^{L}\mathcal{G}^k$, we can apply $L^4$ to all of the four factors in ${}^{\tilde S} G^k_i$ and we should always note that $k_1\leq [\frac{N}{2}] \leq N-3$. Knowing that $ \| D\varphi_{k_2}\|_{L^4(S_{\ub,u})} \lesssim \delta^{- \frac{1}{2}}M  \langle u \rangle^{-\frac{1}{2}}$, $k_2 \leq N-1$,
\begin{align*}
{}^{\tilde S}G^k_{1} 
&\lesssim \int_{0}^{\ub} \di \ub^{\prime} \int_{\Cb_{\ub^\prime}} \delta^{-1} M^2 \langle u^\prime \rangle^{-1} \sum_{k_1 \leq i \leq k_1+1} \langle u^\prime \rangle^{-1} |\Lb \tilde S \varphi_{i}|^2 \di \mu_{\Cb_{\ub}} \\
&\lesssim \delta M^4, \quad k_1 \leq N-3,
\end{align*}
where the Sobolev inequality $S_{\ub,u}$ \eqref{Sobolev-L2-L4-S2} and the bootstrap assumption for ${}^{\tilde S} \Fb_{1+k} (u, \ub)$, $k \leq N-1$ are used.
Similarly, 
\begin{align*}
{}^{\tilde S} G^k_{2} 
&\lesssim \int_{0}^{\ub}  \di \ub^{\prime}  \int_{\Cb_{\ub^\prime}} \delta^{-1} M^2 \langle u^\prime \rangle^{-1}  \sum_{k_1 \leq i \leq k_1+1} \langle u^\prime \rangle^{-1} |\nablaslash \tilde S \varphi_{i} |^2 \di \mu_{\Cb_{\ub}}  \\
&\lesssim \int_{0}^{\ub}\delta^{-1} M^2 \sum_{k_1 \leq j \leq k_1+2} \|\Lb \varphi_{j}\|^2_{L^2(\Cb_{\ub^{\prime}})}\di \ub^{\prime}\\
&\lesssim \delta M^4, \quad k_1 \leq N-3,
\end{align*}
where we have made use of the bootstrap assumption for $\Eb_{l} (u, \ub), \, l \leq N$. For ${}^{\tilde S}G^k_{3}$, noting that, $ \| \Db \varphi_{k_2}\|_{L^4(S_{\ub,u})} \lesssim \delta^{\frac{1}{4}}M  \langle u \rangle^{-1},$
\begin{align*}
{}^{\tilde S} G^k_{3} 
&\lesssim \int_{u_0}^{u}  \delta^{\frac{1}{2}} M^2 \langle u^\prime \rangle^{-2} \sum_{k_1 \leq i \leq k_1+1} \langle u^\prime \rangle^{-1}\|L \tilde S \varphi_{i}\|^2_{L^2(C_{u^{\prime}})}\di u^{\prime}\\
&\lesssim \delta^{\frac{1}{2}} \langle u \rangle^{-2} M^4, \quad k_1 \leq N-3.
\end{align*}
Here we have used the bootstrap assumption for ${}^tF_{1+k}(u, \ub), \, k\leq N-1$.

For ${}^{\tilde S}\mathcal{H}^k$, it can be split into the following terms: for $k_1+k_2\leq k \leq N-1, \, k_1\leq k_2$,
\begin{align*}
{}^{\tilde S}H^k_{1} &= \doubleint_{\Dp} \sum_{i \leq 1} | D \varphi_{k_1}|^2  | \Lb \tilde S^i \varphi_{k_2}|^2  \di \mu_{\D};\\
{}^{\tilde S}H^k_{2} &=  \doubleint_{\Dp}  \sum_{i \leq 1} |D \varphi_{k_1}|^2  |\nablaslash \tilde S^i \varphi_{k_2}|^2  \di \mu_{\D}; \\
{}^{\tilde S}H^k_{3} &= \doubleint_{\Dp}  \sum_{i \leq 1} |\Db\varphi_{k_1}|^2 |L \tilde S^i \varphi_{k_2}|^2  \di \mu_{\D}.
\end{align*}
As in the case of ${}^{L}\mathcal{H}^k$, we can apply $L^\infty, L^\infty, L^2, L^2$ to the four factors in ${}^LH^k_i$ and we should always note that $k_2 \leq N-1$.
For ${}^{\tilde S}H^k_{1}$, in view of $\| D \varphi_{k_1}\|^2_{L^\infty} \lesssim  \delta^{-1} M^2  \langle u \rangle^{-2}$,
\begin{align*}
{}^{\tilde S}H^k_{1} 
&\lesssim \int_{0}^{\ub}  \di \ub^\prime  \int_{\Cb_{\ub^\prime}} \sum_{i \leq 1} \delta^{-1} M^2  \langle u^\prime \rangle^{-2} |\Lb \tilde S^i \varphi_{k_2}|^2 \di \mu_{\Cb_{\ub}} \lesssim \delta M^4, \quad k_2 \leq N-1.
\end{align*}
Here we have used the bootstrap assumptions for $\Eb_{k}(u, \ub), \, k\leq N-1$ and ${}^{\tilde S} \Fb_{1+k}, \, k\leq N-1$.
As for ${}^{\tilde S}H^k_{2}$,  we should use the improved $L^\infty$ estimate (Proposition \ref{improved L infinity and L4 estimates in R1}): $\| D \varphi_{k_1}\|_{L^\infty(S_{\ub,u})} \lesssim \delta^{-\frac{1}{2}} \mathbb{I}_{N}  \langle u \rangle^{-1} + \delta^{\frac{1}{4}} M  \langle u \rangle^{-\frac{3}{2}}$ and then
\begin{align*}
{}^{\tilde S}H^k_{2} 
&\lesssim \int_{0}^{\ub} \di \ub^\prime  \int_{\Cb_{\ub^\prime}} \sum_{i \leq 1} \delta^{-1} \mathbb{I}^2_{N} \langle u \rangle^{-2}  |\nablaslash \tilde S^i \varphi_{k_2}|^2 \di \mu_{\Cb_{\ub}} \\
&\lesssim \int_{0}^{\ub} \delta^{-1} \mathbb{I}^2_{N} \left(\|\nablaslash \varphi_{k_2}\|^2_{L^2(\Cb_{\ub^\prime})} +  \sum_{j \leq k_2 + 1} \|\Lb \varphi_{j}\|^2_{L^2(\Cb_{\ub^\prime})} \right) \di \ub^\prime \\
&\lesssim  \int_{0}^{\ub}\delta^{-1} \mathbb{I}^2_{N} (\mathbb{I}^2_{N} + \delta M^2) \di \ub^\prime \lesssim \mathbb{I}^2_{N} (\mathbb{I}^2_{N}+\delta M^2), \quad k_2 \leq N-1,
\end{align*}
where we have used the improved $\Eb_{k}(u, \ub), \, k\leq N-1$ \eqref{energy-esti-Ek} and the bootstrap assumption for $\Eb_{l}(u, \ub), \, l \leq N$.
At last, noting that $\| \Db \varphi_{k_1}\|_{L^\infty(S_{\ub,u})} \lesssim \delta^{\frac{1}{4}}M  \langle u \rangle^{-\frac{3}{2}}$, ${}^{\tilde S}H^k_{3}$ enjoys the estimate
\begin{equation*}
{}^{\tilde S}H^k_{3} 
\lesssim \int_{u_0}^u \delta^{\frac{1}{2}} M^2  \langle u^\prime \rangle^{-3} \|L \tilde S \varphi_{k_2}\|^2_{L^2(C_{u^\prime})}\di u^\prime \lesssim \delta^{\frac{1}{2}} \langle u \rangle^{-2} M^4, \quad k_2 \leq N-1,
\end{equation*}
where we make use of the bootstrap assumption on ${}^tF_{1+k} (u, \ub), \, k \leq N-1$.

Putting all these estimates together, we conclude
\begin{equation*}
 {}^{\tilde S}\mathcal{H}^k+ {}^{\tilde S}\mathcal{G}^k \lesssim \mathbb{I}^2_{N}+ \delta^{\frac{1}{2}} M^4. 
\end{equation*}

For $ {}^{\tilde S}W^k$, we recall \eqref{commutator-S-Box-R1} and utilize \eqref{energy-esti-Ek}, \eqref{energy-esti-EN} and \eqref{energy-esti-Qk} to derive 
\begin{align*}
 {}^{\tilde S}W^k &\lesssim\doubleint_{\Dp} \langle u \rangle^{-2}\left(|L \varphi_{k}|^2+|\Lb \varphi_{k}|^2+|\nablaslash \varphi_{k+1}|^2\right) + |\Box_g \varphi_k|^2 \di \mu_{\D}\\
&\lesssim \mathbb{I}^2_{N+1}+\delta^{\frac{1}{2}} M^4, \quad k \leq N-1.
\end{align*}

We now summarize the above estimates as
\begin{equation}\label{energy-esti-F-2-R1}
{}^{\tilde S}F_{1+k}(u,\ub)+{}^{\tilde S}\Fb_{1+k}(u,\ub) \lesssim I^2_{N+1}+\delta^{\frac{1}{2}} M^4, \quad k \leq N-1.
\end{equation}

\subsubsection{Energy estimates for ${}^tF_{1+k}(u,\ub), \, k\leq N-1$}\label{sec-T-E}
In order to retrieve the estimate for ${}^tF_{1+k}(u,\ub)$, $k\leq N-1$, we will use the improved $L^\infty$ estimate (Proposition \ref{improved L infinity and L4 estimates in R1}) and additionally the upgraded $E_l (u, \ub)$, $l \leq N$ \eqref{energy-esti-Ek}, \eqref{energy-esti-EN}. Besides, since ${}^tF_{1+k}(u,\ub)$ concerns the transversal derivative $\Lb$ on $C_u$, our estimate will be done through integrating along $L$ and making use of the wave equation. 

Letting
\begin{equation}\label{def-chib}
\chib^2[\psi](u,\ub)=\int_{S_{\ub, u}}|\Lb \psi|^2 r^2 \di \sigma_{S^2},
\end{equation}
we derive with the aid of the expression of wave operator,
\begin{align*} 
\p_{\ub}\chib^2[\psi]&=\int_{S_{\ub, u}}2\Lb \psi(L\Lb \psi+\frac{\eta}{r}\Lb \psi)r^2 \di \sigma_{S^2} \\
                                  &=\int_{S_{\ub, u}}2\Lb \psi(\frac{\eta}{r}L\psi+\eta\laplacianslash \psi-\eta\Box_g\psi) r^2 \di \sigma_{S^2} \\
                                  &\lesssim \delta^{-1}\chib^2[\psi]+ \int_{S_{\ub, u}} \delta \left( \langle u\rangle^{-2}|L\psi|^2+|\laplacianslash \psi|^2+|\Box_g\psi|^2\right) r^2 \di \sigma_{S^2}.
\end{align*}
Suppose $\psi \equiv 0$ on $\Cb_{0}$. We then integrate along $\p_{\ub}$ to obtain,
\begin{equation}\label{esti-ineq-Lpsi-Cu}
\chib^2[\psi] \lesssim \int_0^{\ub} \delta^{-1}\chib^2[\psi] \di \ub^\prime + \int_{C_u}  \delta \left( \langle u\rangle^{-2}|L\psi|^2+|\laplacianslash \psi|^2+|\Box_g\psi|^2 \right) \di \mu_{C_{u}}.
\end{equation}

Taking $\psi= \varphi_k, \, k \leq N-1$ in \eqref{esti-ineq-Lpsi-Cu}, knowing that $\varphi \equiv 0$ on $\Cb_{0}$, we have
\begin{equation}\label{esti-Lb-varphik-ineq}
\chib^2[\varphi_k] \lesssim  \int_0^{\ub} \delta^{-1}\chib^2[\varphi_k] \di \ub^\prime+ \int_{C_u}  \delta \left( \langle u\rangle^{-2}|L\varphi_k|^2+|\laplacianslash \varphi_k|^2+|\Box_g\varphi_k|^2 \right)  \di \mu_{C_{u}}.
\end{equation}
Using the improved results for $E_l(u, \ub), \, l \leq N$ \eqref{energy-esti-Ek}, \eqref{energy-esti-EN}, we obtain
\begin{equation}\label{esti-Lb-varphik-leading-Cu}
 \int_{C_u}  \delta \left( \langle u\rangle^{-2}|L\varphi_k|^2+|\laplacianslash \varphi_k|^2 \right) \di \mu_{C_{u}} \lesssim\delta\langle u\rangle^{-2}\mathbb{I}^2_{N+1}, \quad k \leq N-1.
\end{equation}
For the term $\delta \int_{C_u} |\Box_g\varphi_k|^2\di \mu_{C_u}$,  we make the splitting: for $k_1+k_2\leq k \leq N-1, \, k_1 \leq k_2$,
 \begin{align*}
 S^k_1 &= \delta \int_{C_u}| D \varphi_{k_1}|^2  |\Lb\varphi_{k_2}|^2\di \mu_{C_u}, \quad & S^k_2 = \delta \int_{C_u}  | \Db\varphi_{k_1}|^2 |L\varphi_{k_2} |^2\di \mu_{C_u},\\
 S^k_3 &= \delta \int_{C_u} | \Db\varphi_{k_1}|^2  |\nablaslash\varphi_{k_2} | ^2\di\mu_{C_u}, \quad & S^k_4 = \delta \int_{C_u}|L\varphi_{k_1}|^2 |\nablaslash\varphi_{k_2}|^2\di \mu_{C_u}.
 \end{align*}
 We now treat these error terms one by one.
In view of the improved $\|L \varphi_{k_1} \|_{L^\infty(\R_1)}$ (Proposition \ref{improved L infinity and L4 estimates in R1}), $\| \Db \varphi_{k_1} \|_{L^\infty(\R_1)} \lesssim \delta^{\frac{1}{4}}M \langle u \rangle^{-\frac{3}{2}}$,  $k_1 \leq [\frac{N}{2}] \leq N-3$, and the enhanced $E_k(u, \ub), \, k \leq N-1$ \eqref{energy-esti-Ek},
\begin{align*}
|S^1_k|  &  \lesssim  \mathbb{I}^2_{N} \langle u \rangle^{ -2} \int_0^{\ub} \chib^2 [\varphi_k] (u,\ub^\prime) \di \ub^\prime, \\
 |S^2_k| + |S^3_k| & \lesssim \delta^{ \frac{3}{2}} M^2 \langle u \rangle^{ -3} \left( \| L \varphi_{k_2}\|^2_{L^2(C_{u})}+ \| \nablaslash \varphi_{k_2}\|^2_{L^2(C_{u})} \right) \lesssim \delta^{ \frac{3}{2}} M^4 \langle u \rangle^{ -3},\\
|S^4_k| & \lesssim  \mathbb{I}^2_{N} \langle u \rangle^{-2} \|\nablaslash \varphi_{k_2}\|^2_{L^2(C_{u})} \lesssim \delta \mathbb{I}^4_{N} \langle u\rangle^{ -2}.
\end{align*}
Therefore, for $k \leq N-1$,
\begin{equation}\label{esti-Qk-Cu}
\delta \int_{C_u}|\Box_g\varphi_k|^2\di \mu_{C_u} 
\lesssim   \mathbb{I}^2_{N} \langle u \rangle^{ -2} \int_0^{\ub} \chib^2 [\varphi_k] (u,\ub^\prime) \di \ub^\prime+ \delta \mathbb{I}^4_{N} \langle u\rangle^{ -2}.
\end{equation}
The Gr\"{o}nwall's inequality together with \eqref{esti-Lb-varphik-ineq}-\eqref{esti-Qk-Cu} leads to
 \begin{equation}\label{esti-Lbvarphik-ineq}
\|\Lb  \varphi_k\|^2_{L^2(S_{\ub,u})} \lesssim \delta \langle u\rangle^{-2}\mathbb{I}^2_{N+1}, \quad k \leq N-1.
\end{equation}
Integrating \eqref{esti-Lbvarphik-ineq} over the interval $\ub \in [0, \delta]$ yields
\begin{equation}\label{energy-esti-chib-Cu}
 \| \Lb \varphi_k \|^2_{L^2(C_u)} \lesssim  \delta^{2}  \langle u\rangle^{-2}\mathbb{I}^2_{N+1}, \quad k \leq N-1.
\end{equation}

As for $\|L\Lb \varphi_k\|_{L^2({C_u})}$, we shall make use of the wave equation, which reads
$\eta^{-1} L \Lb\varphi_k = \laplacianslash \varphi_k + \frac{L \varphi_k}{r}- \frac{\Lb \varphi_k}{r} - \Box_g \varphi_k$. Taking \eqref{esti-Lb-varphik-leading-Cu}-\eqref{energy-esti-chib-Cu} into account, we deduce that $$\|L\Lb \varphi_k\|^2_{L^2(C_u)} \lesssim  \langle u \rangle^{-2} \mathbb{I}^2_{N+1}, \quad k\leq N-1.$$
And hence (noting that, $|L\Lb \varphi_k|= \langle u \rangle^{-1}|L\tilde S \varphi_k|$)
\begin{equation}\label{eq-energy-L-S-varphik}
 \|L\tilde S \varphi_k\|_{L^2({C_u})}\lesssim \mathbb{I}_{N+1}, \quad k \leq N-1.
 \end{equation}

For the sake of clarity, we assemble these results with regard to the transversal derivative $\Lb$ on $C_u$ in the following proposition.
\begin{proposition}\label{prop-Lb-S-R1}
For $k \leq N-1$, we have in $\R_1$,
\begin{equation}\label{energy-esti-chib}
 \delta^{-\frac{1}{2}}  \langle u\rangle  \|\Lb \varphi_k\|_{L^2(S_{\ub,u})} + \langle u\rangle  \|L \Lb \varphi_k\|_{L^2(C_u)} +  \|L \tilde S \varphi_k\|_{L^2(C_u)}   \lesssim  \mathbb{I}_{N+1}.
\end{equation}
\end{proposition}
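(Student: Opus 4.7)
The plan is to derive a Gr\"onwall-type inequality in the $\ub$-direction for the sphere integral $\chib^2[\varphi_k](u,\ub) := \int_{S_{\ub,u}}|\Lb\varphi_k|^2 r^2 \di\sigma_{S^2}$, then integrate to obtain the bound on $\|\Lb\varphi_k\|_{L^2(C_u)}$, and finally recover the remaining two quantities algebraically from the wave equation. First I would differentiate $\chib^2[\varphi_k]$ in $\ub$, pulling the derivative under the integral and paying the cost $\tfrac{\eta}{r}$ from the variation of $r^2\di\sigma_{S^2}$ along $L$; since $[L,\Lb]=0$ and the wave equation rewrites $L\Lb\varphi_k$ as $\tfrac{\eta}{r}L\varphi_k + \eta\laplacianslash\varphi_k - \eta\Box_g\varphi_k$, this produces
\[
\p_\ub \chib^2[\varphi_k] \lesssim \delta^{-1}\chib^2[\varphi_k] + \int_{S_{\ub,u}}\delta\bigl(\langle u\rangle^{-2}|L\varphi_k|^2 + |\laplacianslash\varphi_k|^2 + |\Box_g\varphi_k|^2\bigr)r^2 \di\sigma_{S^2}.
\]
Because the trivial data \eqref{eq-data-trivial} forces $\varphi \equiv 0$ on $\Cb_0$, the initial value $\chib^2[\varphi_k](u,0)=0$, so integrating from $0$ to $\ub$ gives an integral inequality amenable to Gr\"onwall.

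Next I would estimate the source terms on $C_u$. The first two pieces, controlling $\delta\langle u\rangle^{-2}\|L\varphi_k\|^2_{L^2(C_u)} + \delta\|\laplacianslash\varphi_k\|^2_{L^2(C_u)}$, are immediately handled by the already-improved energy bounds \eqref{energy-esti-Ek}--\eqref{energy-esti-EN} at orders $k\leq N-1$ and $k+2\leq N+1$, yielding a contribution $\delta\langle u\rangle^{-2}\mathbb{I}^2_{N+1}$. For the nonlinear piece $\delta\int_{C_u}|\Box_g\varphi_k|^2\di\mu_{C_u}$, I would split along the null condition into the four combinations $S^k_1,\dots,S^k_4$ patterned on $H^k_1,\dots,H^k_4$ from Section \ref{sec-energy-esti-Ek-Ebk}. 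Since $k_1 \leq [N/2]\leq N-3$, I place the low-order factor in $L^\infty$ via the improved Proposition \ref{improved L infinity and L4 estimates in R1}. The crucial point is that $S^k_1$, being the only term carrying the bad $\Lb\varphi_{k_2}$ factor, reproduces $\mathbb{I}^2_N\langle u\rangle^{-2}\int_0^\ub \chib^2[\varphi_k](u,\ub')\di\ub'$, which is exactly of the form required for Gr\"onwall, while $S^k_2, S^k_3, S^k_4$ reduce to terms of order $\delta^{3/2}M^4\langle u\rangle^{-3} + \delta\mathbb{I}^4_N\langle u\rangle^{-2}$ using the enhanced $E_{k_2}$ estimates at $k_2\leq N-1$.

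Applying Gr\"onwall in $\ub$ then produces the pointwise bound $\|\Lb\varphi_k\|^2_{L^2(S_{\ub,u})} \lesssim \delta\langle u\rangle^{-2}\mathbb{I}^2_{N+1}$, and integrating over $\ub\in[0,\delta]$ yields $\|\Lb\varphi_k\|^2_{L^2(C_u)}\lesssim \delta^2\langle u\rangle^{-2}\mathbb{I}^2_{N+1}$. To obtain the $L\Lb$ bound, I would rewrite the wave equation as $\eta^{-1}L\Lb\varphi_k = \laplacianslash\varphi_k + r^{-1}(L\varphi_k - \Lb\varphi_k) - \Box_g\varphi_k$; taking $L^2(C_u)$ norms on the right and invoking the already-controlled $E_l$ $(l\leq N)$, the just-obtained $\Lb$ bound, and \eqref{energy-esti-Qk} delivers $\|L\Lb\varphi_k\|_{L^2(C_u)}\lesssim \langle u\rangle^{-1}\mathbb{I}_{N+1}$. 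Since $L\langle u\rangle = 0$, one has $L\tilde S\varphi_k = \langle u\rangle L\Lb\varphi_k$, so the third quantity is controlled simultaneously. The main obstacle is the top-order nonlinear contribution $S^k_1$ at $k_2 = k = N-1$: because $L^4\cdot L^4$ on the top factor is unavailable at this regularity, one must place $\Db\varphi_{k_1}$ or $D\varphi_{k_1}$ in $L^\infty$, which is precisely what the improved Proposition \ref{improved L infinity and L4 estimates in R1} provides, and one needs to verify that the resulting $\mathbb{I}^2_N\langle u\rangle^{-2}$ coefficient in the Gr\"onwall kernel is integrable in $\ub$ over $[0,\delta]$ with the correct power of $\delta$ to close.
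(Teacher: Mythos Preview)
Your proposal is correct and follows essentially the same argument as the paper: define $\chib^2[\varphi_k]$, derive the $\p_\ub$-inequality via the wave equation, integrate from the trivial cone $\Cb_0$, split the nonlinear source into the four $S^k_j$ terms with the improved $L^\infty$ estimates, feed $S^k_1$ back into Gr\"onwall, and then recover $\|L\Lb\varphi_k\|_{L^2(C_u)}$ and $\|L\tilde S\varphi_k\|_{L^2(C_u)}$ algebraically from the wave equation. The only minor imprecision is your citation of \eqref{energy-esti-Qk} for the $\Box_g\varphi_k$ term in the last step: what you actually need is the $L^2(C_u)$ bound on $\Box_g\varphi_k$, which you have already obtained from your own $S^k_j$ analysis together with the just-proven $\chib^2$ estimate, not the spacetime bound \eqref{energy-esti-Qk}.
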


\subsubsection{End of the bootstrap argument in Region $\mathcal{R}_1$.}\label{sec-close-bt-R1}
Putting the estimates \eqref{energy-esti-Ek}, \eqref{energy-esti-F-L-R1}, \eqref{energy-esti-EN}, \eqref{energy-esti-F-2-R1} and Proposition \ref{prop-Lb-S-R1} together, we have arrived at, for $ l \leq N,\, k\leq N-1$,
\begin{equation}\label{Main-derived-estimates-R1}
 E_l(u,\ub) + \Eb_l(u,\ub)+ F_{1+k}(u,\ub) + \Fb_{1+k}(u,\ub)+{}^{t}F_{1+k}(u,\ub) \leq C \mathbb{I}^2_{N+1}, \quad \text{in} \,\, \R_1.
\end{equation}
By choosing $M$ (which depends on the initial data) large enough such that $C I^2_{N+1} \leq \frac{M^2}{4}$, and $\delta$ small enough such that $ C \delta^{\frac{1}{2}}M^{4} \leq \frac{M^2}{4}$, we can replace the $C \mathbb{I}^2_{N+1}$ in \eqref{Main-derived-estimates-R1}  by $\frac{M^2}{2}$, and hence the $M^2$ in \eqref{bootstrap assumption in R1} is replaced by $\frac{M^2}{2}$. The bootstrap argument is closed, which gives rise to the estimate \eqref{main estimates in R1} as well.

\subsubsection{Energy estimates for general derivatives in $\mathcal{R}_1$.}
To continue with general derivatives,  we define for $i=0,1$ and $i+l+k\leq N$,
\begin{align*}
E_{i+l+k}(u,\ub)&= \sum_{p+q=l} \delta^{2p}  \|L\tilde S^i W^l_{p,q}\varphi_k\|^{2}_{L^2(C_u)}+\delta^{2p-1}   \|\nablaslash \tilde S^iW^l_{p,q}\varphi_k\|^{2}_{L^2(C_u)},\\
\Eb_{i+l+k}(u,\ub)&= \sum_{p+q=l} \delta^{2p} \|  \nablaslash \tilde S^iW^l_{p,q}\varphi_k\|^{2}_{L^2(\Cb_{\ub)}}+\delta^{2p-1} \|  \Lb \tilde S^iW^l_{p,q}\varphi_k\|^{2}_{L^2(\Cb_{\ub})},
\end{align*}
and for $l+k\leq N-1$,
\begin{equation*}
{}^{t}F_{1+l+k}(u,\ub)=\sum_{p+q=l} \delta^{2p} \left(\delta^{-2}  \| \tilde S W^l_{p,q}\varphi_k\|^{2}_{L^2(C_u)}  + \|L \tilde S W^l_{p,q}\varphi_k\|^{2}_{L^2(C_u)} \right).
\end{equation*}
The energy estimate \eqref{main estimates in R1} can be extended to general energy norms.
\begin{theorem}\label{Thm-highorder-enengy-esti-R1}
Letting $N\geq6$, we have in  $\mathcal{R}_1$: $u_0 \leq u\leq 1, $ $0\leq \ub\leq \delta$,
\begin{align*}
E_{i+l+k}(u,\ub)+\Eb_{i+l+k}(u,\ub) & \lesssim I^2_{N+1},~~i=0,1,~i+l+k\leq N, \\
{}^{t}F_{1+l+k}(u,\ub) & \lesssim I^2_{N+1},~~~~~~~~i+l+k\leq N-1,
\end{align*}
provided that the initial energy is bounded by $I^2_{N+1}$.
\end{theorem}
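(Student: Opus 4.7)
The plan is to prove Theorem \ref{Thm-highorder-enengy-esti-R1} by induction on the total order $n = l + k$, using the base case $l = 0$ established in Sections \ref{sec-energy-esti-Ek-Ebk}--\ref{sec-T-E}. For each fixed $n \leq N$ and each $(p,q)$ with $p + q = l$, I would introduce the rescaled quantity $\psi_{p,q,i,k} := \delta^p \tilde{S}^i W^l_{p,q}\varphi_k$ and run the multiplier argument of Section \ref{sex-multiplier-R1} on the commuted equation satisfied by $\psi_{p,q,i,k}$. Concretely, applying $\xi_1 = \eta L + \delta^{-1}\Lb$ (for the $E$, $\Eb$, ${}^LF$, ${}^L\Fb$ pieces) and $\delta^{-1}\Lb$ (for the $\tilde{S}$ pieces) to $\psi_{p,q,i,k}$ gives the energy inequality \eqref{energy-ineq-phi-R1-summary} with source $\iint \langle u\rangle^{3/2} |\Box_g \psi_{p,q,i,k}|^2\,\di\mu_\D$. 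The weight $\delta^{2p}$ in the definitions of $E_{i+l+k}$, $\Eb_{i+l+k}$, ${}^tF_{1+l+k}$ is precisely the short-pulse scaling compensating for each $L$-derivative costing a factor $\delta^{-1}$ compared to the ``good'' derivatives.

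The source term decomposes as $\Box_g \psi_{p,q,i,k} = \delta^p \tilde{S}^i W^l_{p,q}\Omega^k \Box_g\varphi + \delta^p[\Box_g, \tilde{S}^i W^l_{p,q}]\varphi_k$. For the first piece, commuting $\tilde{S}^i W^l_{p,q}$ through the null form $Q$ using the null-condition commutator estimates \eqref{eq-null-condition-comm-omega}--\eqref{eq-null-uLb-comm} yields a sum of quadratic terms of the form $D\bar W^{i_1}\varphi_{k_1}\cdot \Db\bar W^{i_2}\varphi_{k_2}$ (with appropriate insertions of $\tilde S$ and matching $\delta$-weights), plus $r^{-1}$-weighted lower-order terms. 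These are handled term-by-term exactly as in the subdivisions $H^k_1,\ldots, H^k_4$ of \eqref{eq-H-k-1-R1}--\eqref{eq-H-k-4-R1} and the analogous $G,H$ splits in Section \ref{sec-tilde-S-energy-estimates-R1}: one factor is placed in $L^\infty$ (or $L^4$) via Proposition \ref{improved L infinity and L4 estimates in R1} applied to $W^l_{p,q}\varphi$, which is legitimate as long as the $L^\infty$/$L^4$ factor carries at most $[N/2]+1\leq N-3$ derivatives; the remaining factor is absorbed by the bootstrap assumption at order $n$. The commutator $[\Box_g, \tilde{S}^i W^l_{p,q}]$ is computed via Lemma \ref{lemma-commuting}, producing terms $\tfrac{1}{r^{l+1-j}}(\Lb \bar W^j \pm L\bar W^j \pm r\laplacianslash \bar W^j)\varphi_k$ of strictly lower $(l,k)$-order and an additional $\bar W^{l-1}\Box_g\varphi_k$ which can be re-expanded using the null form; all of these are bounded by the inductive hypothesis and by \eqref{energy-esti-Qk}.

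For the ${}^tF_{1+l+k}$ estimate I would, as in Section \ref{sec-T-E}, differentiate $\chib^2[\tilde{S} W^l_{p,q}\varphi_k]$ along $L$, use the wave equation to trade $L\Lb$ for $\laplacianslash$ plus $\Box_g$, and apply Gr\"onwall in $\ub$; the improved $E_{l+k}$ and $\Eb_{l+k}$ already obtained feed directly into the estimate \eqref{esti-Lb-varphik-leading-Cu}, while the nonlinear source $\delta\|\Box_g \tilde S W^l_{p,q}\varphi_k\|^2_{L^2(C_u)}$ is split into $S^k_1$-through-$S^k_4$-type terms as in \eqref{esti-Qk-Cu}. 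The bootstrap constants are chosen exactly as in Section \ref{sec-close-bt-R1}: one fixes $M$ so that $C I^2_{N+1}\leq M^2/4$ and $\delta$ so that $C\delta^{1/2}M^4\leq M^2/4$, improving the assumption and closing the argument.

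The main obstacle is bookkeeping: one must verify that at each level $(l,k)$ with $l+k\leq N$ the combinatorics of commuting $\tilde{S}^i W^l_{p,q}\Omega^k$ through $Q$ produces only quadratic terms whose $\delta$-scaling matches the weights $\delta^{2p}$ built into the energies, and whose regularity ($L^\infty$ vs.~$L^2$) distribution respects the constraint $[N/2]+1\leq N-3$. This is delicate at top order ($l+k=N$), where (as already happened for $H^N_4$) one must avoid placing a top-order factor in $L^\infty$ and instead rely on the improved low-order pointwise bounds from Proposition \ref{improved L infinity and L4 estimates in R1} to absorb the top-order term into a Gr\"onwall-able integral. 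Once this bookkeeping is done, every individual integral has already appeared in Sections \ref{sec-energy-esti-Ek-Ebk}--\ref{sec-T-E}, and the argument closes with the same threshold $\delta_0$ depending only on $I_{N+1}$.
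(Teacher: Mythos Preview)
Your proposal is correct and takes essentially the same approach as the paper, which simply states that the theorem follows by an inductive argument on $l$ (the number of $W$-derivatives) and omits the details. The only minor discrepancy is that you induct on the total order $n=l+k$ rather than on $l$ alone, but since $[\Box_g,\Omega]=0$ the $k$-step is trivial and the two schemes are equivalent in practice; your detailed bookkeeping of the $\delta$-weights, the commutator terms from Lemma~\ref{lemma-commuting}, and the top-order $H^N_4$-type obstruction faithfully reproduces what the paper leaves implicit.
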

This theorem can be easily proved by an inductive argument on $l$, i.e., the numbers of $W$ derivative and thus we will omit the details here. 
In the proof, the following $L^\infty$ and $L^4$ estimates (in $\R_1$) can be inferred as well:
\begin{align*}
\delta^{p+\frac{1}{2}}\langle u \rangle \|L W^l_{p,q}\varphi_k\|_{L^\infty(\R_1)}+\delta^{p-\frac{1}{4}}\langle u \rangle^{ \frac{3}{2}}   \|\Db W^l_{p,q}\varphi_k\|_{L^{\infty}(\R_1)} & \lesssim I_{N+1},\quad l+k\leq N-2, \\
\delta^{p+\frac{1}{2}}\langle u \rangle^{ \frac{1}{2}}\|LW^l_{p,q}\varphi_k\|_{L^4(S_{\ub,u})}+\delta^{p-\frac{1}{4}}\langle u \rangle  \|\Db W^l_{p,q}\varphi_k\|_{L^4(S_{\ub,u})}  & \lesssim I_{N+1},\quad l+k\leq N-1.
\end{align*}
Moreover, an analogous version of Proposition \ref{prop-Lb-S-R1} can be derived as follows.
\begin{proposition}\label{prop-Lb-S-high-R1}
For $l+ k \leq N-1$, we have in $\R_1$,
\begin{equation*} 
 \delta^p ( \delta^{-\frac{1}{2}}  \langle u\rangle \|\Lb W^l_{p,q}\varphi_k \|_{L^2(S_{\ub,u})} + \langle u\rangle \|L \Lb W^l_{p,q}\varphi_k\|_{L^2(C_u)} +  \|L \tilde S W^l_{p,q}\varphi_k\|_{L^2(C_u)})  \lesssim I_{N+1}.
\end{equation*}
\end{proposition}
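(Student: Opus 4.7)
The plan is to prove Proposition \ref{prop-Lb-S-high-R1} by induction on $l$, following the same strategy as in the base case $l=0$ (Proposition \ref{prop-Lb-S-R1}), but with careful tracking of the $\delta^p$ weights and using Lemma \ref{lemma-commuting} to handle the commutators $[\Box_g, W^l_{p,q}]$.

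First I would set up the analogue of \eqref{def-chib}, namely
\begin{equation*}
\chib^2[W^l_{p,q}\varphi_k](u,\ub)=\int_{S_{\ub,u}}|\Lb W^l_{p,q}\varphi_k|^2 r^2\,\di\sigma_{S^2},
\end{equation*}
and compute $\partial_{\ub}\chib^2$ using $L\Lb\psi=\eta(r^{-1}L\psi+\laplacianslash\psi)+\eta(r^{-1}\Lb\psi-\Box_g\psi)$ applied to $\psi=W^l_{p,q}\varphi_k$. Since $\varphi\equiv 0$ on $\Cb_0$, all $W^l_{p,q}\varphi_k$ vanish on $\Cb_0$ as well, so integrating in $\ub$ from $0$ yields the analogue of \eqref{esti-ineq-Lpsi-Cu}:
\begin{equation*}
\chib^2[W^l_{p,q}\varphi_k]\lesssim\int_0^{\ub}\delta^{-1}\chib^2[W^l_{p,q}\varphi_k]\,\di\ub'+\int_{C_u}\delta\bigl(\langle u\rangle^{-2}|LW^l_{p,q}\varphi_k|^2+|\laplacianslash W^l_{p,q}\varphi_k|^2+|\Box_gW^l_{p,q}\varphi_k|^2\bigr)\,\di\mu_{C_u}.
\end{equation*}

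The first two volume terms on the right are controlled by Theorem \ref{Thm-highorder-enengy-esti-R1}: $E_{l+k+1}(u,\ub)\lesssim I^2_{N+1}$ provides $\|LW^l_{p,q}\varphi_k\|_{L^2(C_u)}$ and $\|\nablaslash W^l_{p,q}\varphi_{k+1}\|_{L^2(C_u)}$ with the correct $\delta^{-p}$ weight, giving a contribution of size $\delta^{-2p+1}\langle u\rangle^{-2}I^2_{N+1}$. For the $|\Box_gW^l_{p,q}\varphi_k|^2$ term, Lemma \ref{lemma-commuting} splits it into $W^l_{p,q}\Omega^k\Box_g\varphi$ (a null-form source) plus commutator terms of the form $r^{i-l-1}(\Lb\bar W^i_{p,q}\varphi_k\pm L\bar W^i_{p,q}\varphi_k\pm r\laplacianslash \bar W^i_{p,q}\varphi_k)$ with $i\leq l-1$, together with $r^{-1}\bar W^{l-1}_{p,q}\Omega^k\Box_g\varphi$. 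The commutator terms are handled by the induction hypothesis at order $\leq l-1$ (for the $\Lb$ factor) and by Theorem \ref{Thm-highorder-enengy-esti-R1} (for the $L$ and $\laplacianslash$ factors). The null-form source $W^l_{p,q}\Omega^k Q(\p\varphi,\p\varphi)$ is controlled exactly as in Section \ref{sec-T-E}, decomposing products by regularity (putting $L^\infty$ on the lower order factor using the improved estimates stated right after Theorem \ref{Thm-highorder-enengy-esti-R1}, and $L^2$ on the top order factor using Theorem \ref{Thm-highorder-enengy-esti-R1}); the null structure ensures one factor is always a $\Db$-derivative, yielding the same gain in $\delta$ as before.

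A Gr\"onwall argument in $\ub$ then gives $\chib^2[W^l_{p,q}\varphi_k]\lesssim\delta^{-2p+1}\langle u\rangle^{-2}I^2_{N+1}$, which is the first stated bound. Integrating over $\ub\in[0,\delta]$ yields $\delta^p\|\Lb W^l_{p,q}\varphi_k\|_{L^2(C_u)}\lesssim\delta\langle u\rangle^{-1}I_{N+1}$. For the second bound, I would invoke the wave equation in the form $\eta^{-1}L\Lb(W^l_{p,q}\varphi_k)=\laplacianslash W^l_{p,q}\varphi_k+r^{-1}L W^l_{p,q}\varphi_k-r^{-1}\Lb W^l_{p,q}\varphi_k-\Box_g W^l_{p,q}\varphi_k$, so that the previously derived bounds and Theorem \ref{Thm-highorder-enengy-esti-R1} give $\delta^p\langle u\rangle\|L\Lb W^l_{p,q}\varphi_k\|_{L^2(C_u)}\lesssim I_{N+1}$. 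The third bound $\delta^p\|L\tilde SW^l_{p,q}\varphi_k\|_{L^2(C_u)}\lesssim I_{N+1}$ follows by writing $L\tilde S=\langle u\rangle L\Lb$ and using the second bound.

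The main obstacle is bookkeeping: when $p\geq 1$, the commutator $[\Box_g,L]$ from Lemma \ref{lemma-commuting} produces terms with coefficient $\mu/r\cdot\Box_g$, which must be iterated carefully so that the lower-order nonlinear source $\Box_g\bar W^{l-1}_{p,q}\varphi_k$ is controlled inductively without creating a circular dependence on $\chib^2[W^l_{p,q}\varphi_k]$ at the same order. This is resolved by the inductive ordering (estimates at level $l-1$ are fully established before level $l$ is attempted) and by the absorption of any $\int_0^\ub\delta^{-1}\chib^2$ term on the right via Gr\"onwall's inequality, exactly as in the base case.
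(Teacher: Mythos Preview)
Your proposal is correct and matches the paper's intended approach. The paper does not give a detailed proof of Proposition~\ref{prop-Lb-S-high-R1}; it simply states that it is ``an analogous version of Proposition~\ref{prop-Lb-S-R1}'' and that the general higher-order estimates (Theorem~\ref{Thm-highorder-enengy-esti-R1}) are obtained ``by an inductive argument on $l$,'' which is precisely the scheme you outline---transport inequality for $\chib^2[W^l_{p,q}\varphi_k]$, commutator control via Lemma~\ref{lemma-commuting} and the inductive hypothesis, null-form source control as in Section~\ref{sec-T-E}, Gr\"onwall, and finally the wave equation for $L\Lb$.
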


\subsection{Smallness on the last cone in $\R_1$}\label{sec-small-Cb-1}

\begin{theorem}\label{small-W-R1}
Given any fixed $N\geq 6$, we have, on the last cone $\Cb_\delta \cap \R_1$,
\begin{align*}
\|r^{i-1} \Db^i W^{l}_{p,q}\varphi_k\|_{L^2(\Cb_{\delta}\cap\R_1)} & \lesssim \delta^{\frac{1}{2}}, \quad & i \leq 1, \, 2l  +k+i \leq N; \\
\|r^{i-1} \Db^i W^{l}_{p,q}\varphi_k\|_{L^\infty(S_{\delta,u} \cap \R_1)} & \lesssim  \delta ^{\frac{1}{2}}\langle u \rangle^{ -2}, \quad & i \leq 1, \, 2l +k+i\leq N-2.
\end{align*}
And
\begin{align*}
 \|L W^{l}_{pq}\varphi_k\|_{L^2(\Cb_{\delta}\cap\R_1)} & \lesssim \delta ^{\frac{1}{2}}, &\quad 2l +k \leq N-2; \\
\|L W^{l}_{pq}\varphi_k\|_{L^\infty(S_{\delta,u} \cap \R_1)} & \lesssim  \delta ^{\frac{1}{2}}\langle u \rangle^{-\frac{3}{2}}, &\quad 2l +k\leq N-4.
\end{align*}
\end{theorem}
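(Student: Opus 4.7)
The mechanism underlying Theorem \ref{small-W-R1} is the vanishing of $\varphi$, together with all its derivatives, on the cone $\Cb_0 \cap \R_1$. This follows from the trivial data $\varphi|_{\mathcal{H}^-} \equiv 0$ and $\varphi|_{C_{u_0}^{[-\infty,0]}} = 0$, whose consequence is that $\varphi \equiv 0$ in the entire region $\{u \geq u_0, \, \ub \leq 0\}$, so that in particular $W^l_{p,q}\varphi_k$, $LW^l_{p,q}\varphi_k$, $\Db W^l_{p,q}\varphi_k$, etc.\ all vanish identically on $\Cb_0$. One may then apply the fundamental theorem of calculus along $L = \p_\ub$ to write, for any such derivative $D\varphi_k$,
\begin{equation*}
D\varphi_k(u, \delta) \;=\; \int_0^\delta L D\varphi_k(u, \ub)\,\di\ub, \qquad |D\varphi_k(u, \delta)| \;\leq\; \delta\,\|LD\varphi_k\|_{L^\infty_{\ub}([0,\delta])}.
\end{equation*}
Each such integration across a strip of width $\delta$ trades one extra $L$-derivative (in $L^\infty$) for a factor of~$\delta$, and this is precisely the source of the $\delta^{1/2}$ smallness.

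Applying this with $D = W^l_{p,q}$ and plugging in the pointwise bound $\|LW^l_{p,q}\varphi_k\|_{L^\infty(\R_1)} \lesssim \delta^{-p-\frac{1}{2}} \langle u \rangle^{-1}$ supplied by Theorem~\ref{Thm-highorder-enengy-esti-R1} (valid for $l+k \leq N-2$), one obtains $|W^l_{p,q}\varphi_k(u,\delta)| \lesssim \delta^{\frac{1}{2}-p} \langle u\rangle^{-1}$. Squaring and integrating on $\Cb_\delta \cap \R_1$ with the $r^{-2}\cdot r^2 \di u\, \di\sigma = \di u\,\di\sigma$ measure, and using $r \sim \langle u\rangle$ together with the integrable tail $\int_{u_0}^{1} \langle u \rangle^{-2} \di u \lesssim 1$, yields the uniform-in-$u_0$ bound $\|r^{-1} W^l_{p,q}\varphi_k\|^2_{L^2(\Cb_\delta \cap \R_1)} \lesssim \delta$ in the natural $\delta^p$-normalization inherited from the energies \eqref{def-F-L-R1}. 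For the case $i = 1$ and $\Db = \Lb$, the pointwise bound on $S_{\delta,u}$ is already provided by Proposition \ref{prop-Lb-S-high-R1}; a direct $u$-integration $\int_{u_0}^{1} \delta \langle u\rangle^{-2}\, \di u \lesssim \delta$ delivers the claim. For $\Db = \nablaslash$, the commutator $[L, \nablaslash] = -\tfrac{\eta}{r}\nablaslash$ (see \eqref{commutates}) contributes only a lower-order term, so the FTC argument applies to $\nablaslash L W^l_{p,q}\varphi_k$, reducing to the bound for $L W^l_{p,q}\varphi_{k+1}$. The statement for $\|L W^l_{p,q}\varphi_k\|_{L^2(\Cb_\delta)}$ is simply the $i = 0$ statement applied with $(l+1, p+1, q)$ in place of $(l, p, q)$, the tighter regularity budget $2l + k \leq N - 2$ matching $2(l+1)+k \leq N$.

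For the pointwise estimates on $S_{\delta,u}$, the same FTC identity applied to $|D\varphi_k(u,\delta)|$ combined with the stronger lower-order $L^\infty$ bounds from Proposition~\ref{improved L infinity and L4 estimates in R1} (and its extension in Theorem~\ref{Thm-highorder-enengy-esti-R1}) yields $|D\varphi_k(u,\delta)| \lesssim \delta^{\frac{1}{2}}\langle u\rangle^{-2}$ and the corresponding $\langle u \rangle^{-\frac{3}{2}}$ bound for the $L$-derivative case; the extra $\langle u \rangle^{-\frac{1}{2}}$ stems from the angular Sobolev inequality on the sphere \eqref{Sobolev Inequlities-S2}, which accounts for the additional regularity loss $2l+k+i \leq N-2$ (respectively $N-4$).

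The main technical obstacle lies in handling the $L$-derivatives ($p \geq 1$), where the naive FTC does not create new smallness because each $L$-derivative already costs $\delta^{-1}$ in the bootstrap norms. The resolution requires either the implicit $\delta^p$-renormalization built into the energies, or, alternatively, repeatedly invoking the wave equation $\Box_g \varphi = Q(\p\varphi, \p\varphi)$ to convert $L\Lb$ into $\eta\laplacianslash$ modulo controlled remainders, which systematically trades each pair $L\Lb$ for two angular derivatives---this is the source of the factor $2l$ (rather than $l$) in the regularity constraint. A secondary concern is uniformity in the limit $u_0 \to -\infty$, which is secured by the $\langle u \rangle^{-2}$ decay coming from the weighted commutator $\tilde S = \langle u\rangle \Lb$ introduced in Section~\ref{sec-tilde-S-energy-estimates-R1}.
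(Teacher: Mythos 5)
Your mechanism for the ``good derivative'' part ($\Db^i\Lb^l\varphi_k$, i.e.\ $p=0$) is essentially the paper's: transport along $L=\p_{\ub}$ from the vanishing data on $\Cb_0$, Gr\"onwall across the strip of width $\delta$, and Sobolev on $S_{\ub,u}$ (Proposition \ref{prop-small-nabla-R1}); note these estimates in fact hold on every $\Cb_{\ub}$, not just $\Cb_\delta$. The gap is in the $L$-derivative part, which is the actual content of the theorem. Your FTC along $L$ gives, as you yourself compute, $|W^l_{p,q}\varphi_k(u,\delta)|\lesssim \delta^{\frac12-p}\langle u\rangle^{-1}$, and you then retreat to a ``$\delta^p$-normalization''---but the theorem asserts $\|LW^l_{p,q}\varphi_k\|_{L^2(\Cb_\delta)}\lesssim\delta^{\frac12}$ with \emph{no} $\delta^{-p}$ loss. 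Reducing $\|LW^l_{p,q}\varphi_k\|_{L^2(\Cb_\delta)}$ to ``the $i=0$ statement with $(l+1,p+1,q)$'' is circular: the $i=0$ case with $p+1\geq1$ outgoing derivatives is exactly what needs proving, and integrating along $L$ cannot prove it, since each $\ub$-integration gains $\delta$ while each $L$-derivative in the integrand costs $\delta^{-1}$, so they cancel. Your fallback---using the wave equation to trade $L\Lb$ for $\eta\laplacianslash$---handles mixed pairs but not the base case of a single outermost $L$ (e.g.\ $L\varphi_k$ itself, or $L^{p+1}\Lb^q\varphi_k$), where there is no adjacent $\Lb$ to pair with.

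The missing idea is Proposition \ref{prop-small-L-R1} / Step II of the paper's induction: one integrates in the \emph{$u$-direction along the last cone} $\Cb_\delta$, not in $\ub$. Define $\chi^2[\varphi_k]=\int_{S_{\ub,u}}|L\varphi_k|^2 r^3\di\sigma$; its transport equation in $u$ has a favourable sign producing the flux $\int_{\Cb_\delta}|L\varphi_k|^2$, the source $\Lb L\varphi_k$ is rewritten via the wave equation in terms of $\laplacianslash\varphi_k$, $\Lb\varphi_k$ and $\Box_g\varphi_k$ (all already small by the $p=0$ step and the null structure), and---crucially---the seed $\chi^2[\varphi_k](u_0,\delta)=0$ because the pulse $\psi_0$ is compactly supported in $\ub/\delta\in[0,1]$, so $L\varphi_k$ vanishes at the far end $S_{\delta,u_0}$. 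This is also why the $L$-smallness is specific to $\ub=\delta$ and fails on intermediate cones $\Cb_{\ub}$, a distinction your argument does not see. Without this $u$-direction propagation from the support boundary of the data, the unweighted $\delta^{\frac12}$ bound for outgoing derivatives cannot be obtained.
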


For the proof, we begin with the cases involving merely good derivatives.
\begin{proposition}\label{prop-small-nabla-R1}
We have in Region $\mathcal{R}_1$, for any fixed $N\geq 6$ and $\ub \in [0, \delta]$,
\begin{align*}
\|r^{i-1} \Db^i \Lb^{l}\varphi_k\|_{L^2(\Cb_{\ub} \cap \R_1)} & \lesssim \delta ^{\frac{1}{2}},  &\quad  l+k+i \leq N, \, i \leq 1; \\
\|r^{i-1} \Db^i \Lb^l\varphi_k\|_{L^\infty(S_{\ub,u} \cap \R_1)} & \lesssim  \delta ^{\frac{1}{2}}\langle u \rangle^{ -2}, &\quad l+k+i\leq N-2, \, i \leq 1.
\end{align*}
\end{proposition}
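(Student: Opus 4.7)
The central observation is that, thanks to the trivial data on $\mathcal{H}^-\cup C_{u_0}^{[-\infty,0]}$ and finite speed of propagation, $\varphi$ vanishes identically in $\{\ub\le 0\}$, so every tangential derivative of $\varphi$ of any order vanishes along $\Cb_0$. Consequently, for any $\psi=\Lb^l\varphi_k$ or $\psi=\tilde S\Lb^l\varphi_k=\langle u\rangle\Lb^{l+1}\varphi_k$, the fundamental theorem of calculus in the $L=\p_{\ub}$ direction yields
\begin{equation*}
\psi(u,\ub,\theta,\phi)=\int_{0}^{\ub}L\psi(u,\ub',\theta,\phi)\,\di\ub',
\end{equation*}
and Cauchy--Schwarz gives the pointwise bound $|\psi(u,\ub)|^2\le\delta\int_{0}^{\delta}|L\psi|^2\,\di\ub'$.

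To prove the $L^2$ claim at $i=0$, I would integrate this pointwise inequality against the volume form $r^2\di u\di\sigma_{S^2}$ on $\Cb_\ub\cap\R_1$ and use that $r(u,\ub)\sim r(u,\ub')\sim\langle u\rangle$ uniformly for $\ub,\ub'\in[0,\delta]$ in $\R_1$ to obtain
\begin{equation*}
\|r^{-1}\Lb^l\varphi_k\|^2_{L^2(\Cb_\ub\cap\R_1)}\lesssim\delta\int_{u_0}^{1}\langle u\rangle^{-2}\|L\Lb^l\varphi_k\|^2_{L^2(C_u^{[0,\ub]})}\,\di u.
\end{equation*}
Theorem~\ref{Thm-highorder-enengy-esti-R1} (specializing $E_{l+k}$ to $p=0$, $q=l$) supplies $\|L\Lb^l\varphi_k\|_{L^2(C_u)}\lesssim I_{N+1}$ uniformly in $u$, and since $\int_{\mathbb{R}}\langle u\rangle^{-2}\di u<\infty$, the claim follows. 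The $i=1$ case splits by $\Db$. For $\Db=Y=\eta^{-1}\Lb$ with $\eta\sim 1$ in $\R_1$, the claim reduces to $\|\Lb^{l+1}\varphi_k\|_{L^2(\Cb_\ub)}\lesssim\delta^{1/2}$, which is precisely the term $\delta^{-1}\|\Lb W^l_{0,l}\varphi_k\|^2_{L^2(\Cb_\ub)}\le\Eb_{l+k}$. For $\Db=\nablaslash$ I use $|\nablaslash f|\sim r^{-1}|\Omega f|$ and $[\Lb,\Omega]=0$ to write $\nablaslash\Lb^l\varphi_k\sim r^{-1}\Lb^l\varphi_{k+1}$, which reduces to the $i=0$ bound at angular order $k+1$, compatible with the budget $l+k+1\le N$.

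The $L^\infty$ claims follow by combining the pointwise-in-$u$ form of the identity above with the sphere Sobolev embedding $\|f\|_{L^\infty(S_{\ub,u})}\lesssim r^{-1}\sum_{j\le 2}\|\Omega^j f\|_{L^2(S_{\ub,u})}$. For $i=0$, and for $i=1$ with $\Db=\nablaslash$, the fundamental theorem yields $\|\Lb^l\varphi_{k+j}\|^2_{L^2(S_{\ub,u})}\lesssim\delta\|L\Lb^l\varphi_{k+j}\|^2_{L^2(C_u)}\lesssim\delta$, and two factors of $r^{-1}$ (one from Sobolev and one from either the weight $r^{i-1}$ or the identity $\nablaslash\sim r^{-1}\Omega$) produce the claimed $\delta^{1/2}\langle u\rangle^{-2}$ decay. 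The main obstacle is the borderline case $i=1$, $\Db=Y$: a naive Sobolev estimate only produces $\|\Lb^{l+1}\varphi_k\|_{L^\infty(S_{\ub,u})}\lesssim\delta^{1/2}\langle u\rangle^{-1}$, one power of $\langle u\rangle$ short. To recover the missing decay I would apply the fundamental theorem instead to the weighted quantity $\tilde S\Lb^l\varphi_k=\langle u\rangle\Lb^{l+1}\varphi_k$, which still vanishes on $\Cb_0$, and invoke the uniform bound $\|L\tilde S\Lb^l\varphi_k\|_{L^2(C_u)}\lesssim 1$ furnished by the norm ${}^tF_{1+l+k}$ in Theorem~\ref{Thm-highorder-enengy-esti-R1}. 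This extracts the extra factor of $\langle u\rangle^{-1}$ and closes the argument.
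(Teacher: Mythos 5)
Your argument is correct and runs along essentially the same lines as the paper's: both exploit the vanishing of all $\Cb_0$-tangential derivatives together with integration along $L$ (your fundamental-theorem-plus-Cauchy--Schwarz step is the integrated form of the paper's transport-plus-Gr\"onwall computation for $\omega^2$ and $h^2$), the identification $\nablaslash \sim r^{-1}\Omega$, the energy bounds of Theorem \ref{Thm-highorder-enengy-esti-R1}, and Sobolev embedding on $S_{\ub,u}$. The one variation is the borderline case $\Db=Y$ in $L^\infty$: the paper simply invokes the sharp sphere bound $\|\Lb W^l_{p,q}\varphi_k\|_{L^2(S_{\ub,u})}\lesssim \delta^{\frac{1}{2}}\langle u\rangle^{-1}$ of Proposition \ref{prop-Lb-S-high-R1}, whereas you re-derive that same bound by applying your integration identity to $\tilde S \Lb^l\varphi_k$ and using the flux $\|L\tilde S W^l_{0,l}\varphi_k\|_{L^2(C_u)}\lesssim I_{N+1}$ contained in ${}^tF_{1+l+k}$; since that flux bound is already established in Theorem \ref{Thm-highorder-enengy-esti-R1}, this is a legitimate (and self-contained) substitute, and the derivative counts $l+k\leq N-3$ needed for the two angular commutations in the Sobolev step are respected.
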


\begin{proof}
Firstly,  considering $\Db$ to be $\nablaslash$, we define 
\begin{equation}\label{def-omega-R1}
\omega^2[\psi](u,\ub)=\int_{S_{\ub, u}} |\nablaslash \psi|^2 r^2 \di \sigma_{S^2}.
\end{equation}
We take $\psi= \Lb^l\varphi_k,\, (l+k\leq N-1)$ and derive the transport equation
\begin{align*}
\p_{\ub} \omega^2 [ \Lb^l\varphi_k] (u, \ub)
&=\int_{S_{\ub,u}}2 \nablaslash \Lb^l\varphi_k\cdot \nablaslash L   \Lb^l\varphi_kr^2\di \sigma_{S^2}\\
&\lesssim \delta^{-1} \omega^2 [   \Lb^l\varphi_k]  (u, \ub)+ \int_{S_{\ub,u}}   \delta \langle u \rangle^{-2} |L   \Lb^l\varphi_{k+1}|^2 r^2\di \sigma_{S^2},
\end{align*}
where in the last inequality, $| \nablaslash L  \Lb^l\varphi_k|^2 \sim \langle u \rangle^{-2} |L  \Lb^l\varphi_{k+1}|^2$ in $\R_1$ is used.
Now that $\nablaslash \Lb^l\varphi_k \equiv 0$ on the incoming cone $\Cb_{0}$, by the Gr\"{o}nwall's inequality, there is
\begin{equation*}
\omega^2[  \Lb^l\varphi_k] (u,\ub)   \lesssim \delta \| \langle u \rangle^{-1} L \Lb^{l}\varphi_{k+1}\|^2_{L^2(C_{u})}\lesssim  \delta \langle u \rangle^{ -2}.
\end{equation*}
Integrating over the interval $[u_0, u]$ leads to
\begin{equation*}
 \| \nablaslash \Lb^l\varphi_{k}\|^2_{L^2(\Cb_{\ub})} \lesssim \delta.
\end{equation*}
Define $h^2[\psi](u,\ub)=\int_{S_{\ub, u}} |\psi|^2 \di \sigma_{S^2}$. Then, it follows in the same manner that
\begin{align*}
h^2[  \Lb^l\varphi_k] (u,\ub) & \lesssim \delta \|r^{ -1}  L \Lb^{l}\varphi_{k}\|^2_{L^2(C_{u})} \lesssim  \delta \langle u \rangle^{ -2}, \\
 \|\Lb^l\varphi_{k}\|^2_{L^2(\Cb_{\ub})} & \lesssim \delta.
\end{align*}

When $\Db$ is taken as $\Lb$, the smallness follows straightforwardly as a consequence of Theorem \ref{Thm-highorder-enengy-esti-R1} and Proposition \ref{prop-Lb-S-high-R1}.

Eventually, the $L^\infty$ estimates follow from the Sobolev inequality on $S_{\ub,u}$. 
\end{proof}

For  $L \Lb^l \varphi_k$,  the smallness will take place on the last incoming cone $\Cb_{\delta} \cap \R_1$.
\begin{proposition}\label{prop-small-L-R1}
On $\Cb_\delta \cap \R_1$, we have,  for any fixed $N\geq 6$,  
\begin{align*}
\| L \Lb^l \varphi_k\|_{L^2(\Cb_{\delta} \cap \R_1)} & \lesssim \delta ^{\frac{1}{2}}, &\quad l+k \leq N-2; \\
\|L  \Lb^l \varphi_k\|_{L^\infty(S_{\delta,u} \cap \R_1)} & \lesssim  \delta ^{\frac{1}{2}}\langle u\rangle^{ -\frac{3}{2}}, &\quad l+k\leq N-4.
\end{align*}
\end{proposition}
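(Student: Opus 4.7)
The plan is induction on $l$, exploiting the vanishing of every $L^{j+1}\varphi_k$ at the corner $C_{u_0}\cap\Cb_\delta$: since $\psi_0$ is smooth and compactly supported in $\zeta=\ub/\delta\in(0,1)$, one has $\partial_\zeta^j\psi_0(1,\cdot)=0$ for all $j\geq 0$, and hence $L^{j+1}\varphi_k|_{C_{u_0}\cap\Cb_\delta}=0$. This is the specific mechanism that makes $\Cb_\delta$ distinguished among the $\Cb_\ub$'s for the ``bad'' $L$-derivative.

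For the base case $l=0$, I recast the wave equation $\eta^{-1}L\Lb\varphi_k=\laplacianslash\varphi_k+L\varphi_k/r-\Lb\varphi_k/r-\Box_g\varphi_k$ (derived in Section \ref{sec-T-E}) as a transport equation in $u$ for $L\varphi_k$ along $\Cb_\delta$. Using $\partial_u r=-\eta$, multiplication by $r$ cancels the self-interaction term $\eta L\varphi_k/r$ and produces
\[ \partial_u(rL\varphi_k)=r\eta\,\laplacianslash\varphi_k-\eta\,\Lb\varphi_k-r\eta\,\Box_g\varphi_k. \]
Since $rL\varphi_k|_{u=u_0}=0$ on $\Cb_\delta$, direct integration in $u$ together with Minkowski's inequality on $L^2(S_{\delta,u})$ reduces the estimate of $\|L\varphi_k\|_{L^2(\Cb_\delta\cap\R_1)}$ to $L^2(\Cb_\delta)$ bounds on the right-hand side. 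The good-derivative terms $\laplacianslash\varphi_k$ and $\Lb\varphi_k/r$ are $\delta^{1/2}$-small by Proposition \ref{prop-small-nabla-R1}, while $\Box_g\varphi_k=\Omega^k Q(\partial\varphi,\partial\varphi)+\cdots$ is controlled by the null condition: in every quadratic term one factor is a good derivative, bounded in $L^\infty$ by $\delta^{1/2}\langle u\rangle^{-2}$ via Proposition \ref{prop-small-nabla-R1}, paired with a companion factor controlled by the bulk estimate $\delta^{-1/2}\langle u\rangle^{-1}$ from Proposition \ref{improved L infinity and L4 estimates in R1}. The resulting $\delta^{1/2}$-weighted integrand has $\langle u\rangle^{-3}$ decay and is integrable across $u\in[u_0,1]$ uniformly in $u_0$; the sub-case where $L\varphi_k$ itself reappears through the null form is absorbed by Gronwall's inequality along the transport.

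For the inductive step $l\geq 1$, I apply the wave equation to $\Lb^{l-1}\varphi_k$ to obtain the pointwise identity
\[ L\Lb^l\varphi_k=\eta\,\laplacianslash\Lb^{l-1}\varphi_k+\tfrac{\eta}{r}L\Lb^{l-1}\varphi_k-\tfrac{\eta}{r}\Lb^l\varphi_k-\eta\,\Box_g\Lb^{l-1}\varphi_k, \]
which is valid throughout $\R_1$ and in particular on $\Cb_\delta$. The first and third terms on the right are good and small by Proposition \ref{prop-small-nabla-R1}, the second is small by the inductive hypothesis, and the fourth is expanded via Lemma \ref{lemma-commuting} into $\Lb^{l-1}\Omega^k Q+\text{l.o.t.}$ and treated by the same null-structure bookkeeping, with bulk bounds furnished by Theorem \ref{Thm-highorder-enengy-esti-R1} and Proposition \ref{prop-Lb-S-high-R1}. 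The pointwise bound $\|L\Lb^l\varphi_k\|_{L^\infty(S_{\delta,u}\cap\R_1)}\lesssim\delta^{1/2}\langle u\rangle^{-3/2}$ for $l+k\leq N-4$ then follows from the $L^2(S_{\delta,u})$ version of the same arguments by the Sobolev embedding on the $2$-sphere, which accounts for the two-derivative loss and explains the reduced regularity index.

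The principal obstacle is organising the commuted nonlinearity $\Box_g\Lb^{l-1}\varphi_k$: after the expansion from Lemma \ref{lemma-commuting} one faces a tree of products of $L$, $\Lb$, and $\Omega$ derivatives of $\varphi$, and in every summand the null condition must be used to locate at least one good-derivative factor on $\Cb_\delta$ that contributes the crucial $\delta^{1/2}$-smallness, the companion factor being bounded only in the bulk by $\delta^{-1/2}\langle u\rangle^{-1}$. Keeping the $\delta$-bookkeeping tight enough that the product remains $\delta^{1/2}$-small on $\Cb_\delta$ (rather than merely $O(1)$), uniformly in $u_0$, and then propagating this through the induction on $l$ without losing regularity is the most delicate part of the argument.
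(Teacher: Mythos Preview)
Your overall strategy—integrating the transport identity $\partial_u(rL\varphi_k)=r\eta\laplacianslash\varphi_k-\eta\Lb\varphi_k-r\eta\Box_g\varphi_k$ along $\Cb_\delta$ from the vanishing corner $u=u_0$, and for $l\geq 1$ reducing algebraically via the wave equation—is a legitimate alternative to the paper's energy method (which tracks $\chi^2[\varphi_k]=\int|L\varphi_k|^2r^3\,d\sigma$ and exploits the positive bulk term $\int\eta|L\varphi_k|^2r^2\,d\sigma$ arising from $\partial_u r^3$). Your inductive reduction for $l\geq 1$ is in fact a pleasant simplification over the paper, which reruns the full transport argument at every step of Theorem~\ref{small-W-R1}.

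However, there is a genuine $\delta$-bookkeeping error in your treatment of the null form at $l=0$. You write that the good factor is bounded in $L^\infty$ by $\delta^{1/2}\langle u\rangle^{-2}$ and the companion by the bulk estimate $\delta^{-1/2}\langle u\rangle^{-1}$, and conclude the product is a ``$\delta^{1/2}$-weighted integrand''. But $\delta^{1/2}\cdot\delta^{-1/2}=1$: the product is $\langle u\rangle^{-3}$ with \emph{no} $\delta$-gain, and after the $r\eta\sim\langle u\rangle$ weight and one $u$-integration this feeds back $O(\langle u\rangle^{-1})$ into $rL\varphi_k$, yielding $\|L\varphi_k\|_{L^2(\Cb_\delta)}=O(1)$ rather than $O(\delta^{1/2})$. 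The term responsible is $L\varphi_{k_1}\,\Db\varphi_{k_2}$ with $k_1$ low and $k_2$ high (the analogue of the paper's $Er_3$); your Gronwall step, as stated, covers only the symmetric case $\Db\varphi_{k_1}\,L\varphi_{k_2}$ (``$L\varphi_k$ itself reappears''). The fix is to route \emph{every} $L\varphi_{j}$ factor through the Gronwall quantity $G(u)=\sum_{j\leq k}\|rL\varphi_j\|_{L^2(S^2)}$: for $L\varphi_{k_1}\Db\varphi_{k_2}$ one places $\Db\varphi_{k_2}$ in $L^\infty$ (if $k_2\leq N-3$) or uses $L^4$--$L^4$ on the sphere with Sobolev (when $k_2=N-2$), obtaining a Gronwall coefficient $\delta^{1/2}\langle u\rangle^{-2}$ in either case. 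This is precisely how the paper handles $Er_3$, via $L^4$--$L^4$ and absorption into the positive bulk. The same issue recurs in your $l\geq 1$ step if you use ``bulk bounds'' for the companion $L\Lb^{l'}\varphi_{k'}$; there you should instead invoke the inductive smallness of $L\Lb^{l'}\varphi_{k'}$ on $\Cb_\delta$.
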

\begin{proof}
To illustrate the idea, we will carry out the estimates for $l=0$ in detail.
Define
\begin{equation}\label{def-chi-R1}
\chi^2[\psi](u,\ub)=\int_{S_{\ub, u}} |L \psi|^2 r^3  \di \sigma_{S^2},
\end{equation}
and take $\psi = \varphi_k$, with $k \leq N-2$. There is the transport equation
\begin{align*}
\p_{u}\chi^2[ \varphi_k] (u, \ub) =&\int_{S_{\ub,u}} 2 r L  \varphi_k\left(\Lb L   \varphi_k- \frac{3\eta}{2r}L   \varphi_k \right)r^2 \di \sigma_{S^2}\\
=& \int_{S_{\ub,u}}  2 r L  \varphi_k \eta \left(\laplacianslash  \varphi_k - \Box_g \varphi_k - \frac{1}{r} \Lb \varphi_k \right)r^2 \di \sigma_{S^2} - \int_{S_{\ub,u}} \eta |L \varphi_k|^2 r^2 \di \sigma_{S^2}.
\end{align*}
That is,
\begin{align*}
&\,\, \p_{u}\chi^2[ \varphi_k](u, \ub) + \int_{S_{\ub,u}}   \eta |L \varphi_k|^2 r^2\di \sigma_{S^2} \\  
=& \int_{S_{\ub,u}} 2 \eta L  \varphi_k \cdot r \left(\laplacianslash  \varphi_k - \Box_g  \varphi_k - \frac{1}{r} \Lb   \varphi_k \right)r^2 \di \sigma_{S^2}.
\end{align*}
Integrating over $[u_0, u]$,  using the Cauchy-Schwarz inequality and absorbing terms which can be bounded by the positive term $ \int_{S_{\ub,u}}   \eta |L \varphi_k|^2 r^2\di \sigma_{S^2}$ on the left hand side after a small change in constant, we derive (note that $\langle u \rangle \sim r$, $\eta \sim 1$ in $\R_1$) 
\begin{equation}\label{eq-small-chi-1-R1}
\begin{split}
&\chi^2 [  \varphi_k](u,\ub)+\int_{\Cb_{\ub}} |L \varphi_k|^2\di \mu_{\Cb_{\ub}}\\
 \lesssim {}& \chi^2 [ \varphi_k](u_0,\ub)+\int_{\Cb_{\ub}} \left(|\Lb   \varphi_k|^2+|\nablaslash \varphi_{k+1}|^2\right)\di \mu_{\Cb_{\ub}} + \int_{\Cb_{\ub}}\langle u\rangle^{2} |\Box_g \varphi_k|^2  \di \mu_{\Cb_{\ub}}.
\end{split}
\end{equation}
Indicated by Proposition \ref{prop-small-nabla-R1},
\begin{equation*}
\int_{\Cb_{\ub}} \left(|\Lb \varphi_k|^2+|\nablaslash  \varphi_{k+1}|^2\right) \di \mu_{\Cb_{\ub}}\lesssim\delta, \quad k \leq N-2.
\end{equation*}
Therefore, we are left with
\begin{equation}\label{inequa-L-small-R1}
\begin{split}
&\chi^2 [ \varphi_k](u,\ub)+\int_{\Cb_{\ub}} |L  \varphi_k|^2\di \mu_{\Cb_{\ub}}\\
\lesssim {}&\chi^2 [ \varphi_k](u_0,\ub) + \delta + \int_{\Cb_{\ub}}\langle u\rangle^{2} |\Box_g \varphi_k|^2  \di \mu_{\Cb_{\ub}}, \quad k \leq N-2.
\end{split}
\end{equation}
By the null condition, the remaining error term can be decomposed as $\int_{\Cb_{\ub}}\langle u\rangle^{ 2} |\Box_g \varphi_k|^2  \di \mu_{\Cb_{\ub}} = Er_1+ Er_2 + Er_3$,  where for $k_1+k_2\leq k \leq N-2$, $k_1 \leq k_2$,
\begin{align*}
Er_1&= \int_{\Cb_{\ub}} \langle u\rangle^{2} |\bar D \varphi_{k_1} \bar D\varphi_{k_2}|^2 \di \mu_{\Cb_{\ub}},\\
Er_2&= \int_{\Cb_{\ub}} \langle u\rangle^{ 2} | \Db\varphi_{k_1} L\varphi_{k_2} |^2 \di \mu_{\Cb_{\ub}},\\
 Er_3&= \int_{\Cb_{\ub}} \langle u\rangle^{ 2}|L\varphi_{k_1} \Db\varphi_{k_2}|^2 \di \mu_{\Cb_{\ub}}. 
\end{align*}
In view of Proposition \ref{prop-small-nabla-R1}, it is easy to check that $Er_1 \lesssim \delta ^2$ and 
\begin{align*}
Er_2 & \lesssim \int_{\Cb_{\ub}}\langle u\rangle^{ 2}\|\Db\varphi_{k_1}\|^2_{L^\infty}|L\varphi_{k_2} |^2 \lesssim\int_{\Cb_{\ub}} \delta  \langle u\rangle^{-2}|L\varphi_{k_2} |^2,
\end{align*}
which can be absorbed by the left hand side of \eqref{inequa-L-small-R1}. Finally, for $Er_3$, if $k=0$, it is the same as $Er_2$; if $k \geq 1$,  then $ k_1 +1 \leq [\frac{k}{2}] +1 \leq k$ (recalling that
$k_1 + k_2 \leq k, \, k_1 \leq k_2$) and we can perform $L^4$ on all the four factors in $Er_3$. Now, analogous to ${}^{L}\mathcal{G}^k$,
\begin{align*}
 Er_3 &\lesssim \int_{u_0}^{u}\langle u^\prime \rangle^{ 2} \|L\varphi_{k_1}\|^2_{L^4(S_{\ub,u^{\prime}})} \|\Db \varphi_{k_2}\|^2_{L^4(S_{\ub,u^{\prime}})}\di u^{\prime}\\
  &\lesssim\int_{u_0}^{u}\delta \langle u^\prime \rangle^{-2}\sum_{j \leq k_1+1} \|L\varphi_{j}\|^2_{L^2(S_{\ub,u^{\prime}})}\di u^{\prime}, \quad k_1 +1 \leq k,
\end{align*}
which can be absorbed by the left hand side of \eqref{inequa-L-small-R1} as well.
In a word, we deduce that
\begin{equation*}
\chi^2[\varphi_k](u,\ub) + \| L\varphi_k\|_{L^2(\Cb_{\ub})}^2 \lesssim  \chi^2[\varphi_k](u_0,\ub)+\delta, \quad k \leq N-2.
\end{equation*}
Letting $\ub = \delta$, and knowing that $\chi^2[\varphi_k](u_0,\delta)=0$ for the data are compactly supported in $C_{u_0}^{[0, \delta]}$, we have
 \begin{equation}\label{eq-small-L-0}
 \|\langle u\rangle^{ \frac{1}{2}}L \varphi_k\|^2_{L^2(S_{\delta,u})}  +   \| L\varphi_k\|_{L^2(\Cb_{\delta})}^2  \lesssim  \delta, \quad k \leq N-2.
\end{equation}
At last, the $L^\infty$ estimate is implied by the Sobolev inequality on $S_{\ub, u}$.

Thus, we accomplish the proof for the $l=0$ case, while the argument for $l >0$ is similar and hence omitted. 
\end{proof}

\begin{proof}[\bf{Proof of Theorem \ref{small-W-R1}:}] 
We will prove this theorem by an inductive argument on $l$.  Theorem \ref{small-W-R1} with $l=0$ has been verified by the propositions \ref{prop-small-nabla-R1} and \ref{prop-small-L-R1}. Suppose Theorem \ref{small-W-R1} holds true for $l\leq n$, we wish to prove that it holds true as well when $l=n+1$. That is, we shall prove the smallness for $L W^{n+1}_{p,q} \varphi_k$, $2(n+1) +k \leq N-2$ and $\Db W^{n+1}_{p,q} \varphi_k$, $2(n+1) +k \leq N-1$, $ W^{n+1}_{p,q} \varphi_k$, $2(n+1)+k\leq N$.

In the case of $p=0$, the smallness holds true by virtue of the propositions \ref{prop-small-nabla-R1} and \ref{prop-small-L-R1}.

For the case of $p\geq1$, it can be argued  in two steps as below.

{\bf Step I}: $\Db W^{n+1}_{p,q} \varphi_k$, with  $1\leq p$, $2(n+1)+k \leq N-1$ and $W^{n+1}_{p,q} \varphi_k$, with  $1\leq p$, $2(n+1)+k \leq N$. We note that
\begin{align*}
\nablaslash W^{n+1}_{p,q} \varphi_k &\sim r^{-1} L  W^{n}_{p-1,q} \varphi_{k+1}, \quad 2n+k+1 \leq N-2,\\ 
W^{n+1}_{p,q} \varphi_k &\sim  L  W^{n}_{p-1,q} \varphi_{k}, \quad 2n+k \leq N-2,
\end{align*}
both of which reduce to the $l=n$ case, and hence the smallness holds true by the inductive assumption.

When it comes to $\Lb W^{n+1}_{p,q} \varphi_k$ with  $1\leq p$, and $2(n+1)  +k \leq N-1$, we proceed by an analogous idea: $\Lb W^{n+1}_{p,q} \varphi_k = \Lb L W^{n}_{p-1,q} \varphi_k,$ and use the wave equation,
\begin{equation}\label{eq-Lb-L-Wn-phi}
 \eta^{-1} \Lb L W^{n}_{p-1,q} \varphi_k  =   - \Box_g W^{n}_{p-1,q}  \varphi_k +  \laplacianslash W^{n}_{p-1,q} \varphi_k + \frac{1}{r} L W^{n}_{p-1,q} \varphi_k -   \frac{1}{r} \Lb W^{n}_{p-1,q} \varphi_k,
\end{equation}
where $|\Box_g W^{n}_{p-1,q}  \varphi_k| \lesssim |\bar W^{n}_{p-1,q} \Box_g \varphi_k| + |W_{\leq n-1} (\varphi_k)|$.
All the terms on the right hand side of \eqref{eq-Lb-L-Wn-phi} are of lower order in $W$ derivative, and can be reduced to the $l \leq n$ case, noting that $2n+k\leq N-3$. Hence, the smallness for $\Lb W^{n+1}_{p,q} \varphi_k$ with  $1\leq p$, and $ 2( n+1)+k \leq N-1$ follows by induction.

{\bf Step II}: $L W^{n+1}_{p,q}\varphi_k$, with  $1\leq p$, and $2( n+1) +k \leq N-2$. 
Recall the definition \eqref{def-chi-R1}: $\chi^2[ W^{n}_{p,q}\varphi_k](u,\ub)=\int_{S_{\ub,u}} |L  W^{n}_{p,q}\varphi_k|^2 r^3\di \sigma_{S^2}.$ Following the proof of Proposition \ref{prop-small-L-R1}, we deduce a general version of \eqref{eq-small-chi-1-R1}:
\begin{align*}
&\chi^2[ W^{n+1}_{p,q}\varphi_k](u,\ub)+\int_{\Cb_{\delta}} |L W^{n+1}_{p,q}\varphi_k|^2\di \mu_{\Cb_{\ub}}\\
\lesssim  {}&\chi^2[ W^{n+1}_{p,q}\varphi_k](u_0,\ub)+ \int_{\Cb_{\delta}}\langle u\rangle^{ 2}|\Box_g   W^{n+1}_{p,q}\varphi_k|^2\di \mu_{\Cb_{\ub}}\\
&+\int_{\Cb_{\delta}} \left(|\Lb W^{n+1}_{p,q} \varphi_k|^2+|\nablaslash W^{n+1}_{p,q}\varphi_{k+1}|^2\right)\di \mu_{\Cb_{\ub}},
\end{align*}
where by the results obtained in Step I, the last line admits the estimate
\begin{equation*}
\int_{\Cb_{\delta}} \left(|\Lb W^{n+1}_{p,q} \varphi_k|^2+|\nablaslash W^{n+1}_{p,q}\varphi_{k+1}|^2\right)\di \mu_{\Cb_{\ub}}\lesssim \delta.
\end{equation*}
Here we notice that, for the second term above, $2(n+1)+k +1\leq N-1$ does not exceed the regularity. 

Now we turn to the error term $\int_{\Cb_{\delta}}\langle u\rangle^{ 2}|\Box_g W^{n+1}_{p,q}\varphi_k|^2\di \mu_{\Cb_{\ub}}$.
Recall that $|\Box_g W^{n+1}_{p,q} \varphi_k|$ $\lesssim |\bar W^{n+1}_{p,q} \Box_g    \varphi_k| + |W_{\leq n} (\varphi_k)|$. Then $ \int_{\Cb_{\delta}}\langle u\rangle^{ 2}|\Box_g W^{n+1}_{p,q}\varphi_k|^2\di \mu_{\Cb_{\ub}}$ can be bounded by $\F^{n+1,k}_{L1} + \F^{n+1,k}_{L2}$, with
\begin{align*}
\F^{n+1,k}_{L1} &=  \int_{\Cb_{\delta}}\langle u\rangle^{ 2} |W_{\leq n} (\varphi_k) |^2, \quad \F^{n+1,k}_{L2} =   \int_{\Cb_{\delta}}\langle u\rangle^{ 2}  |\bar W^{n+1}_{p,q} \Box_g \varphi_k|^2.
\end{align*}
By induction, $\F^{n+1,k}_{L1} \lesssim \delta$. 
In addition, combining the inductive assumption and the results in Step I, $\F^{n+1,k}_{L2}$ which has null structure, can be bounded in an analogous way as $\int_{\Cb_{\delta}}\langle u\rangle^{ 2} | \Box_g    \varphi_k|^2  \di \mu_{\Cb_{\ub}}$, see Proposition \ref{prop-small-L-R1}.
Thus we conclude that in $\R_1$,
$$ \|\langle u\rangle^{ \frac{1}{2}}LW^{n+1}_{p,q}\varphi_k\|^2_{L^2(S_{\delta,u})} + \|LW^{n+1}_{p,q}\varphi_k\|^2_{L^2(\Cb_\delta)} \lesssim \delta,\quad 2(n+1) +k\leq N-2.$$

We complete the inctive argument.
In the end, the $L^\infty$ estimate follows by the Sobolev inequality.
\end{proof}

\subsection{Small data problem in Region III}\label{sec-global-region-III-1}
Due to Theorem \ref{small-W-R1}, the global existence of a solution to \eqref{Main Equation} in Region III, is reduced to a small data problem with characteristic data prescribed on $C_{u_0}^{[\delta, +\infty]}$ and $\Cb_{\delta}$, for which the local existence is ensured by Rendall's theorem \cite{Rendall-90}. One can also refer to \cite[Section 5.1]{Wang-Yu-16} or \cite[Chapter $16$]{Christodoulou-09} for detailed argument.
In regard of the global existence, it is remarkable that in this region (where $t$ might be negative) the conformal multiplier $K=u^2 \Lb + \ub^2 L$ does not offer a favourable sign even near the spatial infinity, referring to \eqref{current-K}. In the same way, $K$ is not allowed if we are considering the scattering problem. It means that we need to prove a small data global existence theorem without using $K$ in Region III.  As a remark, Luk's theorem \cite{Luk-15-nonlinear} where $K$ is crucial in the proof does not apply here.

We first consider the subregion: $\{ t \leq 0\} \cap \text{Region III}$, which is always away from the event horizon and the photon sphere $r=3m$, because $t \leq 0$ and $\ub \geq \delta$ imply that $r^\ast \geq 2 \delta >0$, and hence there is $r\geq R_0>3m$, for some $R_0 >3m$. By virtue of these crucial facts, we will make use of the approach in \cite{MMDT-strich-sch-10, L-T-quasilinear-sch-18} (without using $K$) to show the global existence in this subregion. 

Let $t_0 \leq t_1 \leq 0$ and $\M_{[t_0,t_1]}^{ext}$ denote the domain bounded by $\{t=t_0\}$, $\{t=t_1\}$, $C_{u_0}$ and $\Cb_\delta$. We know that $r\geq R_0 >3m$ in $\M_{[t_0,t_1]}^{ext}$. We shall use some notations in \cite{MMDT-strich-sch-10, L-T-quasilinear-sch-18}. Consider a partition of $\mathbb{R}^3$ into dyadic sets $A_R = \{\langle r \rangle  \approx R\}$ for some $R \geq 1$, with the obvious change for $R =1$: $A_1 = \{0 \leq r\leq 1\}$. The local energy norm $LE$ in $\M_{[t_0,t_1]}^{ext}$ is defined as
\begin{equation}\label{de-LE}
\|\psi\|_{LE} = \sup_{R \geq R_0} \|\langle r \rangle^{-\frac{1}{2}} \psi\|_{L^2(\mathbb{R} \times A_R)}, \quad \|\psi\|_{LE[t_0, t_1]} = \sup_{R \geq R_0} \|\langle r \rangle^{-\frac{1}{2}} \psi\|_{L^2([t_0, t_1] \times A_R)},
\end{equation}
and its $H^1$ counterpart $LE^1$ in $\M_{[t_0,t_1]}^{ext}$:
\begin{equation}\label{de-LE1}
\|\psi\|_{LE^1[t_0, t_1]} =\|\p \psi\|_{LE[t_0, t_1]} +  \|\langle r \rangle^{-1}  \psi\|_{LE[t_0, t_1]}.
\end{equation}
Let $\Sigma_\tau := \{t =\tau\} \cap \{\ub \geq \delta\}$, $\tau \leq 0$, the energy is defined by
\begin{equation}\label{de-E-tau}
E[\psi] (\tau)= \int_{\Sigma_\tau} |\p \psi|^2 r^2 \di r \di \sigma_{S^2}.
\end{equation}
 
Primarily, we follow the method of \cite{MMDT-strich-sch-10, L-T-quasilinear-sch-18} to derive the following estimate:
\begin{equation}\label{energy-inequality-small-data-1}
E[\psi] (t_1) + \|\psi\|^2_{LE^1[t_0, t_1]}  \lesssim E[\psi] (t_0) + \doubleint_{\M_{[t_0,t_1]}^{ext}}  |\Box_g \psi | \left(|\p \psi| + \frac{|\psi|}{r} \right) + \int_{\Cb_\delta} |\p \psi|^2 + \frac{\psi^2}{r^2}.
\end{equation}
This is achieved by 
taking a multiplier of the form 
 $$X_\rho = C \p_t + f(r) \p_r, \quad q_\rho = \frac{f(r)}{r}, \quad f(r) = \frac{r}{r + \rho}.$$
Then a computation shows that (see \cite[Proposition 8]{M-T-09-decay-exterior} or \cite{L-T-quasilinear-sch-18})
$$\| \langle r \rangle^{-\frac{1}{2}} \psi\|^2_{L^2([t_0, t_1] \times A_\rho)} + \| \langle r \rangle^{-\frac{3}{2}} \psi\|^2_{L^2([t_0, t_1] \times A_\rho)} \lesssim \doubleint_{\M_{[t_0,t_1]}^{ext}} K^{X_\rho}(\psi,q_\rho),$$
where we refer to Section \ref{Energy estimates scheme} for the definition of the modified current $K^{X_\rho}(\psi,q_\rho)$.
Eventually, \eqref{energy-inequality-small-data-1} follows by applying the energy estimates scheme in the domain $\M_{[t_0,t_1]}^{ext}$ and taking the supremum over the dyadic sequence $\rho$.

We now outline the bootstrap argument. Let $\varphi$ be the unknown solution of the wave equation \eqref{Main Equation}. Given an integer $\Lambda \in \mathbb{N}$, we denote $\varphi_\Lambda :=\p^i \Omega^j \varphi, \, i+j=\Lambda$ and $\varphi_{\leq \Lambda} :=\p^i \Omega^j \varphi, \, i+j\leq \Lambda$. Picking a large integer $N \geq 36$, we assume for some large constant $M$ and $t \leq 0$,
\begin{equation}\label{bt-small-1}
E[\varphi_{\leq N}] (t) + \|\varphi_{\leq N}\|^2_{LE^1[t,0]} \leq M^2 \delta.
\end{equation}
Then, appealing to the standard Sobolev inequality $$ \langle r \rangle^2 |\p \psi_\Lambda|^2 \lesssim  \sum_{j \leq 2} E[\Omega^j \psi_{\leq \Lambda}] (t) + \sum_{i \leq 1} E[\p_r\Omega^i \psi_{\leq \Lambda}] (t),$$ we have
\begin{equation}\label{Sobolev-L-infty-small-1}
 |\p \varphi_\Lambda| \lesssim \langle r \rangle^{-1} M \delta^{\frac{1}{2}}, \quad \Lambda \leq N-2.
\end{equation}
Apply the energy inequality \eqref{energy-inequality-small-data-1} to $\varphi$, and take Theorem \ref{small-W-R1}  into account,
\begin{equation}\label{energy-inequality-solu-small-data-1}
E[\varphi] (t) + \|\varphi\|^2_{LE^1[t_0, t]}  \lesssim \delta + \doubleint_{\M_{[t_0,t]}^{ext}}  |\Box_g \varphi | \left(|\p \varphi| +|r^{-1} \varphi| \right).
\end{equation}
Due to the $L^\infty$ estimate \eqref{Sobolev-L-infty-small-1}, the last double integral in \eqref{energy-inequality-solu-small-data-1} is bounded by  (we only use the fact that $\Box_g \varphi$ is quadratic in $\p \varphi$) 
\begin{equation}\label{estimate-main-integral-small-1}
\begin{split}
\doubleint_{\M_{[t_0,t]}^{ext}}  |\p\varphi \p \varphi|  \left(|\p \varphi| +|r^{-1} \varphi| \right) & \lesssim \doubleint_{\M_{[t_0,t]}^{ext}} \langle r \rangle^{-1}  M \delta^{\frac{1}{2}} |\p \varphi| \left(|\p \varphi| +|r^{-1} \varphi| \right) \\
& \lesssim \delta^{\frac{1}{2}} M \|\varphi\|^2_{LE^1[t_0, t]},
\end{split}
\end{equation}
which can be absorbed by the left hand side of \eqref{energy-inequality-solu-small-data-1}, since $\delta$ is small enough. Thus we prove
\begin{equation}\label{energy-inequality-solu-0-small-data}
E[\varphi] (t) + \|\varphi\|^2_{LE^1[t_0, t]}  \lesssim \delta.
\end{equation}
Similar estimate holds with the $\varphi$ in \eqref{energy-inequality-solu-0-small-data} being replaced by $\p_t^i \Omega^j \varphi$, $i+j \leq N$. Besides, combining these with the ellliptic estimates, we have
\begin{align*}
E[\p \varphi] (t) &\lesssim \delta + E[\p_t \varphi](t) + E[\varphi](t) + \|\Box_g \varphi\|^2_{L^2(\Sigma_t)} \lesssim \delta, \\
 \|\p \varphi\|^2_{LE^1[t_0, t]}  & \lesssim \delta + E[\p \varphi] (t) +  \|\p_t\varphi\|^2_{LE^1[t_0, t]} +  \|\varphi\|^2_{LE^1[t_0, t]} + \|\Box_g \varphi\|^2_{LE[t_0, t]} \lesssim \delta,
\end{align*}
where we have used the estimates $ \|\Box_g \varphi\|^2_{L^2(\Sigma_t)} \lesssim \delta^{\frac{1}{2}}M E[\varphi] (t) $ and $ \|\Box_g \varphi\|^2_{LE[t_0, t]} \lesssim \delta^{\frac{1}{2}} M \|\varphi\|^2_{LE^1[t_0, t]}$. As a result, we obtain
\begin{equation}\label{energy-inequality-solu-1-killing-small-data}
E[\p^i  \Omega^j\varphi] (t_1) + \|\p^i \Omega^j\varphi\|^2_{LE^1[t_0, t_1]}  \lesssim \delta, \quad i+j \leq 1.
\end{equation}
The higher order energy bound can be carried out by induction and then the bootstrap argument is closed.

We next come to the subregion: $\{ 0\leq t \leq 1\} \cap \text{Region III}$, where the problem considered is reduced to a small data finite time existence theorem. This is of course well-known. Moreover, the multiplier $K$ and the Morawetz estimate are not needed for the proof. Note that, $\{ 0\leq t \leq 1\} \cap \text{Region III}$ is away from the event horizon as well, however it hits the photon sphere, which is nevertheless not an issue, for the Morawetz estimate is avoided and hence the trapping phenomenon does not take affect in the proof here.

We would also like to remark that in the above proofs, we only require the non-linearity to be quadratic. In other word, the null structure is not necessary for the proof of global existence in Region III.

By the Arzela-Ascoli Lemma, we can let $u_0 \rightarrow - \infty$ and prove that there exists a global (but not necessarily unique) solution in the region $\{ t \leq 1\} \cap \D^+(\mathcal{I}^-) \cap \D^+(\mathcal{H}^-)$, i.e., from the past null infinity and past event horizon up to $t=1$, see \cite[Section 5.3]{Wang-Yu-16} or \cite[Chapter $17$]{Christodoulou-09}. 


Reversing the time $t$, we conclude the scattering statement, i.e., Theorem \ref{Thm-scattering-region}.
\begin{remark}\label{rk-reverse-time}
If we reverse the time function $t$ to be $-t$, then the multiplier for the energy estimates is replaced by $ \delta^{-1} L +  \eta \Lb$. That is, taking $f_1 = \delta^{-1}$, $f_2 =  \eta.$ In view of \eqref{current of fL},
\begin{align*}
- \frac{1}{2}(\p_{\ub} f_1+\frac{\mu f_1}{r} ) |\nablaslash \psi|^2 &= - \delta^{-1} \frac{m}{r^2} |\nablaslash \psi|^2 \leq 0,\\
 \p_{\ub} f_2 g^{u \ub} |\Lb \psi|^2 & = - \frac{m}{r^2} |\Lb \psi|^2 \leq 0.
\end{align*}
Define the corresponding energy 
\begin{align*}
E_k(u) &:= \int_{C_u^{[\ub_0,\ub]}} \left(\delta^{-1} |L\varphi_k|^2 +  \eta |\nablaslash \varphi_k|^2 \right) \di \mu_{C_u},\\ \Eb_k(\ub) &:= \int_{\Cb_{\ub}^{[0, u]}} \left( \delta^{-1} |\nablaslash \varphi_k|^2 + \eta |\Lb \varphi_k|^2 \right) \di \mu_{\Cb_{\ub}}.
\end{align*}
 Let $-\delta \leq u_1 \leq u_2 \leq 0$ and $-1 \leq \ub_1 \leq \ub_2 \leq + \infty$ (thus $r \sim \langle \ub \rangle$ in this region). Then,
\begin{align*}
&E_k(u_2) +\Eb_k(\ub_2) - \doubleint_{\Do}  \langle \ub \rangle^{-2} \left( \delta^{-1}  |\nablaslash \varphi_k|^2  +  |\Lb \varphi_k|^2 \right) \eta  \di \mu_{\D} \\
\lesssim {}& E_k(u_1) +\Eb_k(\ub_1) + \doubleint_{\Do}  \langle \ub \rangle^{-1}   \left( \delta^{-1} |L \varphi_k \Lb \varphi_k |+ |\nablaslash \varphi_k|^2 \right) \eta  \di \mu_{\D}  \\
&+  \doubleint_{\Do} |\Box_g \varphi_k| \cdot  \left( \delta^{-1}  | L \varphi_k| + | \eta \Lb \varphi_k| \right) \eta \di \mu_{\D},
\end{align*}
where the current on the left hand side has the wrong sign. Nevertheless, if we consider the scattering problem: impose the short pulse data $\varphi_{+\infty}$ \eqref{def-psi-0} on $\mathcal{I}^+ (\ub \rightarrow +\infty)$ and on $\hori (u \rightarrow +\infty)$, $\varphi|_{\hori} \equiv0$, the above energy inequality turns out to be
\begin{align*}
&E_k(u) +\Eb_k(\ub) + \doubleint_{\D_{\ub, +\infty}^{u, +\infty}} \langle \ub \rangle^{-2}  \left( \delta^{-1} |\nablaslash \varphi_k|^2  +  |\Lb \varphi_k|^2 \right) \eta  \di \mu_{\D} \\
\lesssim {}& E_k(+\infty) +\Eb_k(+\infty) + \doubleint_{{\D_{\ub, +\infty}^{u, +\infty}}} \langle \ub \rangle^{-1} \left( \delta^{-1} |L \varphi_k \Lb \varphi_k |+  |\nablaslash \varphi_k|^2 \right) \eta \di \mu_{\D}  \\
&+  \doubleint_{{\D_{\ub, +\infty}^{u, +\infty}}} |\Box_g \varphi_k| \cdot \left( \delta^{-1}  | L \varphi_k| +   | \eta \Lb \varphi_k| \right) \eta \di \mu_{\D}.
\end{align*}
We can then prove along the line of Section \ref{sec-past} to show the global existence of a solution to the scattering problem. And indicated by the energy estimates, the scattering map $\mathcal{F}^+: C^\infty(\mathcal{I}^+) \times C^\infty(\hori) \, \rightarrow \, C^\infty(\Sigma_{0}) \times C^\infty(\Sigma_0)$ with $\mathcal{F}^+ (\varphi_{+\infty}, 0) = (\varphi|_{\Sigma_0}, \p_t \varphi|_{\Sigma_0})$ is bounded.
\end{remark}

\section{Global existence for the Cauchy problem}\label{sec-future}

Let $\R_2$ be the null strip $\D^+(\Sigma_{1}) \cap \D^-(\mathcal{I}^+) \cap \D^-(\hori) \cap  \{0 \leq \ub \leq \delta \}$.  In $\R_2$, $u \geq 1-\delta$, $u \sim t$ and $r$ is finite. 
Letting $1-\delta \leq u_1 < u$, $0\leq \ub_1 \leq \ub \leq \delta$, we define the degenerate energy and flux,
\begin{subequations}
\begin{align}
E^{deg} [\psi] (u; [\ub_1,\ub]) : =& \int_{C_u^{[\ub_1, \ub]}} \eta \left( |L \psi|^2 + \delta^{-1}  |\nablaslash \psi|^2 \right) \di \mu_{C_u}, \label{def-energy-Cu}\\
\Eb^{deg}[\psi](\ub; [u_1,u]) : = & \int_{\Cb_{\ub}^{[u_1, u]}} \left( \delta^{-1}   |\Lb \psi|^2 + \eta^2  |\nablaslash \psi|^2 \right)  \di \mu_{\Cb_{\ub}}, \label{def-energy-Cub} \\
{}^tF^{deg}[\psi](u; [\ub_1,\ub]) : = & \int_{C_{u}^{[\ub_1, \ub]}} \eta \left( |Y L \psi |^2 + \delta^{-2} |Y \psi |^2 \right) \di \mu_{C_u}. \label{def-energy-Cu-t}
\end{align}
\end{subequations}
At the same time, the non-degenerate energy and flux are defined as
\begin{subequations}
\begin{align}
E^{ndeg} [\psi] (u; [\ub_1,\ub]) : = & \int_{C_u^{[\ub_1, \ub]}} \left( |L \psi |^2 + \delta^{-1}  |\nablaslash \psi |^2 \right) \di \mu_{C_u}, \label{def-energy-ndeg-E} \\
\Eb^{ndeg} [\psi] (\ub; [u_1,u]) : = &  \int_{\Cb_{\ub}^{[u_1, u]}} \left( \delta^{-1} \eta^{-1}  |\Lb \psi |^2 + \eta  |\nablaslash \psi |^2 \right)  \di \mu_{\Cb_{\ub}}, \label{def-energy-ndeg-Eb} \\
{}^tF^{ndeg} [\psi] (u; [\ub_1,\ub])  : = &  \int_{C_{u}^{[\ub_1, \ub]}} \left( |Y L \psi |^2 + \delta^{-1} |Y \psi|^2 \right) \di \mu_{C_u}. \label{def-energy-ndeg-tF}
\end{align}
\end{subequations}

\begin{remark}\label{rk-degeneracy-energy}
Note that, the degenerate energy and flux vanish at the event horizon $\hori$. It will be apparent to see the degeneracy of $\Eb^{deg}$ and non-degeneracy of $\Eb^{ndeg}$ if we rewrite \eqref{def-energy-Cub} and \eqref{def-energy-ndeg-Eb} with the non-degenerate measure $\di \mu_{\Cb_{\ub}^{ND}} = r^2 \eta \di u \di \sigma_{S^2} = - r^2 \di r \di \sigma_{S^2}$:
\begin{align*}
\Eb^{deg}[\psi](\ub; [u_1,u]) = & \int_{\Cb_{\ub}^{[u_1, u]}}  \eta \left( \delta^{-1}   |Y \psi|^2 + |\nablaslash \psi|^2 \right)  \di \mu_{\Cb^{ND}_{\ub}}, \\
\Eb^{ndeg} [\psi] (\ub; [u_1,u]) = &  \int_{\Cb_{\ub}^{[u_1, u]}} \left( \delta^{-1}   |Y\psi |^2 + |\nablaslash \psi |^2 \right)  \di \mu_{\Cb^{ND}_{\ub}}.
\end{align*}
\end{remark}

Fixing $N \in \mathbb{N}$, $N\geq 6$, we set: for $l \leq N$, 
\begin{subequations}
\begin{align}
E^{deg}_l (u; [\ub_1,\ub]) &:=  E^{deg} [\varphi_l] (u; [\ub_1,\ub]), \label{def-energy-fL}\\
\Eb^{deg}_l (\ub; [u_1,u]) &:=  \Eb^{deg}[\varphi_l](\ub; [u_1,u]), \label{def-energy-fLb}\\
E^{ndeg}_l (u; [\ub_1,\ub]) &:=  E^{ndeg} [\varphi_l] (u; [\ub_1,\ub]), \label{def-energy-fL-fLb-ndeg-E}\\
\Eb^{ndeg}_l (\ub; [u_1,u]) &= \Eb^{ndeg} [\varphi_l] (\ub; [u_1,u]). \label{def-energy-fL-fLb-ndeg-Eb}
\end{align}
\end{subequations}
The degenerate flux is denoted by: for $k \leq N-1$,
\begin{subequations}
\begin{align}
{}^LF^{deg}_{k+1}  (u; [\ub_1, \ub]) &:= E^{deg} [\delta L \varphi_k] (u; [\ub_1,\ub]), \label{def-energy-flux-Cu-LF} \\
{}^{t}F^{deg}_{k+1} (u; [\ub_1, \ub]) &:={}^tF^{deg}[\varphi_k](u; [\ub_1,\ub]), \label{def-energy-flux-Cu-tF} \\
{}^{\Lb}F^{deg}_{k+1} (u; [\ub_1, \ub]) &:=E^{deg} [\Lb \varphi_k] (u; [\ub_1,\ub]), \label{def-energy-flux-Cu-LbF} \\ 
{}^L\Fb^{deg}_{k+1} (\ub; [u_1,u]) &:= \Eb^{deg}[\delta L \varphi_k](\ub; [u_1,u]),  \label{def-energy-flux-Cbub-LFb} \\ 
{}^{\Lb}\Fb^{deg}_{k+1} (\ub; [u_1,u]) &:=  \Eb^{deg}[\Lb \varphi_k](\ub; [u_1,u]). \label{def-energy-flux-Cbub-LbFb} 
\end{align}
\end{subequations}
And the non-degenerate flux is given by: for $k\leq N-1$,
\begin{subequations}
\begin{align}
{}^LF^{ndeg}_{k+1} (u; [\ub_1,\ub])  &:=  E^{ndeg} [\delta L \varphi_k] (u; [\ub_1,\ub]), \label{def-energy-flux-Cu-ndeg-LF}\\  
{}^tF^{ndeg}_{k+1} (u; [\ub_1,\ub])  &:= {}^tF^{ndeg} [\varphi_k] (u; [\ub_1,\ub]), \label{def-energy-flux-Cu-ndeg-tF}\\ 
{}^YF^{ndeg}_{k+1} (u; [\ub_1,\ub])  &:=\Eb^{ndeg} [Y \varphi_k] (u; [\ub_1, \ub]), \label{def-energy-flux-Cu-ndeg-YF} \\ 
{}^L\Fb^{ndeg}_{k+1} (\ub; [u_1,u]) &:=\Eb^{ndeg} [\delta L \varphi_k] (\ub; [u_1,u]), \label{def-energy-flux-Cbub-ndeg-LFb}\\
{}^{Y}\Fb^{ndeg}_{k+1} (\ub; [u_1,u]) &:=\Eb^{ndeg} [Y \varphi_k] (\ub; [u_1,u]). \label{def-energy-flux-Cbub-ndeg-YFb}
\end{align}
\end{subequations}

Define the degenerate integrated energy
\begin{align}
\mathcal{S}^{deg} [\psi] (\D) := & \doubleint_{\D}  \left( \delta^{-1} | \Lb \psi |^2 +\delta^{-1} |\nablaslash \psi |^2 + |L \psi |^2 \right) \eta \di \mu_{\D}, \label{def-energy-spacetime-fLb-fL-deg-1} 
\end{align}
and the non-degenerate integrated energy
\begin{align}
 \mathcal{S}^{ndeg} [\psi] (\D) := & \doubleint_{\D}  \left( \delta^{-1} | Y \psi |^2 +\delta^{-1} |\nablaslash \psi |^2 + |L \psi |^2 \right) \eta \di \mu_{\D}. \label{def-energy-spacetime-fLb-fL-deg}
\end{align}

We set for $l \leq N, \, k \leq N-1$,  
\begin{align*}
 \mathcal{S}^{deg}_l (\D) := &  \mathcal{S}^{deg} [\varphi_l] (\D), \quad \quad\quad \,\,\,\, \mathcal{S}^{ndeg}_l (\D): = \mathcal{S}^{ndeg} [\varphi_l] (\D),\\
{}^L\mathcal{S}^{deg}_{k+1} (\D) :=  & \mathcal{S}^{deg} [\delta L \varphi_k] (\D),  \quad\quad {}^{\Lb}\mathcal{S}^{deg}_{k+1} (\D) : = \mathcal{S}^{deg} [\Lb \varphi_k] (\D),\\
{}^L\mathcal{S}^{ndeg}_{k+1} (\D) :=  & \mathcal{S}^{ndeg} [\delta L \varphi_k] (\D),  \quad\, {}^{Y}\mathcal{S}^{ndeg}_{k+1} (\D) := \mathcal{S}^{ndeg} [Y \varphi_k] (\D).
\end{align*}

At the end of this section, we will prove the degenerate energy  decay  and the non-degenerate energy bound in $\R_2$. 
\begin{theorem}\label{Main-Thm-R2}
There are the degenerate decay estimates in $\R_2$: $1-\delta \leq u \leq +\infty$, $0\leq \ub \leq \delta$,
\begin{align*}
\mathcal{S}^{deg}_l (\D_{0, \ub}^{u, +\infty}) + E^{deg}_l (u; [0, \ub])+ \Eb^{deg}_l (\ub; [u, +\infty])  
&\lesssim I_{N+1}^2  |u|^{-2\beta}, \quad  l \leq N, \\
{}^LF^{deg}_{k+1} (u; [0, \ub]) + {}^L\Fb^{deg}_{k+1} (\ub; [u, +\infty]) & \lesssim I_{N+1}^2  |u|^{-2\beta}, \quad k \leq N-1,\\
 {}^tF^{deg}_{k+1} (u; [0, \ub]) +  {}^{\Lb}F_{k+1}^{deg} (u; [0, \ub] )  + {}^{\Lb}\Fb_{k+1}^{deg}(\ub; [u, +\infty] )   &\lesssim  I_{N+1}^2  |u|^{-2\beta}, \quad  k \leq N-1,\\
 {}^L\mathcal{S}^{deg}_{k+1} (\D_{0, \ub}^{u, +\infty}) +  {}^{\Lb}\mathcal{S}^{deg}_{k+1} (\D_{0, \ub}^{u, +\infty}) 
& \lesssim I_{N+1}^2|u|^{-2\beta}, \quad   k \leq N-1,
\end{align*}
where $\beta \geq \frac{1}{2}$. Besides,  we have the non-degenerate energy bound in $\R_2$:
\begin{align*}
\mathcal{S}^{ndeg}_l (\D_{0, \ub}^{u, +\infty}) + E^{ndeg}_l (u; [0,\ub]) + \Eb^{ndeg}_l (\ub; [u, +\infty]) & \lesssim I_{N+1}^2, \quad  l \leq N, \\
 {}^LF^{ndeg}_{k+1} (u; [0,\ub]) + {}^L\Fb^{ndeg}_{k+1} (\ub; [u, +\infty]) & \lesssim I_{N+1}^2, \quad k \leq N-1, \\
   {}^tF^{ndeg}_{k+1} (u; [0,\ub])  + {}^YF^{ndeg}_{k+1} (u; [0, \ub]) +  {}^Y\Fb^{ndeg}_{k+1} (\ub; [u, +\infty])  & \lesssim I^2_{N+1}, \quad  k \leq N-1,\\
   {}^L\mathcal{S}^{ndeg}_{k+1} (\D_{0, \ub}^{u, +\infty}) +  {}^{Y}\mathcal{S}^{ndeg}_{k+1} (\D_{0, \ub}^{u, +\infty}) 
& \lesssim I_{N+1}^2, \quad   k \leq N-1.
\end{align*}
\end{theorem}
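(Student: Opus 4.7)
The overall strategy is a bootstrap in $\R_2$ split into two coupled stages, following the indications of Section~\ref{sec-outline-proof}: first obtain \emph{degenerate} estimates that decay like $|u|^{-2\beta}$ via the multiplier $\xi_2 = \eta L + \delta^{-1}(1+\mu)\Lb$, and then upgrade to \emph{non-degenerate} ones via the red-shift multiplier $\xi_3 = (1+y_2(r^\ast))L + \delta^{-1}y_1(r^\ast)Y$ in the horizon layer $\{r<r_{NH}\}$. The data at the three characteristic boundaries of $\R_2$ are already in hand: on $\Cb_\delta \cap \R_2$ the $O(\delta^{1/2})$ smallness of all relevant derivatives is provided by Theorem~\ref{small-W-R1}, on $\Sigma_1$ the Cauchy data are prescribed as in Theorem~\ref{Thm-froward-whole-region}, and past $\ub=0$ the solution is extended by zero, so the fluxes on $\Cb_0$ and in the limit $u\to+\infty$ both vanish.

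For the degenerate stage I would commute \eqref{Main Equation} with $\Omega^k$ and $W^n_{p,q}=L^p\Lb^q$ using Lemma~\ref{lemma-commuting}, then apply the energy identity \eqref{energy-identity} with $\xi_2$ to each of $\psi\in\{\varphi_l,\delta L\varphi_k,\Lb\varphi_k\}$. The key algebraic observation is that, because $r$ is bounded in $\R_2$, the deformation current $K^{\xi_2}$ yields a \emph{coercive} spacetime contribution $\mathcal{S}^{deg}[\psi](\D)\sim \doubleint_\D \eta(|L\psi|^2+\delta^{-1}|\Lb\psi|^2+\delta^{-1}|\nablaslash\psi|^2)\,\di\mu_\D$ on the left-hand side. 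Combining this with the null-condition estimates \eqref{eq-null-condition-comm-omega}--\eqref{eq-null-condition-comm-D}, the smallness at $\Cb_\delta$, and $L^\infty/L^4$ bounds derived from a bootstrap analogous to Proposition~\ref{proposition L infinity and L4 estimates in R1}, one obtains an inequality of the schematic form
\begin{equation*}
E^{deg}_l(u)+\Eb^{deg}_l(\ub)+\int_{u_1}^{u} E^{deg}_l(u')\,\di u'\lesssim E^{deg}_l(u_1)+\delta^{1/2}|u_1|^{-2\beta},\qquad \forall\, u_1\leq u.
\end{equation*}
A dyadic pigeon-hole, of the type sketched in Section~\ref{sec-outline-proof}, then promotes this to the advertised decay $\lesssim I_{N+1}^2|u|^{-2\beta}$ for any $\beta\geq 1/2$; the same scheme delivers ${}^LF^{deg}_{k+1}$, ${}^{\Lb}F^{deg}_{k+1}$ and their $\Cb_\ub$ counterparts. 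The purely transversal flux ${}^tF^{deg}_{k+1}$ is recovered from the wave equation $\eta^{-1}L\Lb\varphi_k = \laplacianslash\varphi_k + r^{-1}(L-\Lb)\varphi_k - \Box_g\varphi_k$, integrated along $L$ starting from the trivial data at $\ub=0$.

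For the non-degenerate stage I would localize to $\{r<r_{NH}\}$, commute with the system $Z^n_{p,q}=L^pY^q$ adapted to the horizon (Lemma~\ref{lemma-commuting}), and run a second energy estimate with $\xi_3$. By the standard choice of $y_1,y_2$ the current $K^{\xi_3}$ is pointwise positive in this layer---the red-shift effect---and crucially contains a coercive contribution $\sim\delta^{-1}|Y\psi|^2$ uniformly up to $\hori$. Feeding the already-established degenerate estimates into the right-hand side to control the flux through $\{r=r_{NH}\}$, together with the $\Cb_\delta$-smallness and the null structure of the nonlinearity to absorb cross terms, closes a second bootstrap that yields the non-degenerate $I_{N+1}^2$ bounds for $\psi\in\{\varphi_l,\delta L\varphi_k,Y\varphi_k\}$.

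The main obstacle, and the reason for splitting the argument, is the behaviour at $\hori$: $K^{\xi_2}$ degenerates there and cannot control $Y\psi$, while passing from degenerate to non-degenerate control of the transversal derivative costs one power of $\eta^{-1}$ that is only paid for by the red-shift current. The delicate technical points are therefore (i) calibrating the $\delta^{-1}y_1(r^\ast)$ weight in $\xi_3$ so that the $Y$-coercivity dominates the nonlinear cross terms, given that the short-pulse scaling makes $L\varphi_k$ of size $\delta^{-1/2-k}$, and (ii) handling the $Y^2$ contribution in the commutator $[\Box_g,Y]$ of \eqref{commutates-Y-L-Box} at top order, which must be absorbed by the coercive $Y$-flux on the left rather than escalating the number of derivatives available on the right.
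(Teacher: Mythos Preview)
Your overall strategy matches the paper's approach closely: the multiplier $\xi_2$ for the degenerate step, the red-shift multiplier $\xi_3$ in $\{r<r_{NH}\}$ for the non-degenerate step, the pigeon-hole to extract $|u|^{-2\beta}$ decay, and the recovery of ${}^tF^{deg}_{k+1}$ by integrating the wave equation along $L$ are all exactly what the paper does in Sections~\ref{sec-deg-energy}--\ref{sec-recover-Lb-Y}.

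There is, however, a genuine confusion in your description of the data. The cone $\Cb_\delta$ is \emph{not} a data surface for $\R_2$: the evolution in $\R_2$ is forward in both $u$ and $\ub$, so the characteristic data are prescribed on $\Cb_0$ (where $\varphi\equiv 0$) and on $C_{1}$ (where the $\R_1$ solution restricted to $\Sigma_1$ supplies the initial bounds; see Section~\ref{sec-data-2}). Theorem~\ref{small-W-R1} concerns $\Cb_\delta\cap\R_1$, and the corresponding smallness on $\Cb_\delta\cap\R_2$ (Theorem~\ref{small-W-R2}) is an \emph{output} of the present theorem, proved afterwards in Section~\ref{sec-smallness-Cb-2}. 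Likewise, the fluxes at $u\to+\infty$ do not both vanish: only the \emph{degenerate} energy vanishes there because of the $\eta$ weight, while the non-degenerate flux on $\hori$ is expected to be large (Remark~\ref{rk-Cauchy-theorem}); this is precisely why the degenerate stage must come first and feed the boundary flux at $\{r=r_{NH}\}$ in the red-shift step. Once you correct the identification of the data surfaces, the rest of your outline goes through as in the paper.
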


Before the energy argument, we will introduce a wealth of notations.
For a vector field $V$, let $\iint_{\D}  |V \Box_g \varphi_k|^2 \eta^i \di \mu_{\D} \lesssim {}^V\mathcal{S}^k + {}^V\mathcal{G}^k + \mathcal{L}^k$ where
\begin{align}
{}^V\mathcal{S}^k &=  \doubleint_{\D} \sum_{p+q \leq k, p \leq q}  |Q(\p V \varphi_q, \p \varphi_p)|^2 \eta^i \di \mu_{\D}, \label{def-V-mS-k}\\
{}^V\mathcal{G}^k &=  \doubleint_{\D} \sum_{p+q \leq k, p<q}  | Q(\p V \varphi_p, \p \varphi_q)|^2  \eta^i \di \mu_{\D}, \label{def-V-mG-k}
\end{align}
and the lower order term $ \mathcal{L}^k$ takes the form of \eqref{def-V-mS-k} with $V=1$, and ${}^V\mathcal{S}^k =  {}^VS^k_1+ \cdots +  {}^VS^k_4$, with 
\begin{subequations}
\begin{align}
{}^VS^k_1 &=  \doubleint_{\D} \sum_{p+q\leq k, p\leq q} |D \varphi_{p}|^2 |Y V  \varphi_{q}|^2  \eta^i \di \mu_{\D},\label{def-V-S-1} \\
{}^VS^k_2 &=  \doubleint_{\D}  \sum_{p+q\leq k, p\leq q} |\Db \varphi_{p}|^2 |LV  \varphi_{q}|^2 \eta^i \di \mu_{\D},\label{def-V-S-2} \\
{}^VS^k_3 &=  \doubleint_{\D}\sum_{p+q\leq k, p\leq q} |\Db \varphi_{p}|^2 |\nablaslash V  \varphi_{q}|^2 \eta^i \di \mu_{\D},\label{def-V-S-3} \\
{}^VS^k_4 &=  \doubleint_{\D} \sum_{p+q\leq k, p\leq q} |L\varphi_{p}|^2 |\nablaslash V \varphi_{q}|^2 \eta^i \di \mu_{\D},\label{def-V-S-4}
\end{align}
\end{subequations}
and ${}^V\mathcal{G}^k= {}^VG^k_1 + \cdots + {}^VG^k_4$, where for $p+q \leq k, \, p<q$,
\begin{subequations}
\begin{align}
{}^VG^k_1 &=  \doubleint_{\D}  \sum_{p+q \leq k, p < q}  |D\varphi_{q}|^2  |Y V\varphi_{p}|^2  \eta^i \di \mu_{\D}, \label{def-V-G-i-1} \\
{}^VG^k_2 &=  \doubleint_{\D}  \sum_{p+q \leq k, p < q}    |\Db \varphi_{q}|^2 | L V\varphi_{p}|^2 \eta^i  \di \mu_{\D},\label{def-V-G-i-2} \\
{}^VG^k_3 &=  \doubleint_{\D}  \sum_{p+q \leq k, p < q}   |\Db\varphi_{q}|^2 |\nablaslash V\varphi_{p}|^2  \eta^i \di \mu_{\D},\label{def-V-G-i-3} \\
{}^VG^k_4 &=  \doubleint_{\D}   \sum_{p+q \leq k, p < q}  |L\varphi_{q}|^2 |\nablaslash V\varphi_{p} |^2  \eta^i \di \mu_{\D}. \label{def-V-G-i-4}
\end{align}
\end{subequations}

\subsection{Initial data in $\R_2$}\label{sec-data-2}
We restrict the solution $\varphi$ obtained in $\R_1$ (Section \ref{sec-past}) to the portion of the Cauchy surface $\Sigma^{[0, \delta]}_{1} = \{t=1\} \cap \R_1$. Note that, here $\R_1 = \D^{+}(\mathcal{I}^{-}) \cap \D^+(\mathcal{H}^-) \cap \{u \leq 1\} \cap \{0 \leq \ub \leq \delta \}$.
In regard of the Cauchy problem of \eqref{Main Equation} with the initial data $( \varphi|_{\Sigma^{[0,\delta]}_1}, \partial_t \varphi|_{\Sigma^{[0,\delta]}_1})$, its solution restricted to $\{t \geq 1\} \cap \R_1$ exactly coincides with $\varphi |_{\{t \geq 1\} \cap \R_1}$ (obtained in Section \ref{sec-past}) by the uniqueness.
And hence on $\{u=1| 0 \leq \ub \leq \delta\}$, the solution of the Cauchy problem, which we also denote by $\varphi$, obeys the following estimates: 
\begin{align*}
E^{deg}[\delta^p W^l_{p,q}\varphi_k](1; [0,\delta]) & \lesssim  I^2_{l+k}, \quad \text{on} \quad C_{1}, \\
 \delta^{-1} \|D W^l_{p,q} \varphi_k\|^2_{L^2(S_{\delta, 1})} & \lesssim  I^2_{l+k}, \quad \text{on} \quad S_{\delta,1}.
\end{align*}
In addition, the data on $\Sigma_1 \cap \{\ub \leq 0\}$ is set to be trivial, and we know that $\varphi \equiv 0$ in $\{\ub \leq 0\} \cap \{t \geq 1\}$. 

For any $1 \leq u_1 \leq u, \, 0 \leq \ub_1 \leq \ub \leq \delta$, we shall also use the short cut $C_u$ for  $C_u^{[\ub_1, \ub]}$ and $\Cb_{\ub}$ for $\Cb_{\ub}^{[u_1, u]}$.

\subsection{Bootstrap argument in $\R_2$}\label{sec-bt-argument-forward}
\subsubsection{Bootstrap assumptions in $\R_2$}\label{sec-bt-forward}
We now address the bootstrap assumptions. Given any number $\beta \geq  \frac{1}{2}$ and $N \in \mathbb{N}, \, N \geq 6$, we assume that, there is a large constant $M$ to be determined, such that for $0 \leq \ub \leq \delta$, $1 \leq u \leq +\infty$,
\begin{align*}
E^{deg}_l (u; [0, \ub]) & \leq  M^2  |u|^{-2\beta}, \quad l \leq N,\\
{}^LF^{deg}_{k+1} (u; [0, \ub]) + {}^{t}F^{deg}_{k+1} (u; [0, \ub])+{}^{\Lb}F^{deg}_{k+1} (u; [0, \ub]) & \leq M^2  |u|^{-2\beta}, \quad k \leq N-1,\\
E^{ndeg}_l(u; [0,\ub]) &\leq M^2, \quad\quad \quad\quad l \leq N,\\
{}^LF^{ndeg}_{k+1}(u; [0,\ub]) + {}^tF^{ndeg}_{k+1}(u; [0,\ub]) + {}^YF^{ndeg}_{k+1}(u; [0,\ub]) &\leq M^2, \quad\quad \quad\quad k \leq N-1.
\end{align*}
That is, for the degenerate energy and flux: let $ l \leq N$, $k \leq N-1$,
\begin{subequations}
\begin{align}
  \|\eta^{\frac{1}{2}} L \varphi_l\|_{L^2(C_u)} +  \delta^{ - \frac{1}{2} } \|\eta^{\frac{1}{2}}\nablaslash \varphi_l\|_{L^2(C_u)} &\leq M  |u|^{-\beta},   \label{bt-forward-energy-Cu-deg}\\
  \delta^{-1} \| \eta^{ \frac{1}{2}} Y \varphi_k\|_{L^2(C_u)}  + \|\eta^{ \frac{1}{2}} Y L  \varphi_k\|_{L^2(C_u)} + \delta^{-\frac{1}{2} } \|\eta^{\frac{1}{2}} \nablaslash \Lb \varphi_k\|_{L^2(C_u)} & \leq  M  |u|^{-\beta}, \label{bt-forward-flux} \\
  \delta \|\eta^{\frac{1}{2}} L^2 \varphi_k\|_{L^2(C_u)} +  \delta^{\frac{1}{2} } \|\eta^{\frac{1}{2}}\nablaslash L \varphi_k\|_{L^2(C_u)} & \leq M  |u|^{-\beta}; \label{bt-forward-flux-out-deg}
\end{align}
 \end{subequations}
And for the non-degenerate energy and flux:  let $ l \leq N$, $k \leq N-1$,
 \begin{subequations}
\begin{align}
\|L \varphi_l\|_{L^2(C_u)} +  \delta^{ - \frac{1}{2} } \|\nablaslash \varphi_l\|_{L^2(C_u)} & \leq M, \label{bt-forward-energy-Cu-nondeg} \\
 \delta^{-1} \|Y\varphi_k\|_{L^2(C_u)} + \|Y L \varphi_k\|_{L^2(C_u)} + \delta^{-\frac{1}{2} } \| \nablaslash Y \varphi_k\|_{L^2(C_u)} & \leq M, \label{bt-forward-flux-nondeg}  \\
 \delta \|L^2 \varphi_k\|_{L^2(C_u)} +  \delta^{ \frac{1}{2} } \|\nablaslash L \varphi_k\|_{L^2(C_u)} & \leq M.   \label{bt-forward-flux-out-nondeg}
\end{align}
 \end{subequations} 
 In addition, we make the following bootstrap assumption for the degenerate integrated energy: letting $ l \leq N, \, k \leq N-1$,
\begin{equation}\label{bt-decay-energy-spacetime-fLb-fL-deg}
\mathcal{S}^{deg}_l (\D_{0, \ub}^{u, +\infty})  +   {}^L \mathcal{S}^{deg}_{k+1} (\D_{0, \ub}^{u, +\infty})  +   {}^{\Lb} \mathcal{S}^{deg}_{k+1} (\D_{0, \ub}^{u, +\infty})  
\leq  M^2  |u|^{-2\beta}.
\end{equation}
As a remark, bootstrap assumption for non-degenerate integrated energy is not needed for our proof.

\subsubsection{Close the bootstrap argument in $\R_2$}\label{sec-close-bt}
We also let 
\begin{equation}\label{def-mathbb-I}
\mathbb{I}_k^2 = I^2_k +   \delta^{\frac{1}{2}} M^{4}.
\end{equation}
As in Section \ref{sec-close-bt-R1},
we will finally choose $M$ (which depends on the initial data) large enough such that $C I^2_{N+1} \leq \frac{M^2}{4}$, and $\delta$ small enough such that $\delta^{\frac{1}{2}}M^{{2}} \ll 1$, hence $C\mathbb{I}^2_{N+1} \leq \frac{M^2}{2},$
which will close the bootstrap argument (refer to the discussions below) and we will complete the proof for Theorem \ref{Main-Thm-R2}.

To close the bootstrap argument in $\R_2$, we will start with the following degenerate decay estimates.
\begin{theorem}\label{Thm-decay-low-deg-energy}
Suppose $\beta \geq \frac{1}{2}$ and $1 \leq u \leq +\infty, \, 0\leq \ub \leq \delta$. There are the decay estimates for  the degenerate energies
\begin{align}
E^{deg}_l (u; [0, \ub]) + \mathcal{S}^{deg}_l (\D_{0, \ub}^{u, +\infty})
\lesssim {}& \mathbb{I}_{N+1}^2  |u|^{-2\beta}, \quad l \leq N, \label{decay-deg-estimate-fLb-fL-thm} \\
{}^LF^{deg}_{k+1} (u; [0, \ub])  + {}^L\mathcal{S}^{deg}_{k+1} (\D_{0, \ub}^{u, +\infty})  \lesssim {}& \mathbb{I}_{N+1}^2  |u|^{-2\beta}, \quad k \leq N-1, \label{decay-deg-estimate-fLb-fL-L-thm} \\
\Eb^{deg}_l (\ub; [u, +\infty]) 
\lesssim {}&  \mathbb{I}_{N+1}^2|u|^{-2\beta}, \quad  l \leq N, \label{decay-estimate-Lb-Cub} \\
 {}^L\Fb^{deg}_{k+1} (\ub; [u, +\infty]) 
\lesssim {}&  \mathbb{I}_{N+1}^2  |u|^{-2\beta}, \quad  k \leq N-1.\label{decay-deg-Lb-L-Cub}
\end{align}
\end{theorem}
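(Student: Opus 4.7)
The plan is to use the degenerate multiplier $\xi_2 = \eta L + \delta^{-1}(1+\mu)\Lb$ to set up an integrated energy identity, control the nonlinear source via the null condition, and extract polynomial decay through a dyadic pigeonhole iteration. Taking $f_1 = \eta$ and $f_2 = \delta^{-1}(1+\mu)$ in \eqref{current of fL}, and using $\p_u\eta = -\mu\eta/r$, $\p_{\ub}\mu = -\mu\eta/r$, together with $\mu \sim 1$ and $r$ finite in $\R_2$, a direct calculation combined with the spacetime volume $\eta\,\di\mu_{\D}$ shows that $-2K^{\xi_2}$ contributes, on the left-hand side of the energy identity \eqref{energy-identity}, the positive bulk $\mathcal{S}^{deg}[\psi](\D)$ alongside the degenerate fluxes $E^{deg}[\psi]$ on $C_u$ and $\Eb^{deg}[\psi]$ on $\Cb_{\ub}$. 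Applying the scheme to $\psi = \varphi_l$ ($l \leq N$) and $\psi = \delta L \varphi_k$ ($k \leq N-1$), and noting $[\Box_g,\Omega^l]=0$ while $[\Box_g,L]$ gives only lower-order terms by \eqref{commutates-Y-L-Box}, one obtains for $1 \leq u_1 \leq u_2 \leq +\infty$ and $0 \leq \ub \leq \delta$ an inequality of the form
\begin{equation*}
E^{deg}[\psi](u_2) + \Eb^{deg}[\psi](\ub;[u_1,u_2]) + \mathcal{S}^{deg}[\psi](\D_{0,\ub}^{u_1,u_2}) \lesssim E^{deg}[\psi](u_1) + \mathrm{Err}.
\end{equation*}

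Next I decompose $\mathrm{Err}$ via the null condition into the error terms \eqref{def-V-S-1}--\eqref{def-V-G-i-4}. In every product $Q(\p\varphi_p,\p\varphi_q)$ at least one factor is a good derivative $\Db\varphi$, controlled by the non-degenerate flux through the bootstrap assumptions \eqref{bt-forward-energy-Cu-nondeg}--\eqref{bt-forward-flux-nondeg}, while the ``bad'' $L\psi$ factor carries $\delta^{-1/2}$ but is absorbable by the $\eta|L\psi|^2$ piece of $\mathcal{S}^{deg}$. Combining the degenerate bootstrap $E^{deg}_l(u) \leq M^2|u|^{-2\beta}$ with Sobolev embeddings on $C_u$ and $S_{\ub,u}$ (analogously to Proposition \ref{proposition L infinity and L4 estimates in R1}) produces $\mathrm{Err} \lesssim \delta^{1/2} M^4 |u_1|^{-2\beta}$.

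Since $\mathcal{S}^{deg}$ dominates $\int_{u_1}^{u_2} E^{deg}(u')\,du'$, setting $f(u) := E^{deg}_l(u;[0,\ub])$ yields
\begin{equation*}
f(u_2) + \int_{u_1}^{u_2} f(u')\,du' \leq C_0 f(u_1) + C_0 \delta^{1/2} M^4 |u_1|^{-2\beta}, \quad 1 \leq u_1 \leq u_2.
\end{equation*}
A dyadic pigeonhole---by the mean-value theorem extract $u^\ast \in [u_1, 2u_1]$ with $f(u^\ast) \lesssim u_1^{-1}(f(u_1) + |u_1|^{-2\beta})$, reapply the bound on $[u^\ast, 2u_1]$, and iterate along the sequence $u_n = 2^n$---produces $f(u) \lesssim \mathbb{I}_{N+1}^2 |u|^{-2\beta}$, which is \eqref{decay-deg-estimate-fLb-fL-thm}. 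The same argument run with $\psi = \delta L\varphi_k$ gives \eqref{decay-deg-estimate-fLb-fL-L-thm}. Sending $u_2 \to +\infty$ in the basic identity (where $E^{deg} \to 0$ thanks to the exponential decay of $\eta$ at the horizon) produces the flux decays \eqref{decay-estimate-Lb-Cub} and \eqref{decay-deg-Lb-L-Cub} on $\Cb_{\ub}$.

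The hardest step is establishing $\mathrm{Err} \lesssim \delta^{1/2} M^4 |u_1|^{-2\beta}$ in the top-order case $l = N$, where the four-factor $L^4$ trick \eqref{esti-Qk-4N-1} used in $\R_1$ is unavailable due to the regularity restriction. The sharp weight $|u_1|^{-2\beta}$ must still be extracted: the top-order factor is placed in $L^2$ (bounded by $\Eb^{deg}$ on $\Cb_{\ub}$ or the spacetime $\mathcal{S}^{deg}$), a lower-order factor in $L^\infty$ via the Sobolev half-derivative loss, and the null condition must be invoked so that the $L^\infty$ factor is a good derivative $\Db\varphi$---only then are both the $\eta$-degeneracy and the pigeonhole-ready weight $|u_1|^{-2\beta}$ preserved.
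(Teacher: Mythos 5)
Your multiplier choice, the decomposition of the source into the terms \eqref{def-V-S-1}--\eqref{def-V-G-i-4}, the Gr\"{o}nwall/absorption treatment of the currents, and the pigeonhole extraction of $|u|^{-2\beta}$ decay (Lemma \ref{lema-pigeonhole-1}) all match the paper's argument for $l\le N-1$. The gap is in the top-order case $l=N$, which you correctly identify as the hardest step but then mis-handle. The problematic term is $S^N_4=\iint |L\varphi_p|^2|\nablaslash\varphi_q|^2\eta^2$ with $q=N$: here the good derivative $\nablaslash\varphi_N$ is forced into $L^2$ by regularity, so the $L^\infty$ factor is necessarily the \emph{bad} derivative $L\varphi_p$ --- your claim that ``the null condition must be invoked so that the $L^\infty$ factor is a good derivative $\Db\varphi$'' cannot be arranged for this term. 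More seriously, pairing $\|\eta^{\frac12}L\varphi_p\|^2_{L^\infty}\lesssim \delta^{-1}M^2|u|^{-2\beta}$ with $\|\eta^{\frac12}\nablaslash\varphi_N\|^2_{L^2(C_u)}\lesssim\delta M^2|u|^{-2\beta}$ yields $S^N_4\lesssim M^4|u_1|^{-4\beta+1}$ with \emph{no} factor of $\delta^{\frac12}$; a bound $CM^4|u_1|^{-2\beta}$ cannot be beaten down to $\frac{M^2}{2}|u_1|^{-2\beta}$ by shrinking $\delta$, so the bootstrap does not close and your claimed $\mathrm{Err}\lesssim\delta^{1/2}M^4|u_1|^{-2\beta}$ is unattainable at top order.

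The paper's resolution, which is absent from your proposal, is a two-stage linearization. First close the cases $k\le N-1$ (for both $E^{deg}_k$ and ${}^LF^{deg}_{k+1}$) to upgrade the $L^\infty$ bound to $\|\eta^{\frac12}L\varphi_p\|_{L^\infty}\lesssim\delta^{-\frac12}\mathbb{I}_{k+3}|u|^{-\beta}$, $p\le N-3$, i.e.\ with the data-dependent constant $\mathbb{I}$ in place of the bootstrap constant $M$ (see \eqref{L-infty-L-phi-deg-improved-1}). Then run the top-order estimate with the refined inequality \eqref{energy-ineq-fLb-fL-deg}--\eqref{def-F-1-2-psi}, splitting the source by a Cauchy--Schwarz parameter $c$ into $c^{-1}|\Box_g\varphi_N|^2\eta^3$ and $c|\Box_g\varphi_N|^2\eta^2$. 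Then $S^N_4$ becomes a term \emph{linear} in the top-order energy with coefficient $\delta^{-1}\mathbb{I}_N^2$: the $c\,\eta$-weighted piece $c\,\delta^{-1}\mathbb{I}_N^2|\nablaslash\varphi_N|^2\eta$ is absorbed into $\mathcal{S}^{deg}_N$ by choosing $c\ll\mathbb{I}_N^{-2}$, and the $c^{-1}\eta^2$-weighted piece is handled by Gr\"{o}nwall along $\Cb_{\ub}$ over the short interval $[0,\delta]$ (the extra $\eta^3$ weight in $\F_1$ is what makes this integrable against $\Eb^{deg}_N$). Without this absorption-plus-improvement mechanism your argument fails exactly where you flagged the difficulty.
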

The proof of Theorem \ref{Thm-decay-low-deg-energy} will be presented in the sections \ref{sec-deg-multiplier-II}, \ref{sec-deg-energy-Cu} and \ref{sec-deg-flux}.

We will take advantage of Theorem \ref{Thm-decay-low-deg-energy} to obtain the estimate for the flux associated to $\Lb$.
\begin{theorem}\label{prop-Lb2-deg}
There is, for any $\beta \geq \frac{1}{2}$ and $1 \leq u \leq +\infty, \, 0\leq \ub \leq \delta$,
\begin{align}
  {}^{\Lb}F_{k+1}^{deg} (u; [0, \ub] ) +  {}^{\Lb}\mathcal{S}^{deg}_{k+1} (\D_{0, \ub}^{u, +\infty}) &\lesssim \mathbb{I}_{N+1}^2  |u|^{-2\beta}, \quad  k \leq N-1. \label{decay-Lb-Cu-Lb-phi}\\
{}^{\Lb}\Fb_{k+1}^{deg}(\ub; [u, +\infty] ) 
&\lesssim  \mathbb{I}_{N+1}^2  |u|^{-2\beta}, \quad  k \leq N-1. \label{decay-estimate-Lb-Cub-Lb-phi}
\end{align}
\end{theorem}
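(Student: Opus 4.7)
The plan is to apply the degenerate energy identity with multiplier $\xi_2 = \eta L + \delta^{-1}(1+\mu)\Lb$ (introduced in Section \ref{sec-deg-multiplier-II}) to the function $\psi = \Lb\varphi_k$. After Cauchy--Schwarz, bounding the contribution on $C_1$ by $I_{N+1}^2$ via Section \ref{sec-data-2}, and absorbing a small portion of the positive spacetime integral ${}^{\Lb}\mathcal{S}^{deg}_{k+1}$ appearing on the left, we obtain
\begin{equation*}
{}^{\Lb}F^{deg}_{k+1}(u;[0,\ub]) + {}^{\Lb}\Fb^{deg}_{k+1}(\ub;[u,+\infty]) + {}^{\Lb}\mathcal{S}^{deg}_{k+1}(\D_{0,\ub}^{u,+\infty}) \lesssim I_{N+1}^2 + \doubleint_{\D_{0,\ub}^{u,+\infty}}\!\! \eta\,\bigl|\Box_g(\Lb\varphi_k)\bigr|^2 \di\mu_{\D}.
\end{equation*}
To handle the remaining spacetime integral we decompose $\Box_g(\Lb\varphi_k) = \Lb\Box_g\varphi_k + [\Box_g,\Lb]\varphi_k$.

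For the commutator piece, the formula \eqref{commutates-Y-L-Box} gives terms of the type $r^{-2}(\Lb-L)\varphi_k$, $r^{-1}\laplacianslash\varphi_k$ and $r^{-1}\Box_g\varphi_k$. Using the bound $r^{-1}|\laplacianslash\varphi_k|\lesssim r^{-3}|\varphi_{k+2}|$, the boundedness of $r$ in $\R_2$, and the null form decomposition of $\Box_g\varphi_k$, each of these integrals reduces to a multiple of $\mathcal{S}^{deg}_l(\D_{0,\ub}^{u,+\infty})$ with $l \leq k+2 \leq N$, which is $\lesssim \mathbb{I}_{N+1}^2|u|^{-2\beta}$ by Theorem \ref{Thm-decay-low-deg-energy}.

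For the nonlinear piece $\Lb\Box_g\varphi_k = \Lb\,\Omega^k Q(\p\varphi,\p\varphi)$, using $[\Lb,\Omega]=0$, $[\Lb,Y]=\tfrac{\mu}{r}Y$, $[\Lb,\nablaslash]=\tfrac{\eta}{r}\nablaslash$ together with the null-form commutator bound \eqref{eq-null-condition-comm-D}, we obtain a sum of null quadratic forms in which one factor carries a single $\Lb$ derivative, plus $r^{-1}$-weighted lower-order null forms. Splitting indices at the Sobolev threshold $[N/2]$, the low-index factor is placed in $L^\infty$ via Sobolev embeddings on $C_u$ using the bootstrap assumptions of Section \ref{sec-bt-forward}, while the high-index factor is placed in $L^2$ using either Theorem \ref{Thm-decay-low-deg-energy} (for terms already controlled there) or the bootstrap assumption on ${}^{\Lb}F^{deg}_{k+1}$ itself. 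The outcome is a bound by $\mathbb{I}_{N+1}^2|u|^{-2\beta}$, possibly plus an $\epsilon$-fraction of ${}^{\Lb}\mathcal{S}^{deg}_{k+1}$ that is absorbed into the left-hand side thanks to $\delta^{1/2}M^2 \ll 1$.

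The main obstacle is the top-order contribution of the form $|\Lb^2\varphi_k|\cdot|\Db\varphi_{k'}|$ with $k'\leq [N/2]$, since $\Lb^2\varphi_k$ is not controlled pointwise. Here we use the $\eta$-weighted pointwise decay $|\eta^{1/2}\Db\varphi_{k'}|\lesssim \delta^{1/4}|u|^{-\beta}$ implied by the bootstrap, combined with the $\eta$ factor in the integrand, to estimate
\begin{equation*}
\doubleint \eta|\Lb^2\varphi_k|^2|\Db\varphi_{k'}|^2 \lesssim \delta^{1/2}|u|^{-2\beta}\mathbb{I}_N^2\cdot\delta^{-1}\!\!\doubleint \eta|\Lb^2\varphi_k|^2 \lesssim \delta^{1/2}\mathbb{I}_N^2\cdot {}^{\Lb}\mathcal{S}^{deg}_{k+1}(\D_{0,\ub}^{u,+\infty}),
\end{equation*}
which is absorbed into the left-hand side. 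A Grönwall/pigeonhole step identical to the one in the proof of Theorem \ref{Thm-decay-low-deg-energy} then converts the resulting energy inequality into the pointwise decay rate $|u|^{-2\beta}$, yielding simultaneously \eqref{decay-Lb-Cu-Lb-phi} and \eqref{decay-estimate-Lb-Cub-Lb-phi}.
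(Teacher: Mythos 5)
Your overall architecture --- the multiplier $\xi_2=\eta L+\delta^{-1}(1+\mu)\Lb$ applied to $\psi=\Lb\varphi_k$, the splitting of $\Box_g\Lb\varphi_k$ into the commutator $[\Box_g,\Lb]\varphi_k$ and $\Lb$ of the null form, and the concluding Gr\"{o}nwall/pigeonhole step --- is exactly the paper's route in Section \ref{sec-Lb-energy-away-hori}, and your treatment of the commutator and of every quadratic term that carries a gain in $\delta$ is sound. But you have misplaced the difficulty. The term you single out as the ``main obstacle,'' $\doubleint\eta\,|\Lb^2\varphi_k|^2|\Db\varphi_{k'}|^2$, is one of the routine ones: it is (up to the identity $|Y\Lb\varphi|^2\eta^2=|\Lb^2\varphi|^2$) the analogue of ${}^{\Lb}S^k_1$, and it is handled either by your absorption (the coefficient does carry $\delta^{1/2}$) or, as in the paper, by Gr\"{o}nwall against the flux $\delta^{-1}\|\Lb^2\varphi_k\|^2_{L^2(\Cb_{\ub})}$ contained in ${}^{\Lb}\Fb^{deg}_{k+1}$.

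The genuine obstruction is the pair ${}^{\Lb}S^k_4$, ${}^{\Lb}G^k_4$, i.e.\ terms of the form
\begin{equation*}
\doubleint_{\Do}|L\varphi_{p}|^2\,|\nablaslash\Lb\varphi_{q}|^2\,\eta^2\,\di\mu_{\D},\qquad p+q\le k\le N-1,
\end{equation*}
where the low-index factor is a \emph{bad} derivative, so the bootstrap only yields $\|\eta^{1/2}L\varphi_p\|_{L^\infty}\lesssim\delta^{-1/2}M|u|^{-\beta}$ with no gain in $\delta$, while the high-index factor $\nablaslash\Lb\varphi_q\sim r^{-1}\Lb\varphi_{q+1}$ can be top order ($q+1=N$). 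Your prescription --- bootstrap $L^\infty$ on the low factor, $L^2$ on the high factor --- produces a contribution of size $M^2\mathbb{I}_{N+1}^2|u|^{-2\beta}$ (or $M^4|u|^{-2\beta}$ if the bootstrap is used on both), and since $M^2\mathbb{I}_{N+1}^2\ge M^2I_{N+1}^2$ with $I_{N+1}$ a fixed, $\delta$-independent constant, this cannot be reduced below $M^2/2$ when closing; there is no power of $\delta$ to exploit. Nor does your absorption trick apply: the coefficient one extracts in front of ${}^{\Lb}\mathcal{S}^{deg}_{k+1}$ is $O(M^2|u|^{-2\beta})$, not $O(\delta^{1/2}M^2)$. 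The paper's resolution is to evaluate \emph{both} factors with quantities already improved before this theorem: the upgraded pointwise/$L^4$ bound $\|\eta^{1/2}L\varphi_p\|_{L^\infty}\lesssim\delta^{-1/2}\mathbb{I}_{N+1}|u|^{-\beta}$ of \eqref{L-4-L-phi-deg-improved}, and the already-proven integrated bound $\delta^{-1}\doubleint\eta|\Lb\varphi_{q+1}|^2\le\mathcal{S}^{deg}_{q+1}\lesssim\mathbb{I}_{N+1}^2|u|^{-2\beta}$ with $q+1\le N$ from Theorem \ref{Thm-decay-low-deg-energy} --- the key structural point being that $\nablaslash\Lb\varphi_q$ belongs (via $[\Lb,\Omega]=0$) to the $\Omega$-commuted hierarchy that has already been improved, not to the $\Lb$-commuted hierarchy currently being estimated. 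This gives $\mathbb{I}_{N+1}^4|u|^{-4\beta}\lesssim\mathbb{I}_{N+1}^2|u|^{-2\beta}$ with no appearance of $M$. Your argument needs this substitution to close; as written, the ``low-index factor in $L^\infty$ via the bootstrap'' step fails precisely for the $L\varphi_p$ factors in these two families of terms.
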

Theorem \ref{prop-Lb2-deg} will be inferred in Section \ref{sec-Lb-energy-away-hori}.

\begin{remark}\label{rk-thm-deg-low}
Integrating \eqref{decay-estimate-Lb-Cub}, \eqref{decay-deg-Lb-L-Cub} and \eqref{decay-estimate-Lb-Cub-Lb-phi}  along $\p_{\ub}$ leads to
\begin{equation}\label{decay-spacetime-estimate-Lb-LbL-Lb2}
\doubleint_{\D_{0, \ub}^{u, +\infty}} \left( \delta^{-1} |\Lb \varphi_l|^2  + \delta |\Lb L \varphi_k|^2 +  \delta^{-1} |\Lb^2 \varphi_k|^2 \right) \di \mu_{\D} \lesssim \delta \mathbb{I}_{N+1}^2|u|^{-2\beta}, 
\end{equation}
where $l \leq N, \, k \leq N-1$.
Compared to the spacetime integrals $\mathcal{S}^{deg}_l (\D_{0, \ub}^{u, +\infty})$ and ${}^L\mathcal{S}^{deg}_{k+1} (\D_{0, \ub}^{u, +\infty})$, ${}^{\Lb}\mathcal{S}^{deg}_{k+1} (\D_{0, \ub}^{u, +\infty})$, there is no $\eta$ in the integrands of \eqref{decay-spacetime-estimate-Lb-LbL-Lb2} and the $\delta$-size of the spacetime decay estimates for $\Lb \varphi_l, \Lb L \varphi_k , \Lb^2 \varphi_k$ are improved.

The improved spacetime estimate \eqref{decay-spacetime-estimate-Lb-LbL-Lb2} will be useful in bounding the exterior currents (spacetime integrals away from the horizon) ${}^e\mathcal{C} (\varphi_l)$, ${}^e\mathcal{C} (L\varphi_k)$ and ${}^e\mathcal{C} (Y\varphi_k)$, which arise when conducting the non-degenerate energy estimates near the horizon, see Section \ref{sec-horizon}.
\end{remark}

Finally, the estimate for ${}^tF^{deg}_{k+1} (u; [0,\ub]), \, k \leq N-1$ will be retrieved in Section \ref{sec-recover-Lb-deg}.

Next, we will proceed to the non-degenerate energy estimates near the horizon.
Denote the region near the horizon by 
\begin{equation}\label{def-R2-NH}
\R_2^{NH} :=\R_2 \cap \{ 2m \leq r \leq r_{NH}\},
\end{equation}
where $r_{NH}$ satisfying $2m<r_{NH} <1.2 r_{NH} <3m$, is close to $2m$. 

\begin{theorem}\label{Thm-decay-low-hori-energy}
In $\R_2^{NH}$, $0 - r^\ast_{NH} < u \leq + \infty$, $0 \leq \ub \leq \delta$, there is
\begin{align}
E^{ndeg}_l (u; [0,\ub]) + \mathcal{S}^{ndeg}_l(\R_2^{NH}) \lesssim {}&  \mathbb{I}_{N+1}^2, \quad l \leq N, \label{decay-hori-estimate-lower-flux-thm} \\
 {}^LF^{ndeg}_{k+1} (u; [0,\ub]) + {}^L\mathcal{S}^{ndeg}_{k+1}(\R_2^{NH})  \lesssim {}& \mathbb{I}_{N+1}^2, \quad k \leq N-1. \label{decay-hori-estimate-L-lower-flux-thm}
\end{align}
And letting $u^{NH} = \ub - r_{NH}^\ast$, we have
\begin{align}
\Eb^{ndeg}_l (\ub; [u^{NH}, +\infty])  
\lesssim {}& \mathbb{I}_{N+1}^2, \quad l \leq N, \label{decay-hori-estimate-Eb-thm} \\
{}^L\Fb^{ndeg}_{k+1} (\ub; [u^{NH}, +\infty])  
\lesssim {}& \mathbb{I}_{N+1}^2, \quad k \leq N-1. \label{decay-hori-estimate-Fb-thm}
\end{align}
\end{theorem}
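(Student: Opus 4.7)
The plan is to establish estimates \eqref{decay-hori-estimate-lower-flux-thm}--\eqref{decay-hori-estimate-Fb-thm} by a localized red-shift argument on the near-horizon slab $\R_2^{NH}$, using the multiplier $\xi_3 = (1 + y_2(r^\ast))L + \delta^{-1} y_1(r^\ast) Y$ already introduced in Section \ref{sec-outline-proof}. The profile functions will be chosen so that $y_1 > 0$ on $\{r \leq r_{NH}\}$, with $y_1$ supported in (and $y_2$ vanishing outside) a smooth cutoff region approaching $r = r_{NH}$, in such a way that an analogue of the computation \eqref{current of fL} adapted to the $(L, Y)$ frame produces a positive bulk current controlling $\delta^{-1}|Y\psi|^2 + \delta^{-1}|\nablaslash\psi|^2 + |L\psi|^2$ with a coefficient bounded below uniformly down to $r = 2m$. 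This is the standard Dafermos--Rodnianski red-shift mechanism, and the boundary terms of the resulting energy identity produce precisely the non-degenerate quantities $E^{ndeg}[\psi], \Eb^{ndeg}[\psi]$ appearing in the theorem.

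Since $\xi_3$ is cut off near $r = r_{NH}$, the identity acquires a flux contribution on a timelike tube in $\{r \sim r_{NH}\}$. There $\eta$ is bounded away from zero, so this flux is majorized by a degenerate quantity already estimated by Theorem \ref{Thm-decay-low-deg-energy} and Theorem \ref{prop-Lb2-deg}; combined with the $I_{N+1}^2$-bound on the initial cut $\Sigma_1 \cap \R_2^{NH}$, every boundary contribution that does not sit on the left-hand side is $O(\mathbb{I}_{N+1}^2)$. Applying this scheme first with $\psi = \varphi_l$ and then with $\psi = \delta L \varphi_k$ yields \eqref{decay-hori-estimate-lower-flux-thm}--\eqref{decay-hori-estimate-Fb-thm} modulo the estimate of the source term.

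The source $\doubleint |\xi_3 \psi \cdot \Box_g \psi| \, \eta \, \di \mu_{\D}$ is expanded via the null condition according to the scheme \eqref{def-V-mS-k}--\eqref{def-V-mG-k}. Every product contains at least one $\Db$-type factor, so that contributions involving only $L, \nablaslash, Y$ can be absorbed into the red-shift bulk current using the $L^\infty$ and $L^4$ estimates derived from the bootstrap assumptions \eqref{bt-forward-energy-Cu-deg}--\eqref{bt-forward-flux-out-nondeg}, while any piece carrying the potentially dangerous $\Lb$-factor is handled via the improved spacetime integral \eqref{decay-spacetime-estimate-Lb-LbL-Lb2} from Remark \ref{rk-thm-deg-low}, which removes the troublesome $\eta$ weight and restores a full $\delta$-gain. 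The net nonlinear contribution is $O(\delta^{1/2} M^4)$, hence absorbed into $\mathbb{I}_{N+1}^2$ for $\delta$ small. Higher-order estimates proceed by commuting with $Z \in \{L, Y, \Omega\}$ using Lemma \ref{lemma-commuting} and inducting on $n = p + q$, the lower-order commutator terms $Z_{\leq n - 1}(\varphi_k)$ being controlled by the inductive hypothesis.

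The main obstacle will be the top-order $Y$-commutator: Lemma \ref{lemma-commuting} produces the principal term $(q-p)\frac{2m}{r^2} Y Z^n_{p,q} \varphi_k$, which is of the same differential order as the unknown on the left-hand side, and whose sign and size must be reconciled with the red-shift bulk coefficient. This is exactly the algebraic feature the red-shift construction is designed for, but at each iterated $Y$-commutation the admissible window for $y_1$ shrinks; this is the source of the restriction $2m < r_{NH} < 1.2\, r_{NH} < 3m$, and it will require an inductive readjustment of the profiles $y_1, y_2$ (or of the coercivity constants) at each level of commutation. All remaining error terms --- the $L$- and $\Omega$-commutators, the interface flux on $\{r = r_{NH}\}$, and the null-form sources --- are routine once the degenerate decay and the improved $\Lb$-bulk bound are in hand.
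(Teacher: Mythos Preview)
Your first two paragraphs capture the paper's strategy correctly: the multiplier is exactly $N^h=\xi_3$, the resulting energy identity is split into an interior piece $\D^h$ (where the red-shift bulk is coercive in $\delta^{-1}|Y\psi|^2+\delta^{-1}|\nablaslash\psi|^2+|L\psi|^2$) and an exterior piece $\D^e$ on $\{r_{NH}\le r\le 1.2\,r_{NH}\}$ where $\eta\sim 1$, so that the exterior current and source are bounded directly by the degenerate estimates of Theorems \ref{Thm-decay-low-deg-energy}--\ref{prop-Lb2-deg} and Remark \ref{rk-thm-deg-low}; the interior nonlinear source is decomposed via the null form and handled by the bootstrap $L^\infty/L^4$ bounds together with Gr\"{o}nwall and absorption. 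This is exactly how the paper proceeds.

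Where you go astray is in your last two paragraphs. The quantities in Theorem \ref{Thm-decay-low-hori-energy} involve only $\psi=\varphi_l=\Omega^l\varphi$ and $\psi=\delta L\varphi_k$; there is \emph{no} $Y$-commutation here, and no induction on $n=p+q$ in $Z^n_{p,q}$. The top-order $Y$-commutator issue you raise belongs to Theorem \ref{Thm-decay-low-hori-energy-flux-Lb}, not to the present statement, and the ``inductive readjustment of the profiles $y_1,y_2$'' you anticipate is not carried out and not needed. The genuine obstacle you have missed is the top-order nonlinear term
\[
{}^hS^N_4=\doubleint_{\D^h}\sum_{p+q\le N,\,p\le q}|L\varphi_p|^2\,|\nablaslash\varphi_q|^2\,\eta\,\di\mu_{\D},
\]
which cannot be closed by the $L^4,L^4,L^4,L^4$ trick used for $k\le N-1$ because $\|\nablaslash\varphi_N\|_{L^4}$ is not available. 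The paper resolves this by an \emph{ordering} argument: first close $E^{ndeg}_k$ and ${}^LF^{ndeg}_{k+1}$ for $k\le N-1$ (Sections \ref{sec-out-energy-horizon}--\ref{sec-estimate-L-2}), thereby obtaining the improved non-degenerate bound $\|L\varphi_p\|_{L^\infty(\R_2)}\lesssim\delta^{-1/2}\mathbb{I}_N$ for $p\le N-3$; with this in hand, ${}^hS^N_4$ is linearized as $\lesssim\delta^{-1}\mathbb{I}_N^2\iint_{\D^h}|\nablaslash\varphi_N|^2\eta\,\di\mu_{\D}$ and controlled by the already-established degenerate bulk $\mathcal{S}^{deg}_N$. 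Your proposal does not contain this step, and without it the case $l=N$ in \eqref{decay-hori-estimate-lower-flux-thm} and \eqref{decay-hori-estimate-Eb-thm} does not close.
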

 The proof of this theorem will be given in the sections \ref{sec-out-energy-horizon}, \ref{sec-estimate-L-2}, and \ref{sec-top-energy-ndeg}.

After that, we shall make use of Theorem \ref{Thm-decay-low-hori-energy} to  prove the bound for the flux associated to $Y$.
\begin{theorem}\label{Thm-decay-low-hori-energy-flux-Lb}
In $\mathcal{R}_2^{NH}$, $0 - r^\ast_{NH} < u \leq + \infty$, $0 \leq \ub \leq \delta$, letting $u^{NH} = \ub -r^\ast_{NH}$, we have,
\begin{align}
  {}^YF^{ndeg}_{k+1} (u; [0,\ub]) +  {}^Y\Fb^{ndeg}_{k+1} (\ub; [u^{NH}, +\infty]) +  {}^Y\mathcal{S}^{ndeg}_{k+1}(\R_2^{NH})   \lesssim {}&\mathbb{I}_{N+1}^2, \quad  k \leq N-1. \label{decay-hori-estimate-fLb-fL-flux-thm-Y-Cu}
\end{align}
\end{theorem}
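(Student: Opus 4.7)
The plan is to apply the energy identity \eqref{energy-identity} to $\psi = Y\varphi_k$ in $\R_2^{NH}$ with the red-shift multiplier $\xi_3 = (1+y_2(r^\ast))L + \delta^{-1}y_1(r^\ast)Y$ from Section \ref{sec-multiplier-horizon}. This multiplier is designed so that $-2K^{\xi_3}(Y\varphi_k)$ contains uniformly positive bulk pieces of the form $c_1 \delta^{-1}|Y^2\varphi_k|^2$, $c_2|LY\varphi_k|^2$ and $c_3 \delta^{-1}|\nablaslash Y\varphi_k|^2$ near the horizon (with $c_j>0$ after $y_1$ is chosen suitably large), and so that the boundary currents on $C_u$ and $\Cb_\ub$ dominate ${}^YF^{ndeg}_{k+1}$ and ${}^Y\Fb^{ndeg}_{k+1}$ respectively.

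By Lemma \ref{lemma-commuting},
\begin{equation*}
\Box_g(Y\varphi_k) = Y\Omega^k\Box_g\varphi + \tfrac{2m}{r^2}Y^2\varphi_k - \tfrac{2}{r}\laplacianslash\varphi_k + \tfrac{1}{r^2}Y\varphi_k - \tfrac{1}{r^2}L\varphi_k,
\end{equation*}
so the energy identity on $\R_2^{NH}$ produces, on the right, the initial data on $C_1\cap\R_2^{NH}$ (bounded by $I_{N+1}^2$ via Section \ref{sec-data-2}), the flux through the timelike boundary $\{r=r_{NH}\}$ (where $\eta\sim 1$, so this is controlled by Theorems \ref{Thm-decay-low-deg-energy} and \ref{prop-Lb2-deg} through the identification $Y = \eta^{-1}\Lb$), and the bulk pairing $\doubleint_{\R_2^{NH}} \xi_3(Y\varphi_k)\cdot\Box_g(Y\varphi_k)\,\eta\,\di\mu_{\D}$.

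The bulk pairing is the heart of the argument. The commutator term $\tfrac{2m}{r^2}Y^2\varphi_k$ pairs with the $\delta^{-1}y_1 Y^2\varphi_k$ component of $\xi_3(Y\varphi_k)$ to produce a $\delta^{-1}|Y^2\varphi_k|^2$ contribution that is strictly dominated by $c_1\delta^{-1}|Y^2\varphi_k|^2$ in the positive bulk; this is exactly the classical red-shift choice of $y_1$. Its pairing with the $(1+y_2)LY\varphi_k$ component is handled by Cauchy--Schwarz and absorbed into the bulk current. The strictly lower-order commutator terms $-\tfrac{2}{r}\laplacianslash\varphi_k + \tfrac{1}{r^2}(Y\varphi_k - L\varphi_k)$ are controlled by Theorem \ref{Thm-decay-low-hori-energy} (together with $[\Omega,\nablaslash]=0$ to access $\varphi_{k+1}$). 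The nonlinear source $Y\Omega^k Q(\partial\varphi,\partial\varphi)$ is decomposed via the null condition (Definition \ref{def-null-condition}) so that each resulting quadratic integrand carries a good derivative $\Db$; the usual $L^\infty\cdot L^2$ splitting, using the bootstrap assumptions \eqref{bt-forward-energy-Cu-nondeg}--\eqref{bt-forward-flux-out-nondeg} and a Sobolev argument analogous to Proposition \ref{proposition L infinity and L4 estimates in R1} for the non-degenerate $L^\infty$ norms, gains a $\delta^{1/2}M^4$ factor that is absorbed into $\mathbb{I}_{N+1}^2$.

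The main obstacle is the coexistence of $Y^2\varphi_k$ in both the commutator $[\Box_g,Y]$ and in the bulk current $K^{\xi_3}$: the red-shift design resolves this by tuning $y_1$ large enough so that the positive bulk coefficient strictly dominates. Once this absorption is carried out, a Gr\"onwall argument closes the inequality ${}^YF^{ndeg}_{k+1}(u;[0,\ub]) + {}^Y\Fb^{ndeg}_{k+1}(\ub;[u^{NH},+\infty]) + {}^Y\mathcal{S}^{ndeg}_{k+1}(\R_2^{NH})\lesssim \mathbb{I}_{N+1}^2$, and the smallness of $\delta$ fixed in Section \ref{sec-close-bt} upgrades this bound to the claimed $\lesssim I_{N+1}^2$.
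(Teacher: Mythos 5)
Your overall architecture is the same as the paper's: commute with $Y$, apply the energy identity with the red-shift multiplier $\xi_3=N^h$ on the region $\{r\le 1.2 r_{NH}\}$, control the exterior currents ${}^e\mathcal{C}(Y\varphi_k)$, ${}^e\mathcal{F}(Y\varphi_k)$ by Theorems \ref{Thm-decay-low-deg-energy} and \ref{prop-Lb2-deg}, decompose $Y\Omega^k\Box_g\varphi$ via the null condition with the $L^\infty/L^4$--$L^2$ splitting (the paper additionally needs the already-improved bound \eqref{improved-L-nabla-L4-phi-top} rather than the raw bootstrap constant for the ${}^{hY}S^k_4$, ${}^{hY}G^k_4$ terms), and close by Gr\"onwall. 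So the route is essentially the paper's.

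However, the mechanism you offer for what you correctly identify as the main obstacle --- the term $\frac{2m}{r^2}Y^2\varphi_k$ in $[\Box_g,Y]\varphi_k$ --- does not work as stated. Pairing it against the $\delta^{-1}y_1\,Y^2\varphi_k$ component of $\xi_3(Y\varphi_k)$ produces a bulk term with coefficient $\frac{2m}{r^2}\,\delta^{-1}y_1$, while the favourable bulk $\delta^{-1}|Y^2\varphi_k|^2\,\eta\,\di\mu_{\D}$ generated by the red-shift has coefficient proportional to $\frac{m}{r^2}\,\delta^{-1}y_1$ as well (it arises from $\p_{\ub}(\delta^{-1}y_1\eta^{-1})\,g^{u\ub}$): both scale \emph{linearly} in $y_1$, so ``tuning $y_1$ large'' cannot change their ratio, and if the signs were adverse no choice of $y_1$ would rescue the absorption. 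What actually saves this term is one of two things. First, its sign: in the identity \eqref{energy-identity} the contribution is $-2\cdot\frac{2m}{r^2}Y^2\varphi_k\cdot\delta^{-1}y_1 Y^2\varphi_k=-\frac{4m}{r^2}\delta^{-1}y_1|Y^2\varphi_k|^2\le 0$ on the right-hand side, i.e.\ it is itself favourable --- this is the genuine red-shift commutation phenomenon, a structural fact about $[\Box_g,Y]$ on Schwarzschild, not a consequence of the size of $y_1$. Second (and this is what the paper does in Section \ref{sec-high-de-hori}), one can forgo the sign entirely: Cauchy--Schwarz puts the commutator into ${}^{hY}\mathcal{W}^k$, whose worst piece is $\doubleint_{\D^h}|Y^2\varphi_k|^2\eta\,\di\mu_{\D}=\int \di\ub\; \delta\cdot\big(\delta^{-1}\int_{\Cb_{\ub}^{NH}}\eta^{-1}|\Lb Y\varphi_k|^2\di\mu_{\Cb_{\ub}}\big)\lesssim\int_{\ub_1}^{\ub_2}\delta\;{}^Y\Fb^{ndeg}_{k+1}(\ub)\,\di\ub$, and this is closed by Gr\"onwall in $\ub$ over the interval of length at most $\delta$. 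Either repair is routine, but you should replace the ``choose $y_1$ large'' justification by one of them; the cross term with $(1+y_2)LY\varphi_k$ you do handle correctly by Cauchy--Schwarz and absorption.
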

Theorem \ref{Thm-decay-low-hori-energy-flux-Lb} will be proved in Section \ref{sec-high-de-hori}.

At last, the estimate for ${}^tF^{ndeg}_{k+1} (u; [0,\ub]), \, k \leq N-1$ will be deduced in Section \ref{sec-recover-Lb-deg}.

To facilitate our estimates, we present some preliminary estimates which follow from the bootstrap assumptions \eqref{bt-forward-energy-Cu-deg}-\eqref{bt-forward-flux-out-nondeg} and \eqref{bt-decay-energy-spacetime-fLb-fL-deg}.
\begin{proposition}\label{Prop-Sobolev-R2}
In $\R_2$, we have the non-degenerate estimates:
\begin{align*}
 \delta^{\frac{1}{2}} \| L\varphi_k \|_{L^{4}(S_{\ub,u})} + \delta^{-\frac{1}{4}} \| \Db \varphi_k\|_{L^{4}(S_{\ub,u})}
 &\lesssim M  , \quad k \leq N-1,\\
 \delta^{\frac{1}{2}}  \|L\varphi_j\|_{L^\infty(\R_2)} + \delta^{-\frac{1}{4}} \|\Db \varphi_j\|_{L^\infty(\R_2)} 
 &\lesssim M  , \quad j \leq N-2,
 \end{align*}
and the degenerate decay estimates:
 \begin{align*}
\delta^{\frac{1}{2}} \| \eta^{\frac{1}{2}} L\varphi_k \|_{L^{4}(S_{\ub,u})} + \delta^{-\frac{1}{4}} \| \eta^{\frac{1}{2}}\Db \varphi_k\|_{L^{4}(S_{\ub,u})} & \lesssim   |u|^{-\beta} M  , \quad k \leq N-1,\\
 \delta^{\frac{1}{2}}  \|\eta^{\frac{1}{2}} L\varphi_j\|_{L^\infty(\R_2)} + \delta^{-\frac{1}{4}} \|\eta^{\frac{1}{2}} \Db \varphi_j\|_{L^\infty(\R_2)} 
 &\lesssim   |u|^{-\beta} M  , \quad j \leq N-2.
 \end{align*}
\end{proposition}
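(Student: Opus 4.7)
The plan is to combine two Sobolev embeddings---one along the outgoing cone $C_u$, one along the 2-sphere $S_{\ub,u}$---with the bootstrap assumptions just stated, exploiting throughout the fact that $r\sim 1$ in $\R_2$ so that all $r$-weights are harmless. This is the $\R_2$ counterpart of Proposition \ref{proposition L infinity and L4 estimates in R1}, with the simplifications that we no longer need to track $\langle u\rangle$-weights and we no longer need the commuter $\tilde S$.

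First I would invoke the Sobolev inequality on $C_u$ of the type already used in the proof of Proposition \ref{proposition L infinity and L4 estimates in R1},
\begin{equation*}
\|\phi\|_{L^4(S_{\ub,u})}\lesssim\|L\phi\|^{1/2}_{L^2(C_u)}\bigl(\|\phi\|_{L^2(C_u)}+\|\Omega\phi\|_{L^2(C_u)}\bigr)^{1/2},
\end{equation*}
taking successively $\phi=L\varphi_k$, $\phi=Y\varphi_k$, and $\phi=\nablaslash\varphi_k$. The commutation relations $[\Omega,L]=[\Omega,Y]=[\Omega,\nablaslash]=0$ turn $\Omega\phi$ into an object of the same type with the angular index raised by one, which is still within the bootstrap range provided $k\leq N-1$. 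Matching $\delta$-powers between the non-degenerate bootstrap and the Sobolev output yields the non-degenerate $L^4$ bound. Passing to $L^\infty$ at the cost of one additional $\Omega$-derivative via the standard Sobolev embedding on the 2-sphere, $\|\phi\|_{L^\infty(S_{\ub,u})}\lesssim\|\phi\|_{L^4(S_{\ub,u})}+\|\nablaslash\phi\|_{L^4(S_{\ub,u})}$, accounts for the drop from $k\leq N-1$ to $j\leq N-2$.

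For the degenerate estimates I would repeat the same two-step argument with $\phi$ replaced by the weighted object $\eta^{1/2}\phi$. Because $\eta=1-2m/r$ depends only on $r$, it commutes with $\Omega$ and $\nablaslash$; and $L(\eta^{1/2})=(m/r^2)\eta^{1/2}$ is pointwise dominated by $\eta^{1/2}$, so the $L$-derivative of the weight produces only a lower-order term of the same structure. Plugging in the degenerate bootstrap then directly supplies the decay factor $|u|^{-\beta}$. The only subtle point in the whole argument is this weight-commutation; everything else is a purely embedding-theoretic matching of $\delta$-powers and no multiplier, integration by parts, or new energy identity is required.
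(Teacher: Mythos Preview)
Your proposal is correct and follows essentially the same route as the paper: apply the Sobolev inequality on $C_u$ (the paper's \eqref{Sobolev Inequlities-Cu}/\eqref{Sobolev Inequlities-Cu-deg}) to pass from the bootstrap $L^2(C_u)$ quantities to $L^4(S_{\ub,u})$, then the spherical Sobolev \eqref{Sobolev Inequlities-S2} to get $L^\infty$. The paper writes out only the representative case $\eta^{1/2}\Lb\varphi_k$, invoking ${}^tF^{deg}_{k+1}$ for the $\eta^{1/2}L\Lb\varphi_k$ factor and ${}^{\Lb}F^{deg}_{k+1}$ for the $\eta^{1/2}\nablaslash\Lb\varphi_k$ factor; your commute-the-weight argument for $\eta^{1/2}$ is exactly how one proves \eqref{Sobolev Inequlities-Cu-deg} from \eqref{Sobolev Inequlities-Cu}, so the two presentations are equivalent.
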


\begin{proof}
The proof is based on the Sobolev inequalities on $C_u$ and $S_{\ub, u}$ \eqref{Sobolev Inequlities-S2}-\eqref{Sobolev Inequlities-Cu-deg}. 
As the proof leading to Proposition \ref{proposition L infinity and L4 estimates in R1}, we will only address the case for $\eta^{\frac{1}{2}} \Lb\varphi_k$. By \eqref{Sobolev Inequlities-Cu-deg}, there is, for $k \leq N-1$,
\begin{align*}
r^{\frac{1}{2}} \| \eta^{\frac{1}{2}}\Lb \varphi_k \|_{L^{4}(S_{\ub,u})} &\lesssim \| \eta^{\frac{1}{2}} L \Lb \varphi_k \|^{\frac{1}{2}}_{L^{2}(C_{u})} (\|\eta^{\frac{1}{2}} \Lb \varphi_k \|^{\frac{1}{2}}_{L^{2}(C_{u})} + \|\eta^{\frac{1}{2}} r \nablaslash \Lb \varphi_k \|^{\frac{1}{2}}_{L^{2}(C_{u})}) \\
 &\lesssim M^{\frac{1}{2}}  \langle u \rangle^{-\frac{\beta}{2}} \cdot  \delta^{\frac{1}{4}} M^{\frac{1}{2}} \langle u \rangle^{-\frac{\beta}{2}} \lesssim \delta^{\frac{1}{4}}  \langle u \rangle^{-\beta} M,
 \end{align*}
where we note that for $k \leq N-1$, $\| \eta^{\frac{1}{2}} L \Lb \varphi_k \|_{L^{2}(C_{u})}$ is controlled by ${}^tF_{k+1}^{deg} (u, \ub)$, while the bound of $\| \eta^{\frac{1}{2}} r \nablaslash \Lb \varphi_k \|_{L^{2}(C_{u})}$ should be related to  the bootstrap assumption for ${}^{\Lb}F_{k+1}^{deg} (u, \ub)$, see \eqref{bt-forward-flux}.
The $L^\infty$ estimate $\| \eta^{\frac{1}{2}} \Lb \varphi_j \|_{L^\infty(S_{\ub, u})}, \, j \leq N-2$ follows from \eqref{Sobolev Inequlities-S2} and the above $L^4$ estimates.
\end{proof}

\begin{remark}\label{rk-Prop-Sobolev-R2}
It is worth to mention that, better estimates for lower order derivatives of $Y\varphi_k$ or $\Lb \varphi_k$ (which will not be used throughout our proof) can be derived:
 \begin{align*}
\| |u|^{\beta} \eta^{\frac{1}{2}}  \Lb \varphi_p \|_{L^{4}(S_{\ub,u})} + \| Y \varphi_p \|_{L^{4}(S_{\ub,u})} &\lesssim \delta^{\frac{1}{2}} M, \quad p \leq N-2, \\
\|  |u|^{\beta} \eta^{\frac{1}{2}}  \Lb  \varphi_q\|_{L^\infty(\R_2)} + \| Y \varphi_q\|_{L^\infty(\R_2)} &\lesssim \delta^{\frac{1}{2}}  M, \quad q \leq N-3.
 \end{align*}
The $\delta^{\frac{1}{4}}$ lose in the estimates for the top order $\| Y \varphi_{N-1} \|_{L^{4}(S_{\ub,u})}$ and $\| |u|^{\beta} \eta^{\frac{1}{2}}  \Lb \varphi_{N-1} \|_{L^{4}(S_{\ub,u})}$ is due to the weaker assumption for the top order energy  $\|Y \varphi_N\|_{L^2(C_u)}$ and $\|\eta^{\frac{1}{2}}\Lb \varphi_N\|_{L^2(C_u)}$, or equivalently $\| \nablaslash Y \varphi_{N-1}\|_{L^2(C_u)}$ and $\|\eta^{\frac{1}{2}} \nablaslash \Lb \varphi_{N-1}\|_{L^2(C_u)}$. As shown in \eqref{bt-forward-flux-nondeg} and \eqref{bt-forward-flux}, compared to the lower order bootstrap assumption $\||u|^\beta   \eta^{ \frac{1}{2}} Y \varphi_k\|_{L^2(C_u)} + \|Y\varphi_k\|_{L^2(C_u)} \leq \delta M, \, k \leq N-1$, the one for the top order case $ \||u|^\beta \eta^{\frac{1}{2}}  \nablaslash \Lb \varphi_{N-1}\|_{L^2(C_u)} + \| \nablaslash Y \varphi_{N-1} \|_{L^2(C_u)} \leq\delta^{\frac{1}{2} } M$ is weaker.

In contrast to \cite{Wang-Yu-16}, the Sobolev inequality on $\Cb_{\ub}$ is not good enough for application here, because
\begin{equation*}
r^{\frac{1}{2}}\| \Lb \varphi \|_{L^{4}(S_{\ub,u})} \lesssim r^{\frac{1}{2}}_0 \| \Lb \varphi \|_{L^{4}(S_{\ub,1})} + \|\Lb^2 \varphi \|^{\frac{1}{2}}_{L^{2}(\Cb^{[1, u]}_{\ub})}(\| \Lb \varphi \|^{\frac{1}{2}}_{L^{2}(\Cb^{[1, u]}_{\ub})} + \|\Omega \Lb \varphi \|^{\frac{1}{2}}_{L^{2}(\Cb^{[1, u]}_{\ub})}),
 \end{equation*}
does not offer any decay rates in terms of $|u|$, since $r$ is finite  in $\R_2$. Here $r^\ast_0 = \ub -1$.
\end{remark}

\subsection{Degenerate energy in $\R_2$}\label{sec-deg-energy}
At the first stage, we devote ourselves to the degenerate energy estimates: Theorem \ref{Thm-decay-low-deg-energy} and Theorem \ref{prop-Lb2-deg}.
Let $1 \leq u_1 \leq u_2 \leq +\infty$, and $0\leq \ub_1 \leq \ub_2 \leq \delta$. 
We should remind ourselves that $r$ has a uniformly upper bound  in $\R_2$ and $r \geq 2m$.

\subsubsection{The multiplier in the region $\R_2$}\label{sec-deg-multiplier-II}

Let us consider the multiplier $\xi =\xi_2 := \eta L +  \delta^{-1} (1+\mu) \Lb$. That is, we choose $f_1=\eta$ and $f_2= \delta^{-1} (1+\mu)$, so that 
\begin{align*}
\p_u f_1  g^{u \ub} |L\psi|^2&= \frac{m}{r^2} |L\psi|^2 >0, \\
\p_{\ub} f_2 g^{u \ub} |\Lb \psi|^2 &= \delta^{-1} \frac{m}{r^2} |\Lb \psi|^2 >0, \\
- \frac{1}{2} \left(\p_u f_2- \frac{\mu f_2}{r} \right) |\nablaslash \psi|^2 &= \delta^{-1} \frac{2m^2}{r^3} |\nablaslash \psi|^2 >0.
\end{align*}
Therefore, by virtue of \eqref{current of fL} and the energy identity \eqref{energy-identity}, we get some extra positive spacetime integrals which is crucial in the proof. The energy inequality takes the following form (irrelevant constants are ignored),
\begin{equation}\label{energy-ineq-fLb-fL-deg-psi}
\begin{split}
& \int_{C_{u_2}^{[\ub_1, \ub_2]}} \left( |L \psi|^2 + \delta^{-1} |\nablaslash \psi|^2\right) \eta \di \mu_{C_u} + \int_{\Cb_{\ub_2}^{[u_1, u_2]}} \left(\eta^2  |\nablaslash \psi|^2 +  \delta^{-1} |\Lb \psi|^2 \right)\di \mu_{\Cb_{\ub}} \\
& + \doubleint_{\Do}  \left( \delta^{-1} | \Lb \psi|^2 +\delta^{-1} |\nablaslash \psi|^2 + |L \psi|^2 \right) \eta \di \mu_{\D} \\
\lesssim &  \int_{C_{u_1}^{[\ub_1, \ub_2]}} \left( |L \psi|^2 + \delta^{-1} |\nablaslash \psi|^2 \right) \eta \di \mu_{C_u} + \int_{\Cb_{\ub_1}^{[u_1, u_2]}} \left( \eta^2  |\nablaslash \psi|^2 +  \delta^{-1} |\Lb \psi|^2\right) \di \mu_{\Cb_{\ub}}\\
& \quad \quad \quad \quad + \mathcal{C} (\psi) + \F (\psi),
\end{split}
\end{equation}
where the current $\mathcal{C} (\psi)$ is given by
\begin{equation}\label{def-C-L-Lb-psi}
 \mathcal{C}(\psi) =   \doubleint_{\Do} \left(  |\nablaslash \psi|^2 + \delta^{-1}  |L \psi \Lb \psi|  \right) \eta \di \mu_{\D},
\end{equation}
and the nonlinear error term $\F (\psi)$ is given as below,
\begin{equation}\label{def-Er-L-Lb-psi}
\F (\psi) =  \doubleint_{\Do} |\Box_g \psi| \cdot \left(| L \psi| \eta^2 + \delta^{-1}  |\Lb \psi | \eta \right) \di \mu_{\D}.
\end{equation}

For the current $\mathcal{C} (\psi)$,
\begin{equation}\label{error-quad-nabla-psi}
 \doubleint_{\Do}  |\nablaslash \psi|^2 \eta \di \mu_{\D} \leq \delta \doubleint_{\Do} \delta^{-1} |\nablaslash \psi|^2  \eta \di \mu_{\D},
\end{equation}
can be absorbed by the spacetime integrals on the left hand side of \eqref{energy-ineq-fLb-fL-deg-psi};
\begin{equation}\label{error-quad-LbL-psi}
\begin{split}
& \doubleint_{\Do} \delta^{-1}  |L \psi \Lb \psi|  \eta \di \mu_{\D}\\
 \lesssim {}& c \doubleint_{\Do}  |L \psi|^2 \eta^2 \di \mu_{\D} +  \int_{\ub_1}^{\ub_2} c^{-1} \delta^{-1} \di \ub  \int_{\Cb_{\ub}^{[u_1, u_2]}} \delta^{-1} | \Lb \psi|^2  \di \mu_{\Cb_{\ub}}.
\end{split}
\end{equation}
Here $c$ is a small constant to be determined.
Meanwhile, we estimate $\F (\psi)$ by
\begin{equation}\label{source-psi}
\begin{split}
& |\F  (\psi)|  \lesssim  \doubleint_{\Do} c^{-1} |\Box_g \psi|^2 \eta^3 \di \mu_{\D} + c \doubleint_{\Do}  |L \psi|^2 \eta \di \mu_{\D} \\
& \quad + c\doubleint_{\Do}  |\Box_g \psi|^2 \eta^2 \di \mu_{\D} +  \int_{\ub_1}^{\ub_2} c^{-1} \delta^{-1} \di \ub  \int_{\Cb_{\ub}^{[u_1, u_2]}} \delta^{-1} | \Lb \psi|^2  \di \mu_{\Cb_{\ub}}. 
\end{split}
\end{equation}
We choose $c \ll 1$ so that $c \iint_{\Do}  |L \psi|^2 \eta \di \mu_{\D}$ can be absorbed by the positive integrals on the left hand side of \eqref{energy-ineq-fLb-fL-deg-psi}, while the last terms in \eqref{error-quad-LbL-psi} and \eqref{source-psi} can be handled by the Gr\"{o}nwall's inequality.  
As a consequence, we deduce
\begin{equation}\label{energy-ineq-fLb-fL-deg}
\begin{split}
& E^{deg} [\psi] (u_2; [\ub_1,\ub_2])  + \Eb^{deg} [\psi] (\ub_2; [u_1, u_2]) + \mathcal{S}^{deg} [\psi] (\Do)\\
\lesssim {}& E^{deg} [\psi] (u_1; [\ub_1,\ub_2])  + \Eb^{deg} [\psi] ( \ub_1, [u_1, u_2] ) + \F_1(\psi) + \F_2(\psi),
\end{split}
\end{equation}
where $c \ll 1$ is a constant to be determined  and
\begin{equation}\label{def-F-1-2-psi}
\F_1(\psi) \lesssim \doubleint_{\Do} c^{-1} |\Box_g \psi|^2 \eta^3 \di \mu_{\D}, \quad \F_2(\psi) \lesssim  c \doubleint_{\Do}  |\Box_g \psi|^2 \eta^2 \di \mu_{\D}.
\end{equation}
This kind of energy inequality \eqref{energy-ineq-fLb-fL-deg}-\eqref{def-F-1-2-psi} will come into play in the energy estimate for the top order case, see Section \ref{sec-top-energy-deg}. 
Alternatively, without lost of generality, we have as well
\begin{equation}\label{energy-ineq-fLb-fL-deg-psi-sign}
\begin{split}
& E^{deg} [\psi] (u_2; [\ub_1,\ub_2])  + \Eb^{deg} [\psi] (\ub_2; [u_1, u_2]) + \mathcal{S}^{deg} [\psi] (\Do) \\
\lesssim {}& E^{deg} [\psi] (u_1; [\ub_1,\ub_2])  + \Eb^{deg} [\psi] ( \ub_1, [u_1, u_2] ) +\doubleint_{\Do}  |\Box_g \psi|^2 \eta^2 \di \mu_{\D}.
\end{split}
\end{equation}

\subsubsection{Energy estimates for $E^{deg}_k (u; [0,\ub])$, $\mathcal{S}^{deg}_k(\D^{u,+\infty}_{0,\ub})$ and $\Eb^{deg}_k (\ub; [u,+\infty]), \, k \leq N-1$}\label{sec-deg-energy-Cu}
Taking $\psi = \varphi_k, \, k \leq N-1$ in \eqref{energy-ineq-fLb-fL-deg-psi-sign}, we obtain the energy inequality,
\begin{equation}\label{energy-ineq-fLb-fL-varphi-deg}
\begin{split}
& E^{deg}_k(u_2; [\ub_1,\ub_2])  + \Eb^{deg}_k(\ub_2; [u_1, u_2]) \\
& + \doubleint_{\Do}  \left( \delta^{-1} | \Lb \varphi_k|^2 +\delta^{-1} |\nablaslash \varphi_k|^2 + |L \varphi_k|^2 \right) \eta \di \mu_{\D} \\
\lesssim {}& E^{deg}_k(u_1; [\ub_1,\ub_2])  + \Eb^{deg}_k( \ub_1, [u_1, u_2] )+ \doubleint_{\Do}  |\Box_g \varphi_k|^2 \eta^2 \di \mu_{\D}. 
\end{split}
\end{equation}
The last term is split as $ \iint_{\Do}  |\Box_g \varphi_k|^2 \eta^2 \di \mu_{\D}= S^k_1 + \cdots + S^k_4$, where $S^k_j,\, j=1:4$ are defined as \eqref{def-V-S-1}-\eqref{def-V-S-4} with $\D=\Do, \,V=1$, $i=2$, i.e., for $p+q\leq k \leq N-1$,  $p \leq q$,
\begin{subequations}
\begin{align}
S^k_1 &=  \doubleint_{\Do} |D \varphi_{p}|^2  |Y \varphi_{q}|^2 \eta^2 \di \mu_{\D}, \label{def-S-k-i-1}\\
 S^k_2 &=  \doubleint_{\Do}    |\Db \varphi_{p}|^2 |L \varphi_{q}|^2 \eta^2 \di \mu_{\D},\label{def-S-k-i-2} \\
  S^k_3 &=  \doubleint_{\Do}   |\Db \varphi_{p}|^2 |\nablaslash \varphi_{q}|^2 \eta^2 \di \mu_{\D},\label{def-S-k-i-3} \\
 S^k_4 &=  \doubleint_{\Do}   |L\varphi_{p}|^2 |\nablaslash\varphi_{q}|^2  \eta^2 \di \mu_{\D}.\label{def-S-k-i-4}
\end{align}
\end{subequations}
Note that, we have chosen $N\geq 6,$ so that $p +2 \leq [\frac{N}{2}]+2 \leq N-1$. Hence, we can apply $L^\infty, L^\infty, L^2, L^2$ to the four factors in each term of \eqref{def-S-k-i-1}-\eqref{def-S-k-i-4}.

For $S^k_1$, due to the $L^\infty$ estimate  $| D \varphi_{p}|^2 \lesssim \delta^{-1} M^2$, 
\begin{equation}\label{estimate-Sk1}
|S^k_1| \lesssim  \int_{\ub_1}^{\ub_2}  M^2 \di \ub  \int_{\Cb_{\ub}^{[u_1, u_2]}} \delta^{-1} |\Lb \varphi_q|^2 \di \mu_{\Cb_{\ub}}, \quad q \leq k.
\end{equation}
for which the Gr\"{o}nwall's inequality applies.

Knowing that $|\Db \varphi_{p}|  \lesssim \delta^{\frac{1}{2}} M^2$, and by the bootstrap assumption for $\mathcal{S}_k^{deg}(\D_{0, \ub}^{u, +\infty})$, there is
\begin{equation*}
|S^k_2| + |S^k_3| \lesssim  \delta^{\frac{1}{2}} M^2 \doubleint_{\Do}  \left( |L \varphi_q|^2 + |\nablaslash \varphi_q|^2 \right) \eta^2 \di \mu_{\D}  \lesssim \delta^{\frac{1}{2}} M^4 |u_1|^{-2\beta}, \quad q \leq k.
\end{equation*}

For $S^k_4$, we note that $p \leq q \leq k\leq N-1$, thus we can all apply $L^4$ norm to the four factors. Knowing that $\|\eta^{\frac{1}{2}} L \varphi_{p} \|^2_{L^4(S_{u, \ub})} \lesssim \delta^{-1}  |u|^{-2\beta} M^2$, $\| \eta^{\frac{1}{2}} \nablaslash \varphi_{q} \|^2_{L^4(S_{u, \ub})} \lesssim \delta^{\frac{1}{2}}  |u|^{-2\beta} M^2,$
\begin{equation}\label{esti-S4}
\begin{split}
|S^k_4| \lesssim & \int_{u_1}^{u}   \int_{\ub_1}^{\ub_2}  \|\eta^{\frac{1}{2}} L \varphi_{p} \|^2_{ L^4(S_{u, \ub})} \|\eta^{\frac{1}{2}} \nablaslash \varphi_{q}\|^2_{L^4(S_{u, \ub})} \di u \di \ub \\
 \lesssim{}& \delta^{\frac{1}{2}} M^{4}  |u_1|^{-4\beta + 1}, \quad p \leq q \leq k\leq N-1.
\end{split}
\end{equation}
We here remark that, for the top order case: $k=N$, $\| \eta^{\frac{1}{2}} \nablaslash \varphi_{N} \|^2_{L^4(S_{u, \ub})}$ is not bounded  because of the regularity and hence the estimate \eqref{esti-S4} is no longer valid if $k=N$.

All the above estimates together with the Gr\"{o}nwall's inequality lead to: in the case of $k \leq N-1,$
\begin{equation}\label{energy-ineq-fLb-fL-lower-1}
\begin{split}
&E^{deg}_k(u_2; [\ub_1, \ub_2])  + \Eb^{deg}_k(\ub_2; [u_1, u_2]) + \mathcal{S}^{deg}_k(\Do)\\
\lesssim {}&E^{deg}_k(u_1; [\ub_1, \ub_2])  + \Eb^{deg}_k(\ub_1; [u_1, u_2])  +   \delta^{\frac{1}{2}} M^{4}  |u_1|^{-2\beta },
\end{split}
\end{equation}
where we consider $\beta \geq \frac{1}{2}$, so that $-4\beta+1 \leq -2\beta$.
In particular, letting $\ub_1 =0$ ($\varphi \equiv 0$ on $\Cb_0$) and $0 <\ub_2 =u \leq \delta$, we have for any $1 \leq u_1 < u_2,$
\begin{equation}\label{energy-ineq-fLb-fL-lower-E}
\begin{split}
&E^{deg}_k(u_2; [0, \ub]) + \int_{u_1}^{u_2} E^{deg}_k(u; [0, \ub]) \di u\\
\lesssim {}&E^{deg}_k(u_1; [0, \ub]) +   \delta^{\frac{1}{2}} M^{4}  |u_1|^{-2\beta }, \quad k \leq N-1.
\end{split}
\end{equation}
By the pigeon-hole principle (see Lemma \ref{lema-pigeonhole-1}), we achieve that for any $\beta \geq \frac{1}{2}$ and $1 \leq u, \, 0 \leq \ub \leq \delta$,
\begin{equation}\label{decay-estimate-L-Cu-lower}
E^{deg}_k(u; [0, \ub]) \lesssim  \mathbb{I}_N^2  |u|^{-2\beta}, \quad  k \leq N-1.
\end{equation}
Letting $u_1 = u, \, u_2 \rightarrow + \infty$ and $\ub_1=0, \, 0<\ub_2 =\ub \leq \delta$  in \eqref{energy-ineq-fLb-fL-lower-1} gives rise to
\begin{align*}
& \Eb^{deg}_k(\ub; [u, +\infty]) + \mathcal{S}^{deg}_k(\D^{u,+\infty}_{0,\ub}) \\
 \lesssim {}& E^{deg}_k(u; [0, \ub]) +  \delta^{\frac{1}{2}} M^{4}  |u|^{-2\beta }, \quad k \leq N-1.
\end{align*}
Substituting \eqref{decay-estimate-L-Cu-lower} into the above formula, we deduce
\begin{equation}\label{decay-estimate-Lb-Cub-lower-deg}
 \Eb^{deg}_k(\ub; [u, +\infty]) + \mathcal{S}^{deg}_k(\D^{u,+\infty}_{0,\ub}) 
\lesssim  \mathbb{I}_N^2|u|^{-2\beta}, \quad k \leq N-1.
\end{equation}
\eqref{decay-estimate-L-Cu-lower} together with \eqref{decay-estimate-Lb-Cub-lower-deg} asserts estimates for  the lower order energy $E^{deg}_k(u; [0, \ub])$, $\Eb^{deg}_k(\ub; [u, +\infty]) $ and $\mathcal{S}^{deg}_k(\D^{u,+\infty}_{0,\ub})$, $k \leq N-1$.

By the way, we can insert the estimate for $\Eb^{deg}_k(\ub; [u, +\infty])$ \eqref{decay-estimate-Lb-Cub-lower-deg} into \eqref{estimate-Sk1} and derive
\begin{equation}\label{estimate-Box-varphi-low-deg}
\doubleint_{\Do}  |\Box_g \varphi_k|^2 \eta^2 \di \mu_{\D}
\lesssim   \delta^{\frac{1}{2}} M^4 |u_1|^{-2\beta}, \quad k \leq N-1,
\end{equation}

\subsubsection{Energy estimates for ${}^LF^{deg}_{k+1} (u; [0,\ub])$, ${}^L\mathcal{S}^{deg}_{k+1}(\D^{u,+\infty}_{0,\ub})$ and ${}^L\Fb^{deg}_{k+1} (\ub; [u, +\infty])$, $k \leq N-1$}\label{sec-deg-flux}

We take $\psi = \delta L\varphi_k$, $k \leq N-1$ in \eqref{energy-ineq-fLb-fL-deg-psi-sign} to derive
\begin{equation}\label{energy-ineq-fLb-fL-L}
\begin{split}
& {}^LF^{deg}_{k+1} (u_2; [\ub_1,\ub_2]) + {}^L\Fb^{deg}_{k+1} (\ub_2; [u_1,u_2]) \\ 
& + \doubleint_{\Do}  \left( \delta | \Lb L \varphi_k|^2 + \delta |\nablaslash L \varphi_k|^2 + \delta^2 |L^2 \varphi_k|^2 \right) \eta \di \mu_{\D} \\
\lesssim &  {}^LF^{deg}_{k+1} (u_1; [\ub_1,\ub_2]) + {}^L\Fb^{deg}_{k+1} (\ub_1; [u_1,u_2]) + {}^LW^{k} + \doubleint_{\Do} \delta^2 |L \Box_g \varphi_k|^2 \eta^2 \di \mu_{\D},
\end{split}
\end{equation}
where ${}^LW^{k} $ associated to $ [\Box_g, \delta L] \varphi_k$ is given by
\begin{equation}\label{def-W-L-1}
{}^LW^{k} =  \doubleint_{\Do} \delta^2   \left( |L\varphi_k|^2 +|\Lb\varphi_k|^2 +|\laplacianslash \varphi_k |^2 +  |\Box_g \varphi_k|^2 \right) \eta^2 \di \mu_{\D},
\end{equation}
and the last term can be split as: $$\doubleint_{\Do} \delta^2 |L \Box_g \varphi_k|^2 \eta^2 \di \mu_{\D} = {}^L\mathcal{S}^k + {}^L\mathcal{G}^k + {}^\delta\mathcal{L}^k.$$ Here ${}^L\mathcal{S}^k, {}^L\mathcal{G}^k $ take the forms of \eqref{def-V-mS-k}-\eqref{def-V-mG-k} with $\D= \Do, V=\delta L$, $i=2$; ${}^\delta\mathcal{L}^k$ is defined as \eqref{def-V-mS-k} with $\D= \Do, V=\delta$, $i=2$.

At first, \eqref{estimate-Box-varphi-low-deg} tells that ${}^\delta\mathcal{L}^k \lesssim \delta^{\frac{5}{2}}   M^4 |u_1|^{-2\beta }, \, k \leq N-1.$

For the error terms ${}^L\mathcal{S}^k$, we make the further splitting: ${}^L\mathcal{S}^k = {}^LS^k_1 + \cdots + {}^LS^k_4$,  where $ {}^LS^k_j,\, j=1:4$ are defined as \eqref{def-V-S-1}-\eqref{def-V-S-4} with $\D= \Do, V=\delta L, \, i=2$.
The estimates for the ${}^LS^k_j, \, j =1 : 3$ are the same as that for $S^k_j, \, j=1: 3$ \eqref{def-S-k-i-1}-\eqref{def-S-k-i-3}, except that $\varphi_{q}$ therein is replaced now by $\delta L \varphi_{q}$.
For the remaining one ${}^LS^k_4$, it  reads
\begin{align*}
{}^LS^k_4 &=  \doubleint_{\Do}  \sum_{p+q\leq k, p\leq q} \delta^2 |L\varphi_{p}|^2 |\nablaslash L\varphi_{q}|^2  \eta^2 \di \mu_{\D}, \quad k \leq N-1.
\end{align*}
By the bootstrap assumption \eqref{bt-forward-energy-Cu-deg}, noting that $q+1 \leq N$, 
and the $L^\infty$ estimate $ \eta^\frac{1}{2} |L\varphi_{p}|  \lesssim \delta^{-\frac{1}{2}} M |u|^{-\beta},$ $p \leq N/2 \leq N-3$, 
\begin{align*}
{}^LS^k_4 &\lesssim  \int_{u_1}^{u_2} \delta^2 \| \eta^\frac{1}{2} L\varphi_{p}\|^2_{L^\infty} \|  \eta^\frac{1}{2}  L\varphi_{q+1}\|^2_{L^2(C_u)}  \di u \\
&\lesssim \int_{u_1}^{u_2} \delta  M^{4}   |u|^{-4\beta} \di u \lesssim  \delta  M^{4} |u_1|^{-4\beta + 1}, \quad k \leq N-1.
\end{align*}

As for ${}^L\mathcal{G}^k$, we make the following splitting: ${}^L\mathcal{G}^k= {}^LG^k_1 + \cdots + {}^LG^k_4$, where ${}^LG^k_j, \, j=1:4$ are defined as \eqref{def-V-G-i-1}-\eqref{def-V-G-i-4} with $\D= \Do, \, V=\delta L$, $i=2$, i.e., for $p+q \leq k \leq N-1, \, p<q$,
\begin{subequations}
\begin{align}
{}^LG^k_1 &=  \doubleint_{\Do}  \delta^2   |D\varphi_{q}|^2  |Y L\varphi_{p}|^2  \eta^2 \di \mu_{\D}, \label{L-Gk-1} \\
{}^LG^k_2 &=  \doubleint_{\Do}   \delta^2 |\Db \varphi_{q}|^2 | L^2\varphi_{p}|^2 \eta^2  \di \mu_{\D}, \label{L-Gk-2} \\
{}^LG^k_3 &=  \doubleint_{\Do}  \delta^2  |\Db\varphi_{q}|^2 |\nablaslash L\varphi_{p}|^2  \eta^2 \di \mu_{\D}, \label{L-Gk-3} \\
{}^LG^k_4 &=  \doubleint_{\Do}  \delta^2 |L\varphi_{q}|^2 |\nablaslash L\varphi_{p} |^2  \eta^2 \di \mu_{\D}. \label{L-Gk-4} 
\end{align}
\end{subequations}
We note that $p+q \leq k \leq N-1, \, p<q,$ then $q \leq N-1, \, p \leq k-1 \leq N-2$. We can always perform $L^4$ norm to the four factors in each term above.

For ${}^LG^k_1$, by the a-priori estimate $\| D \varphi_{q} \|_{L^4(S_{\ub,u})} \lesssim \delta^{-\frac{1}{2}} M,$ $q \leq N-1,$ 
\begin{align*}
|{}^LG^k_1|  & \lesssim  \int_{\ub_1}^{\ub_2} \int_{u_1}^{u_2} \delta^{-1}M^2 \cdot \delta^2 \|\Lb L \varphi_{p} \|^2_{L^4(S_{\ub,u})} \di u \di \ub \\
&\lesssim  \delta M^2  \int_{\ub_1}^{\ub_2} \int_{u_1}^{u_2}    \sum_{p \leq i\leq p+1} \|\Lb L \varphi_{i} \|^2_{L^2(S_{\ub,u})} \di u \di \ub \\
& \lesssim  \int_{\ub_1}^{\ub_2}  M^2 \di \ub \int_{\Cb_{\ub}} \sum_{p \leq i \leq p+1} \delta |\Lb L \varphi_{i} |^2 \di \mu_{\Cb_{\ub}}, \quad p \leq k-1,
\end{align*}
where we have used the Sobolev inequality \eqref{Sobolev-L2-L4-S2} and the fact that $r$ is finite in Region $\R_2$ in the second inequality. Hence we can apply the Gr\"{o}nwall's inequality.

For ${}^LG^k_2, {}^LG^k_3$, knowing that $ \|\Db \varphi_{q} \|_{L^4(S_{\ub,u})} \lesssim \delta^{\frac{1}{4}} M $, $q \leq N-1$, and $p \leq N-2$, we have similarly,
\begin{align*}
|{}^LG^k_2| + |{}^LG^k_3|  \lesssim & \int_{\ub_1}^{\ub_2} \int_{u_1}^{u_2}  \delta^{\frac{5}{2}}M^2  \left(  \| L^2\varphi_{p} \|^2_{L^4(S_{\ub,u})} + \| \nablaslash L \varphi_{p} \|^2_{L^4(S_{\ub,u})} \right) \eta^2 \di u \di \ub  \\
\lesssim {}& \delta^{\frac{1}{2}} M^2 \doubleint_{\Do}  \sum_{k\leq N-1} \delta^2 \left( | L^2 \varphi_{k}|^2 + | \nablaslash L \varphi_{k}|^2 \right) \eta^2 \di \mu_{\D}  \lesssim \delta^{\frac{1}{2}} M^4 |u_1|^{-2\beta}, 
\end{align*}
where we have used the bootstrap assumption for ${}^L\mathcal{S}^{deg}_{k+1}(\D^{u,+\infty}_{0,\ub})$, $k \leq N-1$.

For the last one, by the a-priori estimate $\eta \| L \varphi_{q} \|^2_{L^4(S_{\ub,u})} \lesssim \delta^{-1} M^2 |u|^{- 2\beta},$ $q \leq N-1$, and $p \leq N-2$,
\begin{align*}
|{}^LG^k_4| & \lesssim  \int_{u_1}^{u_2} \int_{\ub_1}^{\ub_2} \delta^2 \|\eta^{\frac{1}{2}} L \varphi_{q} \|^2_{L^4(S_{\ub,u})}   \|\eta^{\frac{1}{2}} \nablaslash L \varphi_{p} \|^2_{L^4(S_{\ub,u})} \di \ub \di u \\
&\lesssim   \int_{u_1}^{u_2}  \int_{\ub_1}^{\ub_2} \delta M^2 |u|^{- 2\beta}   \sum_{p \leq i\leq p+1}   \|\eta^{\frac{1}{2}} \nablaslash L \varphi_{i} \|^2_{L^2(S_{\ub,u})} \di \ub \di u \\
& \lesssim \int_{u_1}^{u_2}\delta M^2 |u|^{- 2\beta} \int_{C_u}  \sum_{p \leq i\leq p+2}   | L \varphi_{i} |^2 \eta \di \mu_{C_u}   \lesssim \delta M^4 |u_1|^{-4\beta+1},
\end{align*}
where in the last inequality, the bootstrap assumption \eqref{bt-forward-energy-Cu-deg} is used.

Finally, noting the bootstrap assumption for $\mathcal{S}^{deg}_l (\D_{0, \ub}^{u, +\infty})$, $l\leq N$ \eqref{bt-decay-energy-spacetime-fLb-fL-deg} and the estimate \eqref{estimate-Box-varphi-low-deg}, 
\begin{align*}
 |{}^LW^k |  \lesssim {}&  \delta^2 \doubleint_{\Do} \left( |L \varphi_k|^2 +  |\Lb \varphi_k|^2 + | \nablaslash \varphi_{k+1}|^2 + |\Box_g \varphi_k|^2  \right)  \eta^2 \di \mu_{\D}  \\
\lesssim {}& \delta^{2} M^2 |u_1|^{-2 \beta} + \delta^{\frac{5}{2}} M^4 |u_1|^{-2 \beta}, \quad k \leq N-1. 
\end{align*}

In conclusion,  we have proved: for $k \leq N-1$, $0 \leq \ub_1 < \ub_2 \leq \delta$, $1 \leq u_1 < u_2 < +\infty$, and $\beta \geq \frac{1}{2}$,
\begin{equation*}
\begin{split}
& {}^LF^{deg}_{k+1} (u_2; [\ub_1,\ub_2]) + {}^L\Fb^{deg}_{k+1} (\ub_2; [u_1,u_2]) + {}^L\mathcal{S}^{deg}_{k+1} (\Do)\\
\lesssim &{}^LF^{deg}_{k+1} (u_1; [\ub_1,\ub_2]) + {}^L\Fb^{deg}_{k+1}(\ub_1; [u_1,u_2]) +  \delta^{\frac{1}{2}}  M^4 |u_1|^{-2\beta}.
\end{split}
\end{equation*}
Following the argument for \eqref{decay-estimate-L-Cu-lower}, \eqref{decay-estimate-Lb-Cub-lower-deg},  we can deduce the estimates for the flux ${}^LF^{deg}_{k+1} (u; [0,\ub])$, ${}^L\mathcal{S}^{deg}_{k+1}(\D^{u,+\infty}_{0,\ub})$ and ${}^L\Fb^{deg}_{k+1} (\ub; [u, +\infty])$, $k \leq N-1$, i.e., \eqref{decay-deg-estimate-fLb-fL-L-thm} and \eqref{decay-deg-Lb-L-Cub}.

Now, the improvement for $E^{deg}_k(u; [0,\ub]), \, k \leq N-1$ \eqref{decay-estimate-L-Cu-lower} together with the improved flux ${}^LF^{deg}_{k+1} (u; [0,\ub])$, $k \leq N-1$ \eqref{decay-deg-estimate-fLb-fL-L-thm} 
 yields the  enhanced $L^\infty$ estimate, 
\begin{equation}\label{L-infty-L-phi-deg-improved-1}
 \|\eta^{\frac{1}{2}} L \varphi_k\|_{L^\infty(\R_2)} \lesssim \delta^{-\frac{1}{2}} \mathbb{I}_{k+3}  |u| ^{-\beta}, \quad k \leq N-3,
\end{equation}
which will help to estimate the top order energy $E^{deg}_N(u; [0,\ub])$ and $\mathcal{S}^{deg}_N (\D_{0, \ub}^{u, +\infty})$, $\Eb^{deg}_N(\ub; [u,+\infty])$.

\subsubsection{Energy estimates for $E^{deg}_N(u; [0,\ub])$ and $\mathcal{S}^{deg}_N (\D_{0, \ub}^{u, +\infty})$, $\Eb^{deg}_N(\ub; [u,+\infty])$}\label{sec-top-energy-deg}

As explained before, the estimate for $S^k_4, \, k\leq N-1$ \eqref{esti-S4} is not allowed when 
$k=N$. However, we can combine the improvement \eqref{L-infty-L-phi-deg-improved-1} with the refined energy inequality \eqref{energy-ineq-fLb-fL-deg}-\eqref{def-F-1-2-psi} to linearize $S_4^N$. 
We take $\psi = \varphi_N$ in \eqref{energy-ineq-fLb-fL-deg}-\eqref{def-F-1-2-psi}, then the error terms are
\begin{equation*}
\begin{split}
\F_1(\varphi_N) \lesssim &\doubleint_{\Do} c^{-1} |\Box_g \varphi_N|^2 \eta^3 \di \mu_{\D}, \quad \F_2(\varphi_N) \lesssim  c\doubleint_{\Do}  |\Box_g \varphi_N|^2 \eta^2 \di \mu_{\D},
\end{split}
\end{equation*}
where $c \ll 1$ is a constant to be determined. Analogous to the case of $k \leq N-1$, there is the decomposition \eqref{def-S-k-i-1}-\eqref{def-S-k-i-4} for $\F_1(\varphi_N)$, $\F_2(\varphi_N)$. And $S_i^N,$ $i=1:3$, can be handled in the same way as \eqref{def-S-k-i-1}-\eqref{def-S-k-i-3} previously, while $S_4^N$ taking the form of
 \begin{align*}
S^N_4= & \doubleint_{\Do}  \sum_{p+q\leq N, p\leq q} c^{-1} |\eta^{\frac{1}{2}} L \varphi_{p}|^2 |\nablaslash \varphi_{q} |^2 \eta^2 \di \mu_{\D} \\
&+ \doubleint_{\Do}  \sum_{p+q\leq N, p\leq q} c | \eta^{\frac{1}{2}} L \varphi_{p}|^2 |\nablaslash \varphi_{q} |^2 \eta \di \mu_{\D},
\end{align*}
should be treated differently.  In view of the improvement  
\eqref{L-infty-L-phi-deg-improved-1},
\begin{align*}
|S^N_4| \lesssim & \int_{\ub_1}^{\ub_2} c^{-1} \delta^{-1} \mathbb{I}_{N}^2 \di \ub \int_{\Cb_{\ub}^{[u_1, u_2]}}  |\nablaslash \varphi_{N}|^2 \eta^2 \di \mu_{\Cb_{\ub}} + \doubleint_{\Do}  c\delta^{-1} \mathbb{I}_{N}^2 |\nablaslash \varphi_{N}|^2 \eta \di \mu_{\D} .
\end{align*}
We additionally require $c \ll 1$ so that the second term can be absorbed by $\mathcal{S}^{deg}_N(\Do)$ on the left hand side the top order energy inequality, and the first term can be handled by the Gr\"{o}nwall's inequality. We here recall \eqref{def-mathbb-I} for the definition of $ \mathbb{I}_{N}^2$ and note that $\eta^3$ in $\F_1(\varphi_N)$  is crucial.

Therefore, we end up with the energy inequality ($\beta \geq \frac{1}{2}$)
\begin{equation}\label{energy-ineq-fLb-fL-top-deg}
\begin{split}
&E^{deg}_N(u_2; [\ub_1,\ub_2] )  + \Eb^{deg}_N(\ub_2; [u_1, u_2] ) + \mathcal{S}^{deg}_N(\Do)\\
\lesssim {}&E^{deg}_N(u_1; [\ub_1,\ub_2] ) + \Eb^{deg}_N(\ub_1; [u_1, u_2] )  +   \delta^{\frac{1}{2}} M^{4} |u_1|^{-2\beta}.
\end{split}
\end{equation}
Proceeding in an analogous way as that in Section \ref{sec-deg-energy-Cu} and taking the previously lower order results into account, we complete the energy estimates \eqref{decay-deg-estimate-fLb-fL-thm}, \eqref{decay-estimate-Lb-Cub} and  Theorem \ref{Thm-decay-low-deg-energy}.

As a consequence, the $L^\infty$ estimate \eqref{L-infty-L-phi-deg-improved-1} is upgraded as: for $k\leq N-1, \, j \leq N-2$, there is
\begin{equation}\label{L-4-L-phi-deg-improved}
 \|\eta^{\frac{1}{2}} L \varphi_j\|_{L^\infty(\R_2)} + \|\eta^{\frac{1}{2}} L \varphi_k\|_{L^4(S_{\ub, u} \cap \R_2)} \lesssim \delta^{-\frac{1}{2}} \mathbb{I}_{N+1} |u|^{-\beta}.
\end{equation}

\subsubsection{Energy estimates for ${}^{\Lb}F_{k+1}^{deg} (u; [0, \ub] )$, ${}^{\Lb}\mathcal{S}^{deg}_{k+1} (\D_{0, \ub}^{u, +\infty})$ and $ {}^{\Lb}\Fb_{k+1}^{deg}(\ub; [u, +\infty] )$, $k \leq N-1$}\label{sec-Lb-energy-away-hori}
In this section, we will make use of Theorem \ref{Thm-decay-low-deg-energy} and the resulted improvement \eqref{L-4-L-phi-deg-improved} to prove Theorem \ref{prop-Lb2-deg}.
 
 \begin{proof}[Proof of Theorem \ref{prop-Lb2-deg}]
We take $\psi = \Lb \varphi_k$, $k \leq N-1$ in \eqref{energy-ineq-fLb-fL-deg-psi-sign} to derive, 
\begin{align*}
& {}^{\Lb}F_{k+1}^{deg} (u_2; [\ub_1,\ub_2]) + {}^{\Lb}\Fb_{k+1}^{deg} (\ub_2; [u_1,u_2]) \\ 
& + \doubleint_{\Do}  \left( \delta^{-1} | \Lb^2 \varphi_k|^2 +\delta^{-1} |\nablaslash \Lb \varphi_k|^2 + |L \Lb \varphi_k|^2 \right) \eta \di \mu_{\D} \\
\lesssim {}& {}^{\Lb}F_{k+1}^{deg} (u_1; [\ub_1,\ub_2]) + {}^{\Lb}\Fb_{k+1}^{deg} (\ub_1; [u_1, u_2]) + {}^{\Lb}\mathcal{W}^k +  \doubleint_{\Do} | \Lb \Box_g \varphi_k|^2 \eta^2 \di \mu_{\D},
\end{align*}
where ${}^{\Lb}\mathcal{W}^k$ is associated to $ [\Box_g, \Lb] \varphi_k$,
\begin{equation*}
{}^{\Lb}\mathcal{W}^k = \doubleint_{\Do}  \left( |L\varphi_k|^2 +|\Lb\varphi_k|^2 +|\laplacianslash \varphi_k |^2 +  |\Box_g \varphi_k|^2 \right) \eta^2 \di \mu_{\D},
\end{equation*}
and
\begin{equation*}
\doubleint_{\Do} | \Lb \Box_g \varphi_k|^2 \eta^2 \di \mu_{\D} = {}^{\Lb} \mathcal{S}^k + {}^{\Lb} \mathcal{G}^k  +  \mathcal{L}^k.
\end{equation*}
Here ${}^{\Lb}\mathcal{S}^k, {}^{\Lb}\mathcal{G}^k $ take the forms of \eqref{def-V-mS-k}-\eqref{def-V-mG-k} with $\D=\Do, V=\Lb$, $i=2$, and $\mathcal{L}^k$ is defined as \eqref{def-V-mS-k} with $\D=\Do, V=1,\, i=2$.
 
Appealing to \eqref{estimate-Box-varphi-low-deg}, we get $ \mathcal{L}^k \lesssim \delta^{\frac{1}{2}} M^4 |u_1|^{-4\beta+1}, \, k \leq N-1.$
 
We next turn to ${}^{\Lb}\mathcal{S}^k$ and ${}^{\Lb}\mathcal{G}^k$. 
$^{\Lb}\mathcal{S}^k$ can be split as: ${}^{\Lb}\mathcal{S}^k = {}^{\Lb}S^k_{1} + \cdots + {}^{\Lb}S^k_{4}$,  where ${}^{\Lb}S^k_j, \, j=1:4$ are defined as \eqref{def-V-S-1}-\eqref{def-V-S-4} with $\D=\Do, \, V=\Lb$, $i=2$. The estimates for ${}^{\Lb} S^k_j, \, j=1:3$, resemble those for $S^k_j, \,  j=1:3$ \eqref{def-S-k-i-1}-\eqref{def-S-k-i-3}, with only $\varphi_{q}$ therein being replaced by $\Lb \varphi_{q}$. We are left with ${}^{\Lb}S^k_{4}$, which reads, 
\begin{equation*}
{}^{\Lb}S^k_{4} =  \doubleint_{\Do} \sum_{p+q\leq k \leq N-1, \, p<q}  |L\varphi_{p}|^2 |\nablaslash \Lb \varphi_{q}|^2 \eta^2  \di \mu_{\D}, \quad q \leq N-1, \, p \leq N-2.
\end{equation*}
By virtue of the upgraded $\|\eta^{\frac{1}{2}} L \varphi_{p} \|_{L^\infty}$, $p \leq N-2$ \eqref{L-4-L-phi-deg-improved} and $\mathcal{S}^{deg}_l  (\D_{0, \ub}^{u, +\infty}), \, l \leq N$ \eqref{decay-deg-estimate-fLb-fL-thm}, there is
\begin{equation*} 
 {}^{\Lb}S^k_{4} \lesssim   \doubleint_{\Do}  \sum_{i \leq N} \delta^{-1} \mathbb{I}_{N}^2 |u|^{-2\beta}  | \Lb \varphi_{i}|^2  \eta \di \mu_{\D} \lesssim \mathbb{I}_{N+1}^4 |u_1|^{-4\beta}.
\end{equation*}
For $ {}^{\Lb}\mathcal{G}^k$, we make the following splitting: ${}^{\Lb}\mathcal{G}^k= {}^{\Lb}G^k_{1} + \cdots + {}^{\Lb}G^k_{4}$, where ${}^{\Lb}G^k_j, \, j=1:4$ is defined as \eqref{def-V-G-i-1}-\eqref{def-V-G-i-4} with $\D=\Do, \, V=\Lb,\, i=2$.  The estimates for ${}^{\Lb}G^k_j, \, j=1:3$ are similar to that for ${}^LG^k_j, j = 1:3$ \eqref{L-Gk-1}-\eqref{L-Gk-3}.
As for ${}^{\Lb}G^k_{4}$, we take advantage of the enhanced $L^4$ estimate \eqref{L-4-L-phi-deg-improved} and \eqref{decay-deg-estimate-fLb-fL-thm} to deduce ($p+q \leq k \leq N-1, \, p<q$, hence $q \leq N - 1$ and $p \leq N-2$)
\begin{equation}\label{estimate-Lb-G4-deg}
\begin{split}
{}^{\Lb}G^k_{4}  & \lesssim  \int_{u_1}^{u_2} \int_{\ub_1}^{\ub_2} \|\eta^{\frac{1}{2}} L \varphi_{q} \|^2_{L^4(S_{\ub,u})}   \|\eta^{\frac{1}{2}} \nablaslash \Lb \varphi_{p} \|^2_{L^4(S_{\ub,u})} \di \ub \di u \\
& \lesssim \doubleint_{\Do}  \delta^{-1} \mathbb{I}_{N+1}^2  |u|^{-2\beta}  \sum_{p \leq i \leq p+2}  |  \Lb \varphi_{i}|^2 \eta \di \mu_{\D} \lesssim \mathbb{I}_{N+1}^4 |u_1|^{-4\beta}.
\end{split}
\end{equation}

For ${}^{\Lb}\mathcal{W}^k $, the improved $\mathcal{S}^{deg}_l  (\D_{0, \ub}^{u, +\infty}), \, l \leq N$ \eqref{decay-deg-estimate-fLb-fL-thm} and \eqref{estimate-Box-varphi-low-deg} yield ${}^{\Lb}\mathcal{W}^k \lesssim  \mathbb{I}_{N+1}^2  |u_1|^{-2\beta}$.

We end up with the following energy bound: for $k \leq N-1,$ $\beta \geq \frac{1}{2}$,
\begin{align*}
& {}^{\Lb}F_{k+1}^{deg}  (u_2; [\ub_1,\ub_2]) +  {}^{\Lb}\Fb_{k+1}^{deg} (\ub_2; [u_1,u_2]) + {}^{\Lb}\mathcal{S}_{k+1}(\Do)\\
\lesssim {}& {}^{\Lb}F_{k+1}^{deg} (u_1; [\ub_1,\ub_2]) +  {}^{\Lb}\Fb_{k+1}^{deg}(\ub_1; [u_1,u_2])  + \mathbb{I}_{N+1}^2 |u_1|^{-2\beta}.
\end{align*}
By analogy with the argument presented in Section \ref{sec-deg-energy-Cu}, we prove Theorem \ref{prop-Lb2-deg}.  
\end{proof}

\subsection{Non-degenerate energy near the future horizon}\label{sec-horizon}
In this section, we will prove the non-degenerate energy estimates near the horizon $\R_2^{NH}=\R_2 \cap \{ 2m \leq r \leq r_{NH}\}$, i,e., Theorem  \ref{Thm-decay-low-hori-energy} and Theorem \ref{Thm-decay-low-hori-energy-flux-Lb}.

Consider the region $r\leq 1.2r_{NH}$, and take $0 \leq \ub_{1} \leq \ub_{2} \leq \delta$. 
Let $u^e_i$ be the $u$ value of the intersecting sphere  $\{ r=1.2r_{NH}\} \cap \Cb_{\ub_i}$, and $u^{NH}_i$ be the $u$ value of the intersecting sphere $\{ r=r_{NH} \} \cap \Cb_{\ub_i}$. That is, $u^e_i = \ub_i - (1.2r_{NH})^\ast, \, u^{NH}_i = \ub_i - r_{NH}^\ast, \, i=1,2$.
 In the domain of $\{ r \leq 1.2 r_{NH}\} \cap \R_2$, i.e., $u^e_1 < u^{NH}_2 \leq u \leq + \infty, \, 0\leq \ub_{1} < \ub_{2} \leq \delta$, we define the following exterior  and interior region
\begin{equation*}
\begin{split}
\mathcal{D}^e := & \{r_{NH} < r\leq 1.2 r_{NH}\} \cap \{ \ub_{1} < \ub < \ub_{2} \}, \\
 \mathcal{D}^h := & \{r \leq   r_{NH}\} \cap \{ \ub_{1} < \ub < \ub_{2} \}, \\
 \Cb^{NH}_{\ub} :=& \Cb_{\ub} \cap \D^h, \quad C_u^{e} := C_{u} \cap \D^e.
\end{split}
\end{equation*}
We will also use the notation: $\Cb_{\ub}^{NH}=\Cb_{\ub} \cap \{ r\leq r_{NH}\} = \Cb_{\ub}^{[u^{NH}, +\infty]}$, where $u^{NH} := \ub - r_{NH}^\ast$, and $\Cb_{\ub}^{e}=\Cb_{\ub} \cap  \{r_{NH} < r\leq 1.2 r_{NH}\}$, if there is no room for confusion.

\subsubsection{The multiplier near the horizon}\label{sec-multiplier-horizon}

We choose $y_1(r^\ast)>0, \, y_2(r^\ast)>0$ that are supported in $r < 1.2 r_{NH}$, with $y_1 \big|_{\hori} =1$, $y_2 \big|_{\hori} =0$, and $\p_{r^\ast} y_1 >0$, $\p_{r^\ast} y_2 > 0$ if $2m < r \leq r_{NH}$. 
An example is given by \cite{D-R-13} (we notice that $|r^\ast| = -r^\ast$ near the horizon)
\begin{equation*}
y_1 = \xi_{r_{NH}} (r^\ast) (1+|r^\ast|^{-\epsilon}), \quad y_2 = c \xi_{r_{NH}} (r^\ast) |r^\ast|^{-1-\epsilon},
\end{equation*}
where $\epsilon$ is a small positive constant, $ \xi_{r_{NH}}$ is a cutoff function such that $ \xi_{r_{NH}} =1$ for $r\leq r_{NH}$ and $ \xi_{r_{NH}} = 0$ for $r \geq 1.2 r_{NH}$. One has then $y_2|_{\hori}=0, \p_{r^\ast} y_2|_{\hori}=0$, $y_1|_{\hori}=1, \, \p_{r^\ast} y_1|_{\hori}=0$. To carry out the estimates near horizon, we will consider the following vector field
\begin{equation}\label{def-VF-N}
N^h=\left( 1+y_2(r^\ast) \right) L + \delta^{-1}  y_1 (r^\ast)  Y.
\end{equation}

We take the multiplier $\xi =N^h$ \eqref{def-VF-N} and apply the energy identity to the wave equation for $\psi$. 
 In addition, we split up the error integrals into  exterior and interior parts to obtain 
\begin{equation}\label{energy-ineq-fL-fLb-hori-psi}
\begin{split}
& E^{ndeg} [\psi] (u; [\ub_{1}, \ub_{2}]) +  \Eb^{ndeg} [\psi]( \ub_{2}; [u^e_1, u]) \\
& \quad+ \doubleint_{\D^h}  \left(\delta^{-1} \eta^{-1} |\Lb \psi|^2 +\delta^{-1} \eta |\nablaslash \psi|^2 + \eta |L \psi|^2\right) \di \mu_{\D} \\
\lesssim {}& E^{ndeg} [\psi](u^e_1; [\ub_{1},\ub_{2}]) +  \Eb^{ndeg} [\psi]( \ub_{1}; [u^e_1, u]) \\
& +{}^h\mathcal{C} (\psi) + {}^h\mathcal{F} (\psi) + {}^e\mathcal{C} (\psi) + {}^e\mathcal{F} (\psi),
\end{split}
\end{equation}
where ${}^h\mathcal{C} (\psi), {}^e\mathcal{C} (\psi)$ are the exterior and interior currents respectively,
\begin{subequations}
\begin{align}
{}^h\mathcal{C} (\psi) &= \doubleint_{\D^h} \left( \eta |\nablaslash \psi|^2 +  \delta^{-1} |L \psi \Lb \psi| \right) \di \mu_{\D}, \label{def-current-hori-h}\\
{}^e\mathcal{C} (\psi)&= \doubleint_{\D^e} \left(\delta^{-1} |\Lb \psi|^2 +\delta^{-1}  |\nablaslash \psi|^2 +  |L \psi|^2 +  \delta^{-1} |L \psi \Lb \psi| \right) \di \mu_{\D}, \label{def-current-hori-e}
\end{align}
\end{subequations}
and $ {}^h\mathcal{F} (\psi) $, ${}^e\mathcal{F} (\psi)$ are the exterior and interior source terms,
\begin{subequations}
\begin{align}
 {}^h\mathcal{F} (\psi)&=  \doubleint_{\D^h} |\Box_g \psi| \left(  |L \psi|  + \delta^{-1} |Y \psi | \right) \eta \di \mu_{\D},\label{def-error-hori-h} \\
 {}^e\mathcal{F} (\psi)&= \doubleint_{\D^e} |\Box_g \psi| \left( | L \psi| + \delta^{-1} |  \Lb \psi | \right) \di \mu_{\D}. \label{def-error-hori-e}
\end{align}
\end{subequations}
 The interior current ${}^h\mathcal{C} (\psi)$ can be estimated in the same way as $\mathcal{C} (\psi)$ \eqref{error-quad-nabla-psi}-\eqref{error-quad-LbL-psi}: the first term in ${}^h\mathcal{C} (\psi)$ can be absorbed, while the second term is bounded by 
\begin{equation*}
\doubleint_{\D^h} c |L \psi|^2 \eta \di \mu_{\D} + \int_{\ub_{1}}^{\ub_{2}}  c^{-1} \delta^{-1} \di \ub  \int_{\Cb_{\ub}^{NH}} \delta^{-1} \eta^{-1} | \Lb \psi|^2  \di \mu_{\Cb_{\ub}}.
\end{equation*}
In a similar manner, there is,
\begin{align*}
 | {}^h\mathcal{F} (\psi) | \lesssim &  \doubleint_{\D^h} c |L \psi|^2 \eta \di \mu_{\D} +  \int_{\ub_{1}}^{\ub_{2}}  \delta^{-1} \di \ub  \int_{\Cb_{\ub}^{NH}} \delta^{-1} \eta^{-1}  | \Lb \psi|^2  \di \mu_{\Cb_{\ub}} \\
 &+   \doubleint_{\D^h} (c^{-1} +1)  |\Box_g \psi|^2 \eta \di \mu_{\D}.
\end{align*}
 As   \eqref{energy-ineq-fLb-fL-deg-psi-sign} in Section \ref{sec-deg-multiplier-II}, we choose $c \ll 1$, so that $\iint_{\D^h} c |L \psi|^2 \eta \di \mu_{\D}$ can be absorbed by the left hand side of \eqref{energy-ineq-fL-fLb-hori-psi}. After applying the Gr\"{o}nwall's inequality, there is
\begin{equation}\label{energy-ineq-fLb-fL-deg-psi-hori}
\begin{split}
& E^{ndeg} [\psi] (u_2; [\ub_1,\ub_2])  + \Eb^{ndeg} [\psi] (\ub_2; [u^e_1, u_2]) + \mathcal{S}^{ndeg} [\psi](\D^h)  \\
\lesssim {}& E^{ndeg} [\psi] (u^e_1; [\ub_1,\ub_2])  + \Eb^{ndeg} [\psi] ( \ub_1, [u^e_1, u_2] ) \\
& +  \doubleint_{\D^h}  |\Box_g \psi|^2 \eta \di \mu_{\D}+ {}^e\mathcal{C} (\psi) + {}^e\mathcal{F} (\psi),
\end{split}
\end{equation}
where ${}^e\mathcal{C} (\psi)$, ${}^e\mathcal{F} (\psi)$ are defined by \eqref{def-current-hori-e} and \eqref{def-error-hori-e}. In applications, ${}^e\mathcal{C} (\psi)$ and ${}^e\mathcal{F} (\psi)$ will be controlled by using the result of Theorem \ref{Thm-decay-low-deg-energy} and Theorem \ref{prop-Lb2-deg} (the degenerate case).

Specifically,   we have to rely on the degenerate spacetime integrated bound: $\mathcal{S}_l^{deg} (\D_{0, \ub}^{u, +\infty})  \lesssim \mathbb{I}_{N+1}^2$, $l \leq N$ of \eqref{decay-deg-estimate-fLb-fL-thm} to continue the non-degenerate energy estimates in the following sections \ref{sec-out-energy-horizon}--\ref{sec-high-de-hori}.

\subsubsection{Energy estimates for $E^{ndeg}_{k} (u; [0,\ub])$, $\mathcal{S}^{ndeg}_k( \R_2^{NH})$ and $\Eb^{ndeg}_{k} (\ub; [u,+\infty]),\, k \leq N-1$}\label{sec-out-energy-horizon}

We take $\psi = \varphi_k$, $k \leq N-1$ in \eqref{energy-ineq-fLb-fL-deg-psi-hori} to derive
\begin{equation}\label{energy-ineq-fL-fLb-hori}
\begin{split}
& E^{ndeg}_k(u; [\ub_{1}, \ub_{2}]) +  \Eb^{ndeg}_k( \ub_{2}; [u^e_1, u]) \\
& \quad+ \doubleint_{\D^h}  \left(\delta^{-1} \eta^{-1} |\Lb \varphi_k|^2 +\delta^{-1} \eta |\nablaslash \varphi_k|^2 + \eta |L \varphi_k|^2\right) \di \mu_{\D} \\
\lesssim {}& E^{ndeg}_k(u^e_1; [\ub_{1},\ub_{2}]) +  \Eb^{ndeg}_k( \ub_{1}; [u^e_1, u]) +  \doubleint_{\D^h}  |\Box_g \varphi_k|^2 \eta \di \mu_{\D}\\
& + {}^e\mathcal{C} (\varphi_k) + {}^e\mathcal{F} (\varphi_k),
\end{split}
\end{equation}
where ${}^e\mathcal{C} (\varphi_k)$ and ${}^e\mathcal{F} (\varphi_k)$ are defined by \eqref{def-current-hori-e}, \eqref{def-error-hori-e}, and $$\doubleint_{\D^h}  |\Box_g \varphi_k|^2 \eta \di \mu_{\D} = {}^hS^k_1 + \cdots + {}^hS^k_4.$$
Here ${}^hS^k_j, \, j=1:4$ are defined as \eqref{def-V-S-1}-\eqref{def-V-S-4} with $\D=\D^h, \, V=1$, $i=1$, i.e., for $p+q \leq k \leq N-1, \, p \leq q$,
\begin{subequations}
\begin{align}
{}^hS^k_1 &=  \doubleint_{\D^h}     |D \varphi_{p}|^2 |Y \varphi_{q}|^2 \eta  \di \mu_{\D}, \label{h-Sk-1} \\
{}^hS^k_2 &=  \doubleint_{\D^h}  | \Db \varphi_{p}|^2  |L \varphi_{q}|^2 \eta \di \mu_{\D},  \label{h-Sk-2}\\
{}^hS^k_3 &=  \doubleint_{\D^h}   | \Db \varphi_{p}|^2 |\nablaslash \varphi_{q}|^2 \eta \di \mu_{\D}, \label{h-Sk-3} \\
{}^hS^k_4 &=  \doubleint_{\D^h}   |L\varphi_{p}|^2 |\nablaslash \varphi_{q}|^2 \eta \di \mu_{\D}. \label{h-Sk-4}
\end{align}
\end{subequations}

The estimates for ${}^hS^k_j, \, j=1:4$ are analogous to the degenerate case. 
As \eqref{def-S-k-i-1}-\eqref{def-S-k-i-4}, we apply $L^\infty, L^\infty, L^2, L^2$ to the four factors in each of $ {}^hS^k_j, \, j=1:4$. Consequently,
\begin{align*}
|{}^hS^k_1| \lesssim {}&  \int_{\ub_{1}}^{\ub_{2}} M^2 \di \ub  \int_{\Cb^{NH}_{\ub}} \delta^{-1} \eta^{-1} |\Lb \varphi_{q}|^2 \di \mu_{\Cb_{\ub}}, \quad q \leq k, \\
|{}^hS^k_2| + |{}^hS^k_3| \lesssim {}& \delta^{\frac{1}{2}}M^2 \doubleint_{\D^h} \left( |L \varphi_q|^2 + |\nablaslash \varphi_q|^2 \right) \eta \di \mu_{\D}, \quad q \leq k,
\end{align*}
where the first line can be treated by the Gr\"{o}nwall's inequality, while the second one can be absorbed by the left hand side of \eqref{energy-ineq-fL-fLb-hori}. 
And for ${}^hS^k_4$, we note that $p \leq q \leq N-1$ and $ \|\nablaslash \varphi_{q} \|_{L^4(S_{\ub, u})} \lesssim \delta^{\frac{1}{4}}M$. 
Then for $\beta \geq \frac{1}{2}$,
\begin{equation}\label{esti-S4-hori}
\begin{split}
|{}^hS^k_4| \lesssim {}& \int_{u^e_{1}}^u \int_{\ub_{1}}^{\ub_{2}}  \|L\varphi_{p}\|^2_{ L^4(S_{\ub, u})} \| \nablaslash \varphi_{q}\|^2_{L^4(S_{\ub, u})} \eta \di u \di \ub \\
\lesssim{}& \delta^{\frac{1}{2}} M^2 \doubleint_{\D^h} \sum_{p \leq i \leq p+1} | L\varphi_{i}|^2  \eta \di \mu_{\D}, \quad p \leq N-1,\\
\lesssim {}& \delta^{\frac{1}{2}} \mathbb{I}_{N+1}^2 M^{2} \lesssim \delta^{\frac{1}{2}} M^{2},
\end{split}
\end{equation}
where the degenerate spacetime integrated estimate in \eqref{decay-deg-estimate-fLb-fL-thm} is used in the last inequality.

In the exterior region $\D^e$, $u^e_1  \leq u \leq u^{NH}_{2}$ and $1-\mu \sim 1$. 
Viewing the degenerate integrated decay estimate \eqref{decay-deg-estimate-fLb-fL-thm} and the improved one $\iint_{\D_{0, \ub}^{u, +\infty}} \delta^{-2}  |\Lb \varphi_l|^2 \di \mu_{\D} \lesssim \mathbb{I}_{N+1}^2 |u|^{-2\beta}$, $l \leq N$, see \eqref{decay-spacetime-estimate-Lb-LbL-Lb2} in Remark \ref{rk-thm-deg-low}, we derive
\begin{align*}
|{}^e\mathcal{C}(\varphi_k)|  \lesssim
& \doubleint_{\D^e} \left( |L \varphi_k|^2 + \delta^{-1} |\Db \varphi_k|^2 + \delta^{-2} |\Lb \varphi_k|^2 \right) \di \mu_{\D} \lesssim \mathbb{I}_{N}^2, \quad k \leq N-1.
\end{align*}
Besides, making use of Theorem \ref{Thm-decay-low-deg-energy} and following the proof leading to \eqref{energy-ineq-fLb-fL-lower-1}, we can also conclude
\begin{equation*}
|{}^e\mathcal{F}(\varphi_k) | \lesssim \mathbb{I}_{N}^2, \quad k \leq N-1.
\end{equation*}

In summary, we have accomplished: for any $\ub_1< \ub_2$ and $u > u^{NH}_{1}$, $k\leq N-1$,
\begin{equation}\label{energy-ineq-fL-fLb-hori-2}
\begin{split}
& E^{ndeg}_k(u; [\ub_{1}, \ub_{2}]) +  \Eb^{ndeg}_k( \ub_{2}; [u^e_{1}, u]) + \mathcal{S}^{ndeg}_k( \D^h)\\
\lesssim {}& E^{ndeg}_k(u^e_{1}; [\ub_{1}, \ub_{2}]) +  \Eb^{ndeg}_k( \ub_{1}; [u^e_{1}, u])  + \mathbb{I}_{N}^2.
\end{split}
\end{equation}
Noticing that, $\{u = u_1^e\} \cap \R_2$ is always away from the horizon, hence by Theorem \ref{Thm-decay-low-deg-energy},
\begin{equation}\label{energy-ext-t1-Cu}
E^{ndeg}_k(u^e_{1}; [\ub_{1}, \ub_{2}]) \sim E^{deg}_k(u^e_{1}; [\ub_{1}, \ub_{2}])  \lesssim \mathbb{I}^2_N, \quad k \leq N-1.
\end{equation}
Substituting \eqref{energy-ext-t1-Cu} into \eqref{energy-ineq-fL-fLb-hori-2}, and letting $\ub_1 =0$,  we obtain that for all $u \geq u^{NH}_{0} > u^{e}_{0}$ where $u_0^{NH}:=-r_{NH}^\ast$, $u^{e}_{0}:= - (1.2r_{NH})^\ast$ and $0 \leq \ub \leq \delta$,
\begin{equation}\label{estimate-E-ndeg-low}
 E^{ndeg}_k(u; [\ub_{1}, \ub_{2}]) +  \Eb^{ndeg}_k( \ub; [u^e_{0}, u]) \lesssim \mathbb{I}^2_N, \quad k \leq N-1.
\end{equation}
Letting $u \rightarrow +\infty$, $\ub_1=0$, $\ub_2 = \ub \leq \delta$ in \eqref{energy-ineq-fL-fLb-hori-2}, and taking \eqref{energy-ext-t1-Cu} into account, we have 
\begin{equation}\label{decay-double-integral-hori-low}
 \Eb^{ndeg}_k( \ub; [u^{NH}_{0}, +\infty])+ \mathcal{S}^{ndeg}_k ( \R_2^{NH})
 \lesssim   \mathbb{I}_N^2, \quad k \leq N-1.
\end{equation}
Analogous to \eqref{estimate-Box-varphi-low-deg}, there is the by-product as well
\begin{equation}\label{estimate-Box-phi-deg}
 \doubleint_{\D^h}  |\Box_g \varphi_k|^2 \eta \di \mu_{\D} \lesssim \delta^{\frac{1}{2}} M^{4}, \quad k \leq N-1.
\end{equation}

\subsubsection{Energy estimates for ${}^LF^{ndeg}_{k+1} (u; [0,\ub])$ and ${}^L\mathcal{S}^{ndeg}_{k+1} (\R_2^{NH})$, $k \leq N-1$}\label{sec-estimate-L-2}
We take $\psi = \delta L \varphi_k, \, k \leq N-1$ in \eqref{energy-ineq-fLb-fL-deg-psi-hori}, then
\begin{equation}\label{energy-ineq-fL-L-phi-hori}
\begin{split}
& {}^LF^{ndeg}_{k+1} (u; [\ub_{1},\ub_{2}]) +  {}^L\Fb^{ndeg}_{k+1} (\ub_{2}; [u^e_{1},u]) \\
& \quad+ \doubleint_{\D^h}  \left(\delta \eta^{-1}  |\Lb L \varphi_k|^2  +\delta \eta |\nablaslash L \varphi_k|^2 + \delta^2 \eta |L^2 \varphi_k|^2\right) \di \mu_{\D} \\
\lesssim & {}^LF^{ndeg}_{k+1} (u^e_{1}; [\ub_{1},\ub_{2}]) +  {}^L\Fb^{ndeg}_{k+1} (\ub_{1}; [u^e_{1},u]) +  \doubleint_{\D^h} \delta^2 |L \Box_g \varphi_k|^2 \eta \di \mu_{\D}  \\
& + {}^{hL}\mathcal{W}^{k} + {}^e\mathcal{C} (\delta L\varphi_k) + {}^e\mathcal{F} ( \delta L \varphi_k),
\end{split}
\end{equation}
where ${}^e\mathcal{C} (\delta L \varphi_k)$ and ${}^e\mathcal{F} ( \delta L \varphi_k)$ are defined by \eqref{def-current-hori-e}, \eqref{def-error-hori-e}, ${}^{hL}\mathcal{W}^{k}$ is related to $\delta [\Box_g, L] \varphi_k$,
\begin{equation*}
{}^{hL}\mathcal{W}^{k}  =  \doubleint_{\D^h} \delta^2  \left( |L\varphi_k|^2 +|\Lb\varphi_k|^2 +|\laplacianslash \varphi_k |^2 +  |\Box_g \varphi_k|^2 \right)  \eta  \di \mu_{\D},
\end{equation*}
and  $$\doubleint_{\D^h} \delta^2 |L \Box_g  \varphi_k|^2 \eta \di \mu_{\D} = {}^{hL}\mathcal{S}^k + {}^{hL}\mathcal{G}^k + {}^{hL}\mathcal{L}^k.$$ Here ${}^{hL}\mathcal{S}^k, {}^{hL}\mathcal{G}^k$ are defined as  \eqref{def-V-mS-k}-\eqref{def-V-mG-k} with $\D=\D^h, \,V=\delta L$, $i=1$, ${}^{hL}\mathcal{L}^k$ is defined as  \eqref{def-V-mS-k} with $\D=\D^h, \, V=\delta$, $i=1$.
We will estimate these error terms one by one. 

To begin with, there is ${}^{hL}\mathcal{L}^k \lesssim \delta^{\frac{5}{2}} M^{4}, \, k \leq N-1$, by \eqref{estimate-Box-phi-deg}.

For ${}^{hL}\mathcal{S}^k$, it is split into: ${}^{hL}\mathcal{S}^k = {}^{hL}S^k_1 + \cdots + {}^{hL}S^k_4$, where ${}^{hL}S^k_j, \, j=1:4$ are defined as \eqref{def-V-S-1}-\eqref{def-V-S-4} with $\D=\D^h, \, V=\delta L, \, i=1$.
The estimates for ${}^{hL}S^k_j, \, j=1:3$ resemble those for ${}^hS^k_j, \, j=1:3$ \eqref{h-Sk-1}-\eqref{h-Sk-3}, and hence we omit the details here. The remaining ${}^{hL}S^k_4$ reads
\begin{equation*}
{}^{hL}S^k_4 =  \doubleint_{\D^h}  \sum_{p + q \leq k, p \leq q}  \delta^2  |L\varphi_{p}|^2 |\nablaslash L\varphi_{q}|^2 \eta \di \mu_{\D}, \quad k \leq N-1.
\end{equation*}
Note that,  $ |L\varphi_{p}|  \lesssim \delta^{-\frac{1}{2}} M, \, p \leq N/2 \leq N-3$, and $q \leq N-1$, thus,
\begin{equation}\label{esti-H4-L-hori}
|{}^{hL}S^k_4|  \lesssim  \delta M^2 \doubleint_{\D^h} |L\varphi_{q+1}|^2  \eta \di \mu_{\D} \lesssim  \delta  \mathbb{I}_{N+1}^2 M^2,
\end{equation}
where the degenerate spacetime estimate \eqref{decay-deg-estimate-fLb-fL-thm} is used in the second inequality.

For ${}^{hL}\mathcal{G}^k,$ we make the following splitting: ${}^{hL}G^k= {}^{hL}G^k_1 + \cdots + {}^{hL}G^k_4$, where $ {}^{hL}G^k_j,\,j=1:4$ are defined as \eqref{def-V-G-i-1}-\eqref{def-V-G-i-4} with $\D=\D^h, \, V=\delta L$, $i=1$, i.e., for $p+q \leq k \leq N-1, \, p<q$,
\begin{subequations}
\begin{align}
{}^{hL}G^k_1 &=  \doubleint_{\D^h} \delta^2  |D\varphi_{q}|^2 |Y L\varphi_{p}|^2 \eta  \di \mu_{\D}, \label{hL-Gk-1} \\
{}^{hL}G^k_2 &=  \doubleint_{\D^h}   \delta^2  | \Db \varphi_{q}|^2 | L^2\varphi_{p}|^2  \eta \di \mu_{\D}, \label{hL-Gk-2} \\
{}^{hL}G^k_3 &=  \doubleint_{\D^h}   \delta^2 | \Db\varphi_{q}|^2  |\nablaslash L\varphi_{p}|^2 \eta \di \mu_{\D}, \label{hL-Gk-3} \\
{}^{hL}G^k_4 &=  \doubleint_{\D^h}  \delta^2 |L\varphi_{q}|^2 |\nablaslash L\varphi_{p} |^2  \eta \di \mu_{\D}. \label{hL-Gk-4}
\end{align}
\end{subequations}
Note that $  k \leq N-1$, then $q \leq N-1, \, p \leq k-1 \leq N-2$.  They can be estimated in the same manner as ${}^LG^k_j, \, j=1: 4$ \eqref{L-Gk-1}-\eqref{L-Gk-4}. Hence, we only sketch the calculations here.
\begin{align*}
{}^{hL}G^k_1 & \lesssim  \int_{\ub_{1}}^{\ub_{2}}  \int_{u^{NH}_{1}}^{u} \delta^{-1} M^2 \delta^2  \cdot \eta^{-1} \| \Lb L \varphi_{p} \|^2_{L^{4}(S_{\ub,u^\prime})}  \di u^\prime \di \ub, \\
&\lesssim  \int_{\ub_{1}}^{\ub_{2}}  M^2 \di \ub  \int_{\Cb_{\ub}^{NH}}  \sum_{p \leq i \leq p+1} \delta \eta^{-1} | \Lb L \varphi_{i}|^2 \di \mu_{\Cb_{\ub}}, \quad p \leq k-1,
\end{align*}
which can handled by the Gr\"{o}nwall's inequality.
For ${}^{hL}G^k_2$, ${}^{hL}G^k_3$,
\begin{align*}
{}^{hL}G^k_2 + {}^{hL}G^k_3 & \lesssim  \int_{\ub_{1}}^{\ub_{2}}  \int_{u^{NH}_{1}}^{u} \delta^{\frac{1}{2}} M^2 \delta^2 \eta \left( \| L^2\varphi_{p} \|^2_{L^{4}(S_{\ub,u^\prime})}  + \|\nablaslash L \varphi_{p} \|^2_{L^{4}(S_{\ub,u^\prime})} \right) \di u^\prime \di \ub, \\
&\lesssim \delta^{\frac{1}{2}} M^2\doubleint_{\D^h} \sum_{p \leq i \leq p+1} \delta^2 \left( | L^2\varphi_{i}|^2 + |\nablaslash L \varphi_{i} |^2  \right)  \eta \di \mu_{\D}, \quad p \leq k-1,
\end{align*}
which can be absorbed by the left hand side of \eqref{energy-ineq-fL-L-phi-hori}. 
Similarly for ${}^{hL}G^k_4$, we have, by  \eqref{decay-deg-estimate-fLb-fL-thm},
\begin{equation}\label{estimate-hL-G4}
\begin{split}
{}^{hL}G^k_4 & \lesssim  \int_{\ub_{1}}^{\ub_{2}}  \int_{u^{NH}_{1}}^{u}  \delta^{-1} M^2 \delta^2 \eta \|\nablaslash L \varphi_{p}\|^2_{L^4(S_{\ub, u^\prime})} \di u^\prime \di \ub, \\
&\lesssim \delta M^2 \doubleint_{\D^h}   \sum_{p\leq i\leq p+2} \eta | L \varphi_{i}|^2 \di \mu_{\D} \lesssim  \delta \mathbb{I}_{N+1}^2M^2, \quad p \leq N-2.
\end{split}
\end{equation}

Again, by virtue of \eqref{decay-deg-estimate-fLb-fL-thm} and \eqref{estimate-Box-phi-deg}, ${}^{hL}\mathcal{W}^{k}$ is bounded by, 
\begin{align*}
|{}^{hL}\mathcal{W}^{k}| &\lesssim
 \delta^2 \mathbb{I}_{N+1}^2 +  \delta^{\frac{5}{2}} M^{4}, \quad k \leq N-1. 
\end{align*}

Furthermore, as consequence of Theorem \ref{Thm-decay-low-deg-energy}, 
$| {}^e\mathcal{C} (\delta L\varphi_k)| + |{}^e\mathcal{F} ( \delta L \varphi_k)| \lesssim \mathbb{I}_{N+1}^2, \, k \leq N-1$.

Eventually, we arrive at, for $k\leq N-1$,
\begin{equation}\label{energy-ineq-fL-L-phi-hori-1}
\begin{split}
& {}^LF^{ndeg}_{k+1} (u; [\ub_{1},\ub_{2}])  + {}^L\Fb^{ndeg}_{k+1} (\ub_{2}; [u^e_{1}, u]) + {}^L\mathcal{S}^{ndeg}_{k+1} (\D^h) \\
\lesssim & {}^LF^{ndeg}_{k+1} (u^e_{1}; [\ub_{1},\ub_{2}])  + {}^L\Fb^{ndeg}_{k+1}  (\ub_{1}; [u^e_{1}, u]) + \mathbb{I}_{N+1}^2,
\end{split}
\end{equation} 
which yields \eqref{decay-hori-estimate-Fb-thm} and \eqref{decay-hori-estimate-L-lower-flux-thm} via an analogous argument presented in Section \ref{sec-out-energy-horizon}. 

We have accomplished the improvements \eqref{decay-hori-estimate-L-lower-flux-thm}, \eqref{decay-hori-estimate-Fb-thm} and \eqref{estimate-E-ndeg-low}-\eqref{decay-double-integral-hori-low} till now.
Combining the degenerate estimates of Theorem \ref{Thm-decay-low-deg-energy} with the results of \eqref{decay-hori-estimate-L-lower-flux-thm} and \eqref{estimate-E-ndeg-low}, we can upgrade the non-degenerate estimate for $L \varphi_{k}$:
\begin{equation}\label{improved-L-nabla-infty-L-phi}
\|L\varphi_p\|_{L^\infty(\R_2)} + \|L\varphi_q\|_{L^4(S_{\ub, u} \cap \R_2)}   \lesssim \delta^{-\frac{1}{2}} \mathbb{I}_{N},  \quad p\leq N-3, \, q \leq N-2. 
\end{equation}

\subsubsection{Energy estimates for $E^{ndeg}_N(u; [0,\ub])$, $\mathcal{S}^{ndeg}_{N} (\R_2^{NH})$ and $\Eb^{ndeg}_N(\ub, [u^{NH},+\infty])$}\label{sec-top-energy-ndeg}
As explained in the degenerate case, the estimate for ${}^hS_4^k$ \eqref{esti-S4-hori} is illegal if $k=N$. However, the enhanced estimate \eqref{improved-L-nabla-infty-L-phi}  will help to linearize ${}^hS^N_4$. We remind ourselves that,
\begin{equation*}
{}^hS^N_4 =  \doubleint_{\D^h}  \sum_{p+q \leq N,  p \leq q} |L\varphi_{p}|^2 |\nablaslash \varphi_{q} |^2 \eta \di \mu_{\D}.
\end{equation*}
With the aid of \eqref{improved-L-nabla-infty-L-phi} (knowing that $p \leq [\frac{N}{2}] \leq N-3$), and the spacetime estimate for $\mathcal{S}^{deg}_l (\D_{0, \ub}^{u, +\infty})$, $l \leq N$ in \eqref{decay-deg-estimate-fLb-fL-thm}, 
 \begin{equation*}
|{}^hS^N_4| \lesssim  \doubleint_{\D^h} \delta^{-1}\mathbb{I}_{N}^2 |\nablaslash \varphi_N|^2 \eta \di \mu_{\D} \lesssim \mathbb{I}_{N}^2 \mathbb{I}^2_{N+1}.
\end{equation*}
The other terms can be bounded in the same way as that in the lower order cases.
After that, we derived \eqref{decay-hori-estimate-lower-flux-thm}, \eqref{decay-hori-estimate-Eb-thm}. 

Hence, we have carried out the proof for Theorem \ref{Thm-decay-low-hori-energy}. As a consequence, there is, 
\begin{equation}\label{improved-L-nabla-L4-phi-top}
\|L\varphi_j \|_{L^\infty(\R_2)} + \|L\varphi_k\|_{L^4(S_{\ub, u} \cap \R_2)}    \lesssim \delta^{-\frac{1}{2}} \mathbb{I}_{N+1}, \quad k\leq N-1,\, j \leq N-2. 
\end{equation}

\subsubsection{Energy estimates for ${}^YF^{ndeg}_{k+1} (u; [0,\ub])$, $ {}^Y\mathcal{S}^{ndeg}_{k+1}(\R_2^{NH}) $ and $ {}^Y\Fb^{ndeg}_{k+1} (\ub; [u^{NH}, +\infty])$, $k \leq N-1$}\label{sec-high-de-hori}
Thanks to Theorem \ref{Thm-decay-low-hori-energy} and the resulted improvement \eqref{improved-L-nabla-L4-phi-top}, we will prove in this section the energy bound related to $Y$ near the horizon, i.e., Theorem \ref{Thm-decay-low-hori-energy-flux-Lb}.

\begin{proof}[Proof of Theorem \ref{Thm-decay-low-hori-energy-flux-Lb}]
We take $\psi =Y \varphi_k, \, k \leq N-1$ in \eqref{energy-ineq-fLb-fL-deg-psi-hori}, to derive
\begin{align*}
{}& {}^YF^{ndeg}_{k+1} (u; [\ub_{1},\ub_{2}])  + {}^Y\Fb^{ndeg}_{k+1} (\ub_{2}; [u^e_{1}, u]) \\
& + \doubleint_{\D^h} \left( \delta^{-1}\eta^{-1} |\Lb Y \varphi_k|^2+  \eta |L Y \varphi_k|^2 + \delta^{-1} \eta |\nablaslash Y \varphi_k|^2 \right)  \di \mu_{\D} \\
\lesssim {}& {}^{Y}F_{k+1}^{ndeg} (u^e_{1}; [\ub_{1},\ub_{2}]) +{}^{Y}\Fb_{k+1}^{ndeg} (\ub_{1}; [u^e_{1}, u]) +  \doubleint_{\D^h}  |Y \Box_g \varphi_k|^2 \eta \di \mu_{\D} \\
& +  {}^{hY}\mathcal{W}^k  + {}^e\mathcal{C} (Y\varphi_k) + {}^e\mathcal{F} ( Y \varphi_k), 
\end{align*} 
where  ${}^e\mathcal{C} (Y \varphi_k)$, ${}^e\mathcal{F} (Y \varphi_k)$ are defined by \eqref{def-current-hori-e}, \eqref{def-error-hori-e}, ${}^{hY}\mathcal{W}^k $ is related to $[\Box_g, Y] \varphi_k$ and given by
\begin{align*}
{}^{hY}\mathcal{W}^k &=  \doubleint_{\D^h}  \left( |Y^2 \varphi_k|^2 + |\laplacianslash \varphi_k|^2 + |Y \varphi_k|^2 + |L \varphi_k|^2 \right) \eta \di \mu_{\D},
\end{align*}
and $$\doubleint_{\D^h}  |Y\Box_g \varphi_k|^2 \eta \di \mu_{\D} =  {}^{hY}\mathcal{S}^k  + {}^{hY}\mathcal{G}^k + {}^{hY}\mathcal{L}^k.$$ Here $ {}^{hY}\mathcal{S}^k, {}^{hY}\mathcal{G}^k$ are defined as \eqref{def-V-mS-k}-\eqref{def-V-mG-k} with $\D = \D^h, \,V=Y$, $i=1$, while $ {}^{hY}\mathcal{L}^k$ is given by \eqref{def-V-mS-k} with $\D = \D^h, \,V=1$, $i=1$,  

We split ${}^{hY}\mathcal{S}^k$ into: ${}^{hY}\mathcal{S}^k = {}^{hY}S^k_1 + \cdots + {}^{hY}S^k_4$, where ${}^{hY}S^k_j, \, j=1:4$ are defined as \eqref{def-V-S-1}-\eqref{def-V-S-4} with $\D = \D^h, \, V=Y$, $i=1$.  The estimates for ${}^{hY}S^k_{j}, \, j =1:3$ mimic those for ${}^hS^k_j, \, j=1:3$ \eqref{h-Sk-1}-\eqref{h-Sk-3}. Hence, we will only focus on ${}^{hY}S^k_{4}$, which reads,
\begin{align*}
{}^{hY}S^k_{4} &=  \doubleint_{\D^h} \sum_{p+q\leq k, p \leq q}  |L\varphi_{p}|^2 |\nablaslash Y \varphi_{q}|^2 \eta \di \mu_{\D}, \quad k\leq N-1.
\end{align*}
We make use of the improved $L^\infty$ estimate for $L\varphi_{p}, \, p \leq \frac{N}{2} \leq N-3$ \eqref{improved-L-nabla-L4-phi-top}, then
\begin{align*}
|{}^{hY}S^k_{4}| \lesssim & \int_{\ub_{1}}^{\ub_{2}} \mathbb{I}_N^2 \int_{\Cb^{NH}_{\ub}} \delta^{-1}  |\nablaslash Y \varphi_q|^2 \eta \di \mu_{\Cb_{\ub}}, \quad q \leq k,
\end{align*}
can be handled by the Gr\"{o}nwall's inequality.

For ${}^{Yh}\mathcal{G}^k$, there is, ${}^{hY}G^k= {}^{hY}G^k_{1} + \cdots + {}^{hY}G^k_{4}$, where $ {}^{hY}G^k_j, \, j=1:4$ are defined as \eqref{def-V-G-i-1}-\eqref{def-V-G-i-4} with $\D = \D^h, \, V=Y, \, i =1$.  ${}^{hY}G^k_j, \, j=1:3$ can be estimated in a similar manner as ${}^{hL}G^k_j, \, j=1:3$ \eqref{hL-Gk-1}-\eqref{hL-Gk-3}, while the left ${}^{hY}G^k_{4}$ takes the following form
\begin{align*}
{}^{hY}G^k_{4} &=  \doubleint_{\D^h} \sum_{p+q \leq k, p<q} |L\varphi_{q}|^2 |\nablaslash Y \varphi_{p} |^2 \eta  \di \mu_{\D}, \quad k \leq N-1.
\end{align*}
Noticing that $q \leq N-1, \, p \leq k -1 \leq N-2$ and referring to \eqref{estimate-Lb-G4-deg}, we obtain by means of the upgraded $L^4$ estimate in \eqref{improved-L-nabla-L4-phi-top} and the integrated estimate $\mathcal{S}^{ndeg}_l(\D^h)$, $l \leq N$ \eqref{decay-hori-estimate-lower-flux-thm}  in Theorem \ref{Thm-decay-low-hori-energy},
\begin{align*}
{}^{hY}G^k_{4} 
&\lesssim   \doubleint_{\D^h}   \sum_{p \leq i \leq p+2} \delta^{-1}  \mathbb{I}_{N+1}^2 |Y \varphi_{i}|^2 \eta  \di \mu_{\D} \lesssim  \mathbb{I}_{N+1}^4, \quad p \leq N-2.
\end{align*}

In addition, the usage of the spacetime estimate $\mathcal{S}^{ndeg}_l(\D^h)$, $l \leq N$ \eqref{decay-hori-estimate-lower-flux-thm} also leads to 
\begin{align*}
| {}^{hY}\mathcal{W}^k | &\lesssim \mathbb{I}_{N+1}^2 + \int_{\ub_{1}}^{\ub_{2}} \delta  \int_{\Cb_{\ub}}  \delta^{-1} \eta^{-1} |\Lb Y \varphi_k|^2 \di \mu_{\D}, \quad k \leq N-1,
\end{align*}
where the last term can be treated by the Gr\"{o}nwall's inequality.

Finally, there is also $| {}^e\mathcal{C} (Y\varphi_k)| + |{}^e\mathcal{F} ( Y \varphi_k)|  
\lesssim \mathbb{I}_{N+1}^2$, as a result of Theorem \ref{Thm-decay-low-deg-energy} and Theorem \ref{prop-Lb2-deg}.

In the end, we arrive at: for $k \leq N-1$,
\begin{equation}\label{energy-ineq-fL-Lb-phi-hori-whole}
\begin{split}
{}& {}^YF^{ndeg}_{k+1} (u; [\ub_{1},\ub_{2}])  + {}^Y\Fb^{ndeg}_{k+1} (\ub_{2}; [u^e_{1}, u]) + {}^{Y}\mathcal{S}^{ndeg}_{k+1} (\D^h)\\
\lesssim {}&{}^{Y}F_{k+1}^{ndeg}(u^e_{1}; [\ub_{1},\ub_{2}]) + {}^Y\Fb^{ndeg}_{k+1} (\ub_{1}; [u^e_{1}, u])  +\mathbb{I}_{N+1}^2,
\end{split}
\end{equation} 
which gives rise to Theorem \ref{Thm-decay-low-hori-energy-flux-Lb}.
\end{proof}

\subsection{More general energies in $\R_2$}\label{sec-recover-Lb-Y}

In what follows, we will capitalize on Theorem \ref{Thm-decay-low-deg-energy}, Theorem \ref{Thm-decay-low-hori-energy} and the improved $L^\infty$ estimate \eqref{improved-L-nabla-L4-phi-top} to retrieve ${}^tF^{deg}_{k+1} (u; [0,\ub])$, ${}^tF^{ndeg}_{k+1} (\ub; [u^{NH}, +\infty])$, $k \leq N-1$. The proof will be an analogue of the one in Section \ref{sec-T-E}.

\subsubsection{Estimates for ${}^tF^{deg}_{k+1} (u; [0,\ub]), \, k \leq N-1$}\label{sec-recover-Lb-deg}
\begin{proposition}\label{Prop-ext-decay-Lb-phi}
In $\R_2$, given any  real number $\beta \geq \frac{1}{2}$ and $k \leq N-1$, there are
\begin{align}
 \delta^{-1}  \| \eta^{\frac{1}{2}} Y\varphi_k\|^2_{L^2(S_{\ub,u})} + \delta^{-2} \|\eta^{\frac{1}{2}} Y \varphi_k\|^2_{L^2(C_u)} \lesssim {}&  \mathbb{I}^2_{N+1}   |u|^{-2\beta}, \label  {eq-decay-Lb-phi-L2-Cu-deg-prop}\\
\| \eta^{\frac{1}{2}} Y L \varphi_k\|^2_{L^2(C_u)} \lesssim {}& \mathbb{I}^2_{N+1}  |u|^{-2\beta}. \label{improve-L-Lb-Cu-deg}
\end{align}
\end{proposition}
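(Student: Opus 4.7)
The proof parallels the recovery of ${}^tF_{1+k}$ in $\R_1$ carried out in Section \ref{sec-T-E}: use the wave equation to trade a transversal-to-$C_u$ derivative for quantities already controlled by Theorem \ref{Thm-decay-low-deg-energy}, and close an $L^2(C_u)$ estimate through a Gr\"onwall argument on a transport equation in $\ub$. The basic algebraic identity, derived exactly as in Section \ref{sec-T-E} from the wave equation together with $[L,\Lb]=0$, reads
\begin{equation*}
YL\varphi_k \;=\; \eta^{-1}L\Lb\varphi_k \;=\; \frac{L\varphi_k-\Lb\varphi_k}{r} + \laplacianslash\varphi_k - \Box_g\varphi_k.
\end{equation*}
Multiplying by $\eta^{1/2}$ and taking $L^2(C_u)$-norms reduces $\|\eta^{1/2}YL\varphi_k\|^2_{L^2(C_u)}$ to four contributions, all shown to be $\lesssim \mathbb{I}^2_{N+1}|u|^{-2\beta}$: the $L\varphi_k$-piece is $E^{deg}_k(u;[0,\ub])$; the $\Lb\varphi_k$-piece is bounded by $\|\eta^{1/2}Y\varphi_k\|^2_{L^2(C_u)}$ (since $|\eta^{1/2}\Lb\varphi_k| = \eta\,|\eta^{1/2}Y\varphi_k|\le |\eta^{1/2}Y\varphi_k|$), which is the content of \eqref{eq-decay-Lb-phi-L2-Cu-deg-prop}; the angular Laplacian is controlled by $E^{deg}_{k+1}$ via $|\laplacianslash\varphi_k|\lesssim r^{-1}|\nablaslash\varphi_{k+1}|$; and the nonlinear piece $\|\eta^{1/2}\Box_g\varphi_k\|^2_{L^2(C_u)}$ is handled by the null condition through $L^\infty\times L^2$ splits using \eqref{L-4-L-phi-deg-improved}--\eqref{improved-L-nabla-L4-phi-top}, Proposition \ref{Prop-Sobolev-R2} and the bootstrap assumption \eqref{bt-forward-flux}.

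For \eqref{eq-decay-Lb-phi-L2-Cu-deg-prop} I introduce the transport quantity
\begin{equation*}
\chib^2[\varphi_k](u,\ub) \;:=\; \int_{S_{\ub,u}}\eta\,|Y\varphi_k|^2 r^2\,\di\sigma_{S^2} \;=\; \int_{S_{\ub,u}}\eta^{-1}|\Lb\varphi_k|^2 r^2\,\di\sigma_{S^2},
\end{equation*}
differentiate in $\ub$ using $Lr=\eta$ and $L\eta = \mu\eta/r$, and eliminate the resulting $L\Lb\varphi_k$ cross-term via the identity above. A weighted Cauchy--Schwarz at scale $\delta$ then yields
\begin{equation*}
|\p_\ub \chib^2[\varphi_k]| \;\lesssim\; \delta^{-1}\chib^2[\varphi_k] + \delta\!\int_{S_{\ub,u}}\!\eta\bigl(|L\varphi_k|^2 + r^2|\laplacianslash\varphi_k|^2 + r^2|\Box_g\varphi_k|^2\bigr)\di\sigma_{S^2}.
\end{equation*}
Since $\ub$ ranges over an interval of length at most $\delta$, Gr\"onwall produces the pointwise-in-$\ub$ bound $\chib^2(u,\ub) \lesssim \chib^2(u,\ub_0) + \delta\,\mathbb{I}^2_{N+1}|u|^{-2\beta}$, where $\ub_0 = 0$ for $u\ge 1$ (with $\chib^2(u,0) = 0$ since $\varphi|_{\Cb_0}\equiv 0$) and $\ub_0 = 1-u$ for $u\in[1-\delta,1]$ (with $\chib^2(u,1-u)\lesssim \delta\,I^2_{N+1}$ coming from the Cauchy data bounds of Section \ref{sec-data-2}). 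This yields the $L^2(S_{\ub,u})$-bound in \eqref{eq-decay-Lb-phi-L2-Cu-deg-prop}; a further integration over $\ub\in[\ub_0,\delta]$ produces the $L^2(C_u)$-bound with an extra factor of $\delta$, consistent with the $\delta^{-2}$ weight in the claim.

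The delicate point is choosing the Cauchy--Schwarz scale as exactly $\delta$. This is permitted because the wave equation forces $L\Lb\varphi_k = O(\eta)$, so $|L\Lb\varphi_k|^2$ carries an $\eta^2$ that cancels the singular $\eta^{-1}$ in $\chib^2$ and leaves a usable $\eta$ on the source side; the same role was played by the $\langle u\rangle^{-2}$-smallness in the $\R_1$ analogue of Section \ref{sec-T-E}. A secondary concern is the nonlinear $\|\eta^{1/2}\Box_g\varphi_k\|^2_{L^2(C_u)}$ estimate on a single cone, but this closes by invoking the bootstrap assumption \eqref{bt-forward-flux} to dominate the $\eta^{1/2}Y\varphi_q$-factor in the $L^2$-side of the split, exactly as in the top-order treatment of Section \ref{sec-top-energy-deg}.
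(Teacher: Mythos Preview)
Your proposal is correct and follows essentially the same approach as the paper: define $\chib^2[\varphi_k](u,\ub)=\int_{S_{\ub,u}}\eta|Y\varphi_k|^2r^2\di\sigma_{S^2}$, derive the transport inequality in $\ub$ via the wave equation, close with Gr\"onwall, and then read off \eqref{improve-L-Lb-Cu-deg} from the wave-equation identity once \eqref{eq-decay-Lb-phi-L2-Cu-deg-prop} is known. Your care about the starting point $\ub_0$ on cones $C_u$ with $u\in[1-\delta,1)$ is in fact more explicit than the paper's write-up.

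The one genuine methodological difference concerns the nonlinear piece $F^1_k=\delta\int_{C_u}|D\varphi_p|^2|Y\varphi_q|^2\eta\,\di\mu_{C_u}$. You estimate it by invoking the bootstrap bound on $\|\eta^{1/2}Y\varphi_q\|_{L^2(C_u)}$, which yields a contribution of order $\delta^2 M^4|u|^{-2\beta}\lesssim\delta\,\mathbb{I}_{N+1}^2|u|^{-2\beta}$; this is legitimate thanks to the smallness $\delta^{1/2}M^2\ll1$. The paper instead uses the improved $L^\infty$ bound $|D\varphi_p|^2\lesssim\delta^{-1}\mathbb{I}_N^2$ to write $|F^1_k|\lesssim\int_0^{\ub}\mathbb{I}_N^2\,\chib^2[\varphi_k](u,\ub')\,\di\ub'$ and absorbs this term into the Gr\"onwall loop. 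The paper's route is self-contained (it avoids appealing to the bootstrap for the very quantity being improved), while yours is slightly quicker; both close the estimate with the same final bound.
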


\begin{proof}
Define $\chib^2 [\psi] (u,\ub) =   \int_{S_{\ub, u}} |Y \psi|^2  \eta  r^2 \di \sigma_{S^2}$. Take $\psi = \varphi_k$, $k \leq N-1$,
\begin{align*}
& \p_{\ub} \chib^2 [\varphi_k] (u,\ub)  +   \int_{S_{\ub, u}}   \eta^{-1} \mu r |\Lb \varphi_k|^2  \di \sigma_{S^2} \\
=&  \int_{S_{\ub, u}} 2 r^2 \eta^{-1} \Lb \varphi_k \left( L \Lb \varphi_k + \frac{\eta}{r} \Lb \varphi_k \right)  \di \sigma_{S^2}.
\end{align*}
Appealing to the wave equation and the Cauchy-Schwarz inequality, we integrate along $\p_{\ub}$ to derive (refer to \eqref{esti-ineq-Lpsi-Cu})
\begin{equation}\label{eq-retrieve-Lb-Cu}
\begin{split}
 \chib^2 [\varphi_k] (u,\ub) & \lesssim  \int_{0}^{\ub} \delta^{-1} \chib^2 [\varphi_k] (u,\ub^\prime) \di \ub^\prime \\
 & + \int_{C_{u}}  \delta   \eta \left( |\laplacianslash \varphi_k|^2+ |L \varphi_k|^2 + |\Box_g \varphi_k|^2 \right)   \di \mu_{C_u}.
\end{split}
\end{equation}
We make the following splitting: $ \int_{C_{u}} \delta \eta |\Box_g \varphi_k|^2 \lesssim \sum_{i=1}^{4} F^i_k$, with $F_k^i$ defined as below: for all $p+q\leq k \leq N-1, \, p \leq q$,
\begin{align*}
  F^1_k &: =  \int_{C_{u}}  \delta  |D \varphi_{p}|^2  |Y \varphi_{q}|^2 \eta  \di \mu_{C_u}, 
  & F^2_k : = \int_{C_{u}}  \delta  | \Db \varphi_{p}|^2  |L\varphi_{q}|^2 \eta \di \mu_{C_u}, \\
  F^3_k &: =\int_{C_{u}} \delta  | \Db\varphi_{p}|^2  |\nablaslash\varphi_{q}|^2 \eta \di \mu_{C_u}, 
  & F^4_k : = \int_{C_{u}} \delta  |L\varphi_{p}|^2 |\nablaslash\varphi_{q}|^2 \eta \di \mu_{C_u}.
\end{align*}
In view of the improved estimate for $\|L \varphi_{p} \|_{L^\infty(\R_2)}$ \eqref{improved-L-nabla-L4-phi-top} and  $\| \Db \varphi_{p} \|_{L^\infty(\R_2)} \lesssim \delta^{\frac{1}{4}}M$,  $p \leq \frac{N}{2} \leq N-3$, and Theorem \ref{Thm-decay-low-deg-energy}, $F^i_k$, $i=1:4$, share the following estimates ($q \leq N-1$)
\begin{align*}
 |F^1_k|   &\lesssim \int_0^{\ub}   \mathbb{I}^2_{N} \chib^2 [\varphi_k] (u,\ub^\prime) \di \ub^\prime, \\
 |F^2_k| + |F^3_k| & \lesssim \delta^{ \frac{3}{2}} M^2 \left( \|\eta^{\frac{1}{2}} L \varphi_{q}\|^2_{L^2(C_{u})}+ \|\eta^{\frac{1}{2}} \nablaslash \varphi_{q}\|^2_{L^2(C_{u})} \right) \lesssim \delta^{ \frac{3}{2}} M^2 \mathbb{I}_N^2 |u|^{-2\beta}, \\
 |F^4_k| &  \lesssim \mathbb{I}^2_{N} \|\eta^{\frac{1}{2}} \nablaslash \varphi_{q}\|^2_{L^2(C_{u})} \lesssim \delta \mathbb{I}_N^4 |u|^{-2\beta}.
\end{align*}
Therefore,
\begin{equation}\label{retrive-Lb-Box-deg}
 \delta \| \eta^{\frac{1}{2}}\Box_g \varphi_k\|^2_{L^2(C_u)} \lesssim \int_0^{\ub}   \mathbb{I}^2_{N} \chib^2 [\varphi_k] (u,\ub^\prime) \di \ub^\prime +  \left( \delta \mathbb{I}^2_{N} + \delta^{ \frac{3}{2}} M^2 \right) \mathbb{I}_N^2 |u|^{-2\beta}.
\end{equation}
By the Gr\"{o}nwall's inequality,  \eqref{eq-retrieve-Lb-Cu} turns into
\begin{align*}
 \chib^2 [\varphi_k] (u,\ub)  \lesssim  \delta \mathbb{I}^4_{N}  |u|^{-2\beta} + \delta \left(  \|\eta^{\frac{1}{2}} \nablaslash \varphi_{k+1}\|^2_{L^2(C_u)}  +  \|\eta^{\frac{1}{2}} L \varphi_{k}\|^2_{L^2(C_u)} \right),
\end{align*}
which tells that
\begin{equation}\label{retrive-Lb-S-deg}
 \chib^2 [\varphi_k] (u,\ub)  \lesssim \delta \mathbb{I}^2_{N} \mathbb{I}^2_{N+1} |u|^{-2\beta}, \quad k \leq N-1.
\end{equation}
 Integrating \eqref{retrive-Lb-S-deg} over the interval $\ub \in [0, \delta]$, we prove \eqref{eq-decay-Lb-phi-L2-Cu-deg-prop}.

Meanwhile, based on the wave equation, there is, for $k \leq N-1$,
\begin{align*}
\|\eta^{-\frac{1}{2}} L \Lb\varphi_k\|^2_{L^2(C_u)}  \lesssim {}&  \| \eta^{\frac{1}{2}} \Lb \varphi_{k}\|^2_{L^2(C_u)} + \|\eta^{\frac{1}{2}} L \varphi_{k}\|^2_{L^2(C_u)} \\
&+ \| \eta^{\frac{1}{2}} \laplacianslash \varphi_{k}\|^2_{L^2(C_u)} + \|\eta^{\frac{1}{2}} \Box_g \varphi_{k}\|^2_{L^2(C_u)}.
\end{align*}
Due to Theorem \ref{Thm-decay-low-deg-energy}, \eqref{retrive-Lb-Box-deg} and the proved \eqref{eq-decay-Lb-phi-L2-Cu-deg-prop}, the estimate \eqref{improve-L-Lb-Cu-deg} follows.
\end{proof}

\subsubsection{Estimates for ${}^tF^{ndeg}_{k+1} (\ub; [u^{NH}, +\infty]), \, k \leq N-1$}\label{sec-recover-Lb-ndeg}
\begin{proposition}\label{Prop-nondeg-decay-Lb-phi}
In the region $\R_2 \cap \{r\leq r_{NH}\}$, there are
\begin{align}
\delta^{-1} \|Y \varphi_k\|^2_{L^2(S_{\ub,u})} + \delta^{-2} \|Y \varphi_k\|^2_{L^2(C_u)} \lesssim{}& \mathbb{I}^2_{N+1}, \quad k \leq N-1, \label{decay-Lb-phi-L2-Cu} \\
\|YL \varphi_k\|^2_{L^2(C_u)} \lesssim{}&  \mathbb{I}^2_{N+1}, \quad k \leq N-1.\label{improve-L-Lb-Cu}
\end{align}
\end{proposition}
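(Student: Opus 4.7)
The plan is to adapt the argument for Proposition \ref{Prop-ext-decay-Lb-phi}, removing the $\eta^{1/2}$-degeneracy by exploiting the red-shift structure at the future event horizon. Introduce the non-degenerate sphere quantity
\[
\widetilde{\chib}^2[\psi](u,\ub) := \int_{S_{\ub,u}} |Y\psi|^2 \, r^2 \di\sigma_{S^2}.
\]
Writing $Y = \eta^{-1}\Lb$ and using $L\eta = \mu\eta/r$, $Lr = \eta$, together with the wave equation $\eta^{-1} L\Lb\psi = \laplacianslash\psi + L\psi/r - \Lb\psi/r - \Box_g\psi$, one finds $LY\psi = \laplacianslash\psi + L\psi/r - Y\psi/r - \Box_g\psi$ and, since $r\mu \equiv 2m$,
\[
\p_\ub \widetilde{\chib}^2[\psi] + 4m \int_{S_{\ub,u}} |Y\psi|^2 \di\sigma_{S^2} = 2\int_{S_{\ub,u}} r^2 Y\psi\bigl(\laplacianslash\psi + L\psi/r - \Box_g\psi\bigr)\di\sigma_{S^2}.
\]
The positive sign of the second term on the left is the red-shift effect; coupled with the boundedness of $r$ in $\R_2^{NH}$, it provides uniform coercivity of $\widetilde{\chib}^2$.

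Now set $\psi = \varphi_k$, $k \leq N-1$, and apply Cauchy--Schwarz on the right with the dyadic weighting $\delta^{-1}$ versus $\delta$. For any $u \geq -r^\ast_{NH}$ (the minimum value allowing $C_u$ to meet $\R_2 \cap \{r\leq r_{NH}\}$), the cone $C_u$ intersects $\{\ub = 0\}$ in a full sphere on which $\varphi \equiv 0$, hence $\widetilde{\chib}^2[\varphi_k](u, 0) = 0$. Integrating in $\ub$ over the relevant subinterval of $[0,\delta]$, Gr\"onwall (with constant $e^{\delta^{-1}\cdot \delta}\lesssim 1$) gives
\[
\widetilde{\chib}^2[\varphi_k](u,\ub) \lesssim \delta \int_{C_u \cap \R_2^{NH}} \bigl(|\laplacianslash\varphi_k|^2 + |L\varphi_k|^2 + |\Box_g\varphi_k|^2\bigr) \di\mu_{C_u}.
\]
The $L\varphi_k$ and $\laplacianslash\varphi_k$ integrals are each bounded by $\mathbb{I}^2_{N+1}$ via Theorem \ref{Thm-decay-low-hori-energy} (for the Laplacian, $[\Omega,\nablaslash]=0$ permits domination by $\|\nablaslash\varphi_{k+1}\|^2_{L^2(C_u)}$ plus lower order, valid since $k+1 \leq N$). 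Expand $\Box_g\varphi_k$ via the null condition into pieces mirroring $F^1_k,\ldots,F^4_k$ of Proposition \ref{Prop-ext-decay-Lb-phi}, and bound them using \eqref{improved-L-nabla-L4-phi-top} together with Theorems \ref{Thm-decay-low-hori-energy} and \ref{Thm-decay-low-hori-energy-flux-Lb}; the $F^1_k$-type self-interaction involving $|Y\varphi_q|^2$ is reabsorbed by a supplementary Gr\"onwall in $\ub$, exactly as in Proposition \ref{Prop-ext-decay-Lb-phi}. The outcome is $\widetilde{\chib}^2[\varphi_k](u,\ub) \lesssim \delta \mathbb{I}^2_{N+1}$; integrating in $\ub$ yields \eqref{decay-Lb-phi-L2-Cu}.

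For \eqref{improve-L-Lb-Cu}, the commutator $[L,Y]=-(\mu/r)Y$ in \eqref{commutates} yields $YL\varphi_k = LY\varphi_k + (\mu/r)Y\varphi_k$. Theorem \ref{Thm-decay-low-hori-energy-flux-Lb} supplies ${}^YF^{ndeg}_{k+1}(u;[0,\ub]) \lesssim \mathbb{I}^2_{N+1}$, and hence $\|LY\varphi_k\|^2_{L^2(C_u)} \lesssim \mathbb{I}^2_{N+1}$; combined with \eqref{decay-Lb-phi-L2-Cu}, the claim follows. The most delicate point I anticipate is the $\delta$-bookkeeping in the nonlinear $\Box_g\varphi_k$ source: each $L\varphi$ factor placed in $L^\infty$ contributes $\delta^{-1/2}$, each $\nablaslash$ or $\Lb$ factor in $L^2$ contributes $\delta^{1/2}$, and these counts must balance to leave a $\delta$-independent multiple of $\mathbb{I}^2_{N+1}$ after the overall Gr\"onwall prefactor $\delta$ is applied --- a pattern parallel to that in Sections \ref{sec-deg-flux} and \ref{sec-high-de-hori}, but requiring careful matching at top order $k = N-1$, where regularity forces one to place $L\varphi_p$ ($p \leq N-3$) in $L^\infty$ via \eqref{improved-L-nabla-L4-phi-top} and pair with the integrated non-degenerate bound $\mathcal{S}^{ndeg}_l(\R_2^{NH}) \lesssim \mathbb{I}^2_{N+1}$.
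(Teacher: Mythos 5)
Your proof is correct and follows essentially the same route as the paper: the paper's quantity ${}^h\chib^2[\varphi_k]=\int_{S_{\ub,u}}|\eta^{-1}\Lb\varphi_k|^2 r^2\,\di\sigma_{S^2}$ is exactly your $\widetilde{\chib}^2$, and the paper derives the same $\p_{\ub}$-transport identity with the favourable $-4m\int|Y\varphi_k|^2$ red-shift term, performs the same Cauchy--Schwarz/Gr\"onwall step from the trivial data on $\Cb_0$, reabsorbs the $|D\varphi_p|^2|Y\varphi_q|^2$ source into the Gr\"onwall factor, and controls the remaining source terms via Theorem \ref{Thm-decay-low-hori-energy} and \eqref{improved-L-nabla-L4-phi-top}. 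The only (immaterial) divergence is in \eqref{improve-L-Lb-Cu}, where the paper substitutes the wave equation together with its by-product bound \eqref{retrive-Lb-Box-ndeg} for $\|\Box_g\varphi_k\|_{L^2(C_u)}$, whereas you use $YL=LY+(\mu/r)Y$ and the flux bound of Theorem \ref{Thm-decay-low-hori-energy-flux-Lb}; since $YL\varphi_k=\eta^{-1}L\Lb\varphi_k$, these are equivalent one-line consequences of \eqref{decay-Lb-phi-L2-Cu} and previously established estimates.
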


\begin{proof}
Defining ${}^h\chib^2 [\varphi_k] (u,\ub)= \int_{S_{\ub,u}} |\eta^{-1} \Lb \varphi_k|^2 r^2  \di \sigma_{S^2}, \, k \leq N-1$, we derive,
\begin{equation*}
\begin{split}
\p_{\ub} {}^h\chib^2 [\varphi_k] (u,\ub) &= \int_{S_{\ub,u}} 2 \eta^{-2} \Lb \varphi_k \left( L \Lb \varphi_k + \frac{\eta}{r} \Lb \varphi_k \right) r^2  \di \sigma_{S^2}  \\
&\quad \quad \quad -  \int_{S_{\ub,u}} 2 \eta^{-2} \frac{2m}{r^2} |\Lb \varphi_k|^2 r^2  \di \sigma_{S^2}.
\end{split}
\end{equation*}
Then it follows from the proof leading to Proposition \ref{Prop-ext-decay-Lb-phi} that, for $k \leq N-1$,
\begin{align*}
  {}^h\chib^2 [\varphi_k] (u,\ub)  \lesssim{}& \int_0^{\ub}( \delta^{-1} + \mathbb{I}^2_{N})  {}^h\chib^2 [\varphi_k] (u,\ub^\prime)    \di \ub^\prime +  \delta (\mathbb{I}^4_{N} +\mathbb{I}^2_{N+1}),
\end{align*}
where Theorem \ref{Thm-decay-low-hori-energy} is used.
After applying the Gr\"{o}nwall's inequality, there is
\begin{align*}
{}^h\chib^2 [\varphi_k] (u,\ub) \lesssim \delta \mathbb{I}^2_{N}  \mathbb{I}^2_{N+1}, \quad k \leq N-1.
\end{align*}
Integrating the above formula along $\p_{\ub}$, we have \eqref{decay-Lb-phi-L2-Cu}. Besides, the estimates above also imply
\begin{equation}\label{retrive-Lb-Box-ndeg}
\| \Box_g \varphi_k\|^2_{L^2(C_u)} \lesssim \mathbb{I}^4_{N}  \mathbb{I}^2_{N+1} + \delta^{ \frac{1}{2}} M^2 \mathbb{I}^2_{N}, \quad k \leq N-1.
\end{equation}
Thus, \eqref{improve-L-Lb-Cu} follows from the wave equation, \eqref{retrive-Lb-Box-ndeg} and the proved \eqref{decay-Lb-phi-L2-Cu}.
\end{proof}

\subsubsection{Energy estimates for generally high order derivatives in $\R_2$}\label{sec-high-derivative}
Define 
\begin{align*}
E^{deg}_{l+k}(u; [\ub_1,\ub])  &:=  \sum_{p+q=l} E^{deg}[\delta^p W^{l}_{p,q} \varphi_k](u; [\ub_1,\ub]),\\
 \Eb^{deg}_{l+k}(\ub; [u_1, u]) &:= \sum_{p+q=l} \Eb^{deg}[\delta^p W^{l}_{p,q} \varphi_k](\ub; [u_1, u]),\\
 {}^tF^{deg}_{l+k} (u; [\ub_1,\ub]) &:= \sum_{p+q=l} {}^tF^{deg}[\delta^p W^{l}_{p,q} \varphi_k](u; [\ub_1,\ub]),\\
 \mathcal{S}^{deg}_{l+k}(\D) &:= \sum_{p+q=l} \mathcal{S}^{deg}[\delta^p W^{l}_{p,q} \varphi_k](\D).
\end{align*}
We can similarly define $E^{ndeg}_{l+k}(u; [\ub_1,\ub])$ and $\Eb^{ndeg}_{l+k}(\ub; [u^{NH}, u])$, ${}^tF^{ndeg}_{l+k} (u; [\ub_1,\ub])$, $ \mathcal{S}^{ndeg}_{l+k}(\D)$, where $W^{l}_{p,q}$ is replaced by $Z^{l}_{p,q}$. 

\begin{theorem}\label{Thm-decay-high-energy}
Fix $N \geq 6$. In $\mathcal{R}_2$, there are, for any $\beta \geq \frac{1}{2}$ 
\begin{align*}
|u|^{2\beta} \left( \Eb^{deg}_{l+k}(\ub; [u, +\infty])+ E^{deg}_{l+k}(u; [0,\ub]) + \mathcal{S}^{deg}_{l+k}( \D^{u,+\infty}_{0,\ub}) \right) &\lesssim I_{N+1}^2, \quad l+k \leq N,  \\
 \Eb^{ndeg}_{l+k}(\ub; [u^{NH}, +\infty]) + E^{ndeg}_{l+k}(u, [0,\ub]) + \mathcal{S}^{ndeg}_{l+k}( \D^h) &\lesssim I_{N+1}^2, \quad l+k \leq N,
\end{align*}
where $u^{NH} = \ub -r^\ast_{NH}$ and 
\begin{equation*}
 |u|^{2\beta} \cdot {}^t F^{deg}_{l+k}(u; [0, \ub]) + {}^t F^{ndeg}_{l+k}(u, [0,\ub]) \lesssim   I_{N+1}^2, \quad l+k \leq N-1.
\end{equation*}

\end{theorem}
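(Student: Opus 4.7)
The plan is to argue by induction on $l$, the number of $W$ (respectively $Z$) derivatives, with the base case $l=0$ being precisely the content of Theorem \ref{Main-Thm-R2} (i.e. Theorems \ref{Thm-decay-low-deg-energy}, \ref{prop-Lb2-deg}, \ref{Thm-decay-low-hori-energy} and \ref{Thm-decay-low-hori-energy-flux-Lb} assembled). For the inductive step, assume all of the asserted degenerate and non-degenerate bounds hold for every multi-index of length $\leq l-1$. Then for a fixed decomposition $p+q=l$ we run the same energy identity machinery as in Sections \ref{sec-deg-multiplier-II}--\ref{sec-high-de-hori}, but applied to $\psi=\delta^p W^l_{p,q}\varphi_k$ in $\R_2$ (for the degenerate bounds, with multiplier $\xi_2$) and $\psi=\delta^p Z^l_{p,q}\varphi_k$ in $\R_2^{NH}$ (for the non-degenerate bounds, with multiplier $N^h$ of Section \ref{sec-multiplier-horizon}). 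The weight $\delta^p$ is dictated by the short pulse hierarchy, precisely as in the $l=1$ case already carried out.

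For the degenerate estimates, the right-hand source is $\iint_{\Do}|\Box_g(\delta^p W^l_{p,q}\varphi_k)|^2\eta^2\di\mu_\D$. Applying Lemma \ref{lemma-commuting}, this reduces to the commuted null-form source $\iint \delta^{2p}|W^l_{p,q}\Omega^k Q(\partial\varphi,\partial\varphi)|^2\eta^2\di\mu_\D$ plus the lower-order term $\iint \delta^{2p}|W_{\leq l-1}(\varphi_k)|^2\eta^2\di\mu_\D$. Expanding the commuted null form via \eqref{eq-null-condition-comm-omega}--\eqref{eq-null-condition-comm-D} and distributing the $W$ derivatives gives quartic terms of exactly the shape $S^k_1,\ldots,S^k_4$ of \eqref{def-S-k-i-1}--\eqref{def-S-k-i-4}, but with one factor carrying $l$ extra $W$ derivatives. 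Since $N\geq 6$, in every such quartic one of the two $\varphi$ factors has order $\leq[\,\tfrac{N+l}{2}\,]\leq N-3$ and so falls under the $L^\infty$ / $L^4$ bounds of Proposition \ref{Prop-Sobolev-R2} (together with the improved bounds \eqref{L-4-L-phi-deg-improved} already proved for lower $L$-orders); the remaining factors are controlled by the inductive hypothesis via the bootstrap for $\mathcal{S}^{deg}_{\leq l+k}$ and the flux norms. The lower-order commutator term $W_{\leq l-1}(\varphi_k)$ is directly dominated by the inductive bound. Pigeon-holing in $u$ on the resulting integral inequality, exactly as in Section \ref{sec-deg-energy-Cu}, yields the $|u|^{-2\beta}$ decay for $E^{deg}_{l+k}$, $\Eb^{deg}_{l+k}$ and $\mathcal{S}^{deg}_{l+k}$. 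Passing from these to ${}^tF^{deg}_{l+k}$ is done as in Section \ref{sec-recover-Lb-deg}: write the transport identity for $\int_{S_{\ub,u}}\eta|Y\,\delta^p W^l_{p,q}\varphi_k|^2 r^2\di\sigma_{S^2}$, integrate in $\ub$, use the wave equation to trade $L\Lb$ for $\laplacianslash$ plus lower order, and close by Gr\"onwall.

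For the non-degenerate estimates near the horizon, we take $\psi=\delta^p Z^l_{p,q}\varphi_k$ and apply \eqref{energy-ineq-fLb-fL-deg-psi-hori}. Now Lemma \ref{lemma-commuting} produces, in addition to the null-form and lower-order pieces, the top order commutator $Z_n(\varphi_k)=(q-p)\tfrac{2m}{r^2}Y Z^l_{p,q}\varphi_k$; this term is precisely the one for which the red-shift vector field $N^h$ was designed, and after squaring and integrating it is absorbed by the positive spacetime integral $\iint_{\D^h}\delta^{-1}\eta^{-1}|\Lb \psi|^2\di\mu_\D$ on the left of \eqref{energy-ineq-fLb-fL-deg-psi-hori}, or by Gr\"onwall in $\ub$. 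The exterior currents ${}^e\mathcal{C}$ and ${}^e\mathcal{F}$ on the right of \eqref{energy-ineq-fLb-fL-deg-psi-hori} are controlled away from the horizon by the degenerate bounds just proved, combined with the improved spacetime estimate of Remark \ref{rk-thm-deg-low} applied to $\delta^p W^l_{p,q}\varphi_k$ (the analogue of which follows from the $\Lb$-flux estimate of Theorem \ref{prop-Lb2-deg} commuted inductively). The quartic null-form errors are bounded exactly as the $G^k_i,S^k_i$ of Sections \ref{sec-estimate-L-2}--\ref{sec-high-de-hori}, using Proposition \ref{Prop-Sobolev-R2} and the improvements \eqref{improved-L-nabla-infty-L-phi}, \eqref{improved-L-nabla-L4-phi-top} on the low-order factor.

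The main obstacle, as in Sections \ref{sec-top-energy-deg} and \ref{sec-top-energy-ndeg}, is the top-order case $l+k=N$ (respectively $l+k=N-1$ for the ${}^tF$ statement), where the $L^4$ estimate for a top-order angular factor is unavailable because of regularity. This is handled exactly as before: one must use the refined energy inequality \eqref{energy-ineq-fLb-fL-deg}--\eqref{def-F-1-2-psi} with the cutoff constant $c$, so that the problematic quartic $|\eta^{1/2}L\varphi_p|^2|\nablaslash W^l_{p,q}\varphi_q|^2$ at top order becomes linear in the top energy thanks to the already-upgraded $L^\infty$ bound on $L\varphi_p$ at order $p\leq N-3$. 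Because the short-pulse hierarchy is respected by the $\delta^p$ weight, the smallness factor $\delta^{1/2}$ gained from these quartic errors remains uniform in $l$, so the bootstrap constant $M$ chosen in Section \ref{sec-close-bt} continues to close at every order $l+k\leq N$. Standard induction in $l$ then completes the proof.
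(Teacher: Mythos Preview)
Your proposal is correct and follows exactly the approach the paper indicates: the paper's own proof is the single sentence ``Theorem \ref{Thm-decay-high-energy} with $l \leq 1,\, l+k \leq N$ has been verified by Theorem \ref{Main-Thm-R2}. The general case can be proved by an inductive argument on $l$ and no new difficulty occurs.'' Your sketch supplies the details of that induction, using precisely the commutator structure of Lemma \ref{lemma-commuting}, the multipliers $\xi_2$ and $N^h$, and the top-order linearization trick of Sections \ref{sec-top-energy-deg} and \ref{sec-top-energy-ndeg}, in full agreement with the paper's scheme.
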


Theorem \ref{Thm-decay-high-energy} with $l \leq 1,\, l+k \leq N$ has been verified by Theorem \ref{Main-Thm-R2}. The general case  can be proved by an inductive argument  on  $l$ and no new difficulty occurs.
Furthermore, an analogous version of Proposition \ref{Prop-ext-decay-Lb-phi} and Proposition \ref{Prop-nondeg-decay-Lb-phi}, which is collected below,  can be established by induction as well.

\begin{proposition}\label{Prop-ext-decay-Lb-phi-high}
In $\R_2$, given any  real number $\beta \geq \frac{1}{2}$, $l+k \leq N-1$, there are
\begin{align*}
 & \delta^{2p-1}  \| Y Z^l_{p,q}\varphi_k\|^2_{L^2(S_{\ub,u})} + \delta^{2p-2} \|Y Z^l_{p,q}\varphi_k\|^2_{L^2(C_u)} \\
 &\quad  + \delta^{2p} \|Y L Z^{l}_{p,q} \varphi_{k}\|^2_{L^2(C_u)}  \lesssim I^2_{N+1}, \\ 
& \delta^{2p-1}  \| \eta^{\frac{1}{2}}Y W^l_{p,q}\varphi_k\|^2_{L^2(S_{\ub,u})} + \delta^{2p-2} \|\eta^{\frac{1}{2}} Y W^l_{p,q}\varphi_k\|^2_{L^2(C_u)} \\
&\quad + \delta^{2p}  \| \eta^{\frac{1}{2}} Y L  W^{l}_{p,q} \varphi_{k}\|^2_{L^2(C_u)}  \lesssim  I^2_{N+1}  |u|^{-2\beta}. 
\end{align*}
\end{proposition}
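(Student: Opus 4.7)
\medskip

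\noindent\textbf{Proof proposal.} The plan is to prove Proposition \ref{Prop-ext-decay-Lb-phi-high} by induction on $l$, closely following the scheme of Propositions \ref{Prop-ext-decay-Lb-phi} and \ref{Prop-nondeg-decay-Lb-phi}, which already handle the base case $l=0$, $k\leq N-1$. Fix $l\geq 1$ and assume the conclusion for all $l'<l$ (with $l'+k\leq N-1$), together with Theorem \ref{Thm-decay-high-energy} which supplies the degenerate/non-degenerate energy bounds through order $N$.

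\smallskip

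For the degenerate half, set $\psi = \delta^{p} W^l_{p,q}\varphi_k$ and define the weighted slice quantity
\begin{equation*}
\chib^2[\psi](u,\ub) = \int_{S_{\ub,u}} |Y\psi|^2 \, \eta\, r^2 \, \di\sigma_{S^2}.
\end{equation*}
Differentiating in $\ub$ as in the proof of Proposition \ref{Prop-ext-decay-Lb-phi} and using the wave equation to replace $L\Lb\psi$ by $\eta\,\laplacianslash\psi + (\eta/r)L\psi - (\eta/r)\Lb\psi - \eta\,\Box_g\psi$, one obtains the transport inequality
\begin{equation*}
\chib^2[\psi](u,\ub) \lesssim \int_0^{\ub} \delta^{-1}\chib^2[\psi](u,\ub')\,\di\ub' + \int_{C_u} \delta\,\eta\left(|\laplacianslash\psi|^2 + |L\psi|^2 + |\Box_g\psi|^2\right)\di\mu_{C_u}.
\end{equation*}
The first two integrand pieces on the right are directly controlled by Theorem \ref{Thm-decay-high-energy} at the level $l+k\leq N-1$, giving a contribution of size $\delta I_{N+1}^2|u|^{-2\beta}$. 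For $\Box_g\psi$ one invokes Lemma \ref{lemma-commuting} to write $\Box_g W^l_{p,q}\varphi_k = W^l_{p,q}\Omega^k\Box_g\varphi + W_{\leq l-1}(\varphi_k)$; the commutator piece is of order $\leq l-1$ and handled by the inductive hypothesis, while the null-form piece is split as in \eqref{def-V-S-1}--\eqref{def-V-S-4}. In each of the four splittings one places the lower-order factor ($k_1\leq [N/2]\leq N-3$) in $L^\infty$ using Proposition \ref{Prop-Sobolev-R2} together with the improved $L\varphi_p$ bound \eqref{L-4-L-phi-deg-improved}, and the higher-order factor in $L^2$ using Theorem \ref{Thm-decay-high-energy}. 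The terms containing $Y\psi$ are absorbed into the Gr\"onwall loop as in \eqref{retrive-Lb-Box-deg}--\eqref{retrive-Lb-S-deg}; all others contribute $\lesssim \delta I_{N+1}^2|u|^{-2\beta}$. Gr\"onwall then yields $\chib^2[\psi]\lesssim \delta I_{N+1}^2|u|^{-2\beta}$, which after integrating in $\ub\in[0,\delta]$ gives the $\|Y W^l_{p,q}\varphi_k\|_{L^2(C_u)}$ bound. Finally, rewriting the wave equation for $\psi$ as $\eta^{-1}L\Lb\psi = \laplacianslash\psi + r^{-1}L\psi - r^{-1}\Lb\psi - \Box_g\psi$ and inserting the just-proved bound together with Theorem \ref{Thm-decay-high-energy} gives the $LY\psi = LY L^p \Lb^q$ bound.

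\smallskip

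The non-degenerate half for $Z^l_{p,q}\varphi_k$ is parallel, using instead ${}^h\chib^2[\psi](u,\ub) = \int_{S_{\ub,u}}|\eta^{-1}\Lb\psi|^2 r^2\di\sigma_{S^2}$ as in Proposition \ref{Prop-nondeg-decay-Lb-phi}; the extra red-shift contribution $-\int_{S_{\ub,u}} 2\eta^{-2}(2m/r^2)|\Lb\psi|^2 r^2$ has a favourable sign and only strengthens the inequality. The commutator $[\Box_g, Z^l_{p,q}] = Z_l(\varphi_k) \pm Z_{\leq l-1}(\varphi_k)$ from Lemma \ref{lemma-commuting} is reducible, since $Z_l$ involves only one extra $Y$ at order $l$ (which is absorbed on the left by Gr\"onwall after multiplication by $\eta$) and $Z_{\leq l-1}$ is controlled by the inductive assumption and Theorem \ref{Thm-decay-high-energy}.

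\smallskip

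The main obstacle, as in the $l=0$ case, is that the Gr\"onwall loop must genuinely close: the nonlinear piece $S_1^k$-analogue contributes $\int_0^{\ub}\mathbb{I}_N^2 \chib^2\,\di\ub'$ and must be kept on the left via a small-constant absorption that survives after $N$ levels of commutation. This is why the $\delta^{2p}$ scaling is indispensable — at each commutation with $L$ one picks up a $\delta^{-1}$ from the short-pulse ansatz, and the precise matching with the weights in $E^{deg}_{l+k}, E^{ndeg}_{l+k}$ (and with the bootstrap assumptions \eqref{bt-forward-energy-Cu-deg}--\eqref{bt-forward-flux-out-nondeg}) is what keeps the error estimates of the schematic form $\delta^{1/2}M^4$ rather than growing in $l$. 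A secondary subtlety is the top-order situation $l+k=N-1$, where one of the nonlinear factors can be top order: here, as in Section \ref{sec-top-energy-deg}, one must refrain from $L^4\times L^4$ and instead use the $\xi_2$-type positive spacetime integral together with the improved $L^\infty$ bound \eqref{L-4-L-phi-deg-improved} (resp.\ \eqref{improved-L-nabla-L4-phi-top}) to linearize the offending term. Once this is handled, no new difficulty arises and the induction closes with a constant depending only on $I_{N+1}$.
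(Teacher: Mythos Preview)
Your proposal is correct and follows essentially the same approach as the paper, which merely states that Proposition \ref{Prop-ext-decay-Lb-phi-high} ``can be established by induction as well'' following the schemes of Propositions \ref{Prop-ext-decay-Lb-phi} and \ref{Prop-nondeg-decay-Lb-phi}; you have correctly filled in the induction details, including the use of Lemma \ref{lemma-commuting} for the commutator terms and Theorem \ref{Thm-decay-high-energy} for the higher-order energy inputs. Two minor remarks: your ``secondary subtlety'' about the top-order case $l+k=N-1$ is in fact not needed here, since in the retrieval argument (unlike the energy estimates of Section \ref{sec-top-energy-deg}) the angular factor $\nablaslash \bar W^{l_2}\varphi_{k_2}$ stays at order $\leq N-1$ and is directly controlled by $E^{deg}_{N-1}$ via $L^\infty\times L^2$; and the Gr\"onwall loop closes without any small-constant absorption, since the $F^1_k$-type term contributes a coefficient $\mathbb{I}_N^2$ that, over the interval $[0,\delta]$, only produces a harmless bounded exponential factor.
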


\subsection{Smallness on the last cone in $\R_2$}\label{sec-smallness-Cb-2}
We denote $S_{\ub, r}$ the sphere which is the intersection of the hypersurfaces of constant $r$ and constant $\ub$ (in $(r, \ub)$ coordinate), and recall that $\|\cdot \|_{L^2(\Cb_{\ub}^{ND})}$ denotes the $L^2$ norm on $\Cb_{\ub}$ with respect to the non-degenerate volume form $\di \mu_{\Cb_{\ub}^{ND}}  = \eta r^2 \di u \di \sigma_{S^2} =- r^2 \di r \di \sigma_{S^2}$.

\begin{proposition}\label{prop-small-Lb-nabla}
In $\R_2$, we have, for $\beta \geq \frac{1}{2}$,  and $\ub \in [0, \delta]$,
\begin{align*}
\|  |u|^{\beta} \eta^{\frac{1}{2}} \Db^a \Lb^l \varphi_k\|_{L^2(\Cb_{\ub}^{ND})} & \lesssim \delta^{\frac{1}{2}}, \quad  \|\Db^a Y^l \varphi_k\|_{L^2(\Cb_{\ub}^{ND})} \lesssim \delta^{\frac{1}{2}}, &\quad a+l+k \leq N,  \, a \leq 1,\\
 \||u|^{\beta} \eta^{\frac{1}{2}} \Db^a \Lb^l \varphi_k\|_{L^\infty(S_{\ub,u})} & \lesssim \delta^{\frac{1}{2}}, \quad \|\Db^a Y^l \varphi_k\|_{L^\infty(S_{\ub,u})} \lesssim \delta^{\frac{1}{2}}, &\quad a+l+k \leq N-2, \, a \leq 1. 
\end{align*}
\end{proposition}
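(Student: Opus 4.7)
The proof mirrors the strategy of Theorem \ref{small-W-R1} in Region $\R_1$, adapted to the degenerate and non-degenerate energies of Region $\R_2$. The underlying mechanism is that the solution vanishes on $\Cb_0 \cap \R_2$ (because $\varphi \equiv 0$ in $\{\ub \leq 0\}\cap\{t\geq 1\}$), so integrating transport equations along $L = \partial_{\ub}$ over the interval $\ub \in [0, \delta]$ picks up a factor of $\delta$, yielding the $\delta^{1/2}$ smallness.

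I would argue by induction on $l$, the number of $\Lb$ (resp.\ $Y$) derivatives. For the base case $l=0$, I would handle the angular derivatives by defining the transport quantities
\begin{equation*}
\omega_{\text{deg}}^2[\psi](u,\ub) := \int_{S_{\ub, u}} |u|^{2\beta}\,\eta^{2}\,|\nablaslash \psi|^2 \, r^2 \di \sigma_{S^2}, \qquad
\omega_{\text{ndeg}}^2[\psi](u,\ub) := \int_{S_{\ub, u}} \eta\,|\nablaslash \psi|^2 \, r^2 \di \sigma_{S^2},
\end{equation*}
applied to $\psi = \varphi_k$. Using $[L,\nablaslash]=-(\eta/r)\nablaslash$ and $L\eta = 2m\eta/r^2$, a computation of $\partial_{\ub}\omega^2$ gives a differential inequality of the form $\partial_{\ub}\omega^2 \lesssim \delta^{-1}\omega^2 + \delta \cdot (\text{source})$, where the source involves $L\nablaslash\varphi_k$ and is controlled by the weighted integrated estimates of Theorem \ref{Thm-decay-high-energy}. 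Integrating from $\ub=0$ and invoking Gr\"{o}nwall yields $\omega^2 \lesssim \delta \cdot \mathbb{I}_{N+1}^2 |u|^{-2\beta}$ (resp.\ $\omega^2 \lesssim \delta \cdot \mathbb{I}_{N+1}^2$ near the horizon); a second integration over $u$ (using the integrability of the weight) delivers the $L^2$ estimate on $\Cb_\delta^{ND}$. For the inductive step, I would exchange a $\Lb$ or $Y$ derivative for combinations of $L$, $\nablaslash$, $\Box_g$, and lower-order transversal derivatives via the wave equation (as in \eqref{eq-Lb-L-Wn-phi}) and Lemma \ref{lemma-commuting}, reducing to previously established smallness together with the null condition structure, exactly as in Step II of the proof of Theorem \ref{small-W-R1}. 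The $L^\infty$ bounds on $S_{\ub,u}$ then follow from these $L^2$ estimates via the Sobolev inequality, costing the usual two derivatives (hence $N \mapsto N-2$).

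The principal obstacle will be the non-degenerate estimates on the horizon piece $\Cb_\delta^{ND} \cap \{r \leq r_{NH}\}$: here $Y = \eta^{-1}\Lb$ is singular in $\eta$, and the commutator $[\Box_g, Y]$ (see \eqref{commutates-Y-L-Box}) produces a $\frac{2m}{r^2} Y^2$ term that, in Lemma \ref{lemma-commuting}, leads to a $(q-p)\frac{2m}{r^2} Y Z^n_{p,q}\varphi_k$ contribution when the commutator acts on $Z^n_{p,q}\varphi_k$. Unlike the $\R_1$ argument where all weights are uniformly non-degenerate and the only decay mechanism is in $\langle u\rangle$, the $\R_2$ argument requires carefully tracking the interplay between $\eta$-weights, the red-shift structure used in Section \ref{sec-multiplier-horizon}, and the non-degenerate spacetime integrated estimates from Theorem \ref{Thm-decay-high-energy} and Proposition \ref{Prop-ext-decay-Lb-phi-high}. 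Once the source terms arising from $[\Box_g, Y^l]$ are absorbed either into the Gr\"{o}nwall term $\delta^{-1}\omega^2$ or into the already-controlled lower-order inductive hypothesis, the same $\delta$-gain mechanism closes the estimate and yields the stated smallness.
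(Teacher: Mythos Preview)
Your core mechanism---transport along $L=\partial_{\ub}$ from the trivial data on $\Cb_0$, Gr\"onwall, and the resulting $\delta$ gain---is exactly the paper's. But you have overcomplicated the structure in two ways, and there is a genuine slip in the integration step.

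First, no induction on $l$ is needed, and hence no wave-equation exchange and no worry about the $[\Box_g,Y]$ commutator. The paper applies the transport argument directly to $\psi=\Lb^l\varphi_k$ (resp.\ $\psi=Y^l\varphi_k$) for \emph{all} admissible $l$ at once: after Gr\"onwall the source is $\delta\|\eta^{1/2}L\Lb^l\varphi_{k+1}\|^2_{L^2(C_u)}$ (resp.\ $\delta\|LY^l\varphi_{k+1}\|^2_{L^2(C_u)}$), and this is already controlled, for every $l+k\leq N-1$, by the higher-order energy Theorem~\ref{Thm-decay-high-energy}. The wave-equation reduction you propose is the device for Proposition~\ref{prop-small-L-phi} (the $L$-case) and for the mixed-derivative Theorem~\ref{small-W-R2}; it plays no role here. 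For the transversal piece $\Db=Y$, the paper does not even run a transport argument: the required $L^2(S_{\ub,u})$ bounds for $\eta^{1/2}Y\Lb^l\varphi_k$ and $YY^l\varphi_k$ are read off directly from Proposition~\ref{Prop-ext-decay-Lb-phi-high}.

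Second, your integration step does not work as written. With $|u|^{2\beta}$ already inside $\omega_{\mathrm{deg}}^2$, Gr\"onwall yields $\omega_{\mathrm{deg}}^2\lesssim\delta$, not $\delta|u|^{-2\beta}$, and $\int_1^{\infty}\delta\,\di u$ diverges; invoking ``integrability of the weight'' would require the exponential decay $\eta\sim e^{-u/(2m)}$ on $\Cb_{\ub}$, which you do not mention. The paper's device is cleaner: parametrize $\Cb_{\ub}$ by $r$ via $\di\mu_{\Cb_{\ub}^{ND}}=-r^2\,\di r\,\di\sigma_{S^2}$ and integrate over the \emph{bounded} $r$-interval available in $\R_2$. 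With the paper's weight $\omega^2[\psi]=\int_{S_{\ub,u}}\eta|\nablaslash\psi|^2 r^2\,\di\sigma_{S^2}$ (a single $\eta$, no $|u|$-weight) one obtains $\omega^2\lesssim\delta|u|^{-2\beta}$, hence $|u|^{2\beta}\omega^2\lesssim\delta$ uniformly, and then $\int|u|^{2\beta}\omega^2\,\di r\lesssim\delta$. The non-degenerate companion ${}^h\omega^2=\int_{S_{\ub,u}}|\nablaslash\psi|^2 r^2\,\di\sigma_{S^2}$ carries no $\eta$ at all.
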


\begin{proof}
Define $\omega^2 [\psi] (\ub, u)= \int_{S_{\ub,u}}   |\nablaslash\psi|^2 \eta r^2 \di \sigma_{S^2}$.
Take $\psi = \Lb^l \varphi_k$, $l+k \leq N-1$,
\begin{align*}
\p_{\ub} \omega^2 [\Lb^l\varphi_k] (u,\ub)
&= \int_{S_{\ub,u}} \left( 2 \nablaslash \Lb^l \varphi_k  \nablaslash L \Lb^l \varphi_k \eta r^2 + \mu |\nablaslash \Lb^l \varphi_k|^2 \eta r \right) \di \sigma_{S^2} \\
& \lesssim \delta \eta \|L \Lb^l \varphi_{k+1}\|^2_{L^2(S_{\ub,u})} + (\delta^{-1} +1) \omega^2 [\Lb^l\varphi_k] (u,\ub).
\end{align*}
Similarly, define ${}^h\omega^2  [\psi] (u, \ub)= \int_{S_{\ub, u}} |\nablaslash \psi|^2 r^2 \di \sigma_{S^2}$ and take $\psi= Y^l \varphi_k$, then
$$\p_{\ub} {}^h\omega^2  [Y^l\varphi_k] (u,\ub) \lesssim \delta \|L Y^l \varphi_{k+1}\|^2_{L^2(S_{\ub,u})} +\delta^{-1}  {}^h\omega^2 [Y^l\varphi_k] (u,\ub).$$
After applying the Gr\"{o}nwall's inequality, we obtain (since $\varphi \equiv 0$ on $\Cb_0$) for $l + k \leq N-1$,
\begin{align*}
\omega^2  [\Lb^l\varphi_k]  (u,\ub) & \lesssim \delta \|\eta^{\frac{1}{2}} L \Lb^l \varphi_{k+1}\|^2_{L^2(C_u)} \lesssim \delta |u|^{-2\beta},\\
{}^h\omega^2  [Y^l\varphi_k]  (u,\ub)&\lesssim \delta \|L Y^l \varphi_{k+1}\|^2_{L^2(C_u)} \lesssim \delta.
\end{align*}

In order to bound $\|\eta^{\frac{1}{2}} \nablaslash \Lb^l \varphi_k\|_{L^2(\Cb^{ND}_{\ub})}$ and $\|\nablaslash Y^l \varphi_k\|_{L^2(\Cb^{ND}_{\ub})}$, we work in $(r, \ub)$ coordinate system and parametrize $\Cb_{\ub}$ by $\cup_{r} S_{\ub,r}$, and $\omega^2  [\Lb^l\varphi_k]  (u,\ub)$, ${}^h\omega^2  [Y^l\varphi_k]  (u,\ub)$ by $\omega^2  [\Lb^l\varphi_k]  (r,\ub)$, ${}^h\omega^2  [Y^l\varphi_k]  (r,\ub)$, and further integrate  $\omega^2  [\Lb^l\varphi_k]  (r,\ub)$, ${}^h\omega^2  [Y^l\varphi_k]  (r,\ub)$ with respect to the measure $\di r$ on $\Cb_{\ub}$, noting that $r$ is finite in $\R_2$. 

In the same way, defining $h^2 [\psi] (u, \ub)= \int_{S_{\ub,u}}   |\psi|^2 \eta r^2 \di \sigma_{S^2}$ and
${}^hh^2  [\psi] (u, \ub)= \int_{S_{\ub, u}} \psi|^2 r^2 \di \sigma_{S^2}$, we have for $l+k\leq N$,
\begin{align*}
h^2  [\Lb^l\varphi_k]  (u,\ub) & \lesssim \delta \|\eta^{\frac{1}{2}} L \Lb^l \varphi_{k}\|^2_{L^2(C_u)} \lesssim \delta |u|^{-2\beta},\\
{}^hh^2  [Y^l\varphi_k]  (r,\ub)&\lesssim \delta \|L Y^l \varphi_{k}\|^2_{L^2(C_u)} \lesssim \delta.
\end{align*}
And the $L^2$ bound on $\Cb_{\ub}$ follows straightforwardly as before.

Besides, Proposition \ref{Prop-ext-decay-Lb-phi-high} and Theorem \ref{Thm-decay-high-energy} automatically lead to the estimates for $\eta^{\frac{1}{2}} Y \Lb^l \varphi_k$ and $Y\Lb^l \varphi_k$. 

At last,  the $L^\infty$ estimates follow from the Sobolev theorem on $S_{\ub,u}$. Thus, we complete the proof.
\end{proof}

\begin{proposition}\label{prop-small-L-phi}
For any $\beta \geq \frac{1}{2}$, we have on the last cone $\R_2 \cap \Cb_\delta$,
\begin{align*}
\||u|^{\beta} \eta^{\frac{1}{2}} L \Lb^l \varphi_k\|_{L^2(\Cb^{ND}_{\delta})} & \lesssim \delta^{\frac{1}{2}}, \quad  \|L Y^l \varphi_k\|_{L^2(\Cb_{\delta}^{ND})} \lesssim \delta^{\frac{1}{2}}, &\quad l+k \leq N-2, \\ 
 \||u|^{\beta} \eta^{\frac{1}{2}} L \Lb^l \varphi_k\|_{L^\infty(S_{\delta,u})} &\lesssim \delta^{\frac{1}{2}}, \quad \| L Y^l \varphi_k\|_{L^\infty(S_{\delta,u})}  \lesssim \delta^{\frac{1}{2}}, &\quad l+k\leq N-4.
\end{align*}
\end{proposition}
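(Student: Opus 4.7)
The plan is to follow the template of Proposition \ref{prop-small-L-R1}, suitably adapted to $\R_2$. The central idea is to propagate the smallness of $L\psi$ from the $\R_1/\R_2$ interface $\{u=1\}$ into the cone $\Cb_\delta$ by deriving a transport inequality along $\p_u=\Lb$ for a weighted sphere integral of $|L\psi|^2$; substituting the wave equation extracts a positive-definite spacetime contribution, which, after Gr\"onwall, closes the estimate using Theorem \ref{small-W-R1} on the initial sphere $S_{\delta,1}$ and Proposition \ref{prop-small-Lb-nabla} on the lower-order pieces. The argument proceeds by induction on $l$ in exactly the same way as the proof of Theorem \ref{small-W-R1}; I only outline the base case $l=0$.

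For the $\Lb$-frame estimate, define
\begin{equation*}
\chi^2[\psi](u,\ub) := |u|^{2\beta}\int_{S_{\ub,u}}\eta^2 |L\psi|^2 r^2 \di\sigma_{S^2},
\end{equation*}
and take $\psi=\varphi_k$ with $k\leq N-2$. Using $\Lb r = -\eta$, $\Lb\eta = -2m\eta/r^2$, together with $L\Lb\varphi_k = \eta(\laplacianslash\varphi_k + r^{-1}L\varphi_k - r^{-1}\Lb\varphi_k - \Box_g\varphi_k)$, a direct calculation produces
\begin{equation*}
\p_u \chi^2 + c\int_{S_{\ub,u}} |u|^{2\beta}\eta^2|L\varphi_k|^2 r^2\di\sigma_{S^2} \lesssim \mathcal{E}(u,\ub),
\end{equation*}
for some $c>0$, as soon as $u$ exceeds a constant depending only on $\beta$, $m$, and $\sup_{\R_2}r$---which holds throughout $\R_2$. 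Integrating in $u\in[1,u]$ with $\ub=\delta$ yields
\begin{equation*}
\chi^2[\varphi_k](u,\delta) + \||u|^\beta\eta L\varphi_k\|^2_{L^2(\Cb_\delta^{[1,u]})} \lesssim \chi^2[\varphi_k](1,\delta) + \int_{\Cb_\delta^{[1,u]}}\mathcal{E}\,r^2\di u\,\di\sigma_{S^2},
\end{equation*}
with $\chi^2[\varphi_k](1,\delta)\lesssim\delta$ coming from Theorem \ref{small-W-R1} applied on $S_{\delta,1}\subset\R_1\cap\Cb_\delta$.

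The error $\mathcal{E}$ splits into curvature-type lower-order pieces---$\Lb\varphi_k$, $\nablaslash\varphi_{k+1}$---controlled by Proposition \ref{prop-small-Lb-nabla}, and a null-form source $|u|^{2\beta}\eta^2 r^2 |\Box_g\varphi_k|^2$. By the null condition the latter decomposes into three types (cf.\ $Er_1,Er_2,Er_3$ in the proof of Proposition \ref{prop-small-L-R1}): the $|\Db\varphi_{k_1}||\Db\varphi_{k_2}|$ term is $\lesssim\delta^2$ by Proposition \ref{prop-small-Lb-nabla}; the $|\Db\varphi_{k_1}||L\varphi_{k_2}|$ term is absorbed into the positive left-hand side after a small-constant Cauchy--Schwarz, using $\|\eta^{1/2}\Db\varphi_{k_1}\|_{L^\infty}\lesssim\delta^{1/4}|u|^{-\beta}M$ from Proposition \ref{Prop-Sobolev-R2}; and the mixed $|L\varphi_{k_1}||\Db\varphi_{k_2}|$ term, for $k\geq 1$ (so that $k_1+1\leq k$), is handled by distributing four $L^4(S_{\ub,u})$ norms and exploiting the $|u|^{-\beta}$ gain from Theorem \ref{Thm-decay-high-energy}, precisely as in \eqref{esti-S4}. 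For $k=0$ this last term coincides with the second.

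The $Y$-frame estimate proceeds identically, starting from
\begin{equation*}
{}^h\chi^2[\psi](u,\ub) := \int_{S_{\ub,u}} |L\psi|^2 r^2 \di\sigma_{S^2},
\end{equation*}
and the substitute identity $LY\varphi_k = \laplacianslash\varphi_k + r^{-1}L\varphi_k - (\eta/r + 2m/r^2)Y\varphi_k - \Box_g\varphi_k$, obtained from the $(L,\Lb)$ form via $\Lb=\eta Y$; no $|u|$- or $\eta$-weight is needed, since $r$ is comparable to $2m$ and all relevant estimates are non-degenerate. The induction on $l\geq 1$ is exactly Steps I and II of the proof of Theorem \ref{small-W-R1}: $LW^{n+1}_{p,q}\varphi_k$, resp.\ $LZ^{n+1}_{p,q}\varphi_k$, is reduced to lower $l$ via the wave equation and Lemma \ref{lemma-commuting}, with the null-form sources processed by the same scheme. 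The main technical obstacle is the top-regularity handling of the mixed $|L\varphi_{k_1}||\Db\varphi_{k_2}|$ term: the four-fold $L^4(S_{\ub,u})$ distribution requires $k_1+1\leq k$, and it is this constraint that forces the threshold $l+k\leq N-2$ on the $L^2$ side and, together with the Sobolev embedding on $S_{\ub,u}$, $l+k\leq N-4$ on the $L^\infty$ side.
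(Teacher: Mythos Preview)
Your transport argument with the weight $|u|^{2\beta}\eta^2 r^2$ does produce the stated $L^2(\Cb^{ND}_\delta)$ bound in the degenerate case (since $\||u|^\beta\eta^{1/2}L\varphi_k\|^2_{L^2(\Cb^{ND}_\delta)}=\||u|^\beta\eta L\varphi_k\|^2_{L^2(\Cb_\delta)}$), but it does \emph{not} deliver the claimed $L^\infty$ estimate. The pointwise bound $\chi^2(u,\delta)\lesssim\delta$ only gives $|u|^{\beta}\eta\,\|L\varphi_k\|_{L^2(S_{\delta,u})}\lesssim\delta^{1/2}$, hence after Sobolev on $S_{\delta,u}$ you obtain $|u|^\beta\eta\,\|L\varphi_k\|_{L^\infty(S_{\delta,u})}\lesssim\delta^{1/2}$, a full factor $\eta^{1/2}$ short of what is asserted; since $\eta\to 0$ at the horizon this is a genuine loss, not a cosmetic one. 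There is also a minor inaccuracy: the positivity condition $u>\beta r$ you invoke need not hold ``throughout $\R_2$'' (for instance when $3\beta m>1$), so the region $1-\delta\le u\le C(\beta,m)$ must be handled separately.

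The paper proceeds differently and avoids this loss. It takes the weight $\eta r^2$ (not $\eta^2 r^2$, and with no $|u|^{2\beta}$ built in); after substituting the wave equation the $\eta\,L\varphi_k/r$ contribution combines with $\p_u(\eta r^2)$ to leave a net positive term $\sim(\mu/r)\,\chi^2$, non-degenerate in $\R_2$. This yields an inequality of the form
\[
\chi^2(u,\delta)+\int_{u_1}^{u}\chi^2(u',\delta)\,\di u'\ \lesssim\ \chi^2(u_1,\delta)+\int_{u_1}^{u}\delta\,|u'|^{-2\beta}\eta^2\,\di u',
\]
to which the pigeon-hole Lemma~\ref{lema-pigeonhole-2} applies, giving $\chi^2(u,\delta)\lesssim\delta|u|^{-2\beta}$ with the correct $\eta$-power; the $L^\infty$ estimate then follows by Sobolev. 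If you prefer to keep the $|u|^{2\beta}$ in the weight and avoid pigeon-hole, you must use $\eta^1$ rather than $\eta^2$: with $\chi^2=|u|^{2\beta}\eta r^2\int|L\varphi_k|^2$, the same cancellation with the $L\varphi_k/r$ term produces a coefficient $\sim 2m/r^2-2\beta/u$, which is positive once $u>\beta r^2/m$, and then both the $L^2$ and $L^\infty$ estimates come out with the right $\eta^{1/2}$.
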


\begin{proof}
Since the proof for general $l$ resembles the case of $l=0$, we will take $L \varphi_k, \, k\leq N-2$ for instance here. The proof is analogous to that of Proposition \ref{prop-small-L-R1}. 

{\bf Degenerate case}: Define $\chi^2 [\psi] (r, \ub)=   \int_{S_{\ub,r}} |L \psi|^2 \eta r^2 \di \sigma_{S^2}.$ 
Take $\psi = \varphi_k, \, k \leq N-2$.
Noting that $\p_r (\eta r^2) =2r-2m = r(2\eta + \mu) >0$, we have 
\begin{align*}
&\p_r \chi^2 [\varphi_k](r,\delta) -  \int_{S_{\delta, r}}  \frac{2\eta+\mu}{r} |L \varphi_k|^2 r^2\di \sigma_{S^2} =   \int_{S_{\delta, r}} 2 L\varphi_k  \p_r L \varphi_k \eta r^2 \di \sigma_{S^2}.
\end{align*}
Integrating over $[r, r_1]$, 
\begin{align*}
& \chi^2 [\varphi_k](r,\delta) +  \int_{\Cb^{[r, r_1]}_{\delta}} \frac{2\eta + \mu}{r}  |L \varphi_k|^2 r^2 \di r \di \sigma_{S^2} \\
= &  \chi^2 [\varphi_k](r_1,\delta)  - \int_{\Cb^{[r, r_1]}_{ \delta}} 2L\varphi_k  \p_r L \varphi_k \eta r^2 \di r \di \sigma_{S^2}.
\end{align*}
We now change to the $(u, \ub)$ coordinate system.
Note that $\p_r = -\eta^{-1} \p_u$, where $\partial_r$ is the coordinate vector field in $(r, \ub)$ coordinate. Thus, the $\p_r L \varphi_k$ above is basically $-\eta^{-1} \Lb L\varphi_k$ in $(u, \ub)$ coordinate. What is more, the volume forms on $\Cb_{\ub}$ in the two coordinate systems are related by $-r^2 \di r \di \sigma_{S^2} =\eta r^2 \di u \di \sigma_{S^2} = \eta \di \mu_{\Cb_{\ub}}$. Consequently,  
\begin{align*}
& \chi^2 [\varphi_k](u,\delta) +  \int_{\Cb^{[u_1, u]}_{\delta}} \frac{\mu}{r}  |L \varphi_k|^2\eta \di \mu_{\Cb_{\ub}} \\
= &  \chi^2 [\varphi_k](u_1,\delta)  - \int_{\Cb^{[u_1, u]}_{ \delta}} 2L\varphi_k \left(\Box_g \varphi_k -\laplacianslash \varphi_k + \frac{\Lb \varphi_k}{r}  \right) \eta^2 \di \mu_{\Cb_{\ub}}.
\end{align*}
Noting the positive term $\int_{\Cb_{\delta}^{[u_1, u]}}  \frac{\mu}{r} |L \varphi_k|^2  \eta \di \mu_{\Cb_{\ub}}$ on the left hand side and applying the Cauchy-Schwarz inequality, we have
\begin{equation}\label{eq-small-L-R2}
\begin{split}
& \sum_{k \leq N-2} \chi^2 [\varphi_k] (u,\delta)+ \int_{\Cb_{\delta}^{[u_1, u]}}  \sum_{k \leq N-2}  |L \varphi_k|^2  \eta \di \mu_{\Cb_{\ub}} \lesssim  \sum_{k \leq N-2} \chi^2 [\varphi_k] (u_1, \delta) \\
& \quad + \int_{\Cb_{\delta}^{[u_1,u]}}  \sum_{k \leq N-2} \left( |\laplacianslash \varphi_k|^2  +| \Lb \varphi_k|^2 +|\Box_g \varphi_k|^2  \right) \eta^3  \di \mu_{\Cb_{\ub}}.
\end{split}
\end{equation}
By the result of Proposition \ref{prop-small-Lb-nabla},
\begin{align*} 
 \int_{\Cb_{\delta}^{[u_1, u]}} \left( | \Lb \varphi_k |^2 +  |\laplacianslash \varphi_k|^2 \right) \eta^3  \di \mu_{\Cb_{\ub}} \lesssim  \int^u_{u_1}  \delta |u^\prime |^{-2\beta} \eta^2 \di u^\prime, \quad k \leq N-2.
\end{align*}
Furthermore, the last term in \eqref{eq-small-L-R2}, is split as $\int_{\Cb_{\delta}} |\Box_g \varphi_k |^2 \eta^3  \di \mu_{\Cb_{\ub}} = {}^sF^k_1 + \cdots + {}^sF^k_3$, where $p+q\leq k \leq N-2, \, p \leq q$ and
\begin{align*}
{}^sF^k_1 &=   \int_{\Cb_{\delta}}   \eta^3 |\Db \varphi_{p}|^2 |\Db \varphi_{q}|^2 \di \mu_{\Cb_{\ub}}, \\
{}^sF^k_2 &=   \int_{\Cb_{\delta}}  \eta^3   |  \Db\varphi_{p}|^2  |L\varphi_{q}|^2 \di \mu_{\Cb_{\ub}}, \\
{}^sF^k_3 &=   \int_{\Cb_{\delta}}   \eta^3 |L\varphi_{p}|^2  |  \Db \varphi_{q}|^2 \di \mu_{\Cb_{\ub}}.
\end{align*}
It is obvious to see that $ {}^sF^k_1 \lesssim   \int^u_{u_1}   \delta^2 |u^\prime |^{-2\beta} \eta^2 \di u^\prime$ and ${}^sF^k_2 \lesssim  \delta \int_{\Cb_{\delta}}  |L \varphi_k|^2 \eta^3 \di \mu_{\Cb_{\ub}}$.
For ${}^sF^k_3$, we apply $L^4$ to all the four factors, since $p\leq [\frac{N-2}{2}] \leq N-4$ and $q \leq N-2$,
\begin{align*}
{}^sF^k_3 &\lesssim   \int_{u_1}^u  \| \eta^{\frac{1}{2}} L \varphi_{p}\|^2_{L^4(S_{\delta,u^\prime})}   \|\eta^{\frac{1}{2}} \Db \varphi_{q}\|^2_{L^4(S_{\delta,u^\prime})} \eta  \di u^\prime \\
& \lesssim  \int_{u_1}^u  \sum_{i \leq p +1} \delta |u|^{-2\beta}  \|L \varphi_{i}\|^2_{L^2(S_{\delta,u^\prime})} \eta  \di u^\prime\\
&  \lesssim \delta  \int_{\Cb_{\delta}}  \sum_{i \leq N-3}  |L\varphi_i|^2 \eta \di \mu_{\Cb_{\ub}},
\end{align*}
where we have used the Sobolev inequalities on $S_{\ub, u}$.
Hence both of ${}^sF^k_2$ and  ${}^sF^k_3$ can be absorbed by the left hand side of \eqref{eq-small-L-R2}.

In a word, we deduce that for any $1 \leq u_1 < u,$ $k \leq N-2$,
\begin{equation}\label{eq-chi-deg-R2}
\begin{split}
&\sum_{k \leq N-2} \chi^2 [\varphi_k] (u,\delta) +   \int_{u_1}^{u} \sum_{k \leq N-2} \chi^2 [\varphi_k] (u^\prime,\delta) \di u^\prime \\
 \lesssim {} & \sum_{k \leq N-2} \chi^2 [\varphi_k] (u_1,\delta)+ \int^u_{u_1}   \delta  |u^\prime |^{-2\beta} \eta^2 \di u^\prime.
\end{split}
\end{equation}
Additionally,  the smallness in Theorem \ref{small-W-R1} tells that $\chi^2 [\varphi_k](1,\delta) \lesssim \delta$.
By the pigeon-hole principle (see Lemma \ref{lema-pigeonhole-2}), we derive that for any $1 \leq u$
\begin{equation}\label{improv-L-Cb-delta-deg}
\sum_{k \leq N-2} \chi^2 [\varphi_k] (u,\delta) 
\lesssim   \delta  |u|^{-2\beta}.
\end{equation}
And integrating \eqref{improv-L-Cb-delta-deg} with respect to $\di r$ gives rise to $\||u|^{\beta} \eta^{\frac{1}{2}}  L\varphi_k\|^2_{L^2(\Cb^{ND}_{\delta})} \lesssim \delta$, $k \leq N-2$.

{\bf Non-degenerate case}: 
Define ${}^h\chi^2 [\psi] (r,\ub)=   \int_{S_{\ub, r}} |L \psi|^2 r^3 \di \sigma_{S^2}$ and take $\psi = \varphi_k, \, k \leq N-2$.  
Noting that $\p_r r^3 =3r^2 >0$, then
\begin{align*}
&\p_r {}^h\chi^2 [\varphi_k](r,\ub) - \int_{S_{\ub,r}} 3r^2  |L \varphi_k|^2  \di \sigma_{S^2} =  \int_{S_{\ub,r}} 2 L\varphi_k \partial_r L \varphi_k  r^3 \di \sigma_{S^2}.
\end{align*}
Integrating  on $\Cb_{\delta}$ along $\p_r$ within the interval $[r, r_{NH}]$, one derives,
\begin{align*}
& {}^h\chi^2 [\varphi_k](r,\delta) + \int_{\Cb^{NH}_{\delta}} 3 |L \varphi_k|^2 r^2 \di r \di \sigma_{S^2}\\
= &  {}^h\chi^2 [\varphi_k](r_{NH},\delta) - \int_{\Cb^{NH}_{\delta}}  2 L\varphi_k \partial_r L \varphi_k  r^3 \di r \di \sigma_{S^2}.
\end{align*}
We now change to the $(u, \ub)$ coordinate, as in the degenerate case, there is,
\begin{align*}
& {}^h\chi^2 [\varphi_k](u,\delta) + \int_{\Cb^{NH}_{\delta}} |L \varphi_k|^2 \di \mu_{\Cb_{\ub}^{ND}}  \lesssim {}^h\chi^2 [\varphi_k] (u^{NH}, \delta) \\
&\quad  \quad \quad + \int_{\Cb_{\delta}^{NH}} \left( |\laplacianslash \varphi_k|^2  +| Y \varphi_k|^2 +|\Box_g \varphi_k|^2  \right)  \di \mu_{\Cb^{ND}_{\ub}},
\end{align*}
where $u>u^{NH}=\delta-r^\ast_{NH}$. 
Analogous to the degenerate case, we can show by the result of Proposition \ref{prop-small-Lb-nabla} that for $u>u^{NH}$,
\begin{equation*}
{}^h\chi^2 [\varphi_k] (u,\delta)+    \|L \varphi_k \|_{L^2(\Cb_{\delta}^{ND})}^2  \lesssim \chi^2 [\varphi_k] (u^{NH},\delta)+  \delta.
\end{equation*}
We finish the proof by further applying the Sobolev theorem on $S_{\delta, u}$.
\end{proof}

Based on Proposition \ref{prop-small-Lb-nabla} and Proposition \ref{prop-small-L-phi}, the smallness for general derivatives of $\varphi$ on $\Cb_{\delta} \cap \R_2$ follows by induction. The proof essentially analogous to that in Theorem \ref{small-W-R1}.
\begin{theorem}\label{small-W-R2}
For any fixed $N\geq 6$ and $\beta \geq \frac{1}{2}$, we have on the last cone $\Cb_\delta \cap \mathcal{R}_2$
\begin{align*}
\| | u|^{\beta} \eta^{\frac{1}{2}} \Db^a W^{l}_{p,q}\varphi_k\|_{L^2(\Cb^{ND}_{\delta})} + \|\Db^a Z^l_{p,q} \varphi_k\|_{L^2(\Cb_{\delta}^{ND})}  &\lesssim \delta^{\frac{1}{2}},  &\quad   a+2l +k \leq N,\, a \leq 1, \\
\|  |u|^{\beta} \eta^{\frac{1}{2}} \Db^a W^{l}_{p,q}\varphi_k\|_{L^\infty(S_{\delta,u})} +  \|\Db^a Z^l_{p,q}\varphi_k\|_{L^\infty(S_{\delta,u})}  &\lesssim  \delta ^{\frac{1}{2}}, &\quad a+2l +k\leq N-2,\, a \leq 1,
\end{align*}
and
\begin{align*}
 \| | u|^{\beta } \eta^{\frac{1}{2}} L W^{l}_{p,q}\varphi_k\|_{L^2(\Cb^{ND}_{\delta})} + \|L Z^l_{p,q} \varphi_k\|_{L^2(\Cb_{\delta}^{ND})}  &\lesssim \delta^{\frac{1}{2}},  &\quad   2l  +k \leq N-2, \\
\|  |u|^{\beta} \eta^{\frac{1}{2}} L W^{l}_{p,q}\varphi_k\|_{L^\infty(S_{\delta,u})} +  \|  L Z^l_{p,q}\varphi_k\|_{L^\infty(S_{\delta,u})} &\lesssim  \delta ^{\frac{1}{2}}, &\quad 2l +k\leq N-4.
\end{align*}
\end{theorem}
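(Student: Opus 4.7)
\textbf{Proof proposal for Theorem \ref{small-W-R2}.} The plan is to mirror the structure of Theorem \ref{small-W-R1}: perform an induction on the number $l$ of transversal or translational derivatives, with the base case $l=0$ already supplied by Propositions \ref{prop-small-Lb-nabla} and \ref{prop-small-L-phi}. The two families of estimates (the degenerate $W^{l}_{p,q}$ ones carrying the weight $|u|^\beta \eta^{1/2}$, and the non-degenerate $Z^{l}_{p,q}$ ones near the horizon) will be handled in parallel; the only differences are the multiplier one would use to derive the underlying transport identity and the use of Theorem \ref{Thm-decay-high-energy} (degenerate vs.\ non-degenerate) to control the bulk source terms. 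In every instance the supremum bounds on $S_{\delta,u}$ will be obtained at the end by the standard Sobolev inequality on $S_{\ub,u}$ applied to the just-derived $L^2$ bounds.

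For the inductive step, suppose the claim holds for all $l\le n$. Split into the two cases $p=0$ and $p\ge 1$. When $p=0$, the derivatives are purely $\Lb^{n+1}$ or $Y^{n+1}$, and the method of Propositions \ref{prop-small-Lb-nabla} and \ref{prop-small-L-phi} applies verbatim: commute the wave equation with $\Lb^{n+1}\Omega^k$ or $Y^{n+1}\Omega^k$ (using Lemma \ref{lemma-commuting}), differentiate the corresponding $\omega^2$, ${}^h\omega^2$, $h^2$, ${}^hh^2$ quantities along $\p_{\ub}$, integrate in $\ub\in[0,\delta]$ using $\varphi\equiv 0$ on $\Cb_0$, and substitute the flux bounds of Theorem \ref{Thm-decay-high-energy}. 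When $p\ge 1$, one uses the reductions
\begin{align*}
\nablaslash W^{n+1}_{p,q}\varphi_k &\sim r^{-1} L W^{n}_{p-1,q}\varphi_{k+1}, \qquad W^{n+1}_{p,q}\varphi_k \sim L W^{n}_{p-1,q}\varphi_k,
\end{align*}
(and their $Z$-analogues) to drop to the $l=n$ level, where the inductive hypothesis already supplies the smallness. For $\Lb W^{n+1}_{p,q}\varphi_k = \Lb L W^{n}_{p-1,q}\varphi_k$ (and $Y Z^{n+1}_{p,q}\varphi_k$ near the horizon), invoke the wave equation to replace $\eta^{-1}\Lb L$ by $\laplacianslash - \Box_g + \frac{1}{r}(L-\Lb)$ plus lower-order terms from Lemma \ref{lemma-commuting}; every term appearing on the right is of lower $l$-order (after the commutator) so the inductive hypothesis together with the null decomposition of $\Box_g\varphi$ closes the argument.

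The remaining and principal case is $L W^{n+1}_{p,q}\varphi_k$ (and its non-degenerate $Z$-analogue) with $p\ge 1$. Here one mimics Proposition \ref{prop-small-L-phi}: define the weighted spherical quantity
\begin{equation*}
\chi^2[\psi](u,\delta) = \int_{S_{\delta,u}} |u|^{2\beta}|L\psi|^2 \eta r^2 \,\di\sigma_{S^2}
\end{equation*}
(respectively ${}^h\chi^2$ with weight $r^3$ and without $|u|^{2\beta}\eta$), take $\psi = W^{n+1}_{p,q}\varphi_k$, and differentiate along $\p_r$ in the $(r,\ub)$ coordinate on $\Cb_\delta$. The key observation, just as in the proof of Proposition \ref{prop-small-L-phi}, is that $\p_r(\eta r^2) = r(2\eta+\mu) > 0$ (and $\p_r r^3>0$), producing a coercive bulk term $\int_{\Cb_\delta^{[u_1,u]}} |L\psi|^2 \eta\,\di\mu_{\Cb_{\ub}}$ that both controls the left-hand side and absorbs terms with a small-$\delta$ factor. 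The source on the right-hand side, which comes from $\Lb L\psi = \eta(\laplacianslash\psi - \Box_g\psi - r^{-1}\Lb\psi)$, is decomposed by Lemma \ref{lemma-commuting} into $\bar{W}^{n+1}_{p,q}\Box_g\varphi_k$ (which admits a null-form structure and is treated exactly as $Er_1, Er_2, Er_3$ in Proposition \ref{prop-small-L-phi}, using the upgraded $L^\infty$/$L^4$ bounds of Proposition \ref{Prop-Sobolev-R2} and estimate \eqref{improved-L-nabla-L4-phi-top}) plus a lower-order $W_{\le n}(\varphi_k)$ piece, controlled by the inductive hypothesis; then the pigeonhole argument of Lemma \ref{lema-pigeonhole-2} upgrades the integral inequality to the desired pointwise $|u|^{-2\beta}$ decay on $\Cb_\delta$.

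\textbf{The main obstacle} will be the top-level bad term analogous to $Er_3$ in Proposition \ref{prop-small-L-phi}, namely $\int_{\Cb_\delta} |u|^{2\beta}\eta^3 |L\varphi_{k_1}|^2 |\Db W^{n+1}_{p,q}\varphi_{k_2}|^2$: it cannot be absorbed into the coercive bulk term by crude $L^\infty \times L^2$ because one factor of $L\varphi$ sits at high derivative order. The resolution, as in the $\R_1$ analysis, is to apply $L^4$ on all four factors via the Sobolev inequality \eqref{Sobolev-L2-L4-S2} on $S_{\ub,u}$, using the improved $\|\eta^{\frac{1}{2}}L\varphi_k\|_{L^4(S_{\ub,u})}$ estimate \eqref{L-4-L-phi-deg-improved} (together with \eqref{improved-L-nabla-L4-phi-top} for the non-degenerate $Z$-case) to pull out $\delta^{1/2}|u|^{-\beta}$ and match the inductive weights; this is where $N\ge 6$ enters, providing the Sobolev room on $S_{\ub,u}$ so that $k_1+1\le n+k$. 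Once this term is disposed of, the remainder of the argument is a routine Grönwall/pigeonhole closure, and the $L^\infty$ statements follow by one more Sobolev step on $S_{\delta,u}$.
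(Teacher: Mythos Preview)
Your proposal is correct and matches the paper's approach, which simply notes that the proof follows by induction on $l$ in analogy with Theorem \ref{small-W-R1}, with the base case supplied by Propositions \ref{prop-small-Lb-nabla} and \ref{prop-small-L-phi}. One small correction: do not build the $|u|^{2\beta}$ weight into your definition of $\chi^2$ (differentiating in $r$ along $\Cb_\delta$ would then produce an extra term, since $u$ varies there); instead follow Proposition \ref{prop-small-L-phi} verbatim---derive the unweighted integral inequality and then apply the pigeon-hole Lemma \ref{lema-pigeonhole-2} to extract the $|u|^{-2\beta}$ decay, which is in fact what you end up invoking anyway.
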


\subsection{Small data problem in Region IV}\label{sec-small-data-Cauchy}
For the moment, we specify the small data theorem of \cite[Theorem 1.4]{Luk-15-nonlinear} on the Schwarzschild background.

\begin{theorem}[Luk \cite{Luk-15-nonlinear}, 2013]\label{Small-data-theorem-Luk}
Consider the nonlinear wave equation \eqref{Main Equation} with null quadratic form. There exists an $\epsilon$ such that if the initial data satisfy
\begin{align*}
\sum_{i+j+k \leq 16, l \leq 1} & \int_{\Sigma_{\tau_0} \cap \{ r \geq r_{NH}\}} ( |r D Y^k \partial_{t^\ast}^i \Omega^j S^l \varphi|^2 + |Y^k \partial_{t^\ast}^i \Omega^j S^l \varphi|^2) r^2 \di r^\ast \sigma_{S^2}\\
+ \sum_{i+j+k \leq 16, l \leq 1} & \int_{\Sigma_{\tau_0} \cap \{r < r_{NH}\}}  (|D Y^k \partial_{t^\ast}^i \Omega^j S^l \varphi |^2 + |Y^k \partial_{t^\ast}^i \Omega^j S^l \varphi |^2) r^2 \di r \di \sigma_{S^2} \lesssim \epsilon,
\end{align*}
and 
\begin{align*}
\sum_{l\leq13} (  |r D^l \varphi|_{\Sigma_{\tau_0}} + |r D^l S \varphi|_{\Sigma_{\tau_0}} ) & \lesssim \epsilon.
\end{align*}
Then $\varphi$ exists globally in time. Moreover, for all $\gamma>0$, which we can take sufficiently small such that the solution $\varphi$ obeys the decay estimate
\begin{align*}
& |\varphi| \lesssim \epsilon r^{-1} |u|^{-\frac{1}{2}} |t^{\ast}|^{\gamma}, \,\, |D \varphi| \lesssim \epsilon r^{-1} |u|^{-1} |t^\ast|^\gamma,  \,\, |\bar D \varphi| \lesssim \epsilon r^{-1} |t^\ast|^{-1+\gamma}, \quad r \geq R > r_{NH}, \\
&|\varphi| \lesssim \epsilon r^{-1} |t^\ast|^{-\frac{3}{2}} r^\gamma, \quad |D \varphi| \lesssim \epsilon r^{-1} |t^\ast|^{-\frac{3}{2}} r^{-\frac{1}{2} + \gamma},  \quad  r \leq \frac{t^\ast}{4}.
\end{align*}
\end{theorem}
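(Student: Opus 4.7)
The plan is to run the vector field method adapted to the Schwarzschild background, combining (i) the Killing multiplier $T=\partial_{t^{\ast}}$ away from the horizon, (ii) the red-shift vector field near $\hori$ of the form \eqref{def-VF-N} (as in Section \ref{sec-multiplier-horizon}), (iii) a Morawetz integrated local-energy-decay multiplier of the form $f(r^{\ast})\p_{r^{\ast}}+\tfrac{1}{2}f'(r^{\ast})$ with $f$ vanishing to appropriate order at $r=3m$, (iv) the Dafermos-Rodnianski $r^{p}$-hierarchy ($p\in[\delta,2-\delta]$) applied to $\Psi:=r\varphi$ in the far region, and (v) the conformal multiplier $K=u^{2}\Lb+\ub^{2}L$, which supplies the additional $u$-decay needed for the sharp pointwise rates. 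Assembled together, these produce a master inequality of the schematic form
\begin{equation*}
E[\psi](\tau)+\|\psi\|_{LE^{1}[\tau_{0},\tau]}^{2}+\sum_{p}\|r^{p/2}L(r\psi)\|^{2}\lesssim E[\psi](\tau_{0})+\iint |\Box_{g}\psi|\bigl(|\p\psi|+r^{-1}|\psi|\bigr),
\end{equation*}
together with its higher order analogues after commutation.

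\textbf{Commutations and the null condition.} I would commute \eqref{Main Equation} with $\{\partial_{t^{\ast}},\Omega, S, Y\}$ up to order $16$. The Killing fields $\partial_{t^{\ast}}$ and $\Omega$ commute with $\Box_{g}$, while $S$ and $Y$ generate controlled zeroth and first order errors captured exactly by Lemma \ref{lemma-commuting}. Applying the commuted master inequality to $\p^{i}\Omega^{j}Y^{k}S^{l}\varphi$ yields energy bounds with quadratic source $Q(D\cdot,D\cdot)$; the null condition ensures that each term contains at least one good derivative $\bar D$, so when the lower order factor is placed in $L^{\infty}$ via Klainerman-Sobolev on Schwarzschild, the source is schematically bounded by $\epsilon\cdot(\text{l.h.s.})$, which is absorbed exactly as in \eqref{estimate-main-integral-small-1}. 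The bootstrap assumption is taken on the desired pointwise rates together with the $r^{p}$-weighted energies; the smallness of $\epsilon$ closes the loop.

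\textbf{Main obstacle: trapping.} The principal difficulty is the trapping at the photon sphere $r=3m$, where the Morawetz bulk density is forced to degenerate, so one only recovers integrated decay at the cost of a derivative. This forces the high regularity count $N\geq 16$ and is handled by commuting one extra angular derivative $\Omega$ near the trapped set, following \cite{MMDT-strich-sch-10, L-T-quasilinear-sch-18}. A secondary obstacle is matching the $r^{p}$-hierarchy across the region $\{r\sim t^{\ast}\}$ with the interior bounds; this is done by a standard partition of unity and interpolating in $p$ to produce the $|t^{\ast}|^{\gamma}$-loss in the far-region estimate and the $r^{\gamma}$-loss in the near-region estimate, where $\gamma>0$ is taken arbitrarily small at the cost of raising the commutator count.

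\textbf{Pointwise decay.} Once the energy and $r^{p}$-estimates are closed, I would extract the pointwise bounds by Klainerman-Sobolev on each constant-$t^{\ast}$ slice. Commuting with $S$ and using the hierarchy applied to $\Psi=r\varphi$ gives $|\varphi|\lesssim\epsilon r^{-1}|u|^{-1/2}|t^{\ast}|^{\gamma}$ in the exterior $\{r\geq R>r_{NH}\}$; a further derivative and the transport equation $L(r\varphi)=r\bar D$-terms yield the improved $|L\varphi|,|\bar D\varphi|$ rates. In the interior $\{r\leq t^{\ast}/4\}$ the $r^{\gamma}$-loss is the standard logarithmic failure of the Morawetz estimate to be scale invariant across the photon sphere, and the decay $|t^{\ast}|^{-3/2}$ is inherited from the exterior via finite speed of propagation and the red-shift. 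Reversing time then gives the statement as formulated.
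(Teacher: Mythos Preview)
This theorem is not proved in the paper at all: it is quoted verbatim as Luk's small-data result \cite{Luk-15-nonlinear} and then applied as a black box in Region~IV (see Section~\ref{sec-small-data-Cauchy}). So there is no ``paper's own proof'' to compare against; the only information the paper gives about the argument is the remark that the conformal multiplier $K=u^{2}\Lb+\ub^{2}L$ is ``crucial in the proof of \cite{Luk-15-nonlinear}''.

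Your outline is a reasonable sketch of a small-data global existence proof on Schwarzschild, but it conflates two distinct decay mechanisms. Luk's actual argument is built around the conformal multiplier $K$ together with the commutator $S$, not the Dafermos--Rodnianski $r^{p}$-hierarchy; these are alternative routes to $u$-decay, and invoking both is redundant. If you intend to follow Luk, drop the $r^{p}$-hierarchy and rely on $K$ (together with $T$, red-shift, and the degenerate Morawetz estimate near $r=3m$) for the weighted energy decay, which after commutation with $S,\Omega,\partial_{t^\ast},Y$ and Klainerman--Sobolev yields the stated pointwise rates. Your handling of trapping (loss of one derivative, compensated by extra commutation) and of the null structure (one factor always a good derivative, absorbed via bootstrap) is correct in spirit.

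One concrete error: the final sentence ``Reversing time then gives the statement as formulated'' is misplaced. Theorem~\ref{Small-data-theorem-Luk} is a forward Cauchy problem from $\Sigma_{\tau_0}$; no time reversal is involved. You may be confusing this with the scattering construction elsewhere in the paper (Remark~\ref{rk-reverse-time}), which is a separate issue.
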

We now explain some notations in Theorem \ref{Small-data-theorem-Luk}. $t^\ast = t+ \chi (r) 2m \log(r-2m)$, where $\chi (r)$ is a cut off function such that $\chi (r) = 1$ if $r \leq 2m+ \varepsilon$ and $\chi (r) =0$ if $r \geq 2m+2\varepsilon$, with $\varepsilon$ being a fixed and small constant. As a remark, $t^\ast = 2\ub -r + 3m + 2m \log m$, if $r \leq  2m+\varepsilon$.  And here $\Sigma_{\tau} = \{t^\ast = \tau\}$. The commutator $S = t^\ast \partial_{t^\ast} + h(r) \p_r$, where $h(r) = r \eta$ if $r \sim 2m$ and $h(r) = r^\ast \eta$ if $r \geq R$, for some large $R$, and $h(r)$ is interpolated so that it is smooth and non-negative. We note that, $S=\ub L + u \Lb$, if $r\geq R$. Besides, the multiplier $K= \ub^2 L + u^2 \Lb$ is crucial in the proof of \cite{Luk-15-nonlinear}. We will apply this small data theorem to demonstrate the global existence in Region IV.

We prescribe our data on $\Sigma_1 = \{t=1\}$. Set $\Sigma_1^{\text{int}} =\Sigma_1 \cap \{\ub \leq \delta\}$,  $\Sigma_1^{\text{ext}} =\Sigma_1 - \Sigma_1^{\text{int}}$. We may restrict the solution constructed
in Section \ref{sec-past} on $\Sigma_1^{\text{int}}$ to get $(\varphi, \p_t\varphi)|_{\Sigma_1^{\text{int}}} = (\psi^{\text{int}}_0, \psi^{\text{int}}_1)$. According to the estimates derived in Theorem \ref{small-W-R1}, we have the following properties for $(\psi^{\text{int}}_0, \psi^{\text{int}}_1)$: $$\|(\p^k \psi_0^{\text{int}}, \p^{k-1}\psi_1^{\text{int}} )\|_{L^\infty(\p \Sigma_1^{\text{int}} )} \lesssim \delta^{\frac{1}{2}}, \quad  k \leq N/2 -1.$$
 We then apply the Whitney extension theorem (\cite[Theorem 12]{Luli-Whitney-extension} and the references therein, see also the application in \cite{Wang-Yu-13}) to extend $( \psi_0^{\text{int}}, \psi_1^{\text{int}})$ to the entire $\Sigma_1$ to obtain the Cauchy data $(\psi_0, \psi_1)$ verifying 
\begin{align*}
(\psi_0, \psi_1)|_{\Sigma_1^{\text{int}}} &= (\psi_0^{\text{int}}, \psi_1^{\text{int}} ); \\
(\psi_0, \psi_1)_{ \{ x \in \Sigma_1^{\text{ext}} | \text{dis}(x, \Sigma_1^{\text{int}} ) \geq 1\} } &=( 0, 0);\\
\|(\p^k \psi_0, \p^{k-1}\psi_1)\|_{L^\infty( \{ x \in \Sigma_1^{\text{ext}} | \text{dis}(x, \Sigma_1^{\text{int}} ) \leq 1\} )} &\lesssim \delta^{\frac{1}{2}}, \quad k \leq N/2 -2.
\end{align*}
We remark that, this extension is made so that the datum $(\psi_0, \psi_1)|_{ \Sigma_1^{\text{ext}} }$ is small and decays fast enough near infinity, and hence fulfils the requirement in Theorem \ref{Small-data-theorem-Luk}. On the other hand, we should mention that restricting the solution derived in Region III to the $\Sigma_1$ slice does not provide us the desired data, since the decay is not fast enough for the application of Theorem \ref{Small-data-theorem-Luk}.

The global existence in Region IV is reduced to a small data problem, where the data are given on $\Sigma_1^{\text{ext}} \cup (\Cb_\delta \cap \text{Region IV})$. For the data on $\Cb_\delta \cap \text{Region IV}$,  there is by Theorem \ref{small-W-R2} the smallness: for any $N \geq 6$,
\begin{align*}
 & \| D^a S^i W^l_{p,q} \varphi_k\|_{L^2 (\Cb^{AH}_{\delta})} + \|D^a S^i Z^l_{p,q} \varphi_k\|_{L^2 (\Cb^{NH}_{\delta})} \\
  \lesssim {}& \delta^{\frac{1}{2}},  \quad a+2l+k+i \leq N-1, \, a,i \leq 1, \\
 &  \|  r D^a S^i W^l_{p,q} \varphi_k\|_{L^\infty (\Cb^{AH}_{\delta})}  + \| r D^a S^i Z^l_{p,q} \varphi_k\|_{L^\infty (\Cb^{NH}_{\delta})} \\ 
   \lesssim {}& \delta^{\frac{1}{2}}, \quad a+2l+k+i \leq N-3, \,a, i \leq 1,
\end{align*}
where $\Cb_{\delta}^{AH} = \Cb_{\delta} \cap \{r \geq r_{NH}\}, \, \Cb_{\delta}^{NH} = \Cb_{\delta} \cap \{r < r_{NH}\}$ and $S$ is defined as before. We should note that $r$, $\ub$ are finite in $\R_2$ and $u$ is finite in $\{r \geq r_{NH}\} \cap \R_2$ as well. In particular, we notice that the energy associated to $K$ on $\Cb_\delta$: $\int_{\Cb_\delta \cap \{r \geq r_{NH}\} } ( |L S^i W^l_{p,q} \varphi_k|^2 +  |\Db S^i W^l_{p,q} \varphi_k|^2 +  |S^i W^l_{p,q} \varphi_k|^2 ) \di \mu_{\Cb_{\ub}}$  is bounded, which is compatible with the proof of \cite[Theorem 1.4]{Luk-15-nonlinear}, for the multiplier $K$ is used therein.
Meanwhile, the data on $\Sigma_1^{\text{ext}}$, $(\psi_0, \psi_1)|_{ \Sigma_1^{\text{ext}} }$ ($N \geq 30$), satisfy the decay assumptions in Theorem \ref{Small-data-theorem-Luk}.
We can apply Theorem \ref{Small-data-theorem-Luk} to our situation, so that the global existence in Region IV holds true. 
 
The global existence in Region IV together with that in $\R_2$ and Region II leads to Theorem \ref{Thm-froward-whole-region}.

\appendix

\section{Some inequalities}
\subsection{Applications of the pigeon-hole principle}
\begin{lemma}\label{lema-pigeonhole-1}
Suppose $f(t) >0$ satisfies the following inequality: for any $t_2>t_1$ and $\alpha>0$, 
\begin{equation}\label{pigeonhole-ineq}
f(t_2) + \int_{t_1}^{t_2} f(t) \di t \lesssim f(t_1) + t_1^{-\alpha},
\end{equation}
then there exists a universal constant $A$ depending on the initial data $f(t_0)$, such that
\begin{equation*}
f(t)  \lesssim_\alpha A t^{-\alpha}.
\end{equation*}
\end{lemma}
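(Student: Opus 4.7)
The plan is to run a standard dyadic pigeonhole iteration. Pick the base time $t_0$ at which $f(t_0)$ is known to be finite, and consider the dyadic sequence $t_n := 2^n t_0$. Applying the hypothesis with $t_1 = t_n$ and letting $t_2$ range over $(t_n, 2t_n]$, we get
\begin{equation*}
\int_{t_n}^{2 t_n} f(t) \di t \;\lesssim\; f(t_n) + t_n^{-\alpha}.
\end{equation*}
By the mean value theorem, this produces a point $\tau_n \in [t_n, 2 t_n]$ such that
\begin{equation*}
f(\tau_n) \;\lesssim\; \frac{1}{t_n}\bigl(f(t_n) + t_n^{-\alpha}\bigr).
\end{equation*}
Feeding this $\tau_n$ back into the hypothesis \eqref{pigeonhole-ineq} (now with $t_1 = \tau_n$) gives, for every $t \geq \tau_n$,
\begin{equation*}
f(t) \;\lesssim\; f(\tau_n) + \tau_n^{-\alpha} \;\lesssim\; \frac{f(t_n) + t_n^{-\alpha}}{t_n} + t_n^{-\alpha}.
\end{equation*}

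The next step is to turn this into a clean decay by induction on $n$. Writing $A_n := t_n^{\alpha} f(t_n)$, the previous inequality evaluated at $t = t_{n+1} = 2 t_n$ yields
\begin{equation*}
A_{n+1} \;\leq\; 2^{\alpha} C \Bigl[ \frac{A_n + 1}{t_n} + 1 \Bigr],
\end{equation*}
for some universal $C$ depending only on the implicit constant in \eqref{pigeonhole-ineq} and on $\alpha$. Provided $t_0$ is chosen large enough (or equivalently provided we start the iteration at a sufficiently late dyadic scale), the factor $1/t_n$ is bounded by a small constant, so the recursion is a contraction up to an additive constant. A one-line induction then shows $A_n \leq A$ for all $n$, with $A$ depending only on $f(t_0)$, $t_0$, and $\alpha$. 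This is the estimate at the dyadic times $t_n$.

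Finally, to pass from the dyadic points to arbitrary $t$, fix $t \in [t_n, t_{n+1}]$ and apply \eqref{pigeonhole-ineq} with $t_1 = t_n$, $t_2 = t$:
\begin{equation*}
f(t) \;\lesssim\; f(t_n) + t_n^{-\alpha} \;\lesssim\; (A+1) t_n^{-\alpha} \;\lesssim_{\alpha}\; (A+1)\, t^{-\alpha},
\end{equation*}
since $t_n \sim t$ on this interval. The main (really the only) obstacle is to make sure the induction for $A_n$ actually closes: naively the pigeonhole gain is of size $t_n^{-1}$, which is stronger than $t_n^{-\alpha}$ only when $\alpha < 1$, but the additive $t_n^{-\alpha}$ from the hypothesis is what sets the final decay rate, so the inductive loop is stable for any $\alpha > 0$. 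The opposite case $\alpha \geq 1$ is handled identically once $t_0$ is taken large enough that $C/t_n \leq 1/2$.
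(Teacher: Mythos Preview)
Your proof is correct and follows essentially the same dyadic pigeonhole scheme as the paper: extract a good point in each dyadic interval via the mean value theorem, then feed it back into the hypothesis. The only cosmetic difference is bookkeeping---the paper gains one power of $t$ per iteration and repeats $\lceil \alpha \rceil$ times, while you build the target rate $\alpha$ directly into the recursion $A_n = t_n^{\alpha} f(t_n)$ and close it as a contraction; both rest on the same idea.
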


\begin{proof}
Take a dyadic sequence $\{ \tau_i\}$, such that $\tau_i = 1.1^i t_0$. Apply \eqref{pigeonhole-ineq} to the interval $[\tau_i, \tau_{i+1}],$
\begin{equation*}
f(\tau_{i+1}) + \int_{\tau_i}^{\tau_{i+1}} f(t) \di t \lesssim f(\tau_i) + \tau_i^{-\alpha}.
\end{equation*}
By the pigeonhole principle, there exists a sequence $\{\tau_i^\prime\}$ with $\tau_i \leq \tau^\prime_i \leq \tau_{i+1}$, such that 
\begin{equation}\label{pigeon-first-genern}
f(\tau_{i}^\prime)  \lesssim \frac{  f(\tau_i) + \tau_i^{-\alpha} }{\tau_{i+1} - \tau_i}  \lesssim \frac{  f(\tau_i) + \tau_i^{-\alpha} }{ \tau_i}.
\end{equation}
Now, for any $\tau$, there must exist one interval $[\tau_i^\prime, \tau_{i+1}^\prime]$, such that $\tau_i^\prime \leq \tau \leq \tau_{i+1}^\prime.$ Then, applying \eqref{pigeonhole-ineq} to the interval $[\tau^\prime_i, \tau],$ we have
\begin{equation*}
f(\tau)  \lesssim f(\tau^\prime_i) + \tau_i^{\prime-\alpha}.
\end{equation*}
In view of \eqref{pigeon-first-genern} and $\tau_i \leq \tau_i^\prime \leq \tau \leq \tau_{i+1}^\prime \leq \tau_{i+2}=1.1^2\tau_i,$ we have
\begin{align*}
f(\tau)  & \lesssim \frac{  f(\tau_i) + \tau_i^{-\alpha} }{ \tau_i} + \tau_i^{\prime-\alpha} \lesssim  \frac{  f(\tau) + \tau^{-\alpha} }{ \tau} + \tau^{-\alpha}\\
&\lesssim  \frac{  f(\tau_0) + \tau_0^{-\alpha} + \tau^{-\alpha} }{ \tau} + \tau^{-\alpha} \lesssim \tau^{-1} + \tau^{-\alpha}.
\end{align*}
This completes the first generation of iteration.

For any fixed integer $k\in \mathbb{N}$, we can repeat this procedure $k$ times to obtain
 \begin{equation*}
f(\tau) \lesssim_k \tau^{-k} + \tau^{-\alpha}, \quad \text{for any fixed} \,\, k\in\mathbb{N}.
\end{equation*}   
\end{proof}

There is an alternative version of estimate derived from the pigeon-hole principle \cite[Page 859-860]{L-10}.
\begin{lemma}\label{lema-pigeonhole-2}
Suppose $f(t) >0$ satisfies the following inequality: for any $t_2>t_1$ and $\alpha >0$,  
\begin{equation}\label{pigeonhole-ineq-int-2}
f(t_2) + \int_{t_1}^{t_2} f(t) \di t \leq C f(t_1) + B \max\{t_2-t_1, 1\}t_1^{-\alpha},
\end{equation}
where $C$ and $B$ are some universal constants.
Then there exists a universal constant $A$ depending on the initial data $f(t_0)$, such that
\begin{equation*}
f(t)  \lesssim_\alpha A t^{-\alpha}.
\end{equation*}
\end{lemma}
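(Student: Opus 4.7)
The proof will follow the same iterative pigeon-hole scheme as Lemma \ref{lema-pigeonhole-1}, with modifications to accommodate the additional factor $\max\{t_2-t_1,1\}$. Construct a geometric sequence $\tau_i = 1.1^i t_0$. For $i$ large enough that $\tau_{i+1}-\tau_i = 0.1\,\tau_i \geq 1$, apply \eqref{pigeonhole-ineq-int-2} on the interval $[\tau_i,\tau_{i+1}]$ to get
\begin{equation*}
f(\tau_{i+1}) + \int_{\tau_i}^{\tau_{i+1}} f(t)\,\di t \leq C f(\tau_i) + B(\tau_{i+1}-\tau_i)\tau_i^{-\alpha}.
\end{equation*}
The pigeon-hole principle then produces a point $\tau_i^\prime \in (\tau_i,\tau_{i+1})$ at which
\begin{equation*}
f(\tau_i^\prime) \;\lesssim\; \frac{C f(\tau_i) + B(\tau_{i+1}-\tau_i)\tau_i^{-\alpha}}{\tau_{i+1}-\tau_i} \;\lesssim\; \frac{f(\tau_i)}{\tau_i} + \tau_i^{-\alpha}.
\end{equation*}
This is the key observation: averaging over the dyadic interval exactly cancels the length factor introduced by $\max\{t_2-t_1,1\}$, so the pigeon-hole step itself is no worse than in Lemma \ref{lema-pigeonhole-1}.

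\textbf{Passing from $\tau_i^\prime$ to arbitrary $\tau$.} For $\tau \in [\tau_i^\prime,\tau_{i+1}^\prime]$, I will again apply \eqref{pigeonhole-ineq-int-2} on $[\tau_i^\prime,\tau]$. The control on $\tau-\tau_i^\prime$ together with the bound on $f(\tau_i^\prime)$ yields a polynomial decay estimate, which is then fed back into the pigeon-hole step. Starting from the trivial bound $f(\tau) \leq A$ on any compact set and iterating the argument finitely many times, each iteration improves the decay rate of the ``initial data'' input by one power of $\tau^{-1}$, so that after $k$ iterations the bound contains a term $\tau^{-k}$, which eventually dominates over $\tau^{-\alpha}$. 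This produces $f(\tau) \lesssim_\alpha A\tau^{-\alpha}$ in the limit.

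\textbf{Main obstacle.} The principal difference from Lemma \ref{lema-pigeonhole-1} is the factor $\max\{\tau-\tau_i^\prime,1\} \sim \tau_i \sim \tau$ appearing in the extension step from $\tau_i^\prime$ to $\tau$, which threatens to leave a residual $\tau^{1-\alpha}$ term rather than the desired $\tau^{-\alpha}$. To handle this, I plan to refine the partition: instead of a single pigeon-hole on each dyadic interval $[\tau_i,\tau_{i+1}]$, I will apply pigeon-hole repeatedly on nested sub-intervals of geometrically decreasing length, continuing until one reaches scale $O(1)$. At that bottom scale the factor $\max\{t_2-t_1,1\}$ equals $1$, and the extension step no longer loses a power of $\tau$; the error term then reduces cleanly to $\tau^{-\alpha}$, allowing the iteration to close. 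A small technical point is keeping track of the constants through the cascade of pigeon-hole selections; since only finitely many levels are needed for any fixed $\alpha$, they absorb into the implicit constant $\lesssim_\alpha$.
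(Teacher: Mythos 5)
First, a point of context: the paper does not prove Lemma~\ref{lema-pigeonhole-2} at all --- it is quoted from \cite{L-10} --- so the only in-paper template is the proof of Lemma~\ref{lema-pigeonhole-1}. Your first pigeon-hole step is correct, and your diagnosis of the obstacle is exactly right: dividing by the dyadic length cancels $\max\{t_2-t_1,1\}\sim\tau_i$ in the pigeon-hole step, but the same factor reappears \emph{undivided} in the extension step on $[\tau_i^\prime,\tau]$, leaving a residual $\tau^{1-\alpha}$. Moreover, the outer ``$k$ generations'' iteration of Lemma~\ref{lema-pigeonhole-1} cannot remove it: each generation regenerates $\tau^{1-\alpha}$ in the extension step, so that scheme stalls permanently one power short of $\tau^{-\alpha}$. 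Everything therefore hinges on the cascade you sketch in the last paragraph, and that is where the gap is.

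As literally described --- ``apply pigeon-hole repeatedly on nested sub-intervals of geometrically decreasing length'' --- the natural reading is to average the integral bound already obtained on the dyadic block over shorter and shorter sub-intervals. That fails: the numerator still contains the fixed error $B\cdot 0.1\tau_i\cdot\tau_i^{-\alpha}\sim\tau_i^{1-\alpha}$, and dividing it by the sub-interval length $L_j$ reproduces $\tau_i^{1-\alpha}$ exactly when $L_j$ reaches scale $1$. To make the cascade work you must (i) aim the intervals at the target, e.g.\ $I_j=[\tau-2L_j,\tau-L_j]$ with $L_j=L_0\,2^{-j}$, $L_0\sim\tau$, and (ii) at each level re-derive the integral bound on $I_{j+1}$ from \eqref{pigeonhole-ineq-int-2} based at the \emph{previous} good point $\sigma_j\in I_j$, so that the distance factor is $\max\{\sup I_{j+1}-\sigma_j,1\}\lesssim L_j\sim L_{j+1}$ and the error injected per level is only $O(B)\tau^{-\alpha}$. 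This yields $f(\sigma_{j+1})\le (2C/L_j)\,f(\sigma_j)+O(B)\,\tau^{-\alpha}$, and one must then verify that the accumulated products $\prod_{l>j}(2C/L_l)$ stay uniformly bounded even though the last few factors exceed $1$; they do, because $L_l$ is large for most levels, but this is the actual content of the proof and is absent from your plan. Your stated reason for constant control --- ``only finitely many levels are needed for any fixed $\alpha$'' --- is false: descending from $L_0\sim\tau$ to unit scale takes $\sim\log_2\tau$ levels, so the constants do not trivially absorb. Finally, a cleaner route avoids the cascade entirely: for fixed $t$ set $G(s)=\int_s^t f$, so that \eqref{pigeonhole-ineq-int-2} reads $CG^\prime(s)+G(s)\le B(t-s+1)s^{-\alpha}$; integrating $e^{s/C}$ times this from $t/2$ to $t-1$ and using the a priori linear bound $f(s)\le Cf(t_0)+B\max\{s-t_0,1\}t_0^{-\alpha}$ gives $\int_{t-1}^{t}f\lesssim t\,e^{-t/(2C)}+t^{-\alpha}\lesssim t^{-\alpha}$, whence $f(t)\le C\inf_{[t-1,t]}f+B(t-1)^{-\alpha}\lesssim t^{-\alpha}$.
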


\subsection{Gr\"{o}nwall's inequality}
We recall another version of the Gr\"{o}nwall's inequality \cite{K-R-12}, which will be useful in our proof.
\begin{lemma}\label{lemma-Gronwall}
Let $f(x, y), g(x, y)$ be positive functions defined in the rectangle, $0\leq x\leq x_0, 0\leq y\leq y_0$ which verify the inequality,
\begin{equation*}
f(x, y)+g(x, y)\lesssim J +a\int_{0}^{x}f(x', y)dx'+b \int_{0}^{y}g(x,y')dy'
\end{equation*}
for some nonnegative constants $a, b$ and $J.$ Then, for all $0\leq x\leq x_0, 0\leq y\leq y_0$,
\begin{equation*}
f(x, y), g(x, y)\lesssim Je^{ax+by}.
\end{equation*}
\end{lemma}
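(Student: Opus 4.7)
The plan is to reduce the two-dimensional integral inequality to a one-dimensional Gr\"{o}nwall inequality along the level sets of $ax+by$, and then invoke the classical one-variable Gr\"{o}nwall lemma. First I would set $\phi(x,y) := f(x,y) + g(x,y) \geq 0$. Since $f \leq \phi$ and $g \leq \phi$ on the rectangle, the hypothesis upgrades to
\begin{equation*}
\phi(x,y) \leq C\Big(J + a\int_0^x \phi(x',y)\,dx' + b\int_0^y \phi(x,y')\,dy'\Big),
\end{equation*}
where the constant $C$ is the one hidden by the $\lesssim$ in the assumption.

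Next I would introduce the one-dimensional majorant
\begin{equation*}
\chi(t) := \sup\{\phi(x,y) : 0 \leq x \leq x_0,\ 0 \leq y \leq y_0,\ ax+by \leq t\},
\end{equation*}
which is non-decreasing on $[0, ax_0+by_0]$. For any $(x,y)$ lying in the rectangle with $ax+by \leq t$, the bounds $\phi(x',y) \leq \chi(ax'+by)$ and $\phi(x,y') \leq \chi(ax+by')$ reduce the previous inequality to
\begin{equation*}
\phi(x,y) \leq C\Big(J + a\int_0^x \chi(ax'+by)\,dx' + b\int_0^y \chi(ax+by')\,dy'\Big).
\end{equation*}
The substitutions $u = ax'+by$ and $u = ax+by'$ in the two integrals turn this into
\begin{equation*}
\phi(x,y) \leq C\Big(J + \int_{by}^{ax+by}\chi(u)\,du + \int_{ax}^{ax+by}\chi(u)\,du\Big) \leq C\Big(J + 2\int_0^t \chi(u)\,du\Big).
\end{equation*}

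Taking the supremum over all such $(x,y)$ gives $\chi(t) \leq CJ + 2C\int_0^t \chi(u)\,du$, and the classical one-dimensional Gr\"{o}nwall inequality then produces $\chi(t) \leq CJ e^{2Ct}$. Evaluating at $t = ax+by$ and using $f, g \leq \phi \leq \chi(ax+by)$ yields the desired bound $f, g \lesssim Je^{ax+by}$, after absorbing the multiplicative constants into the $\lesssim$ of the conclusion.

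The only real subtlety is the precise constant in the exponent, since the naive supremum reduction gives an exponent proportional to $ax+by$ rather than exactly $ax+by$. If a sharper exponent is needed, I would instead work with the weighted quantity $\Psi_\lambda(x,y) := \phi(x,y) e^{-\lambda(ax+by)}$: the hypothesis, together with $a\int_0^x e^{-\lambda a(x-x')} dx' = \lambda^{-1}(1-e^{-\lambda ax}) \leq \lambda^{-1}$ and the analogous $y$-estimate, leads to the fixed-point inequality $\sup \Psi_\lambda \leq CJ + (2C/\lambda) \sup \Psi_\lambda$. For any $\lambda > 2C$ this closes to give $\sup \Psi_\lambda \lesssim J$, and hence $\phi(x,y) \lesssim J e^{\lambda(ax+by)}$ for any admissible $\lambda$; this is the form of the inequality actually used in the applications of the lemma in this paper.
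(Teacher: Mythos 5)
Your argument is sound, and there is nothing in the paper to compare it against: the lemma is quoted from \cite{K-R-12} without proof. Both of your routes work. The reduction to the level sets of $ax+by$ correctly turns the hypothesis into the one--variable inequality $\chi(t)\le CJ+2C\int_0^t\chi$, and the weighted fixed--point argument with $\Psi_\lambda$ is the cleaner of the two. The only implicit assumption you should flag is that $f+g$ is bounded on the (compact) rectangle --- otherwise $\chi(t)$, respectively $\sup\Psi_\lambda$, could be infinite and the inequality $\sup\Psi_\lambda\le CJ+(2C/\lambda)\sup\Psi_\lambda$ would not close; this is the usual tacit regularity hypothesis in Gr\"onwall--type lemmas and is satisfied in every application here.

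Your hedge about the exponent is not merely a presentational subtlety: the conclusion with exponent exactly $ax+by$ is in fact unattainable in general, so your $e^{\lambda(ax+by)}$ with $\lambda>2C$ is essentially optimal. Indeed, even with constant $1$ in the hypothesis, the extremal solution of $\phi=J+a\int_0^x\phi(x',y)\,dx'+b\int_0^y\phi(x,y')\,dy'$ is the series $J\sum_{m,n}\binom{m+n}{m}\frac{(ax)^m(by)^n}{m!\,n!}$; on the diagonal $ax=by=s$ the terms of total degree $N$ sum to $\binom{2N}{N}s^N/N!\sim(4s)^N/(N!\sqrt{N})$, so $\phi\gtrsim e^{4s}/\mathrm{poly}(s)=e^{2(ax+by)}/\mathrm{poly}$, which is not $O(e^{ax+by})$ for large $s$. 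The statement is therefore only correct if one either writes $e^{C(ax+by)}$ in the conclusion or allows the implicit constant to depend on $ax_0+by_0$. As you observe, this is harmless for the paper: in every invocation (e.g.\ $b=\delta^{-1}$ integrated over $\ub\in[0,\delta]$, and $a=\langle u'\rangle^{-2}$ integrated over $u$) the total exponent $ax_0+by_0$ is $O(1)$, so the lemma is really being used as the bound $f,g\lesssim J$.
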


\subsection{Sobolev inequality}\label{sec-Soloblev}
The Sobolev inequalities on $S_{\ub ,u}$,
 \begin{equation}\label{Sobolev Inequlities-S2}
\begin{split}
 \|\psi \|_{L^\infty(S_{\ub, u})} &\lesssim r^{-\frac{1}{2}}\| \psi \|_{L^{4}(S_{\ub,u})} + r^{\frac{1}{2}}\| \nablaslash\psi \|_{L^{4}(S_{\ub,u})}, \\
\|\psi\|_{L^p(S_{\ub, u})} &\lesssim r^{\frac{2}{p}} \left( r^{-1} \| \psi \|_{L^{2}(S_{\ub,u})} + \| \nablaslash\psi \|_{L^{2}(S_{\ub,u})} \right), \quad p \in \mathbb{N}.
 \end{split}
 \end{equation}

Referring to  \cite{Christodoulou-09}, there is the Sobolev inequality on the outgoing cone:
For any real function $\psi \equiv 0$ on $\Cb_{0}$,
\begin{equation}\label{Sobolev Inequlities-Cu}
 r^{\frac{1}{2}} \| \psi \|_{L^{4}(S_{\ub,u})} \lesssim \| L \psi \|^{\frac{1}{2}}_{L^{2}(C_{u})}(\| \psi \|^{\frac{1}{2}}_{L^{2}(C_{u})} + \|r\nablaslash \psi\|^{\frac{1}{2}}_{L^{2}(C_{u})}).
 \end{equation} 
Resembling \eqref{Sobolev Inequlities-Cu}, we can prove that,
\begin{equation}\label{Sobolev Inequlities-Cu-deg}
 r^{\frac{1}{2}} \| \eta^{\frac{1}{2}} \psi \|_{L^{4}(S_{\ub,u})} \lesssim \|  \eta^{\frac{1}{2}} L \psi \|^{\frac{1}{2}}_{L^{2}(C_{u})}(\| \eta^{\frac{1}{2}} \psi \|^{\frac{1}{2}}_{L^{2}(C_{u})} + \|r  \eta^{\frac{1}{2}} \nablaslash \psi\|^{\frac{1}{2}}_{L^{2}(C_{u})}). 
 \end{equation}

\end{document}